\theoremstyle{plain}
\newtheorem{thm}{Theorem}[section]
\newtheorem{lemma}[thm]{Lemma}
\newtheorem{prop}[thm]{Proposition}
\newtheorem{cor}[thm]{Corollary}
\newtheorem{conj}[thm]{Conjecture}
\newtheorem*{thm*}{Theorem}
\newtheorem*{lemma*}{Lemma}
\newtheorem*{prop*}{Proposition}
\newtheorem*{cor*}{Corollary}
\newtheorem*{conj*}{Conjecture}
\theoremstyle{definition}
\newtheorem{defn}[thm]{Definition}
\newtheorem{ex}[thm]{Example}
\theoremstyle{remark}
\newtheorem*{rmk}{Remark}
\newcommand{\kk}{\mathbb{K}}
\newcommand{\mhat}{\widehat{m}}
\newcommand{\ind}{\mbox{$\perp \kern-5.5pt \perp$}}
\newcommand{\cone}{\operatorname{cone}}
\newcommand{\init}{\operatorname{in}}
\newcommand{\Tor}{\text{Tor}}
\newcommand{\Bruhatl}{\mathrel{<_{\mathrm{Br}}}} 
\newcommand{\Bruhatg}{\mathrel{>_{\mathrm{Br}}}}
\newcommand{\Bruhatle}{\mathrel{\le_{\mathrm{Br}}}}
\newcommand{\Bruhatge}{\mathrel{\ge_{\mathrm{Br}}}}
\renewcommand*\env@matrix[1][*\c@MaxMatrixCols c]{%
  \hskip -\arraycolsep
  \let\@ifnextchar\new@ifnextchar
  \array{#1}}
\let\oldmarginpar\marginpar
\renewcommand\marginpar[1]{\-\oldmarginpar[\footnotesize #1]{\raggedright\footnotesize #1}}
\def\textcross{\begin{tikzpicture}	
	\draw[thick] (0,.2)  to (.4,.2)
	 [thick](.2,.4)  to (.2,0);
	\end{tikzpicture}}
\def\textelbow{\begin{tikzpicture}	
	\draw[thick] (0,.2)  to [bend left=50] (.2,0)
	[thick] (.2,.4)  to [bend right=50] (.4,.2);
	\end{tikzpicture}
	}
\def\textcelbow{\begin{tikzpicture}	
	\draw[thick] (0,.2)  to [bend left=50] (.2,0);
	\end{tikzpicture}}
\newcommand{\+}{
	\draw[thick] (0,.25)  to (.5,.25)
	[thick] (.25,.5)  to (.25,0);
	 }
\def\elbow{
	\draw[thick] (0,.25)  to [bend left=50] (.25,0)
	[thick] (.25,.5)  to [bend right=50] (.5,.25);
	}
\def\celbow{
	\draw[thick] (0,.25)  to [bend left=50] (.25,0);
	}
\DeclareMathOperator{\Spec}{Spec}
\begin{document}

\title[Gr\"obner bases, symmetric matrices, and type C Kazhdan-Lusztig varieties]{Gr\"obner bases, symmetric matrices,\\ and type C Kazhdan-Lusztig varieties}

\author{Laura Escobar}
\address{Laura Escobar \\ Department of Mathematics and Statistics\\ Washington University in St.~Louis \\ St.~Louis, MO, USA }
\email{laurae@wustl.edu}

\author{Alex Fink}
\address{Alex Fink\\School of Mathematical Sciences\\ Queen Mary University of London\\ London, UK}
\email{a.fink@qmul.ac.uk}

\author{Jenna Rajchgot}
\address{Jenna Rajchgot\\ Department of Mathematics and Statistics\\ McMaster University\\Hamilton, ON, Canada}
\email{rajchgot@math.mcmaster.ca}

\author{Alexander Woo}
\address{Alexander Woo\\Department of Mathematics\\University of Idaho\\Moscow, ID, USA}
\email{awoo@uidaho.edu}

\maketitle

\begin{abstract}
We study a class of combinatorially-defined polynomial ideals which are generated by  minors of a generic symmetric matrix. Included within this class are the symmetric determinantal ideals, the symmetric ladder determinantal ideals, and the symmetric  Schubert determinantal ideals of A. Fink, J. Rajchgot, and S. Sullivant. Each ideal in our class is a type~C analog of a Kazhdan-Lusztig ideal of A. Woo and A. Yong; that is, it is the scheme-theoretic defining ideal of the intersection of a type~C Schubert variety with a type~C opposite Schubert cell, appropriately coordinatized.
The Kazhdan-Lusztig ideals that arise are exactly those where the opposite cell is $123$-avoiding.
Our main results include Gr\"obner bases for these ideals, prime decompositions of their initial ideals (which are Stanley-Reisner ideals of subword complexes) and combinatorial formulas for their multigraded Hilbert series in terms of pipe dreams.

\end{abstract}

\setcounter{tocdepth}{1}
\tableofcontents

\section{Introduction}

Let $\mathbb{K}$ be a field of characteristic zero.
In this paper, we study a class of generalized symmetric determinantal ideals. 
Each ideal in our class is defined by imposing certain \emph{combinatorial} southwest rank conditions on an $n\times n$ symmetric matrix $M$ whose $i,j$ entry is either zero or an indeterminate $z_{ij}=z_{ji}$ and whose nonzero entries lie in a skew partition, in English conventions. Among the ideals in our class are the symmetric determinantal ideals, the symmetric ladder determinantal ideals  \cite{Gorla,Gorla-Migliore-Nagel}, and the symmetric Schubert determinantal ideals of \cite{Fink-Rajchgot-Sullivant}. We plan to describe in detail the connection with symmetric ladder determinantal ideals in a separate paper.

Let $R= \mathbb{K}[z_{ij}]$ be the polynomial ring in the variables appearing in a matrix $M$ as above.
We interpret this ring in terms of a type~C opposite Schubert cell.
Let $G$ be the symplectic group $Sp_{2n}(\mathbb{K})$,
represented as the group of $2n\times 2n$ matrices preserving the form $e_1\wedge e_{2n}+\cdots+e_n\wedge e_{n+1}$.
We will work with the following Borel subgroups of $G$\/:
	$$
	B^+_G:=\{\text{upper triangular matrices in }G\}
	\quad
	\text{and}
	\quad
	B^-_G:=\{\text{lower triangular matrices in }G\}.
	$$
The type~C flag variety is $G/B^+_G$, and an opposite Schubert cell is a $B^-_G$-orbit in $G/B^+_G$.
These cells are indexed by elements of the Weyl group $C_n$ of $G$, which can be identified with the  set of permutations
	\begin{equation*}
	C_n=\{v_1\ldots v_{2n}\in S_{2n} : v_{2n+1-i}=2n+1-v_i \text{ for } i=1,\ldots,n\}.
	\end{equation*}
Then the ring $R$ is the coordinate ring of a type~C opposite Schubert cell associated to some $123$-avoiding permutation $v\in C_n$, with an appropriate choice of coordinates (see \Cref{prop: partial symm matrices}). 
From the point of view of Schubert cells in $G/B^+_G$, our choices of symplectic form and coordinates have
a long history.  These coordinates were used extensively to study Schubert cells by W.~Fulton and P.~Pragacz~\cite{FultonPragacz}, who were likely aware of the connection to symmetric matrices at least in some special cases.

This choice of coordinates allows us to study a large class of generalized symmetric determinantal ideals from the point of view of Kazhdan-Lusztig varieties in $G/B^+_G$. 
Each ideal we encounter is obtained by imposing southwest rank conditions on $M$, using combinatorial rules encoded by some $w\in C_n$.
Given $v,w\in C_n$, we denote by $ \mathcal N_{v,w}$ the affine variety associated to one of our ideals; these varieties form a subclass of type~C Kazhdan-Lusztig varieties.
A Schubert variety is a $B^+_G$-orbit closure in $G/B^+_G$, and a Kazhdan-Lusztig variety is the intersection of a Schubert variety with an opposite Schubert cell.

In general, Kazhdan-Lusztig varieties provide affine neighborhoods of the $T$-fixed points of Schubert varieties, and they have been used to study singularities of Schubert varieties.
One such instance is \cite{WooYongSings}, in which A.~Woo and A.~Yong introduced Kazhdan-Lusztig ideals of type~A for this purpose. 
Each Kazhdan-Lusztig ideal is the prime defining ideal of a type~A Kazhdan-Lusztig variety, appropriately coordinatized.
In \cite{WooYongGrobner}, Woo and Yong showed that Kazhdan-Lusztig ideals of type~A possess nice Gr\"obner bases for which the corresponding initial ideals are Stanley-Reisner ideals of vertex decomposable balls or spheres. They furthermore proved multiple combinatorial formulas for their multigraded Hilbert series.

Similarly, a type~C Kazhdan-Lusztig ideal is the defining ideal of a type~C Kazhdan-Lusztig variety.
In our main theorem, we use the interpretation of $ \mathcal N_{v,w}$ as a type~C Kazhdan-Lusztig variety to give a Gr\"obner basis with squarefree initial terms for the ideals we encounter.
We give $R$ a term order which is \emph{diagonal}. Roughly this means that the leading term of any minor is the product of the diagonal entries of the submatrix it arises from. 
One example of a diagonal term order is the lexicographic term order where
$z_{ij}>z_{i'j'}$ if and only if either $i>i'$, or $i=i'$ and $j>j'$. 
Our main result is as follows. It is stated precisely as \Cref{thm: main} in the main body of the paper:
\begin{thm*} 
Let $v,w\in C_n$ and $v$ be $123$-avoiding.
The determinants defining the ideal of $ \mathcal N_{v,w}$ form a Gr\"obner basis with respect to any diagonal term order.
\end{thm*}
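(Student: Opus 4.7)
The plan is to combine a Gr\"obner degeneration with a multigraded Hilbert series comparison, in the spirit of the arguments used by Knutson--Miller for matrix Schubert varieties and by Woo--Yong for type~A Kazhdan--Lusztig ideals. Write $I$ for the ideal of $\mathcal N_{v,w}$ generated by the prescribed southwest minors of $M$, fix a diagonal term order $\prec$, and let $J \subseteq R$ be the monomial ideal generated by the leading terms of these minors. The containment $J \subseteq \init_\prec(I)$ is automatic, and equality---which is precisely what the Gr\"obner basis statement asserts---is equivalent to the multigraded Hilbert series of $R/J$ and $R/\init_\prec(I) = R/I$ coinciding.

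The first task is to show that $J$ is squarefree and to identify it combinatorially. With a diagonal term order the leading term of a southwest minor is the product of its main-diagonal variables $z_{i_1 j_1} \cdots z_{i_r j_r}$; the symmetry relations $z_{ij}=z_{ji}$ cause no collisions since the row and column indices along a main diagonal are distinct. I would then construct a word $Q$ in the simple reflections of $C_n$ whose letters are in bijection with the coordinates $z_{ij}$ appearing in $M$, and prove that the minimal monomial generators of $J$ are exactly the non-faces of the subword complex $\Delta(Q, w)$. The $123$-avoidance of $v$ enters here in an essential way: via \Cref{prop: partial symm matrices} it forces the zero pattern of $M$ to be a clean skew shape, so that the dictionary between coordinates and letters of $Q$ is well-defined and every prescribed rank condition translates into a single subword constraint without interference from the skew-symmetric mirror entries of $M$.

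With this identification in hand, the multigraded Hilbert series of $R/J$ becomes the generating function of the facets of $\Delta(Q, w)$, equivalently of the reduced subwords of $Q$ spelling $w$. On the geometric side, the multigraded Hilbert series of $R/I$ is the localization at $v$ of a type~C double Schubert polynomial for $w$, which the type~C pipe dream formula also expresses as a sum over exactly those reduced subwords. The two Hilbert series therefore agree, forcing $J = \init_\prec(I)$ and completing the proof.

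I expect the main obstacle to be the combinatorial step in the middle paragraph: choosing $Q$ correctly and proving the bijection between the minimal generators of $J$ and the non-faces of $\Delta(Q, w)$, tracked carefully through the skew-shape geometry imposed by the $123$-avoidance of $v$. Once that dictionary is in place, the closing Hilbert series comparison should follow cleanly from already-available pipe dream formulas in type~C, and squarefreeness of the leading terms ensures that the resulting initial scheme is reduced, as desired.
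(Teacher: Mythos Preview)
Your overall architecture is the same as the paper's: sandwich the ideal $J$ of leading terms between $\init_\prec I$ and a Stanley--Reisner ideal of a subword complex, then close the sandwich via a multigraded Hilbert series comparison. But there is a genuine gap, and a second step that needs to be sourced more carefully.

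The gap is your squarefreeness claim. You assert that the leading term of each defining minor is the product of the main-diagonal variables and that ``the symmetry relations cause no collisions since the row and column indices along a main diagonal are distinct.'' That reasoning is correct only when the main-diagonal term of the submatrix is nonzero. For $v>v_\square$ the matrix $M$ has zero entries in the skew shape, so the diagonal term of a southwest minor can vanish, and the surviving leading term can then pick up a squared variable. Concretely, for $v=v_\square c_0\in C_5$ and $w$ with $E(w)=\{(8,7)\}$, the $3\times 3$ essential minor of $\tau_{8,7}(M_{\overline v})$ on columns $3,4,6$ equals $-z_{33}z_{45}^2+2z_{34}z_{35}z_{45}-z_{35}^2z_{44}$, whose leading term under $\prec_{\mathrm{lex}}$ is $z_{33}z_{45}^2$. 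Thus the generators of $J$ are not all squarefree, and a direct combinatorial identification of $J$ with the Stanley--Reisner ideal $K_{\overline v,w}$ along the lines you describe cannot work as stated. The paper sidesteps this: it proves only the containment $K_{\overline v,w}\subseteq J$ (this is the main technical work, an induction on $\ell(w_0v)$ using the last ascent of $v$ to pass from $M_{\overline v}$ to $M_{\overline{vc_k}}$, with careful tracking of how leading terms of minors change when rows and columns containing the 1s and the missing variable $z_{ij}$ are rearranged). The reverse containment, and hence squarefreeness of $J$ as an ideal, then falls out at the end from the Hilbert series equality.

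A second point: for the Hilbert series of $R/I$ you invoke a ``type~C pipe dream formula,'' but in this paper that formula is a \emph{consequence} of the theorem, not an input. What is available a priori is the Kostant--Kumar recursion for the restriction of $[\mathcal O_{X_w}]$ to the $T$-fixed point indexed by $v$ (equivalently Billey's formula and its $K$-theoretic extension by Graham and Willems). The paper shows that $\mathcal K(R/K_{\overline v,w};\mathbf t)$ satisfies the same recursion, via the vertex decomposition of subword complexes, and thereby matches $\mathcal K(R/I_{\overline v,w};\mathbf t)$. If you want to phrase this as ``both sides are sums over reduced subwords of $Q$'', cite Billey/Graham--Willems for the $R/I$ side rather than a pipe dream formula, and Knutson--Miller for the Stanley--Reisner side.
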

This result is proved in \Cref{sec: proof} using $K$-polynomials and the subword complexes of A.~Knutson and E.~Miller \cite{KnutsonMillerAdvances,KnutsonMillerAnnals}.  Note that our conventions are upside-down from those of Knutson and Miller, so our diagonal term orders are indeed analogues of their antidiagonal term orders.

In \cite{KnutsonFrobenius} Knutson showed the defining ideal of any Kazhdan-Lusztig variety has a Gr\"obner basis whose leading terms are squarefree, and, in \cite{Knutson-degenerate}, he determined that the resulting initial ideal is the Stanley-Reisner ideal of a certain subword complex.
However, he did not provide a Gr\"obner basis.
Up to sign, our coordinates agree with the Bott-Samelson coordinates of~\cite{KnutsonFrobenius}.
Hence, our results make \cite[Theorem 7]{KnutsonFrobenius} more explicit by
describing the coordinates and stating which minors in the Gr\"obner basis arise from
each element of the essential set (which corresponds to Knutson's ``basic elements'').

In \Cref{sec: formulas} we define type~C pipe dreams and use them to give consequences to \Cref{thm: main}.
Namely, in \Cref{cor:initIndex} we give prime decompositions of the initial ideals, in \Cref{prop:multidegree formula} we give combinatorial formulas for their multigraded Hilbert series, and in \Cref{prop: K-poly pipe} we give combinatorial formulas for their $K$-polynomials.
Up to a change in convention, these formulas give, in the case where $v$ is $123$-avoiding, combinatorial models of S.~Billey's formula~\cite{Billey}
and its extension to $K$-theory by W.~Graham~\cite{Graham} and M.~Willems~\cite{Willems} for a particular choice of reduced word.  (We note Billey's formula was first stated by H.~Andersen, J.~Jantzen, and W.~Soergel~\cite{AJS} and independently
rediscovered in different but related context by Billey~\cite{Billey}; see J.~Tymoczko's survey
paper~\cite{TymBilley} for details and more recent developments.)
We note that in recent work E.~Smirnov and A.~Tutubalina \cite{SmirnovTutubalina} have studied pipe dreams in all classical groups; these differ from ours even in the special case we describe.

The polynomials given in these formulas also have an interpretation as a particular
specialization of type~C double Schubert and double Grothendieck polynomials, which are
\emph{stable} equivariant Chow \cite[Theorem~10.8]{Ikeda-Mihalcea-Naruse} and K-theory \cite[Theorem~2]{KirillovNaruse} classes of type~C Schubert varieties.  Here, being stable classes means they are lifts of these classes, independent of the rank of the ambient flag variety,
that satisfy certain recurrences and boundary conditions parallel to those holding in the type~A case. 
From these polynomials the multidegrees and K-polynomials
of Kazhdan-Lusztig varieties are obtained geometrically in either of two equivalent ways, 
restricting to affine patches or localization at torus fixed points, or algebraically according to particular restriction maps.
T.~Ikeda, L.~Mihalcea and H.~Naruse \cite{Ikeda-Mihalcea-Naruse} were the first to define type~C double Schubert polynomials,
and they gave several formulas including two using divided difference operators as well as an algebraic restriction map for recovering local classes. 
Type~C double Grothendieck polynomials are due to A.~Kirillov in \cite{kirillov2015double}, which again gives formulas,
though the connection to geometry is made not in that preprint but in Kirillov and Naruse \cite{KirillovNaruse}.  In this interpretation, one can consider our formulas as type~C analogues of
the type~A specialization formulas of A.~Buch and R.~Rimanyi~\cite{Buch-Rimanyi}.

We note that the \emph{symmetric matrix Schubert varieties} defined and studied by Z.~Hamaker, E.~Marberg, and B.~Pawlowski in \cite{MarbergPawlowski,HamakerMarbergPawlowski} are not special cases of the varieties that we consider in the present paper. The varieties studied by Hamaker, Marberg, Pawlowski are defined by imposing northwest rank conditions on symmetric matrices, while we impose southwest (and northeast) conditions. Correspondingly, the pipe dreams they introduce are symmetric across an axis perpendicular to our axis of symmetry. The varieties in \cite{MarbergPawlowski,HamakerMarbergPawlowski} are related to Borel group orbit closures in $G/K$ where $G = GL_n$ is a general linear group and $K = O_n$ is an orthogonal subgroup of $G$.

\subsubsection*{Outline of this paper}
In \Cref{sec: CA background} we give the commutative algebra background for the paper.
In \Cref{sec:KL background} we establish the notation and setup for type~C Kazhdan-Lusztig varieties.
In \Cref{sect:smallPatches} we introduce the coordinates of \Cref{prop: partial symm matrices} which we use for opposite Schubert cells associated to $123$-avoiding permutations.
Then in \Cref{prop: KL coord ring} we use these coordinates to describe the defining ideal of $\mathcal N_{v,w}$ as a generalized symmetric determinantal ideal.
In preparation for computing the $K$-polynomials of our ideals, we describe  in \Cref{sect:TActionSmall} the weights of the coordinates with respect to the action of the torus of diagonal matrices.
In \Cref{sec: type C swc} we give background on subword complexes, and, for the complexes related to our ideals, we describe how to label their vertices using our coordinates. 
We also state the vertex decomposition of subword complexes, which we will use to compare the $K$-polynomials of our ideals with the $K$-polynomials of the Stanley-Reisner ideals for subword complexes.
We then proceed to prove \Cref{thm: main} in \Cref{sec: proof}.
In \Cref{sec: formulas} we introduce type~C pipe dreams for small patches and give various consequences to \Cref{thm: main}. 
Lastly, in \Cref{sec:beyondSmall}, we show that our Gr\"obner basis result (\Cref{thm: main}) does not naturally extend beyond the small patch setting to general type~C Kazhdan-Lusztig ideals.

\subsection*{Acknowledgements}

We thank Patricia Klein, Allen Knutson, and Alex Yong for helpful discussions, and the Fields Institute for hosting us at the Thematic Program on Combinatorial Algebraic Geometry in Fall 2016, where this project was conceived. We also thank an anonymous referee and Emmanuel Neye for helpful feedback on the first version of this paper. 
LE was partially supported by NSF Grants DMS 1855598 and DMS 2142656. JR was partially supported by NSERC Grant RGPIN-2017-05732.  AW was partially supported by Simons Collaboration Grant 359792.
AF received funding from the European Union's Horizon 2020 research and innovation programme under the Marie Sk\l odowska-Curie grant agreement No~792432.

\section{Background}
\label{sec: CA background}

\subsection{Gr\"obner bases and initial ideals}
A \textbf{term order} $<$ is a total order on the monomials in a polynomial ring $R$
with respect to which 1 is minimal and such that if $m,m',m''$ are monomials such that $m'<m''$ then $mm'<mm''$.
One class of term orders we will use are \textbf{lexicographic} term orders.
Given a total ordering $x_1>\cdots>x_k$ on the variables of~$R$,
an exponent vector $(a_1,\ldots,a_k)\in\mathbb N^k$ can be assigned to any monomial $m=x_1^{a_1}\cdots x_k^{a_k}$;
then two monomials compare in the lexicographic term order just as their exponent vectors compare in the lexicographic order on $\mathbb N^k$.
More precisely, $x_1^{a_1}\cdots x_k^{a_k}<x_1^{b_1}\cdots x_k^{b_k}$ if and only if there is some $1\le i\le k$ so that
$a_1=b_1$, \ldots, $a_{i-1}=b_{i-1}$, and $a_i<b_i$.
In particular, the variables themselves still compare as $x_1>\cdots>x_k$ in the lexicographic term order.

The \textbf{initial term} of a polynomial in~$R$, with respect to a fixed term order $<$,
is the maximum of the monomials in its support.
If $I$ is an ideal of~$R$, then its \textbf{initial ideal}, denoted $\init_< I$, is the ideal generated by all initial terms of elements of~$I$.
A \textbf{Gr\"obner basis} for~$I$ is a generating set for $I$ whose initial terms generate $\init_< I$.

\subsection{Torus actions, multigradings, and $K$-polynomials}

One reference for the material in this section is~\cite[Chapter 8]{MillerSturmfels}.

Suppose a torus $T=(\mathbb{K}^*)^n$ acts on affine space $\mathbb{K}^k=\Spec \mathbb{K}[z_1,\ldots,z_k]$ with weights $-a_1,\ldots, -a_k\in\mathbb{Z}^n$.
This means that, given $x=(x_1,\ldots,x_n)\in T$ and $p=\sum_{i=1}^k z_i(p)\,\mathbf{f}_i\in \mathbb{K}^k$ (where $\{\mathbf{f}_i\}$ denotes the dual basis to $\{z_i\}$),
$$t\cdot p=\sum_{i=1}^k x^{-a_i}z_i(p)\,\mathbf{f}_i,$$
where $$x^{-a_i}=\prod_{j=1}^n x_j^{-a_{ij}}.$$
Then $T$ acts on the coordinate functions $z_1,\ldots, z_k$ with weights $a_1,\ldots,a_k$ respectively.
This action induces a $\mathbb{Z}^n$-grading on the ring $R=\mathbb{K}[z_1,\ldots,z_k]$ given by setting the degree of $z_i$ as $a_i$, so that $\deg(z_1^{b_1}\cdots z_k^{b_k})=\sum_{i=1}^k b_ia_i$.

Given $\mathbf{a}\in\mathbb{Z}^n$ and a graded $R$-module $M$, let $M_{\mathbf{a}}$ denote the $\mathbf{a}$-th graded piece of $M$.
Suppose $\dim_{\mathbb{K}}(M_{\mathbf{a}})$ is finite for all $\mathbf{a}$, which will be the case if $a_1,\ldots, a_k$ generate a pointed cone in $\mathbb{Z}^n\otimes\mathbb{R}$ and $M$ is finitely generated.
Then define the {\bf Hilbert series} of $M$ to be
$$\mathcal{H}(M; \mathbf{t}) = \sum_{\mathbf{a}\in\mathbb{Z}^n} \dim_{\mathbb{K}} (M_{\mathbf{a}})\, \mathbf{t}^{\mathbf{a}}.$$
Furthermore define the {\bf $K$-polynomial} of $M$ as
$$\mathcal{K}(M;\mathbf{t})=\mathcal{H}(M;\mathbf{t})\prod_{i=1}^k (1-\mathbf{t}^{\deg(z_i)}).$$  This is a Laurent polynomial in the variables $t_i$.
Finally, the {\bf multidegree} of~$M$ with its given multigrading, denoted $\mathcal{C}(M;\mathbf{t})$, is the sum of all lowest degree terms in $\mathcal{K}(M;1-t_1,\ldots,1-t_k)$.

Note that an ideal and its initial ideal have equal $K$-polynomials and equal multidegrees.  (This is called the \textbf{degenerative} property in \cite{MillerSturmfels}.)
Furthermore, if $N\subseteq M$ and $\mathcal{K}(N;\mathbf{t})=\mathcal{K}(M;\mathbf{t})$, then $N=M$.
Also, multidegrees are \textbf{additive} in the sense that the multidegree $\mathcal{C}(R/I;\mathbf{t})$ is the sum $\sum_{J}\mathcal{C}(R/J;\mathbf{t})$ where the sum is over those $J$ in a primary decomposition of $I$ such that $\sqrt{J}$ is a minimal prime of $I$ that has the same height as $I$ (see \cite[\S 8.5]{MillerSturmfels}).

\subsection{Simplicial complexes and Stanley-Reisner ideals}

A \textbf{simplicial complex} $\Delta$ on the vertex set $V$ is a set of subsets of~$V$, called \textbf{faces},
such that if $F\in \Delta$ then all subsets of $F$ are in~$\Delta$.
A \textbf{facet} of $\Delta$ is a maximal face under containment.
If $\Delta$ is a simplicial complex on~$V$, and $z\not\in V$,
then the \textbf{cone} $\cone_z \Delta$ is the simplicial complex
\[\{F\subseteq V\cup\{z\} : F\cap V\in \Delta\}\]
on vertex set $V\cup\{z\}$.

The \textbf{Stanley-Reisner ideal} of $\Delta$
is the ideal $I_\Delta$ of the polynomial ring $R=\mathbb K[V]$ generated by products of variables that index non-faces of $\Delta$, that is,
\[I_\Delta := \Big\langle\prod_{z\in Z}z : Z\subseteq V, Z\not\in\Delta\Big\rangle.\]

\section{Kazhdan-Lusztig varieties}
\label{sec:KL background}

In this section, we recall background on Schubert varieties in flag varieties of types $A$ and~$C$. 
In particular, we discuss Kazhdan-Lusztig varieties, which we define (following \cite{WooYongSings}) to be the intersection of a Schubert variety with an opposite Schubert cell. 
These are affine varieties.

\subsection{Schubert cells and varieties}

Fix an integer $n\geq 1$, and let $E$ be the $2n\times 2n$ matrix
\[E := \begin{bmatrix}0 &J_n \\ -J_n &0 \end{bmatrix}, \] 
where $J_n$ is the $n\times n$ antidiagonal matrix with antidiagonal entries $1$.
The matrix $E$ determines a non-degenerate, skew-symmetric bilinear form on $\mathbb{K}^{2n}$.
The \textbf{symplectic group} $Sp_{2n}(\mathbb{K})$ is
\[
Sp_{2n}(\mathbb{K}) := \{M\in GL_{2n}(\mathbb{K}) : E(M^{\rm t})^{-1}E^{-1} = M  \},  
\]
or, equivalently, it is the fixed point set of the involution
\[
\sigma: GL_{2n}(\mathbb{K})\rightarrow GL_{2n}(\mathbb{K}),\quad
\sigma(M) = E(M^{\rm t})^{-1}E^{-1}.
\]
Following \cite[Chapter 6]{Lakshmibai-Raghavan} we let $H:=GL_{2n}(\kk)$ and $G:=Sp_{2n}(\kk)$.
We will work with the following Borel subgroups of $H$\/:
	$$
	B^+_H:=\{\text{upper triangular matrices in }H\}
	\quad
	\text{and}
	\quad
	B^-_H:=\{\text{lower triangular matrices in }H\}.
	$$
These give rise to the following Borel subgroups of $G$:
	$$
	B^+_G=(B_H^+)^\sigma
	\qquad
	\text{and}
	\qquad
	B^-_G=(B_H^-)^\sigma.
	$$

Consider the type~A flag variety $H/B^+_H$. 
A \textbf{Schubert cell} in this flag variety is a $B^+_H$-orbit for the left action of $B^+_H$ on $H/B^+_H$ by multiplication, and a \textbf{Schubert variety} is its closure. 
An \textbf{opposite Schubert cell} is a $B^-_H$-orbit in $H/B^+_H$ and an \textbf{opposite Schubert variety} is its closure.
In the type~C flag variety $G/B^+_G$, Schubert cells and varieties are defined analogously by replacing appearances of $H$ and $B^{\pm}_H$ in the above definitions by $G$ and $B^{\pm}_G$ respectively.

Denote by $S_{2n}$ the Weyl group of $H$.
Given $w\in S_{2n}$ we denote by $P(w)$ the permutation matrix having its nonzero entries in positions $(w(j),j)$ for $j=1,\ldots,n$. We use this convention to be consistent with \cite{WooYongGrobner}. 
Each Schubert cell in the type~A flag variety $H/B^+_H$ is equal to some orbit $B^+_H\cdot P(w) B^+_H/B^+_H$ where $w\in S_{2n}$. 
The analogue is true for opposite Schubert cells:
every opposite Schubert cell in $H/B^+_H$ equals
\[\Omega^{A\circ}_v := B^-_H\cdot P(v) B^+_H/B^+_H\]
for some $v\in S_{2n}$. 
We remark that here, and throughout the remainder of the paper, we use the letter ``$w$'' for permutations indexing Schubert cells or varieties, and we use the letter ``$v$'' for permutations indexing opposite Schubert cells.

The Weyl group $C_n$ of $G$ can be identified with the set of permutations
	\begin{equation}\label{eq: symmetry C}
	C_n=\{v_1\ldots v_{2n}\in S_{2n} : v_i=2n+1-v_{2n+1-i} \text{ for } i=1,\ldots,n\}.
	\end{equation}
Equivalently, $C_n$ consists of the $v\in S_{2n}$ such that $w_0vw_0=v$, where $w_0$ is the longest element of $S_{2n}$.
In the type~C flag variety $G/B^+_G$, Schubert and opposite Schubert cells and varieties are indexed by elements of $C_n$.
Concretely, given $w\in C_n$, the permutation matrix $P(w)$ is an element of $G$, so $B^+_G\cdot P(w) B^+_G/B^+_G$ is a Schubert cell, and every Schubert cell is of this form.
The analogous statements hold for Schubert varieties and opposite Schubert cells and varieties.
We denote the type~C opposite Schubert cells by
\[\Omega^\circ_v := B^-_G\cdot P(v) B^+_G/B^+_G\]
for $v\in C_n$.

It is useful to note that type~C Schubert cells and varieties are the $\sigma$-fixed point sets of type~A Schubert cells and varieties. 
See also the treatment in \cite[Theorem 2.5]{Fink-Rajchgot-Sullivant}.

\begin{thm}\cite[Proposition 6.1.1.1]{Lakshmibai-Raghavan}\label{thm: LR} 
The involution $\sigma$ induces a natural involution $\sigma:H/B_H^+\to H/B_H^+$.\footnote{We abuse notation and use $\sigma$ for both maps.}
For $v\in C_n$, the opposite Schubert cell $\Omega^\circ_v$ is stable under $\sigma$ and 	
	$$
	\Omega^\circ_v=(\Omega^{A\circ}_v)^\sigma.
	$$
In other words, $\Omega^\circ_v$ consists of the $\sigma$-fixed points of the type~A opposite Schubert cell $\Omega^{A\circ}_v$.
\end{thm}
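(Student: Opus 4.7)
The plan is to break the statement into three sub-claims and verify them in order: (i) that $\sigma$ descends to a well-defined involution on $H/B_H^+$; (ii) that $\Omega^{A\circ}_v$ is $\sigma$-stable whenever $v\in C_n$; and (iii) that the $\sigma$-fixed locus inside $\Omega^{A\circ}_v$ coincides with $\Omega^\circ_v$. The containment $\Omega^\circ_v\subseteq(\Omega^{A\circ}_v)^\sigma$ will be immediate; the work is entirely in the reverse inclusion, which is where the Weyl-group condition $v_i=2n+1-v_{2n+1-i}$ must do its job.

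For (i) and (ii), the key is the direct computation $\sigma(B_H^\pm)=B_H^\pm$. Since $\sigma(M)=E(M^{\rm t})^{-1}E^{-1}$ with $E$ block anti-diagonal built from $J_n$, transposition swaps $B_H^+$ and $B_H^-$, inversion preserves each, and conjugation by $E$ (which acts on indices by $i\mapsto 2n+1-i$ up to sign) swaps them back. Hence $\sigma$ descends to an involution of the quotient $H/B_H^+$, and both $B_H^+$ and $B_H^-$ are $\sigma$-stable. The equivalent description of $C_n$ via $w_0vw_0=v$ together with the explicit form of $E$ shows that for $v\in C_n$ the permutation matrix $P(v)$ lies in $G$, i.e.\ $\sigma(P(v))=P(v)$. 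Combining these gives $\sigma(\Omega^{A\circ}_v)=\sigma(B_H^-)\,\sigma(P(v))\,\sigma(B_H^+)/B_H^+=\Omega^{A\circ}_v$. The same argument with $B_H^\pm$ replaced by $(B_H^\pm)^\sigma=B_G^\pm$ shows $\Omega^\circ_v\subseteq(\Omega^{A\circ}_v)^\sigma$, settling the easy inclusion.

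For the reverse inclusion in (iii), I would use the standard affine parametrization of an opposite Schubert cell: there is a unipotent subgroup $U^-_v\subseteq B_H^-$, defined intrinsically in terms of $P(v)$ (for instance as the product of root subgroups $U_\alpha$ for those negative roots $\alpha$ whose corresponding matrix entry in $P(v)\cdot B_H^+/B_H^+$ is free), such that the map $U^-_v\to\Omega^{A\circ}_v$ given by $u\mapsto uP(v)B_H^+$ is an isomorphism of varieties. Because the description of $U^-_v$ depends only on $P(v)$, and $P(v)$ is $\sigma$-fixed, the subgroup $U^-_v$ is itself $\sigma$-stable, and the parametrization is $\sigma$-equivariant. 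Given $x\in(\Omega^{A\circ}_v)^\sigma$ with unique representative $uP(v)$, the fixed-point condition becomes $\sigma(u)P(v)B_H^+=uP(v)B_H^+$, i.e.\ $u^{-1}\sigma(u)\in P(v)B_H^+P(v)^{-1}\cap U^-_v\subseteq B_H^-\cap B_H^+$; but $U^-_v$ has trivial intersection with $B_H^+$, forcing $\sigma(u)=u$. Hence $u\in(U^-_v)^\sigma\subseteq G$, and since $B_G^-\supseteq(U^-_v)^\sigma$, we conclude $x\in B_G^-\cdot P(v)B_H^+/B_H^+=\Omega^\circ_v$.

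The main obstacle will be the last step: making precise the claim that $(U^-_v)^\sigma$ is large enough to recover all of $\Omega^\circ_v$ and not merely a subvariety. This amounts to checking that the $\sigma$-equivariant affine structure on $U^-_v$ restricts to the analogous affine parametrization of the type~C cell, which can be verified root-subgroup by root-subgroup using that $\sigma$ permutes positive roots of $H$ in a way compatible with folding $S_{2n}$ to $C_n$; the required coordinates will emerge concretely in Section~\ref{sect:smallPatches} via \Cref{prop: partial symm matrices}, so one may in fact defer the detailed verification to that point and take the proposition here on the authority of~\cite{Lakshmibai-Raghavan}.
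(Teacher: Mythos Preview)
The paper does not prove this statement; it is quoted from \cite[Proposition 6.1.1.1]{Lakshmibai-Raghavan} without argument. Your outline is a correct independent proof via the standard unipotent parametrization of opposite cells.

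Two minor remarks. First, the containment $P(v)B_H^+P(v)^{-1}\cap U^-_v\subseteq B_H^-\cap B_H^+$ is true but not for the reason your phrasing suggests, since $P(v)B_H^+P(v)^{-1}\not\subseteq B_H^+$ in general. The clean way is to use the explicit description $U_v^- = U^-\cap P(v)U^-P(v)^{-1}$, whence
\[
u^{-1}\sigma(u)\in P(v)B_H^+P(v)^{-1}\cap P(v)U^-P(v)^{-1}=P(v)\bigl(B_H^+\cap U^-\bigr)P(v)^{-1}=\{1\}
\]
directly. Second, the ``main obstacle'' you flag at the end is not one: once $\sigma(u)=u$ you have shown $(\Omega^{A\circ}_v)^\sigma\subseteq B_G^-P(v)B_H^+/B_H^+$, and the right-hand side is exactly the image of $\Omega^\circ_v$ under the embedding $G/B_G^+\hookrightarrow H/B_H^+$. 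Together with the easy inclusion this closes both containments, so no further root-by-root verification is needed.
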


Let $X^A_w$ denote the type $A$ Schubert variety $\overline{B_H^+\cdot P(w)B_H^+/B_H^+}$ and let $X_w$ denote the type $C$ Schubert variety $\overline{B_G^+\cdot P(w)B_G^+/B_G^+}$. Following \cite{WooYongSings} (see also \cite{WooYongGrobner}), we refer to the intersection of a Schubert variety with an opposite Schubert cell as a \textbf{Kazhdan-Lusztig variety}. 
We denote the type~A Kazhdan-Lusztig variety as
\[\mathcal N^A_{v,w} = X^A_w\cap\Omega^{A\circ}_v,\]
and the type~C Kazhdan-Lusztig variety as
\begin{equation}\label{eq: type C KL}
 \mathcal N_{v,w} = X^A_w\cap \Omega^\circ_v.
 \end{equation}
Despite the appearances of $H = GL_{2n}(\mathbb{K})$ in the latter intersection above, 
$\mathcal{N}_{v,w}$ is indeed equal to the intersection of a type~C Schubert variety with a type~C opposite Schubert cell. This follows immediately 
from \cite[Proposition 6.1.1.2]{Lakshmibai-Raghavan}, which says that 
\[
X_w = X^A_w\cap G/B_G^+,
\]
as schemes, under the natural inclusion $G/B^+_G\hookrightarrow H/B^+_H$.

We remark that Kazhdan-Lusztig varieties are useful for studying singularities of Schubert varieties using computational algebraic methods. 
This is because a neighborhood of a torus fixed point in a Schubert variety is isomorphic, up to a factor of an affine space, to a Kazhdan-Lusztig variety, which is an affine variety.
This isomorphism is due to D.~Kazhdan and G.~Lusztig \cite[Lemma A.4]{KazhdanLusztig}, and explained in \cite[Section 3]{WooYongSings}. We will describe the prime defining ideals of Kazhdan-Lusztig varieties in Section \ref{sect:KLrank}.

\subsection{Permutations and left-right weak order}\label{sec: weak order}
The \textbf{simple reflections} in $S_m$ are the permutations $s_1, \ldots, s_{m-1}$,
where $s_i$ transposes $i$ and $i+1$.
In $C_n$, define the simple reflections to be $c_0, c_1,\ldots,c_{n-1}$, where 
$c_0\in C_n$ is the permutation that transposes $n$ and $n+1$, 
and for $i=1,\ldots,n-1$, $c_i\in C_n$ is the permutation that transposes $n+i$ with $n+i+1$ (so it must also transpose $n-i$ and $n-i+1$).
We warn the reader that these indexing conventions for $S_m$ and~$C_n$ are different:
under the defining embedding $C_n\subseteq S_{2n}$, the simple reflection $c_i$ is identified with $s_n$ if $i=0$ or $s_{n-i}s_{n+i}$ otherwise,
not with something built from $s_i$.
Both $S_m$ and $C_n$ are generated by their sets of simple reflections.

If $W=S_m$ (resp.\ $C_n$), a \textbf{reduced word} for $v\in W$ is a sequence $Q=({\alpha_1},\ldots,{\alpha_\ell})$
such that $v=s_{\alpha_1}\cdots s_{\alpha_\ell}$ (resp.\ $v=c_{\alpha_1}\cdots c_{\alpha_\ell}$)  and $\ell$ is minimized.
We denote by $\ell_W(v)$ the length of any reduced word for $v\in W$. When there is no chance for confusion, we omit the subscript $W$ from our notation for length. 

Throughout the paper we let $<_{\rm R}$ denote the \textbf{right weak order} on $S_m$;
namely, $u\le_{\rm R} v$ if some prefix of some reduced word for $v$ is a reduced word for $u$.
Similarly, $<_{\rm L}$ denotes the \textbf{left weak order} on $S_m$, which is defined by declaring that $u\le_{\rm L} v$ if some suffix of some reduced word for $v$ is a reduced word for $u$.
The \textbf{left-right weak order} on $S_m$ is denoted throughout the paper by $<$ and defined by $u\le v$ if $v=s_{\alpha_1}\cdots s_{\alpha_a}us_{\beta_1}\cdots s_{\beta_b}$ and $\ell(v)=\ell(u)+a+b$.
We write $u\lessdot v$ if $v$ covers $u$ in left-right weak order.
Both weak orders as well as the two sided weak order on $C_n$ are induced by that on~$S_{2n}$. Thus we use the same notation for them.

We let $\Bruhatl$ denote the Bruhat order; namely,
$v\Bruhatge w$ if the reduced word $Q$ for~$v$ has as a subword a reduced word for~$w$.
Whether this is the case depends only on~$v$, not on the choice of~$Q$.

A simple reflection $c_k$ is a (right) \textbf{ascent} of $v\in C_n$ if $vc_k\Bruhatg v$
and a (right) \textbf{descent} of~$v$ otherwise, namely if $vc_k\Bruhatl v$.
The \textbf{last ascent} of~$v$ is the ascent $c_k$ where $k$ is maximized.
Note that $vc_k$ and $v$ compare the same way in the Bruhat, right weak, and left-right weak orders:
$vc_k$ is either greater than~$v$ in all three or less than~$v$ in all three.

Our convention for the (Rothe) \textbf{diagram} of a permutation $w\in S_m$ is the set
	$$
	D(w)=\{(w(j),i) :i<j \text{ and } w(i)<w(j)\}.
	$$
It is drawn by placing boxes in an $n\times n$ matrix in the positions given by elements of~$D(w)$.
There is a familiar pictorial procedure to obtain $D(w)$ from $P(w)$\/:
one replaces each $1$ by a $\bullet$, deletes all $0$s, and draws at each $\bullet$  the ``hook"  that extends to the east and north of the $\bullet$.
The entries of the matrix that no hook passes through are the elements of $D(w)$.

\begin{ex} The diagram of $w=365124$ is 
	$$
	D(w)=\{(4,1),(5,1),(6,1),(2,4),(4,4),(4,5)\}
	$$
and it is drawn
	$$
	D(w) = 
	\begin{tikzpicture}[baseline=(O.base)]
	\node(O) at (1,1.5) {};
	\draw (0,0)--(3,0)--(3,3)--(0,3)--(0,0)
		(.5,0)--(.5,1.5)--(0,1.5)
		(0,.5)--(.5,.5)
		(0,1)--(.5,1)
		(1.5,2)--(2,2)--(2,2.5)--(1.5,2.5)--(1.5,2)
		(1.5,1)--(2.5,1)--(2.5,1.5)--(1.5,1.5)--(1.5,1)
		(2,1)--(2,1.5); 
	\draw (.75,3)--(.75,.25) node {$\bullet$}--(3,.25)
		(.25,3)--(.25,1.75) node {$\bullet$}--(3,1.75)
		(1.25,3)--(1.25,.75) node {$\bullet$}--(3,.75)
		(1.75,3)--(1.75,2.75) node {$\bullet$}--(3,2.75)
		(2.25,3)--(2.25,2.25) node {$\bullet$}--(3,2.25)
		(2.75,3)--(2.75,1.25) node {$\bullet$}--(3,1.25);
	\end{tikzpicture}
	\:.$$
\end{ex}

Rothe diagrams are important to us for providing coordinates for opposite Schubert cells and Kazhdan-Lusztig varieties, as we see in the next subsection.

\subsection{Opposite Schubert cells as spaces of matrices}\label{subsec: opp cells}

Let $H = GL_m(\mathbb{K})$ and $B_H^+\subseteq H$ be the Borel subgroup of upper triangular matrices. 
For $v\in S_m$, let $\Sigma^A_v\subseteq H$ be the set of matrices $M$
such that, if $P(v)_{ij}=1$, then $M_{ij}=1$,
and, otherwise, if $(i,j)\not\in D(v)$, then $M_{ij}=0$.

\begin{prop}\cite[Section 10.2]{FultonYoung}\label{prop: matrix Schuberts}
The map $\pi_H: H\to H/B^+_H$ 
sending a matrix $M$ to its coset $MB^+_H/B^+_H$ induces a (scheme-theoretic) isomorphism from the space of matrices $\Sigma_v^{A}$ to the opposite Schubert cell $\Omega_v^{A\circ}$. 
\end{prop}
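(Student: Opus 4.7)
The plan is to recognize $\Sigma^A_v$ as a slice for the $B^+_H$-action, explicitly as a translate of a closed unipotent subgroup of $B^-_H$. Let $U^\pm_H\subset B^\pm_H$ denote the unipotent radicals (the unipotent lower/upper triangular matrices), and set
\[
U^\circ_v := U^-_H \cap P(v)\,U^-_H\,P(v)^{-1},
\qquad
U^\star_v := U^-_H \cap P(v)\,U^+_H\,P(v)^{-1}.
\]
The first step is to verify the identity $\Sigma^A_v = U^\circ_v \cdot P(v)$. For $L \in U^\circ_v$, column $j$ of $L\cdot P(v)$ is the $v(j)$-th column of $L$, which has a $1$ in row $v(j)$, zeros above it, and entries $L_{i,v(j)}$ in rows $i>v(j)$. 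The defining condition on $L$ forces $L_{a,b}=0$ whenever $v^{-1}(a)<v^{-1}(b)$, so $L_{i,v(j)}=0$ when $v^{-1}(i)<j$ and is free when $v^{-1}(i)>j$; comparing with the definition of $D(v)$ in the paper (hooks going east and north), these match exactly the zero positions and the free positions of $\Sigma^A_v$ in column $j$.

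With this identification in hand, the proposition reduces to the standard root-subgroup decomposition $U^-_H = U^\circ_v \cdot U^\star_v$, which is an isomorphism of affine schemes since the two sets of roots involved are complementary and each closed under root addition. Because $T\subset B^+_H$ and $P(v)^{-1} T P(v) = T$, the torus $T$ fixes the point $P(v)\,B^+_H\in H/B^+_H$ under left multiplication, so the $B^-_H$-stabilizer of this point equals $T\cdot U^\star_v$. Orbit-stabilizer combined with the above decomposition of $U^-_H$ therefore produces a scheme isomorphism
\[
U^\circ_v \xrightarrow{\ \sim\ } \Omega^{A\circ}_v,
\qquad
L \longmapsto L\,P(v)\,B^+_H.
\]
Via the bijection $L \leftrightarrow L \cdot P(v)$ supplied by the first step, this map coincides with $\pi_H|_{\Sigma^A_v}$, which is therefore a scheme-theoretic isomorphism onto $\Omega^{A\circ}_v$.

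The step requiring the most care is the bookkeeping in the identification $\Sigma^A_v = U^\circ_v \cdot P(v)$. The paper adopts a non-standard Rothe convention with hooks going east and north, so $|D(v)| = \binom{m}{2}-\ell(v)$ instead of $\ell(v)$; under this convention $D(v)$ corresponds to the coinversions of $v$, which are precisely the positions indexing the admissible entries of $U^\circ_v$. Once this match is in place, the decomposition $U^-_H = U^\circ_v \cdot U^\star_v$ is forced by the complementary root supports, and the proposition follows from orbit-stabilizer applied scheme-theoretically.
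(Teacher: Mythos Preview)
The paper does not give its own proof of this proposition; it simply cites \cite[Section~10.2]{FultonYoung}. Your argument via the root decomposition $U^-_H = U^\circ_v \cdot U^\star_v$ is correct and is the standard Lie-theoretic approach: the identification $\Sigma^A_v = U^\circ_v\cdot P(v)$ matches the paper's Rothe convention (hooks east and north, so $D(v)$ records coinversions), the stabilizer of $P(v)B^+_H$ in $B^-_H$ is indeed $B^-_H\cap P(v)B^+_HP(v)^{-1}=T\cdot U^\star_v$, and the multiplication map $U^\circ_v\times T\times U^\star_v\to B^-_H$ is a scheme isomorphism, so orbit--stabilizer gives the result.

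By way of comparison, the paper does prove the closely related Proposition~\ref{prop:matrixFactor}, which factors $\Sigma^A_v$ as $P(w_0)\tilde\Sigma_{\alpha_1}\cdots\tilde\Sigma_{\alpha_\ell}$ for a reduced word of $w_0v$ via an inductive column-operation argument. That is a more hands-on version of the same underlying fact---both amount to identifying $\Sigma^A_v$ with a translate of the unipotent subgroup $U^\circ_v$---but the paper's inductive version is tailored to the later proofs (tracking how $\Sigma^A_v$ changes under right multiplication by a simple reflection), whereas your global argument dispatches the isomorphism in one stroke.
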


We can similarly identify each type~C opposite Schubert cell with a space of matrices using the map $\pi_H$.
We now do this explicitly, to prepare for the explicit coordinate-dependent presentation needed in our main theorem.
The material discussed in this section follows from general theory on algebraic groups and flag varieties, e.g.~\cite[Chapter~13]{Jantzen}, and this particular presentation features in \cite{Billey-Haiman}.

Let $v\in C_n$. Identifying $\Omega_v^\circ$ as a closed subvariety of $\Omega_v^{A\circ}$ by Theorem \ref{thm: LR}, define the space of matrices
	\begin{equation}\label{eq: sigma v}
	\Sigma_v:= \pi_H^{-1}(\Omega_v^\circ),
	\end{equation}
and note that $\Sigma_v$, which is a closed subvariety of $\Sigma^A_v$, is isomorphic to $\Omega_v^\circ$.

Furthermore, we identify Kazhdan--Lusztig varieties with spaces of matrices by letting
$$\Sigma^A_{v,w}:=\pi_H^{-1}(\mathcal{N}^A_{v,w})$$
and
$$\Sigma_{v,w}:=\pi_H^{-1}(\mathcal{N}_{v,w}).$$

We now wish to describe $\Sigma_v$ as the set of $\sigma$-fixed points of $\Sigma_v^A$.
This description will follow from the containment $\sigma(\Sigma_v^A)\subseteq\Sigma_v^A$ for $v\in C_n$.
In order to prove this containment, the following factorization of the matrices in $\Sigma_v^A$ is useful. 
Let $U^-_{i}$ be the unipotent subgroup of $H$ consisting of matrices with 
$1$s along the diagonal, $0$s in all off-diagonal positions except for $(i+1,i)$, and an arbitrary element of $\mathbb{K}$ in position $(i+1,i)$. 

\begin{prop}\label{prop:matrixFactor}
Define $\tilde{\Sigma}_\alpha:=P(s_\alpha)U_\alpha^-$.
Given $v\in S_m$ and $(\alpha_1,\ldots,\alpha_\ell)$ a reduced word for $w_0v$, the map 
	\begin{equation*}
	\mathcal{M}:\tilde{\Sigma}_{\alpha_1}\times\cdots\times\tilde{\Sigma}_{\alpha_\ell}\to\Sigma_v^A,
	\qquad
	(a_1,\ldots,a_\ell)\mapsto P(w_0)a_1\cdots a_\ell
\end{equation*}
is an isomorphism.
\end{prop}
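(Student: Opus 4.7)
The plan is to induct on $\ell = \ell(w_0 v)$. For the base case $\ell = 0$, we have $v = w_0$, so the domain of $\mathcal{M}$ is a point and the image is $P(w_0)$. Since no pair $i<j$ satisfies $w_0(i) < w_0(j)$, the diagram $D(w_0)$ is empty, so $\Sigma_{w_0}^A = \{P(w_0)\}$ and $\mathcal{M}$ is a trivial isomorphism.

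For the inductive step, given a reduced word $(\alpha_1, \ldots, \alpha_\ell)$ for $w_0 v$, I would set $v' := v s_{\alpha_\ell}$. Then $(\alpha_1, \ldots, \alpha_{\ell-1})$ is a reduced word for $w_0 v'$, and $\ell(v') = \ell(v) + 1$; in particular $\alpha_\ell$ is a descent of $v'$ and an ascent of $v$. By the inductive hypothesis, the truncated map $\mathcal{M}'\colon (a_1, \ldots, a_{\ell-1}) \mapsto P(w_0) a_1 \cdots a_{\ell-1}$ is a scheme-theoretic isomorphism onto $\Sigma_{v'}^A$. Since $\mathcal{M}(a_1, \ldots, a_\ell) = \mathcal{M}'(a_1, \ldots, a_{\ell-1}) \cdot a_\ell$, the proof reduces to the following sub-claim: for any $v' \in S_m$ and any descent $\alpha$ of $v'$, right-multiplication $\Sigma_{v'}^A \times \tilde{\Sigma}_\alpha \to \Sigma_{v' s_\alpha}^A$, $(M', a) \mapsto M'a$, is an isomorphism.

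To verify the sub-claim, write $a = P(s_\alpha)(I + x E_{\alpha+1, \alpha})$. A direct column-by-column computation shows that right-multiplication by $a$ leaves all columns of $M'$ other than columns $\alpha$ and $\alpha+1$ unchanged; replaces column $\alpha+1$ by the old column $\alpha$ of $M'$; and replaces column $\alpha$ by the old column $\alpha+1$ of $M'$ plus $x$ times the old column $\alpha$ of $M'$. Setting $v = v' s_\alpha$, one then checks that the result lies in $\Sigma_v^A$: the $P(v)$ positions acquire the correct $1$s (the key input being that $(v'(\alpha+1), \alpha) \notin D(v')$ because $\alpha$ is a descent of $v'$, forcing $M'_{v'(\alpha+1), \alpha} = 0$), zero entries persist outside $D(v)$, and the parameter $x$ becomes the coordinate of the unique new box $(v(\alpha+1), \alpha) \in D(v) \setminus D(v')$. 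The remaining coordinates of $D(v)$ are polynomial expressions in the $D(v')$ coordinates and $x$ that form a triangular change of variables, yielding an explicit algebraic inverse.

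The main obstacle is the combinatorial bookkeeping: identifying the symmetric difference $D(v) \triangle D(v')$ in terms of the descent $\alpha$, and verifying that the column operation realizes the matching change of coordinates in triangular, hence invertible, form. A possible conceptual shortcut is to recognize $\mathcal{M}$ as a Bott--Samelson-style parametrization of the opposite Schubert cell (as in \cite[Chapter~13]{Jantzen}), invoke the general theory for the isomorphism onto $\Omega_v^{A\circ}$, and then use \Cref{prop: matrix Schuberts} to transfer the isomorphism to the specific matrix coordinates on $\Sigma_v^A$.
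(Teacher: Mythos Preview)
Your proposal is correct and follows essentially the same approach as the paper's proof: both induct on $\ell(w_0v)$, reduce to showing the multiplication map $\Sigma_{v'}^A\times\tilde\Sigma_\alpha\to\Sigma_{v's_\alpha}^A$ is an isomorphism for a descent $\alpha$ of $v'$, and verify this via the explicit column operation and a comparison of diagram boxes. The paper phrases the uniqueness of the factorization via the $2\times 2$ pivot submatrix and cites \cite[Lemma~6.5]{WooYongGrobner} for the diagram bookkeeping, whereas you spell out directly that $(v(\alpha+1),\alpha)$ is the unique new box; these are expository differences only.
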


\begin{proof}
We proceed by induction on $\ell(w_0v)$.
The base case is when $v=w_0$, and it is clear that the result holds in this case. 
For the inductive case let $vs_i\gtrdot v$ and write a reduced expression $w_0vs_i=s_{\alpha_1}\cdots	s_{\alpha_\ell}$.
By induction, 
	\begin{equation*}
	\mathcal{M}:\tilde{\Sigma}_{\alpha_1}\times\cdots\times\tilde{\Sigma}_{\alpha_\ell}\to\Sigma_{vs_i}^A,
	\qquad
	(a_1,\ldots,a_\ell)\mapsto P(w_0)a_1\cdots a_\ell
\end{equation*}
is an isomorphism. So, it suffices to show that the image of the multiplication map
\begin{equation}\label{eq:multMap}
m: \Sigma^A_{vs_i}\times \tilde{\Sigma}_{i}\rightarrow H, \quad (a,b)\mapsto ab
\end{equation}
is  $ \Sigma^A_v$ and that it
is an isomorphism upon restricting the codomain to $\Sigma^A_v$. 
To see this, let $a\in \Sigma^A_{vs_i}$ and $b\in \tilde{\Sigma}_{i}$. Assume that the $(i,i)$-entry of the matrix $b$ is equal to  $t\in \mathbb{K}$. 
Observe that $ab$ is obtained from $a$ by performing two elementary column operations: first swap columns $i$ and $i+1$, then replace column $i$ by column $i$  plus $t$ times column $i+1$.
Let $a_{[i,i+1]}$ and $(ab)_{[i,i+1]}$ be the submatrices of $a$ and $ab$ respectively consisting of columns $i$ and $i+1$. 
Because $vs_i\gtrdot v$, after removing all rows of $a_{[i,i+1]}$ and $(ab)_{[i,i+1]}$ which don't have pivots, we are left with:
\[
\begin{bmatrix}
0&1\\
1&0\\
\end{bmatrix} \text{(from $a$)}
\quad
\text{and}
\quad
\begin{bmatrix}
1&0\\
t&1\\
\end{bmatrix} \text{(from $ab$)}.
\]
Thus, every matrix in the image of the map $m$ of~\eqref{eq:multMap} can be factored uniquely as $ab$, 
and so $m$ is an isomorphism onto its image. 
Finally, a straightforward check shows that the locations of diagram boxes in rows without pivots of $a_{[i,i+1]}$ and $(ab)_{[i,i+1]}$ coincide.  (Alternatively, see \cite[Lemma 6.5]{WooYongGrobner}.)  Hence, the image of $m$ is contained in $\Sigma^A_v$. 
As $m$ is an isomorphism onto its image, and $\Sigma^A_v$ and the domain of $m$ are affine spaces of the same dimension, the proposition is proved. 
\end{proof}

\begin{cor}\label{cor:SigmaMap}
Let $H = GL_{2n}(\mathbb{K})$. The map $\sigma: H\rightarrow H$ restricts to an isomorphism
\[
\sigma: \Sigma^A_v\rightarrow \Sigma^A_{w_0vw_0}.
\]
In particular, if $v\in C_n\subseteq S_{2n}$, then $\sigma$ maps $\Sigma^A_v$ isomorphically onto itself.
\end{cor}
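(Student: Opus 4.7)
The plan is to apply Proposition \ref{prop:matrixFactor} and track how $\sigma$ acts on the individual factors in the resulting decomposition. Concretely, fix a reduced word $(\alpha_1,\ldots,\alpha_\ell)$ for $w_0 v$; every $M\in \Sigma^A_v$ then factors uniquely as
$$M = P(w_0)\,a_1\cdots a_\ell, \qquad a_i\in\tilde{\Sigma}_{\alpha_i}=P(s_{\alpha_i})\,U^-_{\alpha_i},$$
so that $\sigma(M) = \sigma(P(w_0))\,\sigma(a_1)\cdots \sigma(a_\ell)$. This reduces the problem to understanding $\sigma$ on each of the elementary pieces $P(w_0)$, $P(s_\alpha)$, and $U^-_\alpha$.

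The central computation is that, because $E$ coincides with the permutation matrix $P(w_0)$ up to a sign-diagonal matrix, the inner automorphism ``conjugate by $E$'' acts on permutation matrices and on single-root unipotent subgroups through the Dynkin-diagram involution of $S_{2n}$. Concretely, I would verify that $\sigma(U^-_{\alpha}) = U^-_{2n-\alpha}$, and that $\sigma(P(w_0))$ and $\sigma(P(s_{\alpha}))$ land in the expected left $B^+_H$-cosets, namely those of $P(w_0)$ and $P(s_{2n-\alpha})$ respectively. Combining these with the standard fact that conjugation by $w_0$ induces the automorphism $s_i\mapsto s_{2n-i}$ on the simple reflections of $S_{2n}$, one sees that $(2n-\alpha_1,\ldots,2n-\alpha_\ell)$ is a reduced word for $w_0(w_0 v w_0)=v w_0$. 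Consequently, the factored expression for $\sigma(M)$ has precisely the shape prescribed by Proposition \ref{prop:matrixFactor} applied to $w_0 v w_0$, placing $\sigma(M)$ in $\Sigma^A_{w_0vw_0}$.

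The main obstacle is the bookkeeping step: checking that the sign and diagonal-torus corrections picked up by $\sigma(P(w_0))$ and each $\sigma(P(s_{\alpha_i}))$ combine coherently, so that the product $\sigma(M)$ literally lies in the chosen set of coset representatives $\Sigma^A_{w_0vw_0}$ rather than merely in its $B^+_H$-orbit. Once this is settled, $\sigma:\Sigma^A_v\to\Sigma^A_{w_0vw_0}$ is a morphism between affine varieties of the same dimension, and the identity $E^2 = -I$ implies $\sigma^2 = \operatorname{id}$; running the same argument with $w_0 v w_0$ in place of $v$ then produces the inverse, so $\sigma$ is an isomorphism. The final assertion is immediate from \eqref{eq: symmetry C}, which characterizes $C_n\subseteq S_{2n}$ as the fixed set of the involution $v\mapsto w_0 v w_0$.
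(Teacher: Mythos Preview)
Your approach is essentially the paper's: both use the factorization of Proposition~\ref{prop:matrixFactor} and track how $\sigma$ acts on the factors, together with the observation that $(2n-\alpha_1,\ldots,2n-\alpha_\ell)$ is a reduced word for $vw_0=w_0(w_0vw_0)$. The one difference is granularity: rather than treating $P(w_0)$, $P(s_\alpha)$, and $U^-_\alpha$ separately and then reconciling signs, the paper bundles each $P(s_{\alpha_i})$ with its unipotent factor and asserts directly that $\sigma$ carries $\tilde{\Sigma}_\alpha$ isomorphically onto $\tilde{\Sigma}_{2n-\alpha}$, thereby absorbing the bookkeeping you correctly flag as the main obstacle.
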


\begin{proof}

We first observe, by a straightforward direct check, that $\sigma$ maps $\tilde{\Sigma}_\alpha$ isomorphically onto $\tilde{\Sigma}_{2n-\alpha}$. 
Let $v\in S_{2n}$ and let $(\alpha_1,\dots, \alpha_\ell)$ be a reduced word for $w_0v$. By our observation, we have an isomorphism, 
\begin{equation}\label{eq:diagramisom}
\tilde{\Sigma}^A_{\alpha_1}\times \cdots\times \tilde{\Sigma}^A_{\alpha_\ell}\rightarrow \tilde{\Sigma}^A_{2n-\alpha_1}\times \cdots \times \tilde{\Sigma}^A_{2n - \alpha_\ell}
\end{equation}
Noting that $(2n-\alpha_1,\dots, 2n-\alpha_\ell)$ is a reduced word for $w_0(w_0v)w_0 = vw_0$, the first statement of the corollary follows by applying Proposition \ref{prop:matrixFactor}, which states that the domain of \eqref{eq:diagramisom} is isomorphic to $\Sigma^A_v$ and the codomain is isomorphic to $\Sigma^A_{w_0vw_0}$. 

The second statement follows immediately since $w_0vw_0 = v$ for any $v\in C_n\subseteq S_{2n}$. 
\end{proof}

\begin{cor}\label{cor: type C sigma v}
For $v\in C_n$,
$
\Sigma_v = (\Sigma_v^A)^\sigma. 
$
\end{cor}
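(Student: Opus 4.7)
The plan is to leverage the isomorphism $\pi_H|_{\Sigma_v^A}: \Sigma_v^A \xrightarrow{\sim} \Omega_v^{A\circ}$ of \Cref{prop: matrix Schuberts} together with $\sigma$-equivariance in order to transport the identity $\Omega_v^\circ = (\Omega_v^{A\circ})^\sigma$ of \Cref{thm: LR} back to the matrix picture.

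First I would check the preliminary point that $\sigma$ descends to a morphism of $H/B^+_H$ for which $\pi_H$ is $\sigma$-equivariant. For this, it suffices to observe that $\sigma(B^+_H)\subseteq B^+_H$: if $M$ is upper triangular, then $(M^t)^{-1}$ is lower triangular, and conjugation by $E$ reverses the triangularity (since $E$, up to signs, is the permutation matrix of $w_0\in S_{2n}$, which sends index $i$ to $2n+1-i$), so $\sigma(M)\in B^+_H$. This is the same map $\sigma:H/B^+_H\to H/B^+_H$ used in \Cref{thm: LR}.

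Given that preliminary, the argument is formal. By \Cref{cor:SigmaMap}, $\sigma$ restricts to an automorphism of $\Sigma_v^A$ for $v\in C_n$, so $\pi_H|_{\Sigma_v^A}\colon \Sigma_v^A \xrightarrow{\sim} \Omega_v^{A\circ}$ is a $\sigma$-equivariant isomorphism of schemes. Taking $\sigma$-fixed subschemes on both sides yields an isomorphism
\[
(\Sigma_v^A)^\sigma \;\xrightarrow{\sim}\; (\Omega_v^{A\circ})^\sigma \;=\; \Omega_v^\circ,
\]
where the last equality is \Cref{thm: LR}. Concretely, if $x\in (\Sigma_v^A)^\sigma$, then $\sigma(\pi_H(x))=\pi_H(\sigma(x))=\pi_H(x)$, so $\pi_H(x)\in\Omega_v^\circ$, hence $x\in\Sigma_v$. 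Conversely, if $x\in\Sigma_v\subseteq\Sigma_v^A$, then $\pi_H(\sigma(x))=\sigma(\pi_H(x))=\pi_H(x)$, and since $\sigma(x)\in\Sigma_v^A$ and $\pi_H|_{\Sigma_v^A}$ is injective, we conclude $\sigma(x)=x$. This gives the set-theoretic equality, and reinterpreting the same chain of implications at the level of scheme structure (via the fibre-product definition of the fixed locus) gives the scheme-theoretic equality $\Sigma_v=(\Sigma_v^A)^\sigma$.

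The only nontrivial step is the initial verification that $\sigma$ preserves $B^+_H$; everything after that is a formal consequence of \Cref{prop: matrix Schuberts}, \Cref{cor:SigmaMap}, and \Cref{thm: LR}. I do not anticipate any genuine obstacle.
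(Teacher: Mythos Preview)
Your proof is correct and is essentially the same as the paper's. Both directions proceed exactly as in the paper: use the $\sigma$-equivariance of $\pi_H$, the fact that $\sigma$ preserves $\Sigma_v^A$ (\Cref{cor:SigmaMap}), and the injectivity of $\pi_H|_{\Sigma_v^A}$ (\Cref{prop: matrix Schuberts}) to transport \Cref{thm: LR} from $\Omega_v^{A\circ}$ back to $\Sigma_v^A$. Your explicit verification that $\sigma(B_H^+)\subseteq B_H^+$ is a welcome addition, since the paper simply asserts in \Cref{thm: LR} that $\sigma$ descends to $H/B_H^+$.
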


\begin{proof}
Let $M\in\Sigma_v$. By \eqref{eq: sigma v} and \Cref{thm: LR}, $MB^+_H/B^+_H\in \Omega_v^\circ=(\Omega^{A\circ}_v)^\sigma$.
This implies that $\sigma(M)B^+_H/B^+_H=MB^+_H/B^+_H$.
Since $M\in \Sigma^A_v$, by \Cref{cor:SigmaMap}, $\sigma(M)\in \Sigma^A_v$. 
We can then apply \Cref{prop: matrix Schuberts} to deduce that $\sigma(M)=M$ and conclude that $M\in (\Sigma_v^A)^\sigma$.
Conversely, let $M\in (\Sigma_v^A)^\sigma$. Then $\sigma(MB^+_H/B^+_H)=\sigma(M)B^+_H/B^+_H=MB^+_H/B^+_H$.
Furthermore, by \Cref{prop: matrix Schuberts} we have that $MB^+_H/B^+_H\in(\Omega^{A\circ}_v)^\sigma=\Omega^\circ_v$.
We conclude that $M\in\Sigma_v=\pi_H^{-1}(\Omega^\circ_v)$.
\end{proof}

We end with two examples of computing $\Sigma_v$. The first shows that, in general, the space of matrices $\Sigma_v$ can be complicated. 
The second shows that for particular choices of~$v$, $\Sigma_v$ is easy to describe. 
From Section~\ref{sect:smallPatches} on we will generally restrict to only these nice $\Sigma_v$.

\begin{ex}\label{ex: type C patch} Given $v=231645$, we have that
	$$
	\Sigma^A_v = \pi_H^{-1}(\Omega_v^{A\circ})
	=
	\left\{ 
	\begin{bmatrix}
	0&0&1 &0&0&0 \\
	1&0&0 &0&0&0\\
	a&1&0 &0&0&0\\
	 b & c & d& 0&1&0 \\
	 e& f&g & 0&h&1 \\
	i&j&k &  1&0&0
	\end{bmatrix}
	: a,b,\ldots,k\in\mathbb{K}
	\right\}.
	$$
Since
	\begin{align*}
	\sigma\left(
	\begin{bmatrix}
	0&0&1 &0&0&0 \\
	1&0&0 &0&0&0\\
	a&1&0 &0&0&0\\
	 b & c & d& 0&1&0 \\
	 e& f&g & 0&h&1 \\
	i&j&k &  1&0&0
	\end{bmatrix}
	\right)
	=
	\begin{bmatrix}
	        0  & 0  & 1  & 0  & 0  & 0\\
        1  & 0  & 0  & 0  & 0  & 0\\
       -h  & 1  & 0  & 0  & 0  & 0\\
 -c h + f  & c  & j  & 0  & 1  & 0\\
-a f + (a c - b) h + e  & -a c + b  & -a j + i  & 0  &       -a  & 1\\
 -d h + g  & d  & k  & 1  & 0  & 0	
	\end{bmatrix},
	\end{align*}
we can equate the entries of $\sigma(M)$ with the entries of $M\in \Sigma^A_v$ to obtain the conditions defining $\Sigma_v$.
It is straightforward to verify that 
	$$
	\Sigma_v
	=
	\left\{
	\begin{bmatrix}
	        0  & 0  & 1  & 0  & 0  & 0\\
        1  & 0  & 0  & 0  & 0  & 0\\
       a  & 1  & 0  & 0  & 0  & 0\\
 	b  & c  & d  & 0  & 1  & 0\\
	e  & -a c + b  & -a d + i  & 0  &       -a  & 1\\
	i& d  & k  & 1  & 0  & 0	
	\end{bmatrix}
	: a,b,c,d,e,i,k\in\mathbb{K}
	\right\}.
	$$
\end{ex}

\begin{ex}\label{eg:symmetricPatch} By a similar computation to the one in the previous example, one can check that the space of matrices $\Sigma_{321654}$ is naturally identified with the space of $3\times 3$ symmetric matrices. That is, 
	\[
	\Sigma_{321654}
	=
	\left\{ 
	\begin{bmatrix}
	0&0&1 &0&0&0 \\
	0&1&0 &0&0&0\\
	1&0&0 &0&0&0\\
	 z_{11} & z_{12} &z_{13} & 0&0&1 \\
	 z_{12} & z_{22} &z_{23} & 0&1&0 \\
	  z_{13} & z_{23} &z_{33} &  1&0&0
	\end{bmatrix}
	: z_{ij}\in\mathbb{K}
	\right\}.
	\]
\end{ex}

\subsection{Rank conditions on type~C Kazhdan-Lusztig varieties}\label{sect:KLrank}
Given $w\in S_{2n}$, let $r_w:\{1,\ldots,2n\}\times\{1,\ldots,2n\}\rightarrow \{1,\ldots,2n\}$ be the {\bf rank function} of $w$, defined by
$$r_w(p,q)=|\{i\leq q: w(i)\geq p\}|,$$
so that $r_w(p,q)$ is the number of entries of $w$ weakly southwest of $(p,q)$.

Given a matrix $M$, let $\tau_{p,q}(M)$ denote the submatrix of entries of~$M$ weakly southwest of position $(p,q)$.
A matrix $M\in\Sigma^A_v$ is in $\Sigma^A_{v,w}$ if and only if, for all $p,q\in[2n]$, 
$\tau_{p,q}(M)$ has rank at most $r_w(p,q)$.
Not all of these rank conditions are necessary to determine $\Sigma^A_{v,w}$.  
In type A, Fulton~\cite{FultonDuke} defined the {\bf essential set}, which gives a smaller set of sufficient conditions, 
as the set of boxes on the northeast%
\footnote{Note that Fulton uses different conventions to ours. 
His hooks emanate east and south rather than east and north,
and he works in $B^-\mathbin\setminus G$ rather than $G/B^-$, so his permutation matrices are the transpose of ours.}
corners of the connected components of $D(w)$.
To be precise, let
$$E^A(w):=\{(p,q)\in D(w) : (p-1,q), (p,q+1) \not\in D(w)\}.$$
Equivalently, one can also define
$$E^A(w)=\{(p,q) : w(q)<p\leq w(q+1), w^{-1}(p-1)\leq q<w^{-1}(p)\}.$$  
Then $M=\Sigma^A_{v,w}$ if and only if
the size $r_w(p,q)+1$ minors of $\tau_{p,q}(M)$ vanish for all $(p,q)\in E^A(w)$, and in fact these equations define $\Sigma^A_{v,w}$ as a subvariety of $\Sigma^A_v$ scheme-theoretically~\cite[Proposition 3.1]{WooYongSings}.

\begin{ex}\label{ex: type A rank conds}
Let $w = 465213$. We have 
\[
	D(w) =
	\begin{tikzpicture}[baseline=(O.base)]
	\node(O) at (1,1.5) {};
	\draw (0,0)--(0.5,0)--(0.5,1.0)--(0,1.0)--(0,0)
		(0,0.5)--(.5,0.5)
		(1.5,1.5)--(2.5,1.5)--(2.5,2)--(1.5,2)--(1.5,1.5)
		(2,2)--(2,1.5); 
	\draw (.25, 3)--(.25,1.25) node {$\bullet$}--(3,1.25)
		(.75,3)--(.75,.25) node {$\bullet$}--(3,.25)
		(1.25,3)--(1.25,.75) node {$\bullet$}--(3,.75)
		(1.75,3)--(1.75,2.25) node {$\bullet$}--(3,2.25)
		(2.25,3)--(2.25,2.75) node {$\bullet$}--(3,2.75)
		(2.75,3)--(2.75,1.75) node {$\bullet$}--(3,1.75);
	\end{tikzpicture}
	\:,
\]
so, the (type~A) essential set of $w$ is $E^A(w) = \{(5,1), (3,5)\}$. 
Furthermore, $r_w(5,1)=0$ and $r_w(3,5)=3$.

Suppose that $v=231645$, as featured in Example~\ref{ex: type C patch}.
Then $M\in\Sigma^A_{v,w}$ if and only if $M\in\Sigma_v^A$ and the size $r_w(p,q)+1$ minors of $\tau_{p,q}(M)$ vanish for all $(p,q)\in E^A(w)$.
In particular, $e=i=0$, and we have 5 additional equations coming from the $4\times 4$ minors of
	$$
  \begin{bmatrix}
	a&1&0 &0&0\\
	 b & c & d& 0&1 \\
	 e& f&g & 0&h \\
	i&j&k &  1&0
	\end{bmatrix}.
  $$
\end{ex}

Recall from \eqref{eq: type C KL} that $\mathcal{N}_{v,w}$ is the intersection of $\Omega_v$ with a type~A Schubert variety. 
Hence, the rank conditions defining $\Sigma_{v,w}$ are the same as those defining $\Sigma_{v,w}^A$, but now applied to $\Sigma_v$ instead of $\Sigma_v^A$. 
In type C, Anderson~\cite{AndersonEJC} showed that a smaller set suffices.  (Some details were made more explicit in~\cite[Section 4]{WooEJC}.)
First, for a permutation $w\in C_n$, boxes of
$E^A(w)$ always come in pairs.  If $(p,q)\in E^A(w)$, then $(2n+2-p,2n-q)\in E^A(w)$, and, furthermore,
$$r_w(2n+2-p,2n-q)=p-q-1+r_w(p,q).\footnote{AW regrets his earlier failure in~\cite{WooEJC} in the perpetual quest to make an even number of sign errors.}$$   
We will choose one box out of each pair by requiring that $p\geq n+1$, and, if $p=n+1$, $q\leq n$.  
Furthermore, if $(p,q)$ and $(p, 2n-q)$ are both in $E^A(w)$ with $p> n+1$ and $q< n$, and $r_w(p,q)=r_w(p,2n-q)-(n-q)$, then $(p, 2n-q)$ is redundant.%

\begin{defn}\label{def: type C essential}
Define $E(w)$ as the subset of $E^A(w)$ consisting of $(p,q)\in E^A(w)$ that satisfy the following conditions%
\footnote{Anderson in \cite[Definition 1.2]{AndersonEJC} and AW in \cite[Section 4]{WooEJC} choose the leftmost box in each pair,
rather than the lower box as we do.}:
\begin{itemize}
\item $p\geq n+1$
\item If $q\geq n+1$ and $(p,2n-q)\in E^A(w)$, then $r_w(p,2n-q)>r_w(p,q)+n-q$.
\end{itemize}
\end{defn}
The second condition subsumes the redundancy condition for $p=n+1$; we always will get equality instead of the desired inequality in that case.

\begin{ex}
Let $w = 465213$ as in Example~\ref{ex: type A rank conds}. 
The (type~C) essential set of $w$ is $E(w) = \{(5,1)\}$. 
Suppose that $v=321654$ as in Example~\ref{eg:symmetricPatch} .
Then $M\in \Sigma_{v,w}$ if and only if $M\in\Sigma_v$ and the size $1$ minors of $\tau_{5,1}(M)$ vanish. Thus,
	\[
	\Sigma_{v,w}
	=
	\left\{ 
	\begin{bmatrix}
	0&0&1 &0&0&0 \\
	0&1&0 &0&0&0\\
	1&0&0 &0&0&0\\
	 z_{11} & 0 &0 & 0&0&1 \\
	 0 & z_{22} &z_{23} & 0&1&0 \\
	  0 & z_{23} &z_{33} &  1&0&0
	\end{bmatrix}
	: z_{ij}\in\mathbb{K}
	\right\}.
	\]
\end{ex}

\begin{ex}
Let $w=426153$.  The type~A essential set of $w$ is $E^A(w)=\{(3,2), (3,4), (5,2), (5,4)\}$.  The first condition that $p\geq n+1$ eliminates $(3,2)$ and $(3,4)$ (whose conditions are equivalent to those given by $(5,4)$ and respectively $(5,2)$).  Note that $(5,4)$ does not satisfy the second condition, since $q=4\geq n+1$, $(p,2n-q)=(5,2)\in E^A(w)$, and $r_w(5,2)=0=r_w(5,4)+n-q=1+3-4$.  Hence $E(w)=\{(5,2)\}$.

If we let $v=321654$ as in Example~\ref{eg:symmetricPatch}, we see that
the condition $r_w(5,2)=0$ forces $z_{12}=z_{22}=z_{13}=z_{23}=0$, and this automatically forces $r_w(5,4)=1$ (noting
that $z_{23}$ appears in two places in the matrix), indicating that the condition from $(5,4)$ is redundant.  In particular,
	\[
	\Sigma_{v,w}
	=
	\left\{ 
	\begin{bmatrix}
	0&0&1 &0&0&0 \\
	0&1&0 &0&0&0\\
	1&0&0 &0&0&0\\
	 z_{11} & 0 &0 & 0&0&1 \\
	 0 & 0 & 0 & 0&1&0 \\
	  0 & 0 &z_{33} &  1&0&0
	\end{bmatrix}
	: z_{ij}\in\mathbb{K}
	\right\}.
	\]
\end{ex}

\section{Small patches}\label{sect:smallPatches}

Let $v_\square\in C_n$ denote the \textbf{square word} permutation, whose permutation matrix is
	\[
	P(v_\square)=\begin{bmatrix}J_n  & 0 \\ 0 & J_n \end{bmatrix}.
	\]
In this section,
we discuss various properties of type~C opposite Schubert cells $\Omega^\circ_v$ where $v \ge v_\square$ in left-right weak order. We refer to such opposite Schubert cells as \textbf{small patches}. 

The purpose of this section is for us to fix explicit coordinates and conventions. In addition to being crucial in our main theorem on Gr\"obner bases, our choice of coordinates yields a natural identification between small patches and symmetric ladders from the commutative algebra literature \cite{Gorla,Gorla-Migliore-Nagel}.

\subsection{Small patches and symmetric matrices}
To choose coordinates on type~A opposite Schubert cells, 
it is enough to take a distinct indeterminate for each element of~$D(v)$
(see \cite[Section~2.2]{WooYongGrobner}). 
In this section, we put specific coordinates on type~C opposite Schubert cells $\Omega_v^\circ$ when $v\geq v_{\square}$.

By Theorem~\ref{thm: LR}, the type C opposite Schubert cell $\Omega_{v}^\circ$ of $v\in C_n$ consists of the $\sigma$-fixed points of the type~A cell $\Omega^{A\circ}_{v}$.
For $v_\square$, the type~A cell $\Omega^{A\circ}_{v_\square}$ is isomorphic to the set of matrices
	\[
	\Sigma^A_{v_\square}
	=\left\{ 
	\begin{bmatrix}
	J_n & 0 \\ M & J_n
	\end{bmatrix}
	: M \text{ is any $n\times n$ matrix}
	\right\}.
	\]
Applying Corollary~\ref{cor: type C sigma v} we can directly compute the $\sigma$-fixed points of $\Sigma^A_{v_\square}$ to show the following result.  (See Example \ref{eg:symmetricPatch} for the $n=3$ case).

\begin{prop}\label{prop: square patch}
	$
	\Sigma_{v_\square}
	=
	\left\{ 
	\begin{bmatrix}
	J_n & 0 \\ Z & J_n
	\end{bmatrix}
	: Z \text{ is a symmetric } n\times n \text{ matrix}
	\right\}.
	$
\end{prop}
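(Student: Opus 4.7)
The plan is to apply \Cref{cor: type C sigma v}, which gives $\Sigma_{v_\square} = (\Sigma^A_{v_\square})^\sigma$. Since $\Sigma^A_{v_\square}$ has been explicitly described just before the proposition, the task reduces to computing $\sigma$ applied to a generic element $N = \begin{bmatrix} J_n & 0 \\ M & J_n \end{bmatrix}$ of $\Sigma^A_{v_\square}$, and determining when $\sigma(N) = N$.

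First I would record two small facts that will drive the computation: $J_n^2 = I_n$, and $E^2 = -I_{2n}$, so that $E^{-1} = -E$. Then I would compute $N^{-1}$ by block inversion to obtain $N^{-1} = \begin{bmatrix} J_n & 0 \\ -J_n M J_n & J_n \end{bmatrix}$, and transpose to get $(N^{\rm t})^{-1} = \begin{bmatrix} J_n & -J_n M^{\rm t} J_n \\ 0 & J_n \end{bmatrix}$.

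Next I would evaluate $\sigma(N) = E(N^{\rm t})^{-1}E^{-1} = -E(N^{\rm t})^{-1}E$ by a short block multiplication. All the off-diagonal $J_n$'s cancel cleanly because of the identity $J_n^2 = I_n$, and the result simplifies to $\sigma(N) = \begin{bmatrix} J_n & 0 \\ M^{\rm t} & J_n \end{bmatrix}$. Comparing this entrywise with $N$ gives $\sigma(N) = N$ if and only if $M = M^{\rm t}$, which is exactly the condition that the lower-left block is a symmetric matrix. Writing $Z$ for this lower-left block yields the claimed description of $\Sigma_{v_\square}$.

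The computation is entirely mechanical once the block decomposition is in place, so there is no real obstacle; the only point requiring care is tracking the sign coming from $E^{-1} = -E$ and the multiple factors of $J_n$, which must conspire to leave $M$ transposed (rather than conjugated by $J_n$) in the lower-left block of $\sigma(N)$.
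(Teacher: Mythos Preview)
Your proposal is correct and follows exactly the approach the paper takes: the paper simply says to apply \Cref{cor: type C sigma v} and directly compute the $\sigma$-fixed points of $\Sigma^A_{v_\square}$, and you have carried out that computation explicitly with the block multiplication. The paper omits the details you supply, pointing instead to the $n=3$ case in Example~\ref{eg:symmetricPatch}.
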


Our next goals is to give explicit coordinates for $\Sigma_v$ whenever $v\ge v_\square$.  This will show that, by deleting certain rows and columns with no variables, matrices
in $\Sigma_v$ can be identified with partial symmetric matrices, and our coordinates will be entries of these matrices.  For the rest of this paper we restrict to such~$v$. 
A permutation $v\in S_m$ is {\bf $123$-avoiding} if there do not exist $i<j<k$ such that $v(i)<v(j)<v(k)$.  A {\bf left-to-right minimum} of a permutation is an index $a$ such that $v(i)>v(a)$ for all $i<a$; a {\bf right-to-left maximum} is an index $b$ such that $v(k)<v(b)$ for all $k>b$.

\begin{lemma}\label{lem: 123 shuffle}
If $v\in C_n$ then $v$ is $123$-avoiding if and only if there exist left-to-right minima $a_1<\cdots<a_n$ such that $a_i\neq 2n+1-a_j$ for any $i$ and $j$.  If we let $b_i=2n+1-a_{n+1-i}$ for all $i$, then $b_1<\cdots<b_n$ are right-to-left maxima.
\end{lemma}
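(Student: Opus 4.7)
My plan is to reformulate both directions in terms of the position involution $\tau\colon i\mapsto 2n+1-i$. The $C_n$-symmetry gives directly that $a$ is a left-to-right minimum of $v$ if and only if $2n+1-a$ is a right-to-left maximum: for all $i<a$, $v(i)>v(a)$ translates under the symmetry to $v(2n+1-i)<v(2n+1-a)$ for all $2n+1-i>2n+1-a$. Since $\tau$ has no fixed points on $\{1,\ldots,2n\}$, it groups positions into $n$ pairs, and the condition $a_i\neq 2n+1-a_j$ on $n$ positions is equivalent to picking exactly one representative from each $\tau$-pair. Thus the lemma reduces to proving that $v\in C_n$ is $123$-avoiding if and only if each pair $\{k,2n+1-k\}$ for $1\le k\le n$ contains at least one left-to-right minimum of $v$.

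The reverse direction ($\Leftarrow$) is the quicker of the two. Given $a_1<\cdots<a_n$ and $b_i:=2n+1-a_{n+1-i}$, the disjointness hypothesis forces $\{a_i\}\sqcup\{b_i\}=\{1,\ldots,2n\}$; the $b_i$ are right-to-left maxima in increasing order of position by the observation above, and the two value sequences $v(a_1)>\cdots>v(a_n)$ and $v(b_1)>\cdots>v(b_n)$ are both decreasing (the first since left-to-right minima have values decreasing in position, the second by combining this with the $C_n$-symmetry). Then $v$ is a shuffle of two decreasing sequences, so by pigeonhole any hypothetical $123$-pattern $p<q<r$ would confine two of its entries to a single sequence, forcing $v$ to decrease on them and contradicting the required strict increase.

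The main obstacle is the forward direction: assuming $v$ is $123$-avoiding, I want to produce a left-to-right minimum in every pair $\{i,2n+1-i\}$. I will argue the contrapositive through a short case analysis. Suppose neither $i$ nor $2n+1-i$ (with $i\le n$) is a left-to-right minimum, so that there exist witnesses $i^*<i$ with $v(i^*)<v(i)$ and $j^*<2n+1-i$ with $v(j^*)<v(2n+1-i)$. If $v(i)\le n$ then the $C_n$-symmetry gives $v(2n+1-i)=2n+1-v(i)>v(i)$, and $(i^*,i,2n+1-i)$ is immediately a $123$-pattern. If $v(i)>n$, the same symmetry yields $v(2n+1-j^*)=2n+1-v(j^*)>v(i)$; when $j^*<i$ the triple $(j^*,i,2n+1-j^*)$ is a $123$-pattern, while when $j^*>i$ the position $2n+1-j^*$ lies strictly between $i$ and $2n+1-i$ and the triple $(i^*,i,2n+1-j^*)$ is a $123$-pattern. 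Each subcase contradicts $123$-avoidance, and the final assertion about the $b_i$ being right-to-left maxima in increasing order follows from the same symmetry used in the $(\Leftarrow)$ direction.
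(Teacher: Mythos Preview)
Your proof is correct. The two arguments share the same backbone---the $C_n$-symmetry that swaps left-to-right minima with right-to-left maxima under $\tau$---but diverge in the forward direction. The paper dispatches that direction in one line via the classical characterization: in any $123$-avoiding permutation, every index is a left-to-right minimum or a right-to-left maximum (if $j$ is neither, witnesses to each failure assemble into a $123$ at $j$). Combined with the symmetry, this immediately puts a left-to-right minimum in every pair $\{k,2n+1-k\}$, with no case analysis. Your contrapositive reaches the same conclusion but does more work, manufacturing the $123$-pattern by hand from the two witnesses $i^*$ and $j^*$ and splitting on the size of $v(i)$; this is fine but could be collapsed to the one-line version. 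Conversely, your reverse direction via ``shuffle of two decreasing sequences, so no three increasing by pigeonhole'' is more explicit than the paper, which leaves that implication essentially to the same classical fact. Net effect: the paper buys brevity, your version buys self-containment.
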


Note that, by definition of left-to-right minima and right-to-left maxima, $v(a_1)>\cdots >v(a_n)$ and $v(b_1)>\cdots >v(b_n)$.

\begin{proof}
It is a classical result that every index in a $123$-avoiding permutation $v\in S_m$ is a left-to-right minimum or a right-to-left maximum.  Indeed, if $j$ is neither, then by definition there exists $i<j$ with $v(i)<v(j)$ and there exists $k>j$ with $v(k)>v(j)$, so $v$ is not $123$-avoiding.  For $v\in C_n$, whenever $a$ is a left-to-right minimum, $2n+1-a$ is a right-to-left maximum by definition.  Hence, for all $j$ with $1\leq j\leq n$, if only one of $j$ and $2n+1-j$ is a left-to-right minimum, we can let the left-to-right minimum be $a_i$ for some $i$, and if both $j$ and $2n+1-j$ are left-to-right minima, we can arbitrarily choose one to be one of the $a_i$.
\end{proof}

\begin{prop}\label{lem: shuffle}
If $v\in C_n$ then $v\ge v_\square$ if and only if $v$ is $123$-avoiding.
Moreover, if  $v=u_lv_\square u_r$, $\ell(v)=\ell(u_\ell)+\ell(v_\square)+\ell(u_r)$, and we set $a_i=u_r^{-1}(i)$ and $b_i=u_r^{-1}(n+i)$, then the $a$'s and $b$'s are as in \Cref{lem: 123 shuffle}.
\end{prop}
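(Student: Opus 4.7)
My plan is to establish the equivalence and the explicit identification of the $a_i, b_i$ in parallel by analyzing the inversions of $v$ in a common framework.

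For the direction ``$v$ 123-avoiding $\Rightarrow v \ge v_\square$'', I construct the decomposition explicitly. \Cref{lem: 123 shuffle} provides left-to-right minima $a_1 < \cdots < a_n$ of $v$ with $a_i \ne 2n+1-a_j$, and then $b_i := 2n+1-a_{n+1-i}$ are right-to-left maxima. Define $u_r \in S_{2n}$ by $u_r^{-1}(i)=a_i$ and $u_r^{-1}(n+i)=b_i$; the relation between the $a$'s and $b$'s is exactly the $C_n$-condition for $u_r^{-1}$, so $u_r \in C_n$, and then $u_\ell := vu_r^{-1}v_\square^{-1} \in C_n$ as well. By construction $v = u_\ell v_\square u_r$, so only the length identity remains. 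The one-line reading of $u_r^{-1}$ is $a_1,\ldots,a_n,b_1,\ldots,b_n$, which is increasing on $\{1,\ldots,n\}$ and on $\{n+1,\ldots,2n\}$, and a direct calculation gives the one-line of $u_\ell$ as $v(a_n),\ldots,v(a_1),v(b_n),\ldots,v(b_1)$, which is also increasing on each half because the LR-min and RL-max conditions make $v(a_1)>\cdots>v(a_n)$ and $v(b_1)>\cdots>v(b_n)$. Since $\mathrm{Inv}(v_\square)$ is exactly the set of within-half pairs and the inversions of $u_\ell$ and $u_r$ are confined to cross pairs, a direct bookkeeping of $\mathrm{Inv}(v)$ by the partition $\{1,\ldots,2n\}=\{a_i\}\sqcup\{b_k\}$ reduces the identity $\ell(v)=\ell(u_\ell)+\ell(v_\square)+\ell(u_r)$ to the statement that no pair $(i,k)$ satisfies both $a_i>b_k$ and $v(a_i)>v(b_k)$. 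This is immediate from $b_k$ being a right-to-left maximum.

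For the converse, $v_\square$ is 123-avoiding by inspection, and it suffices to show that 123-avoidance is preserved under length-increasing right- or left-multiplication by a simple reflection of $S_{2n}$. A finite case analysis on where a hypothetical 123 pattern of $ws_i$ sits relative to positions $\{i,i+1\}$ shows that any such pattern either coincides with a 123 pattern of $w$, or else forces $w(i)>w(i+1)$, contradicting the length increase; the mirror argument on values handles $s_iw$. Because $v\ge v_\square$ means $v$ is reached from $v_\square$ by some sequence of length-increasing multiplications, $v$ inherits 123-avoidance.

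For the ``moreover'' part, I would run the length analysis in reverse. Given $v = u_\ell v_\square u_r$ with $u_\ell, u_r \in C_n$ and lengths adding, the partial identity $\ell(v_\square u_r)=\ell(v_\square)+\ell(u_r)$, together with the explicit $\mathrm{Inv}(v_\square)$, forces $u_r^{-1}$ to be increasing on each of $\{1,\ldots,n\}$ and $\{n+1,\ldots,2n\}$, giving $a_1<\cdots<a_n$ and $b_1<\cdots<b_n$; the $C_n$-condition on $u_r$ then yields $b_i=2n+1-a_{n+1-i}$ (and in particular $a_i\ne 2n+1-a_j$). Symmetrically, $u_\ell$ is forced to be increasing on each half. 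The three-way length identity is equivalent to $\mathrm{Inv}(u_\ell)\cap\mathrm{Inv}(u_r^{-1}v_\square)=\emptyset$; after unpacking $u_r^{-1}v_\square$'s one-line and applying the $C_n$-symmetry of $u_\ell$, this translates exactly to $v(b_k)>v(a_i)$ whenever $b_k<a_i$. Combined with the automatic decreasing patterns of $v$ on $\{a_\cdot\}$ and $\{b_\cdot\}$ coming from $u_\ell$ being increasing on each half, this shows each $a_i$ is a left-to-right minimum and each $b_k$ is a right-to-left maximum, completing the identification with \Cref{lem: 123 shuffle}. The central difficulty throughout is carrying out the three-way inversion bookkeeping; once that is in hand, the forward construction and the ``moreover'' characterization are two sides of the same calculation.
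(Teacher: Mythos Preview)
Your proof is correct and takes a genuinely different route from the paper's.

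The paper proves both directions inductively. For $v\ge v_\square\Rightarrow$ 123-avoiding, it inducts along covers in left-right weak order of~$C_n$, explicitly tracking how the sequences $(a_\bullet),(b_\bullet)$ change under $v\mapsto vc_d$ or $v\mapsto c_dv$; this simultaneously establishes the ``moreover'' clause. For the converse, it gives an explicit algorithm that peels off simple reflections $c_\alpha$ on the left and then $c_\beta$ on the right to reduce a 123-avoiding $v$ down to~$v_\square$.

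Your approach is instead global and structural: you build $u_r$ (and hence $u_\ell$) in closed form from the $a_i,b_i$ of \Cref{lem: 123 shuffle} and verify the length identity by a single inversion count in~$S_{2n}$, reducing it to the clean condition that no pair satisfies $a_i>b_k$ and $v(a_i)>v(b_k)$, which is immediate from $b_k$ being a right-to-left maximum. For the converse and the ``moreover'' part you run the same inversion bookkeeping in reverse. The one point you use tacitly is that length additivity in $C_n$ and in $S_{2n}$ coincide for elements of~$C_n$ (so that your $S_{2n}$-inversion computation settles the $C_n$ statement, and so that a $C_n$ cover factors into one or two length-increasing $S_{2n}$ simple multiplications for your pattern-preservation step); this follows from the standard fact that reduced $C_n$-words map to reduced $S_{2n}$-words under the folding $c_0\mapsto s_n$, $c_i\mapsto s_{n-i}s_{n+i}$, and is consistent with the paper's remark that the weak orders on~$C_n$ are induced from~$S_{2n}$.

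What each buys: your argument is shorter, gives the factorization $u_\ell v_\square u_r$ in one stroke, and makes the role of the $a_i,b_i$ transparent via the single obstruction $A\cap B=\emptyset$. The paper's inductive proof stays entirely inside~$C_n$, is constructive in the converse direction, and --- more importantly for the paper --- its step-by-step description of how $(a_\bullet),(b_\bullet)$ transform under $v\mapsto vc_k$ is exactly what is reused in the inductive proofs of \Cref{prop: partial symm matrices} and its corollaries.
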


\begin{proof} 
Supposing that $v\ge v_\square$, we will prove that $v$ is $123$-avoiding by induction on $\ell(v)-\ell(v_\square)$.
Notice that the statement is true for $v_\square$.
For the inductive step, suppose $v=u_lv_\square u_r$ is $123$-avoiding and $w\gtrdot v\ge v_\square$. Then there exists $c_d$ such that $w=vc_d$ or $w=c_dv$.
First, suppose that $w=vc_d$.
Since $vc_d>v$ we must have picked $d$ such that $v(n-d)<v(n-d+1)$.  Note $n-d$ cannot be a right-to-left maximum and $n-d+1$ cannot be a left-to-right minimum, so $n-d=a_j$ and $n-d+1=b_{n+1-k}$ for some $j,k$ (which implies that $n+d=a_k$ and $n+d+1=b_{n+1-j}$).
Notice that we obtain the desired sequences $a^\prime_1<\cdots < a^\prime_n$ and $b^\prime_1<\cdots <b^\prime_n$ for $w=vc_d$ by taking $a^\prime_i=a_i$ and $b^\prime_i=b_i$ for all $i$, except that $a^\prime_j=n-d+1$, $b^\prime_{n+1-k}=n-d$, $a^\prime_k=n+d+1$, and $b^\prime_{n+1-j}=n+d$.
We conclude that $w=vc_d$ is $123$-avoiding.  Furthermore, $w=u_lv_\square u^\prime_r$ where $u^\prime_r=u_rc_d$, and we have $a^\prime_i=a_i=u_r^{-1}(i)=(u^\prime_r)^{-1}(i)$ for $i\neq j,k$, while $a^\prime_j=n-d+1=c_d(a_j)=c_d(u_r^{-1}(j))=(u^\prime_r)^{-1}(j)$, and similarly $a^\prime_k=(u^\prime_r)^{-1}(k)$.

Continuing the inductive step, suppose that $w=c_dv$. 
Since $c_dv>v$ we must have picked $d$ such that $v^{-1}(n-d)<v^{-1}(n-d+1)$.
Since $v(v^{-1}(n-d))<v(v^{-1}(n-d+1))$, then $v^{-1}(n-d)$ is not a right-to-left maximum and $v^{-1}(n-d+1)$ is not a left-to-right minimum, so $v^{-1}(n-d)=a_j$ and $v^{-1}(n-d+1)=b_{n+1-k}$ for some $j,k$ (which implies that $v(a_k)=n+d$ and $v(b_{n+1-j})=n+d+1$).
Observe then that for all $i$, $c_dv(a_i)>c_dv(a_{i+1})$ and $c_dv(b_i)>c_dv(b_{i+1})$. 
In this case we obtain the desired sequences for $w=c_dv$ by keeping the $a$'s and $b$'s for $v$.  Note that $u_r$ is unchanged. 
This proves the second statement and the forward direction of the first statement.

Now suppose that $v\in C_n$ is $123$-avoiding.
Choose $a$'s and $b$'s for $v$, as in \Cref{lem: 123 shuffle}.
We will provide an algorithm that produces $\alpha_1,\ldots,\alpha_s$ and $\beta_1,\ldots,\beta_t$ such that $v_\square=c_{\alpha_s}\cdots c_{\alpha_1}v c_{\beta_1}\cdots c_{\beta_t}$ and $\ell(v)=\ell(v_\square)+s+t$, thus proving that $v\ge v_\square$.

We start by finding the $\alpha$'s.  If $v(a_1)=n$, we will have no $c_{\alpha}$'s.  Otherwise, since $v(a_1)>\cdots>v(a_n)\geq 1$, we must have $v(a_1)>n$.
Let $j=\max\{k\in[n] : v(a_k)>n+1-k\}$, and let $c
\in C_n$ be the generator that transposes $v(a_j)-1$ with $v(a_j)$ (so
it transposes $v(2n+1-a_j)$ with $v(2n+1-a_j)+1$).  
Since $a_j$ is a left-to-right minimum, $v^{-1}(v(a_j)+1)>a_j$, so $cv<v$.  
By the construction of $j$, we have $v(a_j)-1\neq v(a_{j+1})$ and hence $v^{-1}(v(a_j)-1)$ is not a left-to-right minimum for $v$.  
Hence $cv$ is also $123$-avoiding as witnessed by the same indices $a_i$ and $b_i$.
Set $\alpha_1$ so that $c=c_{\alpha_1}$.
Iterate this process, without changing the $a_i$ and $b_i$, until $v(a_k)=n+1-k$ for all $k\in[n]$.

Starting with the output $v$ of the previous paragraph we now find the $\beta$'s.
Note that this output has the property that $v(a_i)=n+1-i$ (and $v(b_i)=2n+1-i$) for all $i\in[n]$, and this property will be maintained throughout the process.
Let $j=\min\{k\in[n] : a_k>k\}$, and let $c
\in C_n$ be the generator that transposes $a_j$ with $a_j-1$ (so it must also transpose $b_{n+1-j}$ with $b_{n+1-j}+1$).
First, note that since $a_j$ is a left-to-right minimum, $v(a_j-1)>v(a_j)$, so $vc<v$.
Second, note that $a_j-1$ is not a left-to-right minimum by definition of~$j$, so $a_j-1=b_k$ for some $k\in[n]$ and $v(a_j-1)>n$.  
Take $a^\prime_i=a_i$ and $b^\prime_i=b_i$ for all $i$, except that $a^\prime_j=b_k$, $b^\prime_{k}=a_j$, $a^\prime_{n+1-k}=b_{n+1-j}$, and $b^\prime_{n+1-j}=a_{n+1-k}$.
Then $vc$ is $123$-avoiding and $vc(a^\prime_j)=n+1-j$ (and $vc(b^\prime_{n+1-j})=2n+1-j$).
Set $\beta_1
$ to be the index such that $c=c_{\beta_1}$. Iterate the process starting with $vc_{\beta_i}\cdots c_{\beta_1}$ until no longer possible, so until $a_j=j$ for all $j\in[n]$. 
Note then that the algorithm terminates with $v_\square$ and the proposition follows.
\end{proof}

One can consider Proposition~\ref{lem: shuffle} as a type C analogue of \cite[Theorem 2.1]{Billey-Jockusch-Stanley}, which characterizes 321-avoiding permutations as those smaller (in left-right weak order) than the maximal grassmannian permutation for some descent. This directly implies the ``only if'' direction of our first sentence, but in type~A the choice of maximal grassmannian permutation can depend on~$v$, whereas in type~C, $v_\square$ is the only choice.
Also unlike type~A, the conditions of \Cref{lem: shuffle} are not equivalent to $w_0v$ being fully commutative: $v=1324\in C_2$ does not satisfy the proposition although $w_0v=4231=c_1c_0c_1$ is fully commutative.

The following corollary describes the diagram for $v\ge v_\square$; see \Cref{fig: skew partition} for an example.

\begin{cor}	\label{cor: skew partition} If $v\in C_n$ then $v\ge v_\square$ if and only if $D(v)$ becomes a skew partition after deleting all the rows and columns that do not contain boxes of $D(v)$. 
\end{cor}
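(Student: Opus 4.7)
The plan is to reduce, via \Cref{lem: shuffle}, to the combinatorial statement that $v \in C_n$ is $123$-avoiding if and only if the diagram $D(v)$ becomes a skew partition after deleting empty rows and columns. The entire content of the corollary then lives in this equivalence, which I would prove directly from the description of $D(v)$ in terms of $v^{-1}$ and $v$.

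For the forward direction, suppose $v$ is $123$-avoiding and pick $a_1<\cdots<a_n$ and $b_1<\cdots<b_n$ as in \Cref{lem: 123 shuffle}, so the $a_i$'s are left-to-right minima, the $b_j$'s are right-to-left maxima, together they partition $\{1,\ldots,2n\}$, and $v(a_1)>\cdots>v(a_n)$, $v(b_1)>\cdots>v(b_n)$. Reading off the definition $D(v)=\{(p,q):v^{-1}(p)>q,\ v(q)<p\}$, the non-empty columns of $D(v)$ are exactly $a_1,\ldots,a_n$, the non-empty rows are exactly $v(b_1),\ldots,v(b_n)$, and the box $(v(b_k),a_j)$ lies in $D(v)$ if and only if $b_k>a_j$ and $v(a_j)<v(b_k)$. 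Listing the surviving rows top-to-bottom (which, since $v(b_1)>\cdots>v(b_n)$, means in the order $k=n,n-1,\ldots,1$) and columns left-to-right (in the order $j=1,2,\ldots,n$), one checks from the four monotonicities of $a_j$, $v(a_j)$, $b_k$, $v(b_k)$ that in each row the valid $j$'s form a contiguous interval, and that both the left and right endpoints of these intervals are non-increasing as one moves down. This is exactly the shape of a skew partition.

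For the reverse direction I would argue by contrapositive. If $v$ contains a $123$-pattern at positions $i_1<i_2<i_3$ with $v(i_1)<v(i_2)<v(i_3)$, then the boxes $(v(i_3),i_1)$, $(v(i_3),i_2)$, $(v(i_2),i_1)$ all lie in $D(v)$ directly from the definition, while $(v(i_2),i_2)$ does not, since $v^{-1}(v(i_2))=i_2$. The columns $i_1,i_2$ and rows $v(i_2),v(i_3)$ are non-empty and hence survive the collapse; because $v(i_2)<v(i_3)$ and $i_1<i_2$, the collapsed row of $v(i_2)$ sits strictly above that of $v(i_3)$, yet its right endpoint is strictly to the left of the right endpoint of the collapsed row of $v(i_3)$. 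This violates the requirement that the right endpoints of the rows of a skew partition are non-increasing from top to bottom, so $D(v)$ cannot be a skew partition.

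The main obstacle is bookkeeping rather than anything deep: the paper's Rothe diagram convention has hooks extending \emph{north and east} rather than south and east, and the ordering $v(b_n)<\cdots<v(b_1)$ forces one to track the reversal between the natural labeling of the $b_k$'s and the top-to-bottom order of the rows that survive the collapse. Once the dictionary between the two conventions is written down, both directions fall out of the monotonicity properties of the $a_i$'s and $b_j$'s supplied by \Cref{lem: 123 shuffle}.
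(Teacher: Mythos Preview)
Your approach is essentially the paper's: reduce via \Cref{lem: shuffle} to the equivalence ``$123$-avoiding $\Leftrightarrow$ collapsed diagram is skew,'' which the paper dispatches by citing the note after \cite[Theorem~2.1]{Billey-Jockusch-Stanley}, whereas you supply the argument directly using the $a_i$'s and $b_j$'s of \Cref{lem: 123 shuffle}. The argument is correct, with one small imprecision worth fixing: you write that the non-empty columns of $D(v)$ are \emph{exactly} $a_1,\ldots,a_n$ and the non-empty rows exactly $v(b_1),\ldots,v(b_n)$, but an $a_i$ that happens also to be a right-to-left maximum gives an empty column (and dually for rows). The correct statement is that the non-empty columns are \emph{contained in} $\{a_1,\ldots,a_n\}$; your monotonicity argument still shows the boxes form a skew shape in this $n\times n$ grid, and deleting any remaining empty rows and columns keeps it skew. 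In the reverse direction you should also note explicitly that row $v(i_2)$ contains no box in any column $q\ge i_2$ (since $(v(i_2),q)\in D(v)$ forces $i_2>q$), so its right endpoint really is strictly left of column~$i_2$; this is the step that makes the comparison of right endpoints rigorous.
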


\begin{proof}
This is a direct consequence of the previous proposition together with the note following \cite[Theorem 2.1]{Billey-Jockusch-Stanley}. We remark that we obtain $123$-avoiding permutations instead of $321$-avoiding ones due to the difference in our conventions for $D(v)$.
\end{proof}

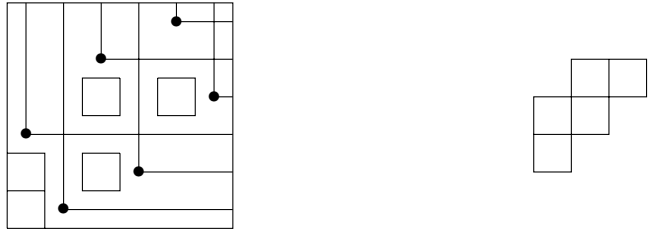
\begin{figure}[h]
	\begin{tikzpicture}[baseline=(O.base)]
	\node(O) at (1,1.5) {};
	\draw (0,0)--(3,0)--(3,3)--(0,3)--(0,0)
		(.5,0)--(.5,1)--(0,1)
		(0,.5)--(.5,.5)
		(1.5,1)--(1,1)--(1,.5)--(1.5,.5)--(1.5,1)
		(1.5,2)--(1,2)--(1,1.5)--(1.5,1.5)--(1.5,2)
		(2,2)--(2.5,2)--(2.5,1.5)--(2,1.5)--(2,2)
		; 
	\draw 
		(.25,3)--(.25,1.25) node {$\bullet$}--(3,1.25)
		(.75,3)--(.75,.25) node {$\bullet$}--(3,.25)
		(1.25,3)--(1.25,2.25) node {$\bullet$}--(3,2.25)
		(1.75,3)--(1.75,.75) node {$\bullet$}--(3,.75)
		(2.25,3)--(2.25,2.75) node {$\bullet$}--(3,2.75)
		(2.75,3)--(2.75,1.75) node {$\bullet$}--(3,1.75);	
	\def\l{7}
	\def\u{.75}
	\draw (\l,\u+.5)--(\l,\u+1)--(\l+1.5,\u+1)--(\l+1.5,\u+1.5)--(\l+.5,\u+1.5)--(\l+.5,\u)--(\l,\u)--(\l,\u+.5)--(\l+1,\u+.5)--(\l+1,\u+1.5)
		;
	\end{tikzpicture}
	\caption{On the left we have the diagram of $v=462513>v_\square$ and on the right its associated skew partition.}
	\label{fig: skew partition}
\end{figure}

Throughout the paper we denote by $\overline{v}$ a factorization $v=u_lv_\square u_r$ such that $\ell(v)=\ell(u_l)+\ell(v_\square)+\ell(u_r)$.
 We let
  $$R_{\overline{v}}=\mathbb{K}[z_{ij} : i\leq j,\ u_r^{-1}(i)<u_r^{-1}(2n+1-j),\ u_l(n+1-i)<u_l(n+j)].$$
Furthermore, let $M_{\overline{v}}$ be the matrix with $z_{ij}$ as the entries at $(u_l(n+j), u_r^{-1}(i))$ and $(u_l(n+i),u_r^{-1}(j))$ whenever $u_r^{-1}(i)<u_r^{-1}(2n+1-j)$ and $u_l(n+1-i)<u_l(n+j)$, 
1s at $(v(i),i)$ for all $i$, and 0s at all other positions. 
Note that the $z_{ij}$ only appear within $D(v)$.
For some examples, the general element of $\Sigma_{321654}$ in \Cref{eg:symmetricPatch} and the matrix on the left hand side of the equality in \Cref{ex:furthermore} are both of the form $M_{\overline{v}}$.

Note that given $v\ge v_\square$, a choice of $a_1,\ldots,a_n$ and $b_1,\ldots, b_n$ as in \Cref{lem: 123 shuffle} is equivalent to choosing a factorization $\overline{v}$. 
In this language
 $$R_{\overline{v}}=\mathbb{K}[z_{ij} : i\leq j,\ a_i<b_{n+1-j},\ v(a_i)<v(b_{n+1-j})]$$
and $M_{\overline{v}}$ is the matrix with $z_{ij}$ as the entries at $( v(b_{n+1-j}),a_i)$ and $( v(b_{n+1-i}),a_j)$ whenever $a_i<b_{n+1-j}$ and $v(a_i)<v(b_{n+1-j})$, 
1s at $(v(i),i)$ for all $i$, and 0s at all other positions. 

\begin{ex}\label{ex: variable labels change}
This example shows how the labeling of the coordinates in $R_{\overline{v}}$ depends on the choice of our factorization $\overline{v}$. 
The factorization $\overline{v}^{(1)}$ corresponding to $v=642531>v_\square$, $(a_{\bullet})=(1,2,3)$, and $(b_{\bullet})=(4,5,6)$ is $\overline{v}^{(1)}=u_l^{(1)} v_\square u_r^{(1)}$ where $u_l^{(1)}=246135$ and $u_r^{(1)}=123456$.  Then
	$$M_{\overline{v}^{(1)}}=\begin{bmatrix}
	0&0&0 &0&0&1 \\
	0&0&1 &0&0&0\\
	0&0&z_{23} &0&1&0\\
	 0 & 1 &0 & 0&0&0 \\
	 0 & z_{23} &z_{33} & 1&0&0 \\
	  1 & 0 &0 &  0&0&0
	\end{bmatrix}.$$
By comparison, for $(a_{\bullet})=(2,3,6)$ and $(b_{\bullet})=(1,4,5)$ we have that $\overline{v}^{(2)}=u_l^{(2)}v_\square u_r^{(2)}$ where $u_l^{(2)}=124356$ and $u_r^{(2)}=412563$.  Then
	$$M_{\overline{v}^{(2)}}=\begin{bmatrix}
	0&0&0 &0&0&1 \\
	0&0&1 &0&0&0\\
	0&0&z_{12} &0&1&0\\
	 0 & 1 &0 & 0&0&0 \\
	 0 & z_{12} &z_{22} & 1&0&0 \\
	  1 & 0 &0 &  0&0&0
	\end{bmatrix}.$$
\end{ex}

One can relate the variables to the self-conjugate skew partition associated to $v$ by Corollary~\ref{cor: skew partition} as follows.
If the skew partition has $n$ rows (or equivalently $n$ columns), then there is only one choice for $\overline{v}$, and $z_{ij}$ is a variable if and only if $(i,j)$ (equivalently $(j,i)$) is a box of the skew partition.
If the skew partition has fewer rows, then different choices of $\overline{v}$ will give rise to different (but always self-conjugate) placements of the self-conjugate skew partition in an $n\times n$ box corresponding to different coordinates.

The matrix $M_{\overline{v}}$ satisfies the following property.

\begin{prop}\label{rows and columns with ones}
If $1\le\delta\le 2n$ is a left-to-right minimum of $v$ then the only nonzero entry of~$M_{\overline{v}}$ in row~$v(\delta)$ is the $1$ at position $(v(\delta), \delta)$.
Similarly, if $1\le\delta\le 2n$ is a right-to-left maximum of $v$, then the only nonzero entry of~$M_{\overline{v}}$ in column~$\delta$ is the $1$ at position $(v(\delta), \delta)$.
\end{prop}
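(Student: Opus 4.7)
The plan is to argue via the Rothe diagram $D(v)$, using the observation noted immediately before the statement that every variable $z_{ij}$ of $M_{\overline{v}}$ occupies a position of $D(v)$.

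First I would unpack what the definition $D(v) = \{(v(j), i) : i < j,\ v(i) < v(j)\}$ says about a fixed row or column. A box of $D(v)$ in row $v(\delta)$ must have the form $(v(\delta), i)$ with $i < \delta$ and $v(i) < v(\delta)$; a box in column $\delta$ must have the form $(v(j), \delta)$ with $j > \delta$ and $v(j) > v(\delta)$. The hypotheses on $\delta$ then immediately rule out such boxes: if $\delta$ is a left-to-right minimum of $v$, then $v(i) > v(\delta)$ for every $i < \delta$, so no box of $D(v)$ lies in row $v(\delta)$; if $\delta$ is a right-to-left maximum, then $v(j) < v(\delta)$ for every $j > \delta$, so no box of $D(v)$ lies in column $\delta$.

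Combined with the containment of variable positions within $D(v)$, this yields that row $v(\delta)$ (resp.\ column $\delta$) of $M_{\overline{v}}$ contains no variable. Since $v$ is a bijection, the unique entry of the form $(v(i), i)$ in row $v(\delta)$ (resp.\ column $\delta$) sits at $(v(\delta), \delta)$; all other entries of $M_{\overline{v}}$ are zero by definition, which gives the statement.

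I do not anticipate any genuine obstacle. If one prefers not to invoke the containment of variable positions in $D(v)$ as a black box, it can be verified directly from the placement formulas $(v(b_{n+1-j}), a_i)$ and $(v(b_{n+1-i}), a_j)$: both positions have row index $v(b_k)$ and column index $a_\ell$ satisfying $a_\ell < b_k$ and $v(a_\ell) < v(b_k)$, which is exactly the condition for membership in $D(v)$. Checking this for the second position uses the $C_n$-symmetry $v(2n+1-k) = 2n+1 - v(k)$ to translate the placement inequalities on $(i,j)$ into their counterparts with $i$ and $j$ swapped.
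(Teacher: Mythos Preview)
Your proof is correct and follows essentially the same approach as the paper's: both show that no box of $D(v)$ lies in row $v(\delta)$ (resp.\ column $\delta$), so no variable appears there and only the $1$ remains. The paper phrases this via the pictorial hook description (entries to the right lie on the hook from $(v(\delta),\delta)$, entries to the left lie on the north-pointing hook from the dot in their column), while you work directly from the set-theoretic definition of $D(v)$; these are equivalent, and your version is arguably cleaner.
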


\begin{proof} 
Suppose that $1\le\delta\le 2n$ is a left-to-right minimum of $v$.
Entries to the right of this position lie on its hook, while an entry $(\epsilon,\delta)$ to its left lies on the hook extending from~$(\epsilon,v^{-1}(\epsilon))$ by the left-to-right minimum condition.
An analogous argument proves the second part of the lemma.
\end{proof}

We now give concrete coordinates for the coordinate ring of $\Sigma_v$.

\begin{prop}\label{prop: partial symm matrices} If $v\ge v_\square$, then $R_{\overline{v}}$ is a coordinate ring of $\Sigma_v$ and $M_{\overline{v}}$ is the generic matrix in $\Sigma_v$.  (In other words, a matrix is in $\Sigma_v$ if and only if it can be obtained by setting each variable in $M_{\overline{v}}$ to some element of $\mathbb{K}$.)  
Furthermore, if $v=u_lv_\square u_r$ and $\ell(v)=\ell(u_l)+\ell(v_\square)+\ell(u_r)$, then the rule $M\mapsto P(u_l^{-1})MP(u_r^{-1})$ 
induces the injective map from $\Sigma_v$ to $\Sigma_{v_\square}$ which identifies the entry named $z_{ij}$ in $M_{\overline{v}}$ with the entry named $z_{ij}$ in $M_{\overline{v_{\square}}}$.
\end{prop}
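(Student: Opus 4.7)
My plan is to prove both sentences of the proposition together by induction on $\ell(u_l) + \ell(u_r)$, denoting by $\phi$ the rule $M \mapsto P(u_l^{-1}) M P(u_r^{-1})$ from the statement. The base case $v = v_\square$ (so $u_l = u_r = e$) is exactly \Cref{prop: square patch}: here $\phi$ is the identity on matrices, and the explicit symmetric-matrix description of $\Sigma_{v_\square}$ yields the parameterization by $M_{\overline{v_\square}}$, with each $z_{ij}$ ($i \leq j$) placed at the symmetric positions $(n+j, i)$ and $(n+i, j)$.

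For the inductive step I would suppose $\ell(u_r) > 0$ (the case $\ell(u_l) > 0$ being symmetric). Write $u_r = u_r' c_d$ with lengths still additive, set $v' = u_l v_\square u_r'$, and apply the induction hypothesis on $v'$. Tracing the algorithm at the end of the proof of \Cref{lem: shuffle} shows how the sequences $(a_\bullet), (b_\bullet)$ change in passing from $v'$ to $v$: the simple reflection $c_d$ swaps one $a_i$ with an adjacent $b_{n+1-k}$, and simultaneously swaps their partners across the middle. At the matrix level, this translates to right-multiplying $M_{\overline{v'}}$ by the Bott-Samelson factor corresponding to $c_d$, viewed in $S_{2n}$ via the embedding $C_n \hookrightarrow S_{2n}$; by \Cref{prop:matrixFactor} this operation keeps us inside $\Sigma_v^A$. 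The second sentence is then obtained by direct position bookkeeping: the identities $a_i = u_r^{-1}(i)$ and $b_{n+1-j} = u_r^{-1}(2n+1-j)$, together with $v = u_l v_\square u_r$, show that a variable $z_{ij}$ at $(v(b_{n+1-j}), a_i)$ in $M_{\overline{v}}$ is sent by $\phi$ to $(v_\square(2n+1-j), i) = (n+j, i)$, which is exactly the location of $z_{ij}$ in $M_{\overline{v_\square}}$; the other occurrence and the $1$-entries are handled analogously. The first sentence then follows: $\phi$ is an injection (multiplication by invertible matrices), and the extra column operation transforming $M_{\overline{v'}}$ into $M_{\overline{v}}$ cancels against the extra permutation factor distinguishing $\phi_{v'}$ from $\phi_v$, so the inductive hypothesis propagates the parameterization from $v'$ to $v$.

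The main obstacle is ensuring the inductive step stays inside $\Sigma_v = (\Sigma_v^A)^\sigma$ (\Cref{cor: type C sigma v}) rather than merely $\Sigma_v^A$, since a generic column operation in $S_{2n}$ breaks $\sigma$-symmetry. The resolution is that a simple reflection $c_d \in C_n$ corresponds under $C_n \hookrightarrow S_{2n}$ either to the single transposition $s_n$ (when $d=0$), which is self-symmetric under the center-reflection underlying $\sigma$, or to the commuting pair $s_{n-d} s_{n+d}$ (when $d \geq 1$), symmetric across the center. The associated Bott-Samelson maneuver is accordingly either a single column operation in the central column, or a pair of commuting column operations placed symmetrically across the middle --- in either case introducing exactly one new variable at the symmetric pair of positions $(v(b_{n+1-j}), a_i)$ and $(v(b_{n+1-i}), a_j)$ prescribed by the definition of $M_{\overline{v}}$. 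Combined with \Cref{cor:SigmaMap} (which records how $\sigma$ acts on the Bott-Samelson factors), $\sigma$-invariance is thereby maintained through each step of the induction.
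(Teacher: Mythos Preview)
Your approach is essentially that of the paper: induct from $v_\square$ upward in left-right weak order one simple reflection at a time, handling $\sigma$-invariance via the center-symmetry of $c_d\in C_n$ under the embedding into $S_{2n}$ (the paper phrases this as $\sigma(P(c_k))=P(c_k)$ together with $\sigma$ being a homomorphism). Your direct position-formula verification of the ``furthermore'' clause---computing that $z_{ij}$ at $(u_l(n+j),u_r^{-1}(i))$ in $M_{\overline{v}}$ is sent by $\phi$ to $(n+j,i)$ in $M_{\overline{v_\square}}$---is correct and in fact cleaner than the paper, which leaves this clause implicit in the inductive bookkeeping.

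One point to correct: in the inductive step you say passing from $v'$ to $v=v'c_d$ amounts to ``right-multiplying $M_{\overline{v'}}$ by the Bott--Samelson factor corresponding to $c_d$,'' citing \Cref{prop:matrixFactor}. This is the wrong direction. Since $v>v'$, the cell $\Sigma_v^A$ has \emph{lower} dimension than $\Sigma_{v'}^A$; multiplying by a Bott--Samelson factor $\tilde\Sigma_\alpha$ adds a parameter and would take you to a larger cell, not a smaller one. The actual relationship---what the paper establishes using \Cref{lem: weak cover diagram} and \Cref{lem: left weak cover diagram}---is that $M_{\overline{v}}P(c_d)$ equals $M_{\overline{v'}}$ with a single variable $z_{ij}$ set to zero: a column \emph{permutation} plus a specialization, not a Bott--Samelson factor. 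Your later phrase ``extra column operation'' indicates you have the right mechanism in mind; just drop the appeal to \Cref{prop:matrixFactor} here and make that column-permutation-plus-specialization explicit.
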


Notice that $v\not\ge v_\square$ in Example~\ref{ex: type C patch},
and indeed the entries of the general matrix in~$\Sigma_v$ in that example cannot all be made to be variables.
An interesting question is to describe the entries of $\Sigma_v$ for general $v\in C_n$ and give a Gr\"obner basis for Kazhdan-Lusztig varieties arising from these cells.

Before proving this result, let us give an example and some necessary lemmas.

\begin{ex}\label{ex:furthermore}
Let $v = {462513}$. Then $c_0v_{\square}c_0c_1 = v$. The following equality illustrates the ``furthermore'' part of \Cref{prop: partial symm matrices}:
\[
 P(c_0) 
\begin{bmatrix}
	0&0&0 &0&1&0 \\
	0&0&1 &0&0&0\\
	0&0&z_{12} &0&z_{13}&1\\
	 1 & 0 &0 & 0&0&0 \\
	 z_{12} & 0 &z_{22} & 1&0&0 \\
	  z_{13} & 1 &0 &  0&0&0
	\end{bmatrix} P(c_1c_0) = 
	\begin{bmatrix}
	0&0&1&0&0&0 \\
	0&1&0 &0&0&0\\
	1&0&0 &0&0&0\\
	 0 & z_{12} &z_{13} & 0&0&1 \\
	 z_{12} & z_{22} &0& 0 &1&0 \\
	  z_{13} & 0 &0 &  1&0&0
	\end{bmatrix}
\]
\end{ex}
In general, the effect of right multiplication by $P(u_r^{-1})$ 
is to collect at the left side all columns containing any variable $z_{ij}$,
and similarly left multiplication by $P(u_l^{-1})$ collects rows with variables at the bottom.

The following lemmas will be used to prove \Cref{lem: before lunch}. They are adaptations of \cite[Lemma 6.5]{WooYongGrobner} to type~C.
\begin{lemma}\label{lem: weak cover diagram}
Let $v\in C_n$ and $k$ be such that  $vc_k\gtrdot v$ in right weak order.
The diagram $D(vc_k)$ is obtained from $D(v)$ as follows:
	$D(vc_k)$ agrees with $D(v)$ except in columns $n\pm k$ and $n\pm k+1$. 
	To obtain columns $n-k$ and $n-k+1$ of $D(vc_k)$, move all the boxes of $D(v)$ in column $n-k$ strictly above row $v(n-k+1)$ one unit to the right and delete the box in position $(v(n-k+1),n-k)$. 
	Repeat the analogous process in columns $n+k$ and $n+k+1$ (if $k\neq0$).
\end{lemma}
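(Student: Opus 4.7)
The plan is to reduce the statement to the corresponding type-A result about right multiplication by a single simple reflection in $S_{2n}$. The type-A analogue (essentially \cite[Lemma 6.5]{WooYongGrobner}) says that when $vs_i \gtrdot v$ in $S_{2n}$ right weak order, the diagram $D(vs_i)$ is obtained from $D(v)$ by moving each box of $D(v)$ in column $i$ that lies strictly above row $v(i+1)$ one unit to the right, and deleting the box at position $(v(i+1), i)$; all other columns are unchanged. I would either quote this or give a short direct proof from the definition $D(w) = \{(w(j), i) : i < j,\ w(i) < w(j)\}$, by computing each column of $D(vs_i)$ and $D(v)$ in terms of the sets $\{v(j) : j > i,\ v(j) > v(i)\}$ and $\{v(j) : j > i+1,\ v(j) > v(i+1)\}$ and comparing.

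For $k=0$, the simple reflection $c_0$ coincides with $s_n$ under the embedding $C_n \subseteq S_{2n}$, and the cover relation $vc_0 \gtrdot v$ in $C_n$ right weak order is equivalent to $v(n) < v(n+1)$, which is exactly the type-A cover condition $vs_n \gtrdot v$. Applying the type-A statement with $i = n$ then gives the desired description, with only columns $n$ and $n+1$ affected.

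For $k \neq 0$, the key is the identity $c_k = s_{n-k} s_{n+k}$ in $S_{2n}$, where these two simple reflections commute because $|(n-k) - (n+k)| = 2k \geq 2$. Using the $C_n$ symmetry $v(i) + v(2n+1-i) = 2n+1$ from \eqref{eq: symmetry C}, one checks that $v(n-k) < v(n-k+1)$ if and only if $v(n+k) < v(n+k+1)$, so the hypothesis $vc_k \gtrdot v$ in $C_n$ translates into the two type-A cover conditions $vs_{n-k} \gtrdot v$ and $vs_{n-k}s_{n+k} \gtrdot vs_{n-k}$ in $S_{2n}$ (using also that $s_{n-k}$ fixes the entries of $v$ at positions $n+k, n+k+1$, so $(vs_{n-k})(n+k+j) = v(n+k+j)$ for $j \in \{0,1\}$). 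I then apply the type-A lemma twice: the first application modifies only columns $n-k$ and $n-k+1$ in the stated way, and the second, since $(vs_{n-k})(n+k+1) = v(n+k+1)$, produces the analogous modification in columns $n+k$ and $n+k+1$. The two modifications act on disjoint pairs of columns and hence do not interfere.

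The only nontrivial step is this translation of the single $C_n$ cover condition into two compatible $S_{2n}$ cover conditions via \eqref{eq: symmetry C}; once it is in place, the remainder is bookkeeping about which columns are touched by each simple reflection.
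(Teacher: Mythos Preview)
Your proposal is correct and is exactly the intended approach. The paper does not give an explicit proof of this lemma: it simply states it and remarks that it is an ``adaptation of \cite[Lemma 6.5]{WooYongGrobner} to type~C.'' Your argument makes this adaptation precise by decomposing $c_k$ as $s_{n-k}s_{n+k}$ (or $s_n$ when $k=0$), using the symmetry \eqref{eq: symmetry C} to translate the single $C_n$ cover into two commuting $S_{2n}$ covers, and applying the type-A lemma to each. This is the natural reading of what the paper means by ``adaptation,'' and all of your verifications are sound.
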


We need the analogous lemma for left weak order as well:
\begin{lemma}\label{lem: left weak cover diagram}
Let $v\in C_n$ and $k$ be such that  $c_kv\gtrdot v$ in left weak order.
The diagram $D(c_kv)$ is obtained from $D(v)$ as follows:
	$D(c_kv)$ agrees with $D(v)$ except in rows $n\pm k$ and $n\pm k+1$. 
	To obtain rows $n-k$ and $n-k+1$ of $D(c_kv)$, move all the boxes of $D(v)$ in row $n+1-k$ strictly right of column $v^{-1}(n-k)$ one unit up and delete the box in position $(n+1-k,v^{-1}(n-k))$. 
	Repeat the analogous process in rows $n+k$ and $n+k+1$ (if $k\neq 0$).
\end{lemma}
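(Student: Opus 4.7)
The plan is a direct mirror of the calculation used in \Cref{lem: weak cover diagram}, carried out via the characterization $(p,q)\in D(w)\iff w(q)<p\text{ and }w^{-1}(p)>q$. Writing $w=c_kv$ and using that $c_k$ is an involution gives $w(q)=c_k(v(q))$ and $w^{-1}(p)=v^{-1}(c_k(p))$, so I can compare the defining inequalities row by row.

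For $p\notin\{n-k,n-k+1,n+k,n+k+1\}$, the reflection $c_k$ fixes $p$, and the initial segment $\{1,\ldots,p-1\}$ is $c_k$-invariant because each swapped pair $\{n-k,n-k+1\}$, $\{n+k,n+k+1\}$ lies entirely inside that segment or entirely outside it. Consequently $c_k(v(q))<p\iff v(q)<p$ and $v^{-1}(c_k(p))=v^{-1}(p)$, so row $p$ of $D(c_kv)$ coincides with row $p$ of $D(v)$; this is the ``agrees'' claim.

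For rows $n-k$ and $n-k+1$, the cover hypothesis $c_kv\gtrdot v$ in left weak order is exactly the inequality $v^{-1}(n-k)<v^{-1}(n-k+1)$. Direct unpacking of the defining conditions (using $c_k(n-k)=n-k+1$ and $c_k(n-k+1)=n-k$) then yields
\begin{align*}
\text{row }n-k\text{ of }D(c_kv) &= \{q:v(q)<n-k,\ q<v^{-1}(n-k+1)\},\\
\text{row }n-k+1\text{ of }D(c_kv) &= \{q:v(q)<n-k,\ q<v^{-1}(n-k)\},
\end{align*}
whereas row $n-k$ of $D(v)$ equals $\{q:v(q)<n-k,\ q<v^{-1}(n-k)\}$ and row $n-k+1$ of $D(v)$ equals $\{q:v(q)<n-k+1,\ q<v^{-1}(n-k+1)\}$. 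Since row $n-k$ of $D(v)$ is already contained in row $n-k+1$ of $D(v)$ under the cover hypothesis, a straightforward set comparison shows that the transition from $D(v)$ to $D(c_kv)$ in these two rows is effected exactly by moving the boxes of row $n-k+1$ of $D(v)$ strictly right of column $v^{-1}(n-k)$ up by one unit, and deleting the box at $(n-k+1,v^{-1}(n-k))$ (which is in $D(v)$ precisely because $v^{-1}(n-k)<v^{-1}(n-k+1)$).

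For $k\neq 0$, the type~C symmetry $v(2n+1-i)=2n+1-v(i)$ of $v\in C_n$ converts the hypothesis into $v^{-1}(n+k)<v^{-1}(n+k+1)$, so the identical analysis applied to the pair $n+k,n+k+1$ yields the analogous transformation there. I do not foresee a serious obstacle; the only mildly subtle feature is the asymmetric form of the stated operation (we only move boxes strictly right of column $v^{-1}(n-k)$), which is just a reflection of the row-inclusion noted above.
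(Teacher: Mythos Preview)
Your argument is correct. The paper does not actually supply a proof of this lemma: it is stated without proof as the left-weak-order analogue of the preceding lemma, both described as ``adaptations of \cite[Lemma 6.5]{WooYongGrobner} to type~C.'' Your direct verification via the characterization $(p,q)\in D(w)\iff w(q)<p$ and $w^{-1}(p)>q$, together with the observation that $\{1,\ldots,p-1\}$ is $c_k$-stable for $p\notin\{n\pm k,\,n\pm k+1\}$, is exactly the kind of row-by-row computation one would write down to justify the statement, and it goes through as you describe.
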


\begin{proof}[Proof of \Cref{prop: partial symm matrices}]
To prove that $M_{\overline{v}}$ is the generic matrix in $\Sigma_v$ it suffices to show that $\sigma(M_{\overline{v}})=M_{\overline{v}}$ and the entries of $M_{\overline{v}}$ are as in the beginning of \Cref{subsec: opp cells}, 
implying that, regardless how we evaluate the $z_{ij}$ in $M_{\overline{v}}$, we get a matrix in~$\Sigma^A_v$.
The second statement follows by definition, since $(M_{\overline{v}})_{ij}=1$ whenever $P(v)_{ij}=1$ and $(M_{\overline{v}})_{ij}=0$ whenever $(i,j)\notin D(v)$.
For the first statement we proceed by induction on $\ell(v)$. 
The base case, when $v=v_\square$, is trivial.
For the inductive case, consider some $v> v_\square$, and let $c_k$ be a simple reflection such that either $vc_k\gtrdot v$ or $c_kv\gtrdot v$.  
By the inductive hypothesis, $M_{\overline{v}}$ is the generic matrix in $\Sigma_{v}$.
Throughout the proof we will fix a factorization $\overline{v}=u_lv_\square u_r$ with corresponding sequences $(a_{\bullet})$ and $(b_{\bullet})$.

First suppose that $vc_k\gtrdot v$ so that $v(n- k)<v(n- k+1)$ and $v(n+ k)<v(n+ k+1)$.
Let $\overline{vc_k}$ be the factorization $(u_l)v_\square(u_rc_k)$.
Since $v$ is $123$-avoiding, $n\pm k$ are left-to-right minima, and $n\pm k +1$ are right-to-left maxima.
Therefore, $(v(n- k+1),n- k)=(v(b_{n+1-j}),a_i)$ and $(v(n+ k+1),n+ k)=(v(b_{n+1-i}),a_j)$ for some $i,j$, and the corresponding entries in $(M_{\overline{v}})$ are the variable $z_{ij}$ or $z_{ji}$.
Without loss of generality we assume that $i\le j$.
Our goal is to show that $M_{\overline{vc_k}}P(c_k)$ is obtained from $M_{\overline{v}}$ by setting $z_{ij}=0$.
Right multiplication by $P(c_k)$ swaps column $n+k$ with $n+k+1$ and column $n-k$ with $n-k+1$.  
Since $P(vc_k)P(c_k)=P(v)$ then the positions of the $1$s in $M_{\overline{vc_k}}P(c_k)$ and $M_{\overline{v}}$ agree.
By \Cref{lem: weak cover diagram}, 
	$$
	D(vc_k)\setminus [n]\times\{n\pm k, n\pm k+1\}
	=
	D(v)\setminus [n]\times\{n\pm k, n\pm k+1\},
	$$
and therefore the positions of $0$ entries in $M_{\overline{vc_k}}P(c_k)$ and $M_{\overline{v}}$ agree on all columns, except possibly columns $n\pm k, n\pm k+1$.
This lemma also implies that, for $\delta\in\{n\pm k,n\pm k+1\}$, if $(M_{\overline{v}})_{\epsilon\delta}=0$, then $(M_{\overline{vc_k}}P(c_k))_{\epsilon\delta}=0$.

It remains to analyze the variable entries of $M_{\overline{v}}$.
Since $n\pm k+1$ are right-to-left maxima for $v$, columns $n\pm k+1$ of $M_{\overline{v}}$ do not contain any variables. 
Similarly, since $n\pm k$ are right-to-left maxima for $vc_k$, columns $n\pm k+1$ of $M_{\overline{v}}P(c_k)$ do not contain any variables and are therefore equal to columns $n\pm k+1$ of $M_{\overline{v}}$.
Note that the sequences $(a^\prime_{\bullet})$ and $(b^\prime_{\bullet})$ from \Cref{lem: shuffle} for $\overline{vc_k}$ agree with the sequences  $(a_{\bullet})$ and $(b_{\bullet})$ everywhere except
	$$a^\prime_i=b_{n+1-j}=n-k+1,\quad b^\prime_{n+1-j}=a_i=n-k,\quad a^\prime_{j}=b_{n+1-i}=n+k+1, \text{ and }\quad b^\prime_{n+1-i}=a_{j}=n+k.$$
If $\delta\neq n\pm k,n+1\pm k$ and $(M_{\overline{v}})_{\epsilon\delta}=z_{i'j'}$, then $(\epsilon,\delta)\in D(v)$ and $(\epsilon,\delta)\in\{(v(b_{n+1-j'}),a_{i'}),(v(b_{n+1-i'}),a_{j'})\}$.
Combining $(\epsilon,\delta)\in D(vc_k)$, which follows from \Cref{lem: weak cover diagram}, with $vc_k(b^\prime_{m})=v(b_{m})$ for all $m$, we have that $(\epsilon,\delta)\in\{(vc_k(b^\prime_{n+1-j'}),a^\prime_{i'}),(vc_k(b^\prime_{n+1-i'}),a^\prime_{j'})\}$.
It follows that $(M_{\overline{vc_k}})_{\epsilon\delta}=z_{i'j'}$.
Finally, assume $\delta=n\pm k$ and $(M_{\overline{v}})_{\epsilon\delta}=z_{i'j'}$ so that $(\epsilon,\delta)\in D(v)$ and by \Cref{lem: weak cover diagram} $(\epsilon,\delta+1)\in D(vc_k)$, except if $\epsilon=v(\delta)$.
If $\epsilon=v(\delta)$ then $z_{i'j^\prime}=z_{ij}$ and we obtain $(M_{\overline{vc_k}}P(c_k))_{\epsilon\delta}$ by setting $z_{ij}=0$, as desired.
If $\epsilon\neq v(\delta)$ then $\epsilon\in\{v(b_{n+1-j'}),v(b_{n+1-i'})\}=\{vc_k(b'_{n+1-j'}),vc_k(b'_{n+1-i'})\}$ and therefore $(M_{\overline{vc_k}})_{\epsilon\delta}=z_{i'j'}$.
We conclude that $M_{\overline{vc_k}}P(c_k)$ is obtained from $M_{\overline{v}}$ by setting $z_{ij}=0$ if it lies in positions $(v(n\pm k+1),n\pm k)$.

We are left with proving that $\sigma(M_{\overline{vc_k}})=M_{\overline{vc_k}}$.  However, by induction, $\sigma(M_{\overline{v}})=M_{\overline{v}}$, and therefore $\sigma(M_{\overline{vc_k}}P(c_k))=M_{\overline{vc_k}}P(c_k)$ by the argument above.  
Hence, since $\sigma$ is a group homomorphism and $\sigma(P(c_k))=P(c_k)$, we conclude $\sigma(M_{\overline{vc_k}})=M_{\overline{vc_k}}$.
Finally, returning to the case $c_kv\gtrdot v$,
using \Cref{lem: left weak cover diagram} and other similar arguments, one can show that in this case
$P(c_k)M_{\overline{c_kv}}$ is obtained from $M_{\overline{v}}$ by setting $z_{ij}=0$ if it lies in positions $(n\pm k+1,v^{-1}(n\pm k))$, and $\sigma(M_{\overline{c_kv}})=M_{\overline{c_kv}}$.
\end{proof}

Given $v\in C_n$, $v \geq v_\square$,
let $V_{\overline{v}}$ be the set of variables of $R_{\overline{v}}$, i.e.\ the set of variables that appear as entries of $M_{\overline{v}}$.

\begin{cor}\label{cor: deleted variable}
Fix a factorization $\overline{v}=u_lv_\square u_r$.
If $c_k$ is an ascent of~$v$ and we set $\overline{vc_k}=(u_l)v_\square (u_rc_k)$, then
$V_{\overline{vc_k}}\subseteq V_{\overline{v}}$ and $V_{\overline{v}}\setminus V_{\overline{vc_k}}=\{z_{ij}\}$, where $z_{ij}$ is the entry of $M_{\overline{v}}$ in positions $(v(n\pm k+1),n\pm k)$.
\end{cor}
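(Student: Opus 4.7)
The plan is to read the conclusion off directly from the computation carried out in the proof of Proposition~\ref{prop: partial symm matrices}. That proof established, under the hypothesis $vc_k \gtrdot v$, the explicit identity that $M_{\overline{vc_k}}\,P(c_k)$ is obtained from $M_{\overline{v}}$ by setting to zero the variable $z_{ij}$ occupying the positions $(v(n\pm k+1),\, n\pm k)$. The corollary is essentially a repackaging of this statement in the language of the variable sets $V_{\overline{v}}$ and $V_{\overline{vc_k}}$.

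First, I would observe that right multiplication by the permutation matrix $P(c_k)$ merely permutes columns, so it does not change the underlying multiset of nonzero entries of the matrix it acts on. Consequently, the set of variables appearing as entries of $M_{\overline{vc_k}}\,P(c_k)$ coincides with $V_{\overline{vc_k}}$. Next, I would invoke the identity from the proof of Proposition~\ref{prop: partial symm matrices}: zeroing out a single entry $z_{ij}$ of $M_{\overline{v}}$ (which by Proposition~\ref{rows and columns with ones} and \Cref{lem: weak cover diagram} indeed lies at positions $(v(n\pm k+1), n\pm k) \in D(v)$) yields $M_{\overline{vc_k}}\,P(c_k)$. Comparing variable sets on both sides gives $V_{\overline{vc_k}} \subseteq V_{\overline{v}}$ together with $V_{\overline{v}} \setminus V_{\overline{vc_k}} = \{z_{ij}\}$.

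The only subtlety is to confirm that $z_{ij}$ really does belong to $V_{\overline{v}}$ and is genuinely absent from $V_{\overline{vc_k}}$. The first is immediate because $c_k$ being a descent of $vc_k$ forces $(v(n-k+1), n-k)$ (and symmetrically $(v(n+k+1), n+k)$) to be boxes of $D(v)$, and by Proposition~\ref{prop: partial symm matrices} every box of $D(v)$ carries a variable of $M_{\overline{v}}$. The second follows because, after the diagram move described in \Cref{lem: weak cover diagram}, this position is no longer in $D(vc_k)$, so the corresponding variable cannot reappear elsewhere in $M_{\overline{vc_k}}$ (noting also that the variable assignment rules for $\overline{vc_k}$ given by the $(a_\bullet^\prime, b_\bullet^\prime)$ sequences were shown in the proof of Proposition~\ref{prop: partial symm matrices} to reuse the same labels for all other positions).

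Since all of the heavy lifting — the diagram bookkeeping and the identification of variable labels under the passage from $\overline{v}$ to $\overline{vc_k}$ — has already been carried out in the proof of Proposition~\ref{prop: partial symm matrices}, no genuine new obstacle arises. The corollary is a two-or-three line consequence of restating that computation.
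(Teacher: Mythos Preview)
Your proposal is correct and takes essentially the same approach as the paper: the paper's proof is the single line ``This follows from the inductive step in \Cref{prop: partial symm matrices},'' and you have simply unpacked that inductive step in more detail. Your observation that right multiplication by $P(c_k)$ permutes columns and therefore preserves the set of variables is exactly the mechanism that makes the one-line deduction work.
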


\begin{proof}This follows from the inductive step in \Cref{prop: partial symm matrices}.
\end{proof}

\subsection{Equations for type~C Kazhdan-Lusztig varieties}\label{sect:KLeq}
Let $R_{\overline{v}}$ and $R_{v}^A$ denote respectively the coordinate rings $\Bbbk[\Sigma_v]$ and $\Bbbk[\Sigma_v^A]$.
Similarly, let $M_{\overline{v}}$ and $M_{v}^A$ denote respectively the generic matrices in $\Bbbk[\Sigma_v]$ and $\Bbbk[\Sigma_v^A]$.
Let $I_{\overline{v},w}$ be the ideal of $R_{\overline{v}}$ generated by the size $r_w(p,q)+1$ minors of $\tau_{p,q}(M_{\overline{v}})$ over all $(p,q)$ in $E(w)$.  
We call $I_{\overline{v},w}$ a {\bf Kazhdan--Lusztig ideal}.  

\begin{prop}\label{prop: KL coord ring}
We have $\Sigma_{v,w} = \operatorname{Spec}(R_{\overline{v}}/I_{\overline{v},w})$.
\end{prop}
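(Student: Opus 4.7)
The plan is to compile three ingredients already discussed or cited in the paper: the scheme-theoretic description of $\Sigma^A_{v,w}$, the realization of $\Sigma_v$ as $\sigma$-fixed points of $\Sigma^A_v$, and Anderson's reduction from $E^A(w)$ to $E(w)$.

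First, I would unwind the definitions. By \eqref{eq: type C KL} and Theorem \ref{thm: LR}, $\mathcal{N}_{v,w} = X^A_w \cap \Omega^\circ_v = X^A_w \cap (\Omega^{A\circ}_v)^\sigma = \mathcal{N}^A_{v,w} \cap \Omega^\circ_v$, and applying $\pi_H^{-1}$ and using Corollary \ref{cor: type C sigma v} gives the scheme-theoretic equality
\[
\Sigma_{v,w} = \Sigma^A_{v,w} \cap \Sigma_v \subseteq \Sigma^A_v.
\]
Thus the defining equations of $\Sigma_{v,w}$ inside $\Sigma_v$ are exactly those of $\Sigma^A_{v,w}$ in $\Sigma^A_v$, pulled back along the closed embedding $\Sigma_v \hookrightarrow \Sigma^A_v$ from Proposition \ref{prop: partial symm matrices}.

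Second, I would apply \cite[Proposition 3.1]{WooYongSings}, which gives that $\Sigma^A_{v,w}$ is cut out scheme-theoretically inside $\Sigma^A_v$ by the size $r_w(p,q)+1$ minors of $\tau_{p,q}(M^A_v)$ for $(p,q) \in E^A(w)$. Under the closed embedding $\Sigma_v \hookrightarrow \Sigma^A_v$, the generic matrix $M^A_v$ specializes to $M_{\overline{v}}$ (by Proposition \ref{prop: partial symm matrices}), so the corresponding equations for $\Sigma_{v,w} \subseteq \Sigma_v$ become the size $r_w(p,q)+1$ minors of $\tau_{p,q}(M_{\overline{v}})$ for $(p,q) \in E^A(w)$. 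Let $J_{\overline{v},w}$ denote the resulting ideal of $R_{\overline{v}}$; we have $\Sigma_{v,w} = \operatorname{Spec}(R_{\overline{v}}/J_{\overline{v},w})$ and clearly $I_{\overline{v},w} \subseteq J_{\overline{v},w}$.

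Finally, I would show $I_{\overline{v},w} = J_{\overline{v},w}$. This is where Anderson's reduction \cite{AndersonEJC} is needed. Boxes of $E^A(w)$ come in pairs $(p,q) \leftrightarrow (2n+2-p, 2n-q)$ with ranks related by $r_w(2n+2-p,2n-q) = p-q-1+r_w(p,q)$. The symplectic symmetry $\sigma(M_{\overline{v}}) = M_{\overline{v}}$ identifies (up to signs and a row/column reversal implemented by $J_n$) the submatrix $\tau_{p,q}(M_{\overline{v}})$ with a submatrix of $\tau_{2n+2-p,2n-q}(M_{\overline{v}})$, so the two sets of rank conditions cut out the same ideal; discarding the box with smaller $p$ in each pair handles the first condition of Definition \ref{def: type C essential}. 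For the secondary redundancy (when both $(p,q)$ and $(p,2n-q)$ lie in $E^A(w)$ with $p > n+1$, $q<n$, and $r_w(p,q) = r_w(p,2n-q) - (n-q)$), I would verify directly that the $\sigma$-symmetry forces the rank of $\tau_{p,2n-q}(M_{\overline{v}})$ to be automatically bounded by $r_w(p,2n-q)$ once the rank of $\tau_{p,q}(M_{\overline{v}})$ is bounded by $r_w(p,q)$, because the overlap of the two submatrices along the antidiagonal of $M_{\overline{v}}$ already contributes $n-q$ rank by symmetry.

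The one step requiring care is the last redundancy check, since the paper invokes \cite{AndersonEJC} (and \cite[Section 4]{WooEJC}) as a black box. The main obstacle, therefore, is to confirm that Anderson's redundancy argument indeed works scheme-theoretically in our coordinate setup and not merely set-theoretically; once this is granted, the three ingredients assemble to the claim.
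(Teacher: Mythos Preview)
Your approach is essentially the same as the paper's: both assemble \cite[Proposition~3.1]{WooYongSings} for the type~A scheme-theoretic description, Theorem~\ref{thm: LR} together with Equation~\eqref{eq: type C KL} for the passage to type~C, and Anderson's reduction from $E^A(w)$ to $E(w)$ for the set-theoretic redundancy. The paper's proof is in fact terser than yours and does not spell out the intermediate ideal $J_{\overline{v},w}$ or the specialization of $M^A_v$ to $M_{\overline{v}}$.

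You are right to flag the scheme-theoretic status of Anderson's redundancy argument as the delicate point; the paper's own proof does not address it directly either, simply citing the set-theoretic discussion of Section~\ref{sect:KLrank} and then the type~A scheme-theoretic result. The paper's remark immediately preceding the proof, that Theorem~\ref{thm: main} gives an independent proof in the case $v\ge v_\square$, suggests the authors are aware this step is not fully justified here and rely on the Gr\"obner basis argument for confirmation. One minor point: your invocation of Proposition~\ref{prop: partial symm matrices} in the second step presumes $v\ge v_\square$, whereas the proposition as stated is meant for general $v\in C_n$; the relevant fact you need there is just that $\Sigma_v\hookrightarrow\Sigma^A_v$ is a closed embedding, which holds in general by Corollary~\ref{cor: type C sigma v}.
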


Theorem~\ref{thm: main} will give an independent proof of this proposition in the case $v\ge v_\square$.

\begin{proof}
The discussion of rank conditions in Section~\ref{sect:KLrank} proves equality as sets.
Equality as schemes follows from \cite[Proposition 3.1]{WooYongSings},
which gives the analogous scheme-theoretic equality in type~A,
and Theorem~\ref{thm: LR} along with Equation~\eqref{eq: type C KL} (which follows from \cite[Proposition 6.1.1.2]{Lakshmibai-Raghavan}).
\end{proof}

We now define the term orders we use in this paper.  A {\bf diagonal} term order on $R_{\overline{v}}$
is one where, given any minor in $M_{\overline{v}}$ where the diagonal term is nonzero,
the diagonal term is the leading term.
In notation, this means that, if $\epsilon_1<\cdots<\epsilon_r$, $\delta_1<\cdots<\delta_r$,
$D$ is the minor of $M_{\overline{v}}$ using rows $\{\epsilon_1,\ldots,\epsilon_r\}$
and columns $\{\delta_1,\ldots,\delta_r\}$, and $\prod_{i=1}^r (M_{\overline{v}})_{\epsilon_i \delta_i}$ is nonzero, then this product is the leading term of $D$.
Note that there can be multiple distinct diagonal term orders.
However, even if the diagonal term is zero for a given minor, there are restrictions on which term can be its leading
term under a diagonal term order, since the condition applies to every subminor of the minor in question.
Because we are taking southwest minors rather than northwest minors in defining $I_{\overline{v},w}$, our diagonal term orders are equivalent to the antidiagonal term orders of~\cite{KnutsonMillerAnnals}.

We note that our diagonal term orders on different sets of variables are compatible with each other.

\begin{prop}\label{prop:diagonalcompatibility}
Let $c_k$ be an ascent of $v$, let $\prec$ be a diagonal term order on $R_{\overline{v}}$, and let
$\mathord{\prec'}=\mathord{\prec\mid_{R_{\overline{vc_k}}}}$ be the restriction of $\prec$ to $R_{\overline{vc_k}}$.
Then $\prec'$ is a diagonal term order on $R_{\overline{vc_k}}$.
\end{prop}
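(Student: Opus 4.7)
The plan is to verify that for every minor $D'$ of $M_{\overline{vc_k}}$ whose diagonal monomial $d$ is nonzero, $d$ is the $\prec'$-leading monomial of $D'$. By \Cref{cor: deleted variable}, there is a unique variable $z := z_{ij}$ in $V_{\overline{v}} \setminus V_{\overline{vc_k}}$, and the proof of \Cref{prop: partial symm matrices} establishes the matrix identity $M_{\overline{vc_k}} = M_{\overline{v}}|_{z=0}\,P(c_k)$. Thus any minor $D'$ at rows $R = \{\epsilon_1 < \cdots < \epsilon_r\}$ and columns $C = \{\delta_1 < \cdots < \delta_r\}$ equals $\pm\,\widehat{D}|_{z=0}$, where $\widehat{D}$ is the minor of $M_{\overline{v}}$ at rows $R$ and columns $c_k(C)$ taken in sorted order. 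In the signed expansion of $\widehat{D}$ over permutations, $d$ appears as the monomial of the term for a permutation $\pi$ that records the reordering of $(c_k(\delta_1),\ldots,c_k(\delta_r))$ back to sorted order.

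In the easy case $\pi=\mathrm{id}$, which occurs unless $C$ contains the full pair $\{n-k,n-k+1\}$ or the full pair $\{n+k,n+k+1\}$, $d$ is literally the diagonal monomial of $\widehat{D}$; since it is nonzero and does not involve $z$, the diagonal hypothesis on $\prec$ makes $d$ the $\prec$-leading monomial of $\widehat{D}$, and this survives the specialization $z=0$. In the hard case $\pi\ne\mathrm{id}$, the diagonal of $\widehat{D}$ itself vanishes: by \Cref{rows and columns with ones}, column $n-k+1$ (respectively $n+k+1$) of $M_{\overline{v}}$ has a single nonzero entry, the pivot $1$ at row $v(n-k+1)$ (respectively $v(n+k+1)$), which cannot fall on the diagonal of $\widehat{D}$. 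My plan is to cofactor-expand $\widehat{D}$ along each such column: each expansion collapses $\widehat{D}$ to $\pm\,\widehat{D}'$, a smaller minor of $M_{\overline{v}}$ obtained by deleting the pivot's row and column. Because $v(n-k+1)$ and $v(n+k+1)$ are precisely the two rows in which $z$ appears in $M_{\overline{v}}$, the reduced minor $\widehat{D}'$ does not involve $z$, and a direct index check shows that its diagonal equals $d$ as a monomial. The diagonal hypothesis on $\prec$, applied to the subminor $\widehat{D}'$ of $M_{\overline{v}}$, then gives that $d$ is the $\prec$-leading monomial of $\widehat{D}' = \pm\,\widehat{D}$; since $d$ does not involve $z$, it remains leading in $D' = \pm\,\widehat{D}|_{z=0}$.

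The main obstacle will be executing the hard case cleanly: when both pairs $\{n-k,n-k+1\}$ and $\{n+k,n+k+1\}$ sit inside $C$, two successive cofactor expansions are required, and one must track signs and index shifts with care throughout. The degenerate case $k=0$, in which the two pairs coincide and $z$ appears at only a single position of $M_{\overline{v}}$, should be handled separately since only one expansion is needed.
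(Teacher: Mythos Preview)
Your approach is essentially the same as the paper's: both transport a minor $D'$ of $M_{\overline{vc_k}}$ to a minor of $M_{\overline{v}}$ via the column permutation $c_k$, handle the easy case where the sort order is preserved directly, and in the hard case cofactor-expand along the pivot-only column $n\pm k+1$ of $M_{\overline{v}}$ to reduce to the easy case. One small inaccuracy: your claim that the reduced minor $\widehat{D}'$ ``does not involve $z$'' is not literally true when only one of the two pairs $\{n-k,n-k+1\}$, $\{n+k,n+k+1\}$ is fully contained in $C$, since the single cofactor expansion removes only one of the two rows carrying $z$; however, this does not matter, because the diagonal monomial $d$ itself never involves $z$, so once $d$ is the $\prec$-leading term of $\widehat{D}'$ it remains the leading term after specializing $z=0$.
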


\begin{proof}
Suppose $D'$ is a minor of $M_{\overline{vc_k}}$ with nonzero diagonal term
using rows $\epsilon_1<\cdots<\epsilon_s$ and columns $\delta_1<\cdots<\delta_s$.  Let $D$ be the minor of
$M_{\overline{v}}$ using rows $\epsilon_1<\cdots<\epsilon_s$ and columns $c_k(\delta_1)<\cdots<c_k(\delta_s)$.
If $D'$ involves only one of the columns $n-k$ and $n-k+1$, and $D'$ involves only one of the columns $n+k$
and $n+k+1$, then the diagonal term of $D$ in $M_{\overline{v}}$ is the same as the diagonal term of $D'$
in $M_{\overline{vc_k}}$, as the rows and columns are ordered in the same way.
Note that the variable $z_{ij}$ in $V_{\overline{v}}\setminus V_{\overline{vc_k}}$ cannot appear in the diagonal term of $D$, as that would imply the diagonal term of $D$ is
zero.  Hence the leading term of $D$ under $\prec'$ must be the diagonal term,
as the leading term of $D'$ under $\prec$ is the diagonal term.

Otherwise, if $D'$ involves both columns $n-k$ and $n-k+1$ in $M_{\overline{v}}$, then, since $c_k$ is an
ascent of $v$, there is a right-to-left maximum in column $n-k+1$ of $M_{\overline{v}}$
and column $n-k$ of $M_{\overline{vc_k}}$. So by \Cref{rows and columns with ones},
$D'=\pm\widetilde{D'}$ where $\widetilde{D'}$ is the minor formed by removing column $n-k$
and row $vc_k(n-k)$ from $D'$.  Similarly $D=\pm\widetilde{D}$ where $\widetilde{D}$ is the minor formed
by removing column $n-k+1$ and row $v(n-k+1)$ from $D$.  Now the argument in the previous paragraph
applies to $\widetilde{D}$ and $\widetilde{D'}$.  A similar argument applies if $D'$ involves both columns $n+k$ and $n+k+1$.
\end{proof}

We show there is at least one diagonal term order, namely the lexicographic term order $\prec_{\mathrm{lex}}$ where
$z_{ij}>z_{i'j'}$ if and only if either $i>i'$, or $i=i'$ and $j>j'$.  One can see from the next section that $\prec_{\mathrm{lex}}$ is the term order used in \cite{Knutson-degenerate}, made explicit for this case.

\begin{prop}\label{prop:lexisdiagonal}
The term order $\prec_{\mathrm{lex}}$ is a diagonal term order.
\end{prop}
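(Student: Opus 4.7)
The plan is a two-step reduction: first, Laplace-expand along pure rows and columns to reduce to a minor of the generic symmetric matrix $M^\bullet$ (with $M^\bullet_{r,c}=z_{\min(r,c),\max(r,c)}$) specialized by setting some variables to zero; second, establish lex-diagonality for $M^\bullet$ by a swap argument on permutations.

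Take a submatrix of $M_{\overline{v}}$ with rows $\epsilon_1<\cdots<\epsilon_p$ and columns $\delta_1<\cdots<\delta_p$ and nonzero diagonal product. Since $v\geq v_\square$ is $123$-avoiding by \Cref{lem: shuffle}, every column index of $v$ is a left-to-right minimum or a right-to-left maximum, so by \Cref{rows and columns with ones} the corresponding row or column of $M_{\overline{v}}$ is pure (a single $1$ and zeros). Whenever a diagonal entry is a $1$, Laplace expansion along its pure row or column strips it off with a $\pm 1$ factor. Iterating, we reach a submatrix whose diagonal is all variables, which forces rows to lie in $\{v(b_s):s\in[n]\}$ and columns in $\{a_t:t\in[n]\}$. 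Moreover, no off-diagonal $1$ can remain: such a $1$ at $(\epsilon_k,\delta_l)$ with $k\ne l$ would make the whole column $\delta_l$ of the submatrix consist of that $1$ and zeros, forcing the diagonal entry $(\epsilon_l,\delta_l)$ to vanish, contrary to assumption. So the reduced minor equals the corresponding minor of $M^\bullet$ specialized by sending the variables outside $V_{\overline{v}}$ to zero.

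It now suffices to prove that $\prec_{\mathrm{lex}}$ is a diagonal term order for minors of $M^\bullet$. Fix a submatrix with rows $r_1<\cdots<r_p$ and columns $c_1<\cdots<c_p$; each $\pi\in S_p$ contributes the monomial $\prod_k z_{\min(r_k,c_{\pi(k)}),\max(r_k,c_{\pi(k)})}$. The key step is a swap lemma: if $\pi$ has an adjacent inversion at $k,k{+}1$, set $a=r_k<b=r_{k+1}$ and $s=c_{\pi(k+1)}<t=c_{\pi(k)}$; a brief case analysis using $a<b$ and $s<t$ shows that $z_{\min(b,t),\max(b,t)}$ is strictly $\prec_{\mathrm{lex}}$-greater than both $z_{\min(a,t),\max(a,t)}$ and $z_{\min(b,s),\max(b,s)}$. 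In the lex-sorted comparison of the two pairs of factors, the swapped pair therefore strictly dominates the original. Bubble-sorting inversions drives every non-identity $\pi$ to the identity with a strict monomial increase at each step, so the identity alone attains the lex maximum, which is the diagonal monomial (and in particular it is not cancelled by any other term in the determinantal sum).

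Finally, the diagonal monomial uses only variables in $V_{\overline{v}}$ by the nonzero-diagonal hypothesis, so it survives the specialization to the minor of $M_{\overline{v}}$; since it was the strict lex maximum of the generic minor and no new monomials appear after specialization, it remains the lex-leading term of the minor of $M_{\overline{v}}$. The main obstacle is the swap lemma, whose verification needs a modest enumeration of subcases on the relative order of $a,b,s,t$, but no fundamentally new idea is required.
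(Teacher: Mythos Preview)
Your argument is correct and takes a genuinely different route from the paper's proof. The paper proceeds by downward induction on $\ell(w_0v)$: it identifies the last ascent $c_k$ of~$v$, observes that the variable $z_{ij}\in V_{\overline{v}}\setminus V_{\overline{vc_k}}$ is the $\prec_{\mathrm{lex}}$-largest and has no variables to its southeast in~$M_{\overline{v}}$, and then invokes \Cref{prop:diagonalcompatibility} together with the inductive hypothesis on $R_{\overline{vc_k}}$. Your approach instead strips the diagonal~$1$s by Laplace expansion, reduces to a minor of the full generic symmetric matrix (specialized by zeroing some variables), and handles that case directly with a bubble-sort swap argument on permutations. Your method is more self-contained and avoids the inductive scaffolding that the paper sets up for later use; the paper's method, on the other hand, meshes with the recursion that drives the proof of \Cref{thm: main}, so it costs them almost nothing extra.

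One small correction: in your paragraph ruling out off-diagonal $1$s after the reduction, you say the column $\delta_l$ of the submatrix would be pure. That is not what happens---after the reduction each $\delta_l$ is some $a_t$, a left-to-right minimum, and by \Cref{rows and columns with ones} it is the \emph{row} $v(a_t)$ that is pure, not the column. The cleaner way to finish is to note directly that a $1$ at $(\epsilon_k,\delta_l)$ forces $\epsilon_k=v(\delta_l)=v(a_t)$, whereas $\epsilon_k=v(b_s)$ for some $s$ since its diagonal entry is a variable; this is already a contradiction because the $a$'s and $b$'s are disjoint. Your conclusion stands, but the stated reason should be adjusted.
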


\begin{proof}
We prove this by downwards induction in length.  The base case is where $v=w_0$, where $R_{\overline{v}}$ has no variables and hence the statement is vacuously true.
Let $c_k$ be the last ascent of $v$.
By \Cref{cor: deleted variable} $V_{\overline{v}}\setminus V_{\overline{vc_k}}=\{z_{ij}\}$, where $z_{ij}$ is the entry of $M_{\overline{v}}$ in positions $(v(n\pm k+1),n\pm k)$.
Moreover, $z_{ij}$ must appear as the south-most nonzero entry in its column, and the 1 appearing immediately to its right is a right-to-left maximum.  Hence there are no variables southeast of (either appearance, if there are two, of) $z_{ij}$ in $M_{\overline{v}}$.
It follows that $z_{ij}$ is the largest variable in $R_{\overline{v}}$ under $\prec_{\mathrm{lex}}$.

 By induction, $\prec_{\mathrm{lex}}$ restricted to $R_{\overline{vc_k}}$ is a diagonal term order.
By \Cref{prop:diagonalcompatibility}, it suffices to show that 
if $z_{ij}$ appears in a minor of $M_{\overline{v}}$ with a nonzero diagonal term, then it must appear in the diagonal term.
Since there are no variables southeast of $z_{ij}$ in $M_{\overline{v}}$, any minor of $M_{\overline{v}}$ with nonzero diagonal term and such that $z_{ij}$ does not appear on the diagonal term must have as its southeast entry the $1$ directly to the right of $z_{ij}$. 
By \Cref{rows and columns with ones} the only nonzero entry of rightmost column of the minor is this $1$. 
It follows that $z_{ij}$ does not appear in any term of the minor.
\end{proof}

We now state our main theorem.

\begin{thm}\label{thm: main}
Given $v\ge v_\square$, the size $r_w(p,q)+1$ minors of $\tau_{p,q}(M_{\overline{v}})$ over all $(p,q)$ in $E(w)$
form a Gr\"obner basis for $I_{\overline{v},w}$ with respect to any diagonal term order.
\end{thm}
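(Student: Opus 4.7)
My strategy is a $K$-polynomial matching argument via the vertex decomposition of the type C subword complex from \Cref{sec: type C swc}. Fix a diagonal term order $\prec$ on $R_{\overline v}$, let $J = I_{\overline v,w}$ be the ideal generated by the given minors, and let $J_0$ be the ideal of their leading terms, which is a squarefree monomial ideal independent of the choice of diagonal term order. Since the minors vanish on $\mathcal N_{v,w}$ we have $J \subseteq I(\mathcal N_{v,w})$, and by Knutson's theorem in \cite{Knutson-degenerate}, $\init_\prec I(\mathcal N_{v,w})$ equals the Stanley--Reisner ideal $I_\Delta$ of the subword complex $\Delta$ attached to $(v,w)$. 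Therefore
\[
J_0 \ \subseteq\ \init_\prec J \ \subseteq\ \init_\prec I(\mathcal N_{v,w}) \ =\ I_\Delta.
\]
The theorem reduces to showing $J_0 = I_\Delta$; this would simultaneously collapse the entire chain, proving Gr\"obner-basis-hood and yielding an independent derivation of \Cref{prop: KL coord ring}.

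The plan for $J_0 = I_\Delta$ is a downward induction on $\ell(v)$, with trivial base case $v = w_0$. In the inductive step, let $c_k$ be an ascent of $v$; by \Cref{cor: deleted variable}, $V_{\overline{vc_k}} = V_{\overline v} \setminus\{z_{ij}\}$ for an explicit $z_{ij}$, which labels a vertex of $\Delta$. The vertex decomposition of $\Delta$ along $z_{ij}$ translates into the $K$-polynomial identity
\[
\mathcal K(R_{\overline v}/I_\Delta;\mathbf t) = \mathcal K\!\left(R_{\overline v}/(I_{\Delta\setminus z_{ij}} + \langle z_{ij}\rangle);\mathbf t\right) + (1-\mathbf t^{\deg z_{ij}})\,\mathcal K\!\left(R_{\overline v}/I_{\operatorname{lk}_\Delta(z_{ij})};\mathbf t\right).
\]
I would then establish the parallel identity on the $J_0$ side: the specialization $J_0 + \langle z_{ij}\rangle$ should equal the analogous essential-set monomial ideal for the data $(\overline{vc_k}, w)$, while the colon ideal $(J_0 : z_{ij})$ should match the corresponding ideal for the Kazhdan-Lusztig problem encoded by the link. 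Combined with the term-order compatibility from \Cref{prop:diagonalcompatibility} and the inductive hypothesis applied to each piece, additivity of $K$-polynomials closes the induction.

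The main obstacle will be confirming that the type C essential set $E(w)$ interacts cleanly with the vertex decomposition. In type C, $E(w)$ is defined (\Cref{def: type C essential}) with a redundancy condition across the axis of symmetry, and the peeled variable $z_{ij}$ typically appears in two symmetric positions of $M_{\overline v}$. One must check: (i) modulo $z_{ij}$, the essential-set minors for $(\overline v, w)$ specialize to those for $(\overline{vc_k}, w)$ up to generators of $\langle z_{ij}\rangle$; and (ii) the link corresponds to an explicit modified Kazhdan-Lusztig problem (likely involving $wc_k$ in the cases where $c_k$ is also an ascent of $w$), whose essential-set minors match the colon ideal up to leading terms. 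Managing the rank conditions at symmetric essential-set boxes — where two minors may collectively control a single redundancy — and verifying that no ``extra'' generators beyond those coming from $E(w)$ are ever needed, is where most of the technical work will lie.
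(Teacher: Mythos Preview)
Your overall strategy---sandwich plus $K$-polynomial matching via the vertex decomposition---is the paper's, but with the chain reversed. The paper proves $K_{\overline v,w}\subseteq J_{\overline v,w,\prec}\subseteq\init_\prec I_{\overline v,w}$ (in your notation $I_\Delta\subseteq J_0\subseteq\init_\prec I$), the first containment being the combinatorial heart (\Cref{prop: K in J}), and then collapses the chain by matching the $K$-polynomials of the \emph{outer} terms $K_{\overline v,w}$ and $I_{\overline v,w}$: both satisfy the Kostant--Kumar recursion (\Cref{thm:KostKum}, \Cref{prop: K(K) = K(I)}). This orientation avoids invoking Knutson's degeneration theorem as a black box (that result is stated for the specific lex order, whereas the theorem here addresses every diagonal order), and it needs only the \emph{one-sided} containments that the technical lemma supplies, rather than the two-sided identifications of $(J_0:z_{ij})$ and $J_0+\langle z_{ij}\rangle$ with smaller $J_0$'s that your plan requires. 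What you call ``the main obstacle'' is exactly where the paper spends its effort: \Cref{lem: before lunch} shows that the leading term of an essential minor of $M_{\overline{vc_k}}$ is divisible by one for $M_{\overline v}$ (with the $z_{ij}$ factor appearing precisely in the ascent case), and \Cref{lem: w ascent essential box} governs how $E(wc_k)$ relates to $E(w)$. One further caution: your assertion that $J_0$ is independent of the diagonal term order is not available a priori---minors whose diagonal term vanishes can have different leading terms under different diagonal orders---and the paper records this independence only as a \emph{consequence} of the theorem.
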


The proof appears in Section~\ref{sec: proof}.
The main technique is to show that $K$-polynomials of subword complexes, suitably weighted, satisfy the Kostant-Kumar recursion.
(This technique follows \cite{Knutson-degenerate}.)

\subsection{Torus action of type~C Kazhdan-Lusztig varieties and the weights for $v\ge v_\square$}\label{sect:TActionSmall}\label{sect:Taction1}

Let $T$ be the torus consisting of the diagonal matrices in  $Sp_{2n}(\mathbb{K})$. 
Since any $(t_{ij})\in T$ is fixed under $\sigma$, it satisfies $t_{ii}=t_{2n+1-i,2n+1-i}^{-1}$ for all $i$.
The torus $T$ acts on $\Omega_v^\circ$ by left multiplication, i.e.\ given matrices $M\in Sp_{2n}(\mathbb{K})$ and $N\in T$,
	$$
	N\bullet (MB^+_G/B^+_G):=(N M)B^+_G/B^+_G.
	$$ 
This action induces the following torus action on $\Sigma_v$: for $M\in\Sigma_v$ and $N\in T$, 
$N\bullet_v M$ is the matrix in $\Sigma_v$ representing $(N M)B^+_G/B^+_G$.
Let us describe the action more concretely.
Notice that in general $NM\notin\Sigma_v$ because the entry of $NM$ in position $(\pi(j),j)$ need not equal $1$.
Thus to obtain an element of $\Sigma_v$ we need to multiply on the right by the appropriate element of $T$ to make these entries $1$.

It will be most convenient for us to denote by $(x_1,\ldots,x_n)$ the element of $T$ where
	\begin{equation}\label{eq:torus}
	(x_1,\ldots,x_n)
	:=\text{diag}(x_n,\ldots,x_1,x_1^{-1}\dots, x_{n}^{-1}),
	\end{equation}
the diagonal matrix with diagonal entries $x_n,\ldots,x_1,x_1^{-1}\dots, x_{n}^{-1}$ from northeast to southwest.

\begin{ex}\label{eg:squarewordweights} We describe the action of $T$ on $\Sigma_{v_\square}$ for $v_\square = 321654$:
 	\begin{align*}
	(x_1,x_2,x_3)
 \bullet_{v}
\begin{bmatrix}
	0&0&1 &0&0&0 \\
	0&1&0 &0&0&0\\
	1&0&0 &0&0&0\\
	 z_{11} & z_{12} &z_{13} & 0&0&1 \\
	 z_{12} & z_{22} &z_{23} & 0&1&0 \\
	  z_{13} & z_{23} &z_{33} &  1&0&0
	\end{bmatrix}
	&=
	(x_1,x_2,x_3)
\begin{bmatrix}
	0&0&1 &0&0&0 \\
	0&1&0 &0&0&0\\
	1&0&0 &0&0&0\\
	 z_{11} & z_{12} &z_{13} & 0&0&1 \\
	 z_{12} & z_{22} &z_{23} & 0&1&0 \\
	  z_{13} & z_{23} &z_{33} &  1&0&0
	\end{bmatrix}
	(x_3^{-1},x_2^{-1},x_1^{-1})\\
	&=
	\begin{bmatrix}
	0&0&1 &0&0&0 \\
	0&1&0 &0&0&0\\
	1&0&0 &0&0&0\\
	x_1^{-2} z_{11} & x_1^{-1}x_2^{-1}z_{12} &x_1^{-1}x_3^{-1}z_{13} & 0&0&1 \\
	x_1^{-1}x_2^{-1} z_{12} & x_2^{-2}z_{22} &x_2^{-1}x_3^{-1}z_{23} & 0&1&0 \\
	 x_1^{-1}x_3^{-1}z_{13} & x_2^{-1}x_3^{-1}z_{23} &x_3^{-2}z_{33} &  1&0&0
	\end{bmatrix}.	
	\end{align*}
\end{ex}

Since $T$ acts by left multiplication on each Schubert variety and on each opposite Schubert cell,  the torus $T$ also acts by left multiplication on each Kazhdan-Lusztig variety $\mathcal{N}_{v,w}$.

Let us now restrict to $v\geq v_{\square}$ and explicitly compute the weights on the coordinates $z_{ij}$ of the action. 
We adopt the convention that the weight $e_i$ denotes the homomorphism in $\text{Hom}(T,\mathbb{K})$ that sends the element $(x_1,\ldots,x_n)$ to $x_i$.  We will write weights additively.  In addition, we will let $t_i=\exp(e_i)$ denote the formal exponential of the weight $e_i$, so that the group operation on the $t_i$ (and monomials therein) is multiplication.

\begin{lemma}\label{lem:squarewordweights}
The coordinate function $z_{ij}$ on $M_{\overline{v_\square}}$ has weight $e_i+e_j$.
\end{lemma}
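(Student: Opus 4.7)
The plan is a direct computation of the $T$-action on $\Sigma_{v_\square}$, in the spirit of Example~\ref{eg:squarewordweights}. First I would locate $z_{ij}$ inside $M_{\overline{v_\square}}$: by \Cref{prop: square patch}, the lower-left $n\times n$ block of $M_{\overline{v_\square}}$ is the generic symmetric matrix~$Z$, so $z_{ij}$ occurs precisely at positions $(n+i,j)$ and $(n+j,i)$.

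Next I would unwind $N\bullet_{v_\square} M$ as $N\cdot M\cdot D$, where $N=(x_1,\ldots,x_n)=\mathrm{diag}(x_n,\ldots,x_1,x_1^{-1},\ldots,x_n^{-1})$ from \eqref{eq:torus}, and $D$ is the diagonal matrix that rescales each column $j$ so that the pivot $1$ at $(v_\square(j),j)$ remains $1$. Left-multiplication by $N$ scales row $n+i$ by $N_{n+i,n+i}=x_i^{-1}$. For the rescaling: if $j\le n$, then $v_\square(j)=n+1-j\le n$, so $N_{v_\square(j),v_\square(j)}=x_j$, and hence $D_{jj}=x_j^{-1}$. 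Combining these two factors, the entry at $(n+i,j)$ of $N\bullet_{v_\square} M$ equals $x_i^{-1}x_j^{-1}\,z_{ij}$. By the convention of Section~\ref{sec: CA background} (where $z(t\cdot p)=x^{-a}\,z(p)$ records weight $a$), this is exactly the assertion that $z_{ij}$ has weight $e_i+e_j$.

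As a consistency check I would verify that the symmetric occurrence of $z_{ij}$ at $(n+j,i)$ transforms by the same scalar: the row factor becomes $N_{n+j,n+j}=x_j^{-1}$ and, for $i\le n$, the column factor is $x_i^{-1}$, giving the same $x_i^{-1}x_j^{-1}$. This both confirms that $N\bullet_{v_\square} M$ still lies in $\Sigma_{v_\square}$ (so the action is well-defined on coordinates) and matches the weight computation above. No real obstacle arises; the only thing requiring care is the convention \eqref{eq:torus}, in which the $i$-th entry of $(x_1,\ldots,x_n)$ is placed at row $n+1-i$ (and its inverse at row $n+i$), rather than at row~$i$.
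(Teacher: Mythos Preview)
Your proposal is correct and takes essentially the same approach as the paper: a direct computation of the $T$-action on $\Sigma_{v_\square}$ showing that the entry $z_{ij}$ gets scaled by $x_i^{-1}x_j^{-1}$, hence has weight $e_i+e_j$. The paper's proof simply points to Example~\ref{eg:squarewordweights} for this computation, whereas you have spelled out the row and column scalings explicitly; the content is the same.
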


\begin{proof}
As one can see from Example~\ref{eg:squarewordweights}, acting on $M\in \Sigma_{{v_\square}}$ by $(x_1,\ldots,x_n)$ multiplies the entry $z_{ij}(M)$ by $x_i^{-1}x_j^{-1}$.  Hence, the weight of the action on $z_{ij}$, the coordinate function on this entry, is $e_i+e_j$.
\end{proof}

We next see that the analog of Lemma \ref{lem:squarewordweights} holds for any $v\geq v_{\square}$.
To do so we need to consider the action of $C_n$ on the weights induced from permuting the diagonal entries of $(x_1,\ldots,x_n)$.
Notice that given $u\in C_n$, the induced action is so that for $i\le n$,
	$$
	u\cdot e_i=\begin{cases}
	-e_{n+1-u(n+i)}&  \text{if }\ u(n+i)\le n,\\
	e_{u(n+i)-n}&  \text{if }\ u(n+i)\ge n+1.
\end{cases}
$$

\begin{prop}\label{prop: weights}
Let $v\ge v_\square$ with $\overline{v}=u_lv_\square u_r$.
The weight of a coordinate function of $M_{\overline{v}}$ depends only on its position (and not on $\overline{v}$).
Furthermore, the coordinate function $z_{ij}$ on $M_{\overline{v}}$ has weight $u_l\cdot e_i+u_l\cdot e_j$.
\end{prop}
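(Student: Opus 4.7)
The plan is to compute the torus action on $\Sigma_v$ explicitly in matrix coordinates and read off the weight at each position, then specialize to the variable positions in $M_{\overline{v}}$. For the first step, I would argue that given $N \in T$ and $M \in \Sigma_v$, the element $N \bullet_v M$ is obtained from $NM$ by restoring the normalization that the entry at $(v(j),j)$ equals $1$: since that $1$ becomes $N_{v(j),v(j)}$ after left-multiplication by $N$, we correct it by right-multiplying by the diagonal matrix with $(j,j)$-entry $N_{v(j),v(j)}^{-1}$, which lies in $B^+_G$ and so preserves the coset $NM \cdot B^+_G$. The entry of $N \bullet_v M$ at position $(p,q)$ is then $(N_{pp}/N_{v(q),v(q)}) \cdot M_{pq}$, so the coordinate function at position $(p,q)$ has weight $\chi(v(q)) - \chi(p)$, where $\chi(i)$ denotes the character $N \mapsto N_{ii}$. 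This expression depends only on the position (and $v$), settling the first claim.

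From the parametrization \eqref{eq:torus}, $\chi(i) = e_{n+1-i}$ if $i \le n$ and $\chi(i) = -e_{i-n}$ if $i > n$, from which one reads off the key identity $\chi(2n+1-k) = -\chi(k)$ for all $k \in [2n]$. For the explicit formula, I would specialize to $z_{ij}$ at position $(p,q) = (u_l(n+j), u_r^{-1}(i))$, so that $v(q) = u_l v_\square(i) = u_l(n+1-i)$. Using that $u_l \in C_n$ gives $u_l(n+1-i) = 2n+1 - u_l(n+i)$, and applying the identity above yields $\chi(u_l(n+1-i)) = -\chi(u_l(n+i))$. Hence the weight of $z_{ij}$ simplifies to $-\chi(u_l(n+i)) - \chi(u_l(n+j))$. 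A direct case check against the definition of the $C_n$-action on weights then shows $-\chi(u_l(n+k)) = u_l \cdot e_k$ in both cases ($u_l(n+k) \le n$ and $u_l(n+k) > n$), whence the weight equals $u_l \cdot e_i + u_l \cdot e_j$ as claimed. For the other symmetric position $(u_l(n+i), u_r^{-1}(j))$, the identical calculation with $i$ and $j$ swapped produces the same final expression, confirming consistency.

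The computation is elementary once the torus action is pinned down. The main obstacle is purely notational: keeping straight the indexing conventions for $T$ (with $N_{ii} = N_{2n+1-i,2n+1-i}^{-1}$), the $C_n$-action on the weight lattice, and the two symmetric matrix positions of each variable, and verifying that the two $C_n$-symmetries — on matrix positions and on weights — interact correctly through the character $\chi$.
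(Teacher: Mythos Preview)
Your proposal is correct and follows essentially the same approach as the paper's proof: both compute the explicit torus action $N\bullet_v M = N\cdot M\cdot\text{diag}(N_{v(1),v(1)}^{-1},\ldots,N_{v(2n),v(2n)}^{-1})$, read off that the weight at position $(\epsilon,\delta)$ corresponds to $N_{\epsilon\epsilon}N_{v(\delta),v(\delta)}^{-1}$ and hence depends only on position, then specialize to the positions of $z_{ij}$ and use the $C_n$-symmetry of $u_l$ together with a case check against the definition of $u_l\cdot e_k$. The paper's proof uses the auxiliary notation $y_i$ for the diagonal entries where you use the character $\chi(i)$, but the computation is identical.
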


Note that for $\overline{v}=u_lv_\square u_r$ and $i\le n$, in the notation of Lemma~\ref{lem: 123 shuffle} we have $u_l(i)=v(a_{n+1-i})$ and $u_l(n+i)=v(b_{n+1-i})$.

\begin{proof}
Define $y_{n+i}:=x_i^{-1}$ and $y_{n+1-i} := x_i$, $1\leq i\leq n$, so that $ \text{diag}(y_1,\dots, y_{2n})=(x_1,\ldots,x_n)$.
 Then, the action of $T$ on $\Sigma_v$, $v\in C_n$, is given by
\[
\mathbf{y}\bullet_{v} M = \text{diag}(y_1,\dots, y_{2n})~M~\text{diag}(y_{v(1)}^{-1},\dots, y_{v(2n)}^{-1}), \quad \mathbf{y}\in T, ~M\in \Sigma_v.
\]
Therefore, the weight for the coordinate function in position $(\epsilon,\delta)$ is the weight corresponding to $y_\epsilon y^{-1}_{v(\delta)}$, which depends only on $(\epsilon,\delta)$ and not on $\overline{v}$.

Suppose that $v\ge v_{\square}$ and $\overline{v}=u_lv_\square u_r$.
Let $M\in \Sigma_v$ and $\mathbf{y}\in T$. 
Given $i\le j$, the variable $z_{ij}$ appears as entries $(v(b_{n+1-j}),a_i)$ and $(v(b_{n+1-i}),a_j)$ of $M$.
The entries in positions $(v(b_{n+1-j}),a_i)$ and $(v(b_{n+1-i}),a_j)$ of $\mathbf{y}\bullet_{v} M$ are 
	$$
	y_{v(b_{n+1-j})}y^{-1}_{v(a_i)}z_{ij}=y_{u_l(n+j)}y^{-1}_{u_l(n+1-i)}z_{ij}\ \ \text{ and }\ \ y_{v(b_{n+1-i})}y^{-1}_{v(a_j)}z_{ij}=y_{u_l(n+i)}y^{-1}_{u_l(n+1-j)}z_{ij},
$$
respectively.
By definition of $\mathbf{y}$ and the fact $u_l\in C_n$,
	$$
	y_{u_l(n+j)}y^{-1}_{u_l(n+1-i)}=y_{u_l(n+j)}y_{u_l(n+i)}=y_{u_l(n+i)}y^{-1}_{u_l(n+1-j)}.
$$
Again by definition of $\mathbf{y}$, we conclude that the weight of $z_{ij}$ is $u_l\cdot e_i+u_l\cdot e_j$.
\end{proof}

\begin{ex} Let $v=642531$ and $\overline{v}$ be the factorization associated to $(a_{\bullet})=(1,2,3)$, and $(b_{\bullet})=(4,5,6)$.
We describe the weights of the coordinate functions of $M_{\overline{v}}$ via an explicit computation:
 	\begin{align*}
	(x_1,x_2,x_3)
	 \cdot
	\begin{bmatrix}
	0&0&0 &0&0&1 \\
	0&0&1 &0&0&0\\
	0&0&z_{23} &0&1&0\\
	 0 & 1 &0 & 0&0&0 \\
	 0 & z_{23} &z_{33} & 1&0&0 \\
	  1 & 0 &0 &  0&0&0
	\end{bmatrix}
	&=
	(x_1,x_2,x_3)
	\begin{bmatrix}
	0&0&0 &0&0&1 \\
	0&0&1 &0&0&0\\
	0&0&z_{23} &0&1&0\\
	 0 & 1 &0 & 0&0&0 \\
	 0 & z_{23} &z_{33} & 1&0&0 \\
	  1 & 0 &0 &  0&0&0
	\end{bmatrix}
	(x_2^{-1},x_1,x_3)\\
	&=
	\begin{bmatrix}
	0&0&0 &0&0&x_3 \\
	0&0&x_2 &0&0&0\\
	0&0&x_1z_{23} &0&x_1&0\\
	 0 & x^{-1}_1 &0 & 0&0&0 \\
	 0 & x^{-1}_2z_{23} &x^{-1}_2z_{33} & x^{-1}_2&0&0 \\
	  x^{-1}_3 & 0 &0 &  0&0&0
	\end{bmatrix}
	(x_2^{-1},x_1,x_3)\\
	&=
	\begin{bmatrix}
	0&0&0 &0&0&1 \\
	0&0&1 &0&0&0\\
	0&0&x_2^{-1}x_1z_{23} &0&1&0\\
	 0 & 1&0 & 0&0&0 \\
	 0 & x_1x^{-1}_2z_{23} & x_2^{-1}x^{-1}_2z_{33} & 1&0&0 \\
	 1 & 0 &0 &  0&0&0
	\end{bmatrix}.
	\end{align*}
Thus the weight of $z_{23}$ is $-e_1+e_2$ and the weight of $z_{33}$ is $-e_2-e_2$. 
We verify that for $z_{23}$ this agrees with \Cref{prop: weights}. 
Since $v(b_{n+1-2})\le n$ and $v(b_{n+1-3})\ge n+1$, we have $u_l\cdot e_2=-e_1$ and $u_l\cdot e_3=e_2$.
One can verify that if we now take  $\overline{v}$ to be the factorization associated to $(a_{\bullet})=(2,3,6)$ and $(b_{\bullet})=(1,4,5)$ (as in the second part of \Cref{ex: variable labels change}) the weight of $z_{12}$ is $-e_1+e_2$ and the weight of $z_{22}$ is $-e_2-e_2$.
\end{ex}

We end by noting that this multigrading is positive, so that the only elements in $R_{\overline{v}}$ which have degree ${\mathbf 0}$ are the constants.

\begin{cor}
Let $v\geq v_\square$ with $\overline{v} = u_lv_\square u_r$. 
The multigrading on $R_{\overline{v}}$ that assigns degree $u_l\cdot e_i+u_l\cdot e_j$ to coordinate function $z_{ij}$ is a positive multigrading.
\end{cor}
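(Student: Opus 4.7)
The plan is to exhibit a $\mathbb{Z}$-linear functional $\varphi\colon\mathbb{Z}^n\to\mathbb{Z}$ that takes the same positive value on the degree of every variable $z_{ij}$ of $R_{\overline{v}}$. Once this is in hand, any monomial $m\ne 1$ in the $z_{ij}$ satisfies $\varphi(\deg m)>0$ and hence $\deg m\ne \mathbf{0}$, which forces the degree-$\mathbf{0}$ component of $R_{\overline{v}}$ to consist solely of scalars.

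By \Cref{prop: weights}, $\deg z_{ij} = u_l\cdot(e_i+e_j)$. From the formula for the $C_n$-action on $\mathbb{Z}^n$ displayed just before \Cref{prop: weights}, every element of $C_n$ acts on the basis $\{e_1,\ldots,e_n\}$ as a signed permutation. Signed permutations preserve the standard inner product $\langle e_i,e_j\rangle = \delta_{ij}$, so I will use that $\langle u_l\cdot\alpha,\, u_l\cdot\beta\rangle = \langle\alpha,\beta\rangle$ for all $\alpha,\beta\in\mathbb{Z}^n$.

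Setting $\varphi(\alpha) := \langle \alpha,\, u_l\cdot(e_1+\cdots+e_n)\rangle$, the key computation is then immediate:
\[
\varphi(\deg z_{ij}) \;=\; \langle u_l\cdot(e_i+e_j),\, u_l\cdot(e_1+\cdots+e_n)\rangle \;=\; \langle e_i+e_j,\, e_1+\cdots+e_n\rangle \;=\; 2,
\]
valid for every variable (both $i<j$ and $i=j$). Consequently, for any monomial $m=\prod z_{ij}^{a_{ij}}$, one has $\varphi(\deg m) = 2\sum a_{ij}$, which vanishes only when $m$ is a scalar. This proves positivity.

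There is no substantive obstacle beyond \Cref{prop: weights}; everything reduces to the orthogonality of the Weyl group action on the character lattice. An equivalent framing is the following: the degrees $\{u_l\cdot(e_i+e_j)\}$ all lie in the image $u_l\cdot\mathbb{R}_{\ge 0}^n$ of the pointed cone $\mathbb{R}_{\ge 0}^n$ under the invertible lattice automorphism $u_l$, and the image of a pointed cone under an invertible linear map is again pointed.
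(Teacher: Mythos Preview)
Your proof is correct and, at its core, coincides with the paper's: the paper simply observes that the vectors $e_i+e_j$ generate a pointed cone and that pointedness is preserved under the invertible linear map $u_l$, which is exactly your final paragraph. Your explicit linear functional $\varphi=\langle\,\cdot\,,u_l\cdot(e_1+\cdots+e_n)\rangle$ is a nice concrete witness of this pointedness (it shows the multigrading coarsens to twice the standard grading), but it is not a genuinely different route.
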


\begin{proof}
The set of all vectors $e_i+e_j$ generates a pointed cone, so the images of these vectors under the action of a fixed $u_l$ do also.
\end{proof}

\section{Type \texorpdfstring{$C$}{C} subword complexes and vertex decomposition}\label{sec: type C swc}

\subsection{Subword complexes}\label{sec:subword complexes}
In \cite{KnutsonMillerAdvances,KnutsonMillerAnnals} A.~Knutson and E.~Miller defined a family of simplicial complexes, called subword complexes, for arbitrary Coxeter groups.
Let $Q=(\alpha_1,\ldots,\alpha_\ell)$ be a reduced word for $v\in C_n$, as defined in Section~\ref{sec: weak order}.
The \textbf{subword complex} $S(Q,w)$ associated to $Q$ and~$w\in C_n$
is the simplicial complex on the vertex set $[\ell]=\{1,\ldots,\ell\}$
whose maximal faces are the sets $[\ell]\setminus\{i_1,\ldots,i_k\}$
such that the subword $(\alpha_{i_1},\ldots,\alpha_{i_k})$ of~$Q$ is a reduced word for~$w$.
If $v\not\Bruhatge w$,
then $S(Q,w)=\emptyset$ is the simplicial complex with no faces.
This must be distinguished from the complex $\{\emptyset\}$, which is $S(Q,v)$ whenever $Q$ is a reduced word for $v$.

The key fact we use about subword complexes is their vertex decomposition,
first proved as \cite[Theorem E]{KnutsonMillerAnnals} (for every Coxeter group).

\begin{thm}
\label{thm: vertex decomposition}
Let $v,w\in C_n$, and let $Q=(\alpha_1,\ldots,\alpha_\ell)$ be a reduced word for~$v$.
Assume that $v\Bruhatge w$.
If $v=1$, then $w=1$, $Q=()$, and $S(Q,w)=\{\emptyset\}$.
Otherwise $\ell>0$.  Let $Q'=(\alpha_1,\ldots,\alpha_{\ell-1})$ and $i=\alpha_\ell$. Then
\[S(Q,w) = \cone_\ell S(Q',w) \cup S(Q',wc_i). \]
\end{thm}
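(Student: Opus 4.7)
\medskip

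The plan is to verify the equality of simplicial complexes by checking facets and using the standard description of $S(Q,w)$ in terms of subwords. The base case $v=1$ forces $\ell=0$, and $v\Bruhatge w$ then forces $w=1$; the only reduced subword of the empty word is itself, so the unique facet is $\emptyset$, giving $S(Q,w)=\{\emptyset\}$. From here on assume $\ell>0$.

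For the inclusion $\subseteq$, I would take a facet $F$ of $S(Q,w)$, so that $J:=[\ell]\setminus F$ indexes a reduced subword of $Q$ spelling $w$, and split on whether $\ell\in F$. If $\ell\in F$, then $J\subseteq[\ell-1]$, so the same $J$ witnesses that $F\setminus\{\ell\}$ is a facet of $S(Q',w)$; hence $F\in\cone_\ell S(Q',w)$. If $\ell\notin F$, then $\ell\in J$, and since $J$ is a reduced word for $w$ ending in $\alpha_\ell=c_i$, deleting its last letter gives a reduced word for $wc_i$. This shows $w\Bruhatge wc_i$ (so in particular $c_i$ is a descent of $w$) and that $[\ell-1]\setminus F$ indexes a reduced subword of $Q'$ spelling $wc_i$, i.e., $F$ is a facet of $S(Q',wc_i)$.

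For the reverse inclusion, I would handle each piece of the union separately. For a facet $F'\cup\{\ell\}$ of $\cone_\ell S(Q',w)$, the set $[\ell-1]\setminus F'$ indexes a reduced word for $w$ in $Q'$, which is equally a reduced subword of $Q$ whose complement is $F'\cup\{\ell\}$; so $F'\cup\{\ell\}$ is a facet of $S(Q,w)$. For a facet $F_0$ of $S(Q',wc_i)$, viewed as a subset of $[\ell]$ not containing $\ell$, the set $[\ell-1]\setminus F_0$ spells $wc_i$, so appending position $\ell$ (carrying the letter $c_i$) gives a word for $wc_i\cdot c_i=w$ in $Q$ with complement $F_0$. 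If $c_i$ is a descent of $w$, this word is already reduced and $F_0$ is itself a facet of $S(Q,w)$. Otherwise the word has length $\ell(w)+2$, so I invoke the standard Coxeter-group fact that every expression for a Weyl group element contains a reduced subexpression for it, obtaining $J\subseteq[\ell]\setminus F_0$ with $[\ell]\setminus J\supseteq F_0$ a facet of $S(Q,w)$; hence $F_0$ is a face (though not necessarily a facet) of $S(Q,w)$.

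The main conceptual hurdle is precisely this last point: the equality is stated as complexes, not as facet lists, because when $c_i$ is not a descent of $w$ the faces of $S(Q',wc_i)$ need not be facets of $S(Q,w)$, yet they must still sit inside $S(Q,w)$. Everything else is bookkeeping about reduced words, but containing a reduced subword for $w$ in an arbitrary word for $w$ in a Coxeter group is the tool that makes this argument work uniformly in all cases.
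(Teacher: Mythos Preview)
Your proof is correct. The paper does not actually prove this theorem; it is stated as a citation to Knutson--Miller \cite[Theorem~E]{KnutsonMillerAnnals}, so there is no in-paper argument to compare against. Your facet-by-facet analysis---splitting on whether $\ell\in F$ for the forward inclusion, and handling $\cone_\ell S(Q',w)$ and $S(Q',wc_i)$ separately for the reverse---is essentially the standard proof, and your observation that in the ascent case the facets of $S(Q',wc_i)$ are absorbed as (non-maximal) faces of $S(Q,w)$ via the ``every word contains a reduced subword'' property is exactly the right way to handle the uniformity of the stated formula.

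One small stylistic remark: in the forward direction, case $\ell\notin F$, you note that $c_i$ must be a descent of~$w$. This is true and worth observing, but it is not needed for the inclusion argument itself; it simply explains why, in the ascent case, this branch is vacuous and the union on the right-hand side collapses to $\cone_\ell S(Q',w)$.
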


\subsection{Labeling the vertices with variables}\label{sec: labeling}
Recall that $V_{\overline{v}}$ denotes the set of variables of $R_{\overline{v}}$, i.e.\ the set of variables that appear as entries of $M_{\overline{v}}$.

\begin{lemma}\label{lemma: remove last variable}
Let $c_k$ be an ascent of~$v$, $\overline{v}=u_lv_\square u_r$, $\overline{vc_k}=(u_l)v_\square (u_rc_k)$, and $V_{\overline{v}}\setminus V_{\overline{vc_k}}=\{z_{ij}\}$. Then $k=j-i$.
\end{lemma}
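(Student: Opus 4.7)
My plan is to use \Cref{cor: deleted variable} to identify the columns that the deleted variable $z_{ij}$ occupies in $M_{\overline v}$, and then to run a short counting argument exploiting the $C_n$-symmetry of the right factor $u_r$.

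By \Cref{cor: deleted variable}, $z_{ij}$ sits at the positions $(v(n\pm k+1), n\pm k)$ of $M_{\overline v}$, so its two columns are $n-k$ and $n+k$ (coinciding when $k=0$). On the other hand, the construction of $M_{\overline v}$ places $z_{ij}$ (for $i \le j$) in columns $a_i \le a_j$. Matching these forces
\[
a_i = n-k, \qquad a_j = n+k,
\]
or equivalently $u_r(n-k)=i$ and $u_r(n+k)=j$.

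The key combinatorial observation is that, because $u_r\in C_n$, the identity $b_{n+1-m}=2n+1-a_m$ tells us the involution $\iota\colon m\mapsto 2n+1-m$ exchanges the sets $\{a_1,\ldots,a_n\}$ and $\{b_1,\ldots,b_n\}$. Consequently, any $\iota$-invariant subset of $[2n]$ contains the same number of $a$'s as $b$'s. I would apply this to
\[
S := \{n-k+1,\,n-k+2,\,\ldots,\,n+k\},
\]
an interval of size $2k$ whose endpoints are swapped by $\iota$, so that $S$ contains exactly $k$ of the $a$'s.

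Finally, since $a_i = n-k\notin S$ while $a_j=n+k\in S$ and the $a$'s are strictly increasing, the $a$'s lying in $S$ are precisely $a_{i+1},\ldots,a_j$, which gives $j-i$ of them. Equating the two counts yields $k=j-i$. The degenerate case $k=0$ is handled uniformly, with $S=\emptyset$ and $i=j$. I do not anticipate a substantive obstacle: once one has correctly matched the columns of $z_{ij}$ to $\{n-k,n+k\}$ via \Cref{cor: deleted variable}, the rest is a short symmetry-based count.
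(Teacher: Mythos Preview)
Your proof is correct and follows essentially the same approach as the paper: invoke \Cref{cor: deleted variable} to place $z_{ij}$ in columns $n\pm k$, match these with $a_i=n-k$ and $a_j=n+k$, then finish with a short count. The only difference is cosmetic: the paper counts all columns strictly to the right of column $n+k$ (using also $b_{n+1-i}=n+k+1$), while you count the $a$'s in the $\iota$-invariant interval $\{n-k+1,\ldots,n+k\}$ via the symmetry $b_{n+1-m}=2n+1-a_m$; both counts immediately yield $k=j-i$.
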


\begin{proof}
By \Cref{cor: deleted variable}, $z_{ij}$ is in positions $(v(n\pm k+1),n\pm k)$ of $M_{\overline{v}}$. We also have that $z_{ij}$ is in position $(v(b_{n+1-i}),a_j)$ and therefore $a_j=n+k$, $b_{n+1-i}=n+k+1$.
Since the number of columns to the right of column $n+k$ is counted by both $|\{a_{i+1},\ldots,a_n\}|+|\{b_{n+1-j},\ldots,b_n\}|$ and $2n-(n+k)=n-k$, we have
	$$
	n-k=|\{a_{i+1},\ldots,a_n\}|+|\{b_{n+1-j},\ldots,b_n\}|=n-i+j.
	$$
We conclude that $k=j-i$.
\end{proof}

\begin{prop}\label{prop: Q is a reduced word}
Let $Q$ be the word $(j_1-i_1, \ldots, j_\ell-i_\ell)$, 
where $z_{i_1j_1}\prec_{\mathrm{lex}}\cdots \prec_{\mathrm{lex}} z_{i_\ell j_\ell}$ are the variables in~$V_{\overline{v}}$. 
The word $Q$ is a reduced word for $w_0v$.
\end{prop}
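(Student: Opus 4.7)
The plan is to induct on the cardinality $\ell = |V_{\overline{v}}|$. As a preliminary observation, since $\Sigma_v \cong \Omega_v^\circ$ has dimension $\ell(w_0) - \ell(v) = \ell(w_0 v)$, the number $\ell$ already matches the length of $w_0 v$. Hence it will suffice to verify that the simple reflections named by $Q$ multiply together to $w_0 v$ along a strictly increasing sequence of Coxeter lengths.

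The base case is $\ell = 0$, where $v = w_0$, the set $V_{\overline{v}}$ is empty, and $Q$ is the empty word, which is indeed the (unique) reduced word for $w_0 v = e$.

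For the inductive step I would let $c_k$ be the last ascent of $v$, which exists because $v \neq w_0$. The first paragraph of the proof of \Cref{prop:lexisdiagonal} already shows that the unique variable $z_{ij} \in V_{\overline{v}} \setminus V_{\overline{vc_k}}$ is the maximum of $V_{\overline{v}}$ under $\prec_{\mathrm{lex}}$; so $z_{ij} = z_{i_\ell j_\ell}$, and by \Cref{lemma: remove last variable} we have $k = j_\ell - i_\ell$. Using the factorization $\overline{vc_k} = u_l v_\square (u_r c_k)$, \Cref{cor: deleted variable} identifies the remaining variables $z_{i_1 j_1}, \ldots, z_{i_{\ell-1} j_{\ell-1}}$, in the same $\prec_{\mathrm{lex}}$ order, with $V_{\overline{vc_k}}$. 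By the inductive hypothesis applied to $vc_k$, the word $Q' := (j_1 - i_1, \ldots, j_{\ell-1} - i_{\ell-1})$ is a reduced word for $w_0(vc_k)$. Appending $c_k$ gives $w_0(vc_k) \cdot c_k = w_0 v$; because $vc_k \gtrdot v$ implies $\ell(w_0(vc_k)) = \ell(w_0) - \ell(vc_k) = \ell(w_0 v) - 1$, the adjoined reflection strictly increases length, so $Q = Q' \cdot (k)$ is reduced for $w_0 v$.

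The only nontrivial input to this argument is the identification of the $\prec_{\mathrm{lex}}$-maximal variable with the variable erased by the \emph{last} ascent; it rests on the fact that the variable $z_{ij}$ sitting in positions $(v(n \pm k + 1), n \pm k)$ has no variables strictly to its southeast in $M_{\overline{v}}$, as already established inside the proof of \Cref{prop:lexisdiagonal}. Given that, the induction itself is immediate, and I do not expect any further obstacles.
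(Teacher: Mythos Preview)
Your proof is correct and follows essentially the same inductive approach as the paper's: both remove the $\prec_{\mathrm{lex}}$-maximal variable, identify it with the variable deleted by the last ascent $c_k$ (via \Cref{cor: deleted variable} and \Cref{lemma: remove last variable}), and apply induction to $vc_k$. The only cosmetic difference is that the paper argues directly that the lex-maximal variable's position forces $c_k$ to be the last ascent, whereas you start from the last ascent and cite the first paragraph of the proof of \Cref{prop:lexisdiagonal} for the same identification; since that proposition appears earlier, this is legitimate and yields the same argument.
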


\begin{proof}
Let $\overline{v}=u_lv_\square u_r$.
We proceed by induction on $\ell(w_0v)$. 
The base case $v=w_0$ is trivial.
For the inductive case, let $Q'$ equal $Q$ without the last letter, which we denote by $\alpha_\ell$.
By construction $\alpha_\ell=j-i$ where $z_{ij}$ is the last variable in $V_{\overline{v}}$ under $\prec_{\mathrm{lex}}$.
Let the lowest box of~$D(v)$ containing $z_{ij}$ be in the $(n+k)$th column.
Since $z_{ij}$ is the last variable in $V_{\overline{v}}$, there are no boxes of $D(v)$ weakly southeast of this box.
This implies that $c_k$ is the last ascent of~$v$ and, by \Cref{cor: deleted variable}, that $V_{\overline{v}}\setminus V_{\overline{vc_k}}=\{z_{ij}\}$, where $k=j-i$ by Lemma~\ref{lemma: remove last variable}.
(As in the proof of Proposition~\ref{prop: partial symm matrices}, $\overline{vc_k}$ denotes the factorization $(u_\ell)v_\square (u_rc_k)$, where $u_\ell v_\square u_r$ is the factorization denoted by $\overline{v}$.)
Therefore $Q'$ is the word constructed from the variables in~$V_{\overline{vc_k}}$.
By the induction hypothesis, $Q'$ is a reduced word for $w_0vc_k$.
Because $c_{\alpha_\ell}$ is an ascent of $vc_{\alpha_\ell}$, that is $\ell((w_0vc_{\alpha_\ell})s_{j-i})=\ell(w_0vc_{\alpha_\ell})+1$,
we can append $\alpha_{\ell}={j-i}$ to a reduced word for $w_0vc_{\alpha_\ell}$ to obtain a reduced word for $w_0vc_{\alpha_\ell}s_{j-i}=w_0v$.
It follows that $Q$ is a reduced word for $w_0v$.
\end{proof}

Define $\zeta:[\ell]\to V_{\overline{v}}$ to be the map that associates to $k$ the $k$-th smallest variable in $V_{\overline{v}}$ under $\prec_{\mathrm{lex}}$.
Let
\[\Delta_{\overline{v},w} = \{\zeta(F) : F\in S(Q, w_0w)\},\]
where $Q$ is the reduced word for $w_0v$ defined in \Cref{prop: Q is a reduced word}.
This is a simplicial complex isomorphic to $S(Q, w_0w)$
but relabeled so its vertex set is~$V_{\overline{v}}$.

Proposition~\ref{thm: vertex decomposition2} translates Theorem~\ref{thm: vertex decomposition} to the notation $\Delta_{\overline{v},w}$,
and breaks it into the cases that will appear in our proofs (which are also parallel to the cases in the statement of Theorem~\ref{thm:KostKum}).

\begin{prop}\label{thm: vertex decomposition2}
Let $v,w\in C_n$ and $\overline{v}=u_lv_\square u_r$.
\begin{itemize}
\item If $v\not\Bruhatle w$, then $\Delta_{\overline{v},w} = \emptyset$.
\item If $v = w_0$, then $w = w_0$ (or we are in the previous case), and $\Delta_{\overline{v},w} = \{\emptyset\}$. 
\item Otherwise, let $k$ be the last ascent of $v$, so $vc_k\Bruhatg v$, and let $\overline{vc_k}=u_lv_\square (u_rc_k)$.
\begin{enumerate}
\item If $k$ is a descent of $w$, so $wc_k\Bruhatl w$, then
\[
\Delta_{\overline{v},w} = \cone_{z_{ij}}\Delta_{\overline{vc_k},w},
\]
where $z_{ij}$ is the largest variable with respect to~$\prec_{\rm lex}$ on $R_{\overline{v}}$.
\item If $k$ is an ascent of $w$, so $wc_k\Bruhatg w$, then
\[
\Delta_{\overline{v},w} = \cone_{z_{ij}}(\Delta_{\overline{vc_k},w})\cup \Delta_{\overline{vc_k}, wc_k}.
\]
\end{enumerate}
\end{itemize}
\end{prop}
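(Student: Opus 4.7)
The plan is to translate Theorem~\ref{thm: vertex decomposition} through the dictionary $\Delta_{\overline{v},w} = \{\zeta(F) : F \in S(Q, w_0 w)\}$, where $Q$ is the reduced word for $w_0 v$ constructed in Proposition~\ref{prop: Q is a reduced word} and $\zeta$ sends position $k$ to the $k$-th smallest variable of $V_{\overline{v}}$ under $\prec_{\mathrm{lex}}$. Since multiplication by $w_0$ reverses Bruhat order, $w_0 v \Bruhatge w_0 w$ iff $v \Bruhatle w$, so the first bullet is immediate: $\Delta_{\overline{v},w} = \emptyset$ exactly when $S(Q, w_0 w) = \emptyset$, i.e.\ when $v \not\Bruhatle w$. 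The second bullet is equally immediate, since if $v = w_0$ then $Q$ is the empty word, so $S((), w_0 w) = \{\emptyset\}$ when $w = w_0$ and $\emptyset$ otherwise.

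The heart of the proof is the third bullet, and its preparatory step is to identify the truncation $Q' = (\alpha_1, \ldots, \alpha_{\ell-1})$ with the reduced word associated to the factorization $\overline{vc_k} = u_l v_\square (u_r c_k)$, where $c_k$ is the last ascent of $v$. This is essentially contained in the proof of Proposition~\ref{prop: Q is a reduced word}: the largest variable $z_{ij}$ of $V_{\overline{v}}$ under $\prec_{\mathrm{lex}}$ forces $c_k$ to be the last ascent of $v$, and $\alpha_\ell = j - i = k$ by Lemma~\ref{lemma: remove last variable}. Corollary~\ref{cor: deleted variable} then gives $V_{\overline{vc_k}} = V_{\overline{v}} \setminus \{z_{ij}\}$ with the same $\prec_{\mathrm{lex}}$-ordering, so $\zeta$ restricts to the labeling for $\overline{vc_k}$ on $[\ell-1]$. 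With this identification, Theorem~\ref{thm: vertex decomposition} (applied with the target there taken to be $w_0 w$) transports through $\zeta$ to
\[
\Delta_{\overline{v}, w} \;=\; \cone_{z_{ij}} \Delta_{\overline{vc_k}, w} \;\cup\; \Delta_{\overline{vc_k}, wc_k}.
\]

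In Case~(2), where $c_k$ is an ascent of $w$, this is already the desired formula. Case~(1) is the only one requiring additional work: here $wc_k \Bruhatl w$, hence $w_0 w c_k \Bruhatg w_0 w$, and I would invoke the Demazure-subword characterization of faces (a subset $F \subseteq [\ell-1]$ lies in $S(Q', u)$ iff the Demazure product of $Q'|_{[\ell-1] \setminus F}$ is $\Bruhatge u$) to deduce $\Delta_{\overline{vc_k}, wc_k} \subseteq \Delta_{\overline{vc_k}, w}$, which collapses the union onto $\cone_{z_{ij}} \Delta_{\overline{vc_k}, w}$. The only subtlety to monitor throughout is keeping Bruhat order, left-right weak order, and the $w_0$-reversal straight; everything else is bookkeeping built on the already-established Propositions~\ref{prop: Q is a reduced word} and~\ref{cor: deleted variable}.
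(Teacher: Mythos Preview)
Your proof is correct and follows the same approach as the paper, which treats the proposition simply as a translation of Theorem~\ref{thm: vertex decomposition} through the relabeling $\zeta$ and the identification of $Q'$ with the word for $\overline{vc_k}$ established in Proposition~\ref{prop: Q is a reduced word}. Your added justification for Case~(1)---using the Demazure-product face characterization to absorb $\Delta_{\overline{vc_k},wc_k}$ into $\Delta_{\overline{vc_k},w}$---is a detail the paper leaves implicit but is exactly the right reason the second term disappears.
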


\begin{ex}\label{ex: vtx dec 1}
For $v=v_\square=321654$ and $w=635241$ we have that $Q=(0,1,2,0,1,0)$ and $w_0w=c_0c_1$.
The last ascent of $v$ is $c_0$ and this is a descent of $w$. In this case, $\Delta_{\overline{v},w} = \cone_{z_{33}}\Delta_{\overline{vc_0},w}$, and $\Delta_{\overline{vc_0},w}$ is pictured in \Cref{fig: vtx dec 1}.
\begin{figure}[h]
\begin{tikzpicture}
\filldraw[fill=cyan!30] (.5,-1) node {$\bullet$} node[left] {$z_{12}$}--(0,0) node {$\bullet$} node[left] {$z_{11}$}--(1,0) node {$\bullet$}node[above] {$z_{13}$}--(2,0) node {$\bullet$}node[right] {$z_{23}$}--(1.5,-1) node {$\bullet$}node[right] {$z_{22}$}--(.5,-1)--(1,0)--(1.5,-1);
\end{tikzpicture}
\caption{The simplicial complex $\Delta_{\overline{vc_0},w}$ for $v=321654$ and $w=635241$.}
\label{fig: vtx dec 1}
\end{figure}
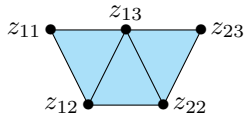
\end{ex}

\begin{ex}\label{ex: vtx dec 2}
For $v=v_\square=321654$ and $w=632541$ we have that $Q=(0,1,2,0,1,0)$ and $w_0w=c_0c_1c_0$.
The last ascent of $v$ is $c_0$ and this is an ascent of $w$. In this case, $\Delta_{\overline{v},w} = \cone_{z_{33}}(\Delta_{\overline{vc_0},w})\cup\Delta_{\overline{vc_k}, wc_k}$ as one can see in \Cref{fig: vtx dec 2}.
\begin{figure}[h]
\begin{tikzpicture}
\filldraw[fill=cyan!30] (0,-1) node {$\bullet$} node[below] {$z_{13}$}--(-1.2,0) node {$\bullet$} node[left] {$z_{11}$}--(-.6,0) node {$\bullet$}node[above] {$z_{12}$}--(0,0) node {$\bullet$}node[above] {$z_{22}$}--(.6,0) node {$\bullet$}node[above] {$z_{23}$}--(1.2,0) node {$\bullet$}node[above] {$z_{33}$}--(0,-1)--(.6,0);
\draw (0,0)--(0,-1)--(-.6,0);
\draw (2,-.5) node {$=$};
\end{tikzpicture}
\begin{tikzpicture}
\filldraw[fill=cyan!30] (0,-1) node {$\bullet$} node[below] {$z_{13}$}--(.6,0) node {$\bullet$}node[above] {$z_{23}$}--(1.2,0) node {$\bullet$}node[above] {$z_{33}$}--(0,-1);
\draw (1.5,-.5) node {$\cup$};
\end{tikzpicture}
\begin{tikzpicture}
\filldraw[fill=cyan!30] (0,-1) node {$\bullet$} node[below] {$z_{13}$}--(-1.2,0) node {$\bullet$} node[left] {$z_{11}$}--(-.6,0) node {$\bullet$}node[above] {$z_{12}$}--(0,0) node {$\bullet$}node[above] {$z_{22}$}--(.6,0) node {$\bullet$}node[above] {$z_{23}$}
--(0,-1);
\draw (0,0)--(0,-1)--(-.6,0);
\end{tikzpicture}
\caption{The simplicial complex $\Delta_{\overline{v},w} = \cone_{z_{33}}\Delta_{\overline{vc_0},w} \cup \Delta_{\overline{vc_0}, wc_0}$ for $v=321654$ and $w=632541$.}
\label{fig: vtx dec 2}
\end{figure}
\end{ex}

Let $K_{\overline{v},w}$ be the Stanley-Reisner ideal of $\Delta_{\overline{v},w}$.
This is the ideal generated by the monomials that are the non-faces of $\Delta_{\overline{v},w}$, so
$$K_{\overline{v},w} = \langle \prod_{z\in Z} z : Z\subseteq V_{\overline{v}}, Z \not\in \Delta_{\overline{v},w} \rangle.$$
Translating Proposition~\ref{thm: vertex decomposition2} to be in terms of $K_{\overline{v},w}$ gives the following.

\begin{prop}\label{thm: vertex decomposition3}
Let $v,w\in C_n$ and $\overline{v}=u_lv_\square u_r$.
\begin{itemize}
\item If $v\not\Bruhatle w$, then $K_{\overline{v},w} = \langle 1\rangle$.
\item If $v = w_0$, then $w = w_0$ (or we are in the previous case), and $K_{\overline{v},w} = \langle 0 \rangle$. 
\item Otherwise, let $k$ be the last ascent of $v$, so $vc_k\Bruhatg v$, and let $\overline{vc_k}=u_lv_\square (u_rc_k)$.
Let $z_{ij}$ be the largest variable with respect to $\prec_{\mathrm{lex}}$ on $R_{\overline{v}}$.
\begin{enumerate}
\item If $k$ is a descent of $w$, so $wc_k\Bruhatl w$, then
\[
K_{\overline{v},w} = K_{\overline{vc_k},w} R_{\overline{v}}.
\]
\item If $k$ is an ascent of $w$, so $wc_k\Bruhatg w$, then
\[
K_{\overline{v},w} = \langle z_{ij}m : m\in K_{\overline{vc_k},w} \rangle + K_{\overline{vc_k}, wc_k}R_{\overline{v}}.
\]
\end{enumerate}
\end{itemize}
\end{prop}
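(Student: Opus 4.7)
The plan is to deduce \Cref{thm: vertex decomposition3} directly from \Cref{thm: vertex decomposition2} by translating each case through the dictionary between simplicial complexes and their Stanley-Reisner ideals. The first bullet is immediate: $\Delta_{\overline{v},w}=\emptyset$ means the empty set itself is a non-face, so $1\in K_{\overline{v},w}$ and hence $K_{\overline{v},w}=\langle 1\rangle$. For the second bullet, $v=w_0$ forces $V_{\overline{v}}=\emptyset$ and $R_{\overline{v}}=\mathbb{K}$, while $\Delta_{\overline{v},w}=\{\emptyset\}$ has no non-faces at all, so $K_{\overline{v},w}=\langle 0\rangle$.

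For the third bullet I will use two standard facts. First, if $\Delta$ is a simplicial complex on vertex set $V$ and $z\notin V$, then the non-faces of $\cone_z\Delta$ (viewed inside $\mathbb{K}[V\cup\{z\}]$) are exactly the non-faces of $\Delta$, so $I_{\cone_z\Delta}=I_\Delta\cdot\mathbb{K}[V\cup\{z\}]$. Second, if $\Delta_1,\Delta_2$ are simplicial complexes on the same vertex set, then $I_{\Delta_1\cup\Delta_2}=I_{\Delta_1}\cap I_{\Delta_2}$, since a set is a non-face of the union iff it is a non-face of each piece. Case~(1) follows at once from the first fact. For case~(2), viewing $\Delta_{\overline{vc_k},wc_k}$ as a complex on $V_{\overline{v}}$ in which $z_{ij}$ is an isolated non-vertex gives its Stanley-Reisner ideal in $R_{\overline{v}}$ as $K_{\overline{vc_k},wc_k}R_{\overline{v}}+\langle z_{ij}\rangle$, and combining the two facts yields
\[K_{\overline{v},w}=\bigl(K_{\overline{vc_k},w}R_{\overline{v}}\bigr)\cap\bigl(K_{\overline{vc_k},wc_k}R_{\overline{v}}+\langle z_{ij}\rangle\bigr).\]

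The last step is to compute this intersection. The key input is the containment $K_{\overline{vc_k},wc_k}\subseteq K_{\overline{vc_k},w}$, which is equivalent to $\Delta_{\overline{vc_k},w}\subseteq\Delta_{\overline{vc_k},wc_k}$. This follows from a general monotonicity property of subword complexes: if $u\Bruhatge u'$ then $S(Q',u)\subseteq S(Q',u')$, because any reduced word for $u$ contains a reduced word for $u'$ as a subword. We apply this with $u=w_0w\Bruhatg w_0wc_k=u'$, the direction coming from $w\Bruhatl wc_k$ (case~(2) hypothesis). Given this containment, expand an arbitrary element of the intersection as $f=\sum_{a\ge 0}z_{ij}^a f_a$ with $f_a\in R_{\overline{vc_k}}$. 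Membership in $K_{\overline{vc_k},w}R_{\overline{v}}$ forces $f_a\in K_{\overline{vc_k},w}$ for every $a$, while membership in $K_{\overline{vc_k},wc_k}R_{\overline{v}}+\langle z_{ij}\rangle$ forces $f_0\in K_{\overline{vc_k},wc_k}$. Therefore $f\in K_{\overline{vc_k},wc_k}R_{\overline{v}}+z_{ij}K_{\overline{vc_k},w}R_{\overline{v}}$, which is exactly the desired formula. The only place that requires care is tracking the Bruhat-direction of the subword-complex containment; once that is in hand the rest is purely formal bookkeeping.
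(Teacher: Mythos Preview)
Your proof is correct and follows the same approach as the paper, which simply states that the proposition is obtained by ``translating Proposition~\ref{thm: vertex decomposition2} to be in terms of~$K_{\overline{v},w}$'' without further detail. You supply the details the paper omits; in particular, you make explicit the containment $\Delta_{\overline{vc_k},w}\subseteq\Delta_{\overline{vc_k},wc_k}$ (equivalently $K_{\overline{vc_k},wc_k}\subseteq K_{\overline{vc_k},w}$) that is needed for case~(2) and that the paper leaves implicit in the vertex-decomposition framework.
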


\begin{ex}
In this example we verify (1) and (2) in the proposition above.
First, let $v$ and $w$ be as in \Cref{ex: vtx dec 1}.
We compute that both ideals are generated by $z_{11}z_{22}, z_{11}z_{23}, z_{12}z_{23}$,
although for $K_{\overline{v},w}$ these generators are interpreted in the ring $\mathbb{K}[z_{11},z_{12},z_{13},z_{22},z_{23},z_{33}]$,
and for $K_{\overline{vc_k},w}$ in the ring $\mathbb{K}[z_{11},z_{12},z_{13},z_{22},z_{23}]$.

Now, let $v$ and $w$ be as in \Cref{ex: vtx dec 2}.
Direct computation shows that \[K_{\overline{v},w}=\langle z_{11}z_{33},z_{12}z_{33},z_{22}z_{33},  z_{11}z_{22}, z_{11}z_{23}, z_{12}z_{23} \rangle,\] and $K_{\overline{vc_0},w} = \langle z_{11}, z_{12}, z_{22}\rangle$, and $K_{\overline{vc_0},wc_0} = \langle z_{11}z_{22},z_{11}z_{23}, z_{12}z_{23}\rangle$. Then $K_{\overline{v},w} = z_{33}K_{\overline{vc_0},w}+K_{\overline{vc_0},wc_0}$.
\end{ex}

\section{Proof of Theorem~\ref{thm: main}}\label{sec: proof}

In this section we prove Theorem~\ref{thm: main}.
We explain the overall structure of the proof now, and dedicate subsections to the details.

Given $v,w\in C_n$ with $v\ge v_\square$ in left-right weak order, a factorization $\overline{v}$ for $v$,
and a diagonal term order $\prec$ on $R_{\overline{v}}$, 
let $\mathcal G_{\overline{v},w,\prec}$ be the set of initial monomials of the generators we used to define $I_{\overline{v},w}$.
We recall that these generators are the size $r_w(p,q)+1$ minors of the truncated matrix $\tau_{p,q}(M_{\overline{v}})$,
running over all $(p,q)$ in~$E(w)$.
Let $J_{\overline{v},w,\prec}$ be the ideal generated by~$\mathcal G_{\overline{v},w,\prec}$.
We will show that
\[K_{\overline{v},w}\subseteq J_{\overline{v},w,\prec}\subseteq\init_\prec I_{\overline{v},w},\]
the former containment being Proposition~\ref{prop: K in J} below and the latter clear from the definition of~$J_{\overline{v},w}$. In Proposition~\ref{prop: K(K) = K(I)}, we will prove that the $K$-polynomials
$\mathcal{K}(R_{\overline{v}}/K_{\overline{v},w};\mathbf{t})$ and 
$\mathcal{K}(R_{\overline{v}}/I_{\overline{v},w};\mathbf{t})=\mathcal{K}(R_{\overline{v}}/\init_\prec I_{\overline{v},w};\mathbf{t}) $ are equal.
The containments above then imply
\[K_{\overline{v},w}=J_{\overline{v},w,\prec}=\init_\prec I_{\overline{v},w},\]
and the latter equality is the statement of Theorem~\ref{thm: main}.
Note that this shows $J_{\overline{v},w,\prec}$ is independent of the choice of diagonal term order $\prec$.

\subsection{The Stanley-Reisner ideal is contained in the initial ideal}
This subsection proves the containment $K_{\overline{v},w}\subseteq J_{\overline{v},w,\prec}$, which is Proposition~\ref{prop: K in J}.
The proof will be by induction on the length $\ell(w_0v)$.
A factorization $v=u_lv_\square u_r$, where $\ell(v) = \ell(u_l)+\ell(v_\square)+\ell(u_r)$,
can be extended to a factorization $w_0 = u_lv_\square u_r(w_0v)^{-1}$,
and if the induction were unrolled, it would descend to~$v$ from its base case $v=w_0$ in right weak order
by acting by simple reflections at the right of this factorization.
Thus, we can use right weak order to induct down from $w_0$ to any $123$-avoiding permutation where every permutation along the way is $123$-avoiding.

Throughout this section, we let $\overline{v}$ be the factorization $v=u_lv_\square u_r$, and, for $c_k$ an ascent of $v$, we let $\overline{vc_k}$ be the factorization $v=u_lv_\square (u_rc_k)$.

Every term of the Leibniz formula for a minor of~$M_{\overline{v}}$ is zero or a signed monomial in the variables $z_{ij}$.
Our proofs in this section will rely on the fact that there are no cancellations among these terms.
This is essentially the fact known in spectral graph theory as the Harary--Sachs theorem \cite{Harary,Sachs}.

\begin{lemma}\label{lem:powerof2}
Every coefficient of any minor of~$M_{\overline{v}}$ is a signed power of~$2$.
If $\prod_{j=1}^r (M_{\overline{v}})_{p_jq_j}$ is nonzero,
then it is a monomial contained in the support of the $(\{p_1,\ldots,p_r\},\{q_1,\ldots,q_r\})$ minor of~$M_{\overline{v}}$.
\end{lemma}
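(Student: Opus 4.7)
The plan is to reduce to a well-known statement about minors of a generic symmetric matrix, passing through the block structure of $M_{\overline{v_\square}}$.

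First, I would reduce to the case $v = v_\square$. By \Cref{prop: partial symm matrices}, the rule $M \mapsto P(u_l^{-1}) M P(u_r^{-1})$ sends $M_{\overline v}$ to $M_{\overline{v_\square}}$ while preserving the name of each variable entry. Because left- and right-multiplication by permutation matrices merely reorder rows and columns, any minor $\det(M_{\overline v}[P, Q])$ equals a minor of $M_{\overline{v_\square}}$ up to an overall sign; in the same way, the product $\prod_j (M_{\overline v})_{p_j q_j}$ matches, up to sign, the analogous product in $M_{\overline{v_\square}}$ after suitably permuting the indices. Both parts of the lemma thus reduce to the case $v = v_\square$.

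Second, I would reduce minors of $M_{\overline{v_\square}}$ to minors of the generic symmetric matrix $Z$ using the block form
\[
M_{\overline{v_\square}} = \begin{pmatrix} J_n & 0 \\ Z & J_n \end{pmatrix}
\]
of \Cref{prop: square patch}. Splitting $P = P_1 \sqcup P_2$ and $Q = Q_1 \sqcup Q_2$ by whether indices lie in $\{1,\ldots,n\}$ or $\{n+1,\ldots,2n\}$, I observe that every row $p \in P_1$ has a unique nonzero entry (equal to $1$) at column $n+1-p$, and that the only nonzero entry of row $p \in P_2$ in columns $\{n+1,\ldots,2n\}$ is the $1$ at column $3n+1-p$. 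A Laplace expansion along the rows and columns where these forced $1$s appear shows that $\det(M_{\overline{v_\square}}[P, Q]) = \pm\det(Z[A, B])$ for subsets $A, B \subseteq \{1,\ldots,n\}$ explicitly determined by $P$ and $Q$ (or the minor vanishes identically); the same expansion applies to each individual Leibniz term $\prod_j (M_{\overline{v_\square}})_{p_j q_j}$. We are reduced to proving the lemma for arbitrary minors of $Z$.

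Third, I would invoke the (bipartite) Harary--Sachs viewpoint on minors of the generic symmetric matrix. In the Leibniz expansion $\det(Z[A, B]) = \sum_{\pi \colon A \to B} \operatorname{sgn}(\pi) \prod_{a \in A} z_{a, \pi(a)}$, each bijection $\pi$ contributing to a fixed monomial $m$ corresponds to an ``orientation'' of the multigraph $G_m$ on $\{1,\ldots,n\}$ whose edges are the variables of $m$ counted with multiplicity. Since each vertex of $G_m$ has degree at most $2$, $G_m$ is a disjoint union of paths, self-loops, double edges, and cycles of length $\ge 3$; the first three pieces have forced orientations, while each cycle of length $\ge 3$ admits exactly two. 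The two orientations of such a cycle correspond to bijections that differ by an inverse on the cycle vertices, hence have the same sign. Summing over all contributing bijections yields a coefficient of the form $\pm 2^{k(m)}$, where $k(m)$ is the number of cycles of length $\ge 3$ in $G_m$, proving~(a). Part~(b) then follows at once, since any nonzero Leibniz term contributes its monomial with a signed-power-of-$2$, hence nonzero, coefficient.

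The main obstacle will be the sign verification in the third step: one must check that reversing the orientation of a cycle in $G_m$ preserves $\operatorname{sgn}(\pi)$, so that matchings yielding the same monomial reinforce rather than cancel. This is the core content of the Harary--Sachs argument cited in the excerpt, but making it work in the non-principal minor setting (where $A \ne B$) requires careful bookkeeping of the linear orderings on $A$ and $B$.
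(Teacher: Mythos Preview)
Your proposal is correct and follows essentially the same Harary--Sachs cycle-inversion argument as the paper; the paper simply strips the $1$s directly from $M_{\overline v}$ via \Cref{rows and columns with ones} rather than first reducing to $v=v_\square$. For the sign bookkeeping you flag as the main obstacle, the paper's device is to relabel the rows $\epsilon_1,\ldots,\epsilon_r$ and columns $\delta_1,\ldots,\delta_r$ so that a row and column carrying the same torus weight receive the same index~$i$; this forces any repeated variable to sit at positions $(\epsilon_i,\delta_j)$ and $(\epsilon_j,\delta_i)$, whence two Leibniz terms with the same monomial differ by inverting cycles of a single permutation in~$S_r$, and the sign equality is immediate.
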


\begin{proof}
If a square submatrix $N$ of~$M_{\overline{v}}$ contains an entry~$1$,
then by Proposition~\ref{rows and columns with ones},
expansion along either its row or column shows that 
every nonzero term in $\det N$ involves that entry~$1$ 
and $N$ has the same determinant as a smaller submatrix, up to sign.
So we may assume that $N$ contains no~$1$s.

Give the rows of $N$ the names $\epsilon_1, \ldots, \epsilon_r$ and its columns the names $\delta_1, \ldots, \delta_r$,
in such a way that whenever $N$ contains a row and column which contribute the same weight to the action of $T$ on~$\Sigma_v$ (see Proposition~\ref{prop: weights}),
then this row and column are $\epsilon_i$ and $\delta_i$ for some $i$.
This ensures that, if a variable is repeated in $N$,
the two positions at which it appears are $(\epsilon_i, \delta_j)$
and $(\epsilon_j, \delta_i)$ for some $i$ and $j$.

We have
\[\pm\det(N) = \sum_{u\in S_r}\operatorname{sgn}(u)\prod_j N_{\epsilon_j\delta_{u(j)}}.\]
Suppose $u,u'\in S_r$ index terms of this sum which are both equal, up to sign, to a monomial $m$.
Because each nonzero entry of $N$ is a variable, the multisets of variables entering the product for $u$ and that for~$u'$ must be equal.
Thus for every $i, j\in [r]$, if $u(i) = j$, then $u'(i) = j$ or $u'(j) = i$
(and if $u(i)=j$ and $u(j)=i$, then $u'(i)=j$ and $u'(j)=i$).
It follows that $u'$ is obtained from $u$ by inverting some of the cycles in its disjoint cycle decomposition.
A cycle and its inverse have the same sign, so $\operatorname{sgn}(u)$ is constant over all terms with $\prod_j N_{\epsilon_j\delta_{u(j)}}=m$.
The coefficient of $m$ in $\det N$ is $\operatorname{sgn}(u)$ times the number of such terms.
Because any set of disjoint cycles can be inverted independently, this number
is $2^k$, where $k$ is the number of cycles $C$ with $\prod_{j\in C} N_{\epsilon_j\delta_{c(j)}} = \prod_{j\in C} N_{\epsilon_{c(j)}\delta_{j}}$.
This is nonzero in $\mathbb{K}$ because $\operatorname{char}\mathbb{K}\ne2$.
\end{proof}

We now prove the main technical lemma of this section.

\begin{lemma}\label{lem: before lunch}
Fix a diagonal term order $\prec$ on $R_{\overline{v}}$.  Let $c_k$ be the last ascent of $v$ and
fix $\prec'$, the restriction of $\prec$, as our diagonal term order on $R_{\overline{vc_k}}$.
Suppose $m$ is the leading term (with respect to $\prec'$) of the minor of $M_{\overline{vc_k}}$ on row set $A=\{\epsilon_1,\ldots,\epsilon_r\}$ and column set $B=\{\delta_1,\ldots,\delta_r\}$,
labeled so that the entries giving the leading term are the $(\epsilon_j,\delta_j)$ entries, i.e.\ $m = \prod_{j=1}^r (M_{\overline{vc_k}})_{\epsilon_j\delta_j}$.
Let $p,q$ be such that $A\subset[2n]\setminus[p-1]$ and $B\subset [q]$, and assume row $\epsilon_j$ of $\tau_{p,q}(M_{\overline{vc_k}})$ contains an entry 1
exactly when $j=s+1,\ldots,r$.
Define
\[B' = \{c_k(\delta_1), \ldots, c_k(\delta_s), v^{-1}(\epsilon_{s+1}), \ldots, v^{-1}(\epsilon_r)\}.\]
\begin{enumerate}
\item The leading term $m'$ (with respect to $\prec$) of the minor of $M_{\overline{v}}$ on row set $A$ and column set $B'$ divides $m$.
\item 
If $c_k$ is a descent of $w$, $m\in\mathcal G_{\overline{vc_k},w,\prec'}$, $(p,q)\in E(w)$, and $r=r_w(p,q)+1$ then $m'\in\mathcal G_{\overline{v},w,\prec}$.
\item
If $c_k$ is an ascent of $w$, $m\in\mathcal G_{\overline{vc_k},w,\prec'}$, $(p,q)\in E(w)$, $r=r_w(p,q)+1$, and $V_{\overline{v}}\setminus V_{\overline{vc_k}} = \{z_{ij}\}$,
then there exists $m''\in\mathcal G_{\overline{v},w,\prec}$ with $m '' \mid z_{ij}m'$.
\end{enumerate}
\end{lemma}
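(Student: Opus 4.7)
The strategy hinges on the matrix identity $M_{\overline{v}}|_{z_{ij}=0} = M_{\overline{vc_k}}\, P(c_k)$ from \Cref{prop: partial symm matrices}, which identifies column $c_k(\delta)$ of $M_{\overline{v}}|_{z_{ij}=0}$ with column $\delta$ of $M_{\overline{vc_k}}$. The set $B'$ is engineered so that columns $c_k(\delta_j)$ for $j \le s$ come directly from this identification, while for $j > s$ we substitute $v^{-1}(\epsilon_j)$, the column of the 1 in row $\epsilon_j$ of $M_{\overline{v}}$. For part (1), I compute $\det M_{\overline{v}}[A, B']$ by Laplace expansion along the rows $\epsilon_{s+1}, \ldots, \epsilon_r$, each of which has a 1 at $(\epsilon_j, v^{-1}(\epsilon_j)) \in A \times B'$. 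The distinguished ``pick all 1s'' term reduces, up to sign, to the sub-minor of $M_{\overline{v}}$ on rows $\{\epsilon_1, \ldots, \epsilon_s\}$ and columns $\{c_k(\delta_1), \ldots, c_k(\delta_s)\}$; when $z_{ij}$ does not appear in this sub-minor the matrix identity makes it equal to $\pm \det M_{\overline{vc_k}}[\{\epsilon_1,\ldots,\epsilon_s\}, \{\delta_1,\ldots,\delta_s\}]$, whose $\prec'$-leading term $\prod_{j \le s}(M_{\overline{vc_k}})_{\epsilon_j\delta_j}$ is a sub-product of $m$. The remaining Laplace contributions pick variables from rows $\epsilon_{s+1}, \ldots, \epsilon_r$, which by \Cref{rows and columns with ones} are confined to columns strictly west of the corresponding 1; combined with \Cref{lem:powerof2} to preclude cancellation in the Leibniz expansion and the diagonality of $\prec$, these cannot dominate the distinguished term. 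Hence $m'$ divides this sub-product, which divides $m$.

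For parts (2) and (3), I feed the size-$r$ minor of $M_{\overline{vc_k}}$ producing $m$ into the lift of part (1), where $r = r_w(p,q)+1$ and $(p,q) \in E(w)$. In part (2), when $c_k$ is a descent of $w$, I verify $B' \subseteq [q]$: the conditions of \Cref{def: type C essential} (in particular $p \ge n+1$ and the paired-box inequality) combined with $wc_k \Bruhatl w$ preclude the cases where the $c_k$-swap would push $c_k(\delta_j)$ or $v^{-1}(\epsilon_j)$ past~$q$. So the lifted minor is a defining minor of $\tau_{p,q}(M_{\overline{v}})$ and $m' \in \mathcal G_{\overline{v}, w, \prec}$. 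In part (3), when $c_k$ is an ascent of $w$, exactly those cases can occur; to handle them I substitute a column $n \pm k$ (where $z_{ij}$ lives) for $n \pm k + 1$ so the resulting set sits in $[q]$, and the correction term in $M_{\overline{v}} = M_{\overline{vc_k}}\, P(c_k) + z_{ij} E$ supplies the extra $z_{ij}$ factor so that the leading term $m''$ of this alternative minor satisfies $m'' \mid z_{ij} m'$.

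The principal obstacle is the bookkeeping in part (1), especially when $z_{ij}$ actually appears in the sub-minor of $M_{\overline{v}}$: in that case the matrix identity must be applied with its correction term, and I must verify that either $m'$ still does not involve $z_{ij}$ (so $m'$ divides $m$ as asserted) or the discrepancy is exactly what part (3) is designed to absorb through the $z_{ij}$ slack. A secondary obstacle is the case analysis in parts (2) and (3) tying together the $c_k$-swap, the essential-set geometry, and the ascent/descent behavior of $c_k$ at $w$; this should be routine once part (1) is in hand, but requires careful tracking of whether the affected columns $\{n-k, n-k+1, n+k, n+k+1\}$ lie inside $[q]$.
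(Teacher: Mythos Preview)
Your overall strategy matches the paper's: reduce the $(A,B')$ minor of $M_{\overline{v}}$ to the $(\{\epsilon_1,\ldots,\epsilon_s\},\{c_k(\delta_1),\ldots,c_k(\delta_s)\})$ sub-minor via the 1s in rows $\epsilon_{s+1},\ldots,\epsilon_r$, identify this with the corresponding minor of $M_{\overline{vc_k}}$ using $M_{\overline{vc_k}} = M_{\overline{v}}P(c_k)|_{z_{ij}=0}$, and then in part (3) swap column $q+1$ for $q$ when $B'\not\subseteq[q]$. However, two steps that you flag as ``obstacles'' are in fact places where your argument is incomplete and where the paper's proof supplies concrete arguments you are missing.

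First, in part (1) you leave open the case that $z_{ij}$ appears in the sub-minor. The paper does \emph{not} absorb this into part (3); instead it shows directly that this case cannot occur. The point is that $z_{ij}$ sits in columns $n\pm k$ of $M_{\overline{v}}$, so it would appear in the sub-minor only if some $\delta_\ell = n\pm k+1$ with $\ell\le s$. But then $vc_k(\delta_\ell)=v(\delta_\ell-1)\in A$, and since $\delta_\ell\le q$ and $p\le vc_k(\delta_\ell)$, row $vc_k(\delta_\ell)$ of $\tau_{p,q}(M_{\overline{vc_k}})$ contains the entry $1$ at position $(vc_k(\delta_\ell),\delta_\ell)$---contradicting $\ell\le s$. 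This contradiction argument is the missing idea. Relatedly, your Laplace discussion is more complicated than necessary: by \Cref{rows and columns with ones}, for each $j>s$ either row $\epsilon_j$ or column $v^{-1}(\epsilon_j)$ of $M_{\overline{v}}$ has the 1 as its \emph{only} nonzero entry, so the $(A,B')$ minor equals the sub-minor up to sign exactly---there are no ``remaining Laplace contributions'' to dominate.

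Second, your part (3) is underspecified. Replacing column $q+1$ by $q$ in $B'$ is correct (the paper's $B''$), but changing only the column set is not enough: the paper also modifies the \emph{row} set, replacing certain $\epsilon_\ell$ (those with $p\le vc_k(\delta_\ell)<\epsilon_{s+1}$) by $vc_k(\delta_\ell)$ to form $A'$. The hard step is then proving that $z_{ij}=(M_{\overline{v}})_{\epsilon_{s+1},q}$ must appear in the leading term of the $(A',B'')$ minor. The paper argues this by contradiction using that $c_k$ is the \emph{last} ascent of $v$ (so every entry below $z_{ij}$ in column $q$ is zero) together with diagonality; your one-line appeal to a correction term ``supplying the extra $z_{ij}$ factor'' does not establish this. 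Also note your part (2) reasoning is off: the relevant fact is simply that $c_k$ being a descent of $w$ forces $q\ne n\pm k$ (since $(p,q)\in E^A(w)$ requires $w(q)<w(q+1)$), whence $c_k$ maps $[q]$ to itself and $B'\subseteq[q]$; the paired-box inequality of \Cref{def: type C essential} plays no role here.
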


\begin{proof}

Let $1\leq \delta\leq 2n$ be a column index.
By \Cref{rows and columns with ones}, 
in any minor of~$M_{\overline{v}}$ using row $v(\delta)$ and column $\delta$, 
all nonvanishing terms use the $(v(\delta),\delta)$ entry.
Applied to the $(A,B')$ minor of $M_{\overline{v}}$, we conclude that this minor equals, up to sign, the 
\[(\{\epsilon_1,\ldots,\epsilon_s\},\{c_k(\delta_1), \ldots, c_k(\delta_s)\})\] 
minor of~$M_{\overline{v}}$.

Let us first show that the $(\{\epsilon_1,\ldots,\epsilon_s\},\{c_k(\delta_1), \ldots, c_k(\delta_s)\})$ minor of $M_{\overline{v}}$ equals, up to sign,
the $(\{\epsilon_1,\ldots,\epsilon_s\},\{\delta_1, \ldots, \delta_s\})$ minor of~$M_{\overline{vc_k}}$.
By \Cref{cor: deleted variable},
$M_{\overline{vc_k}}$ is obtained from $M_{\overline{v}}P(c_k)$ by setting 
a single variable $z_{ij}$ to~0, which appears in columns $n\pm k$ of~$M_{\overline{v}}$.
\Cref{prop: partial symm matrices} shows that none of the other variables change names.
Therefore, the claim is straightforward except in the case where $z_{ij}$ shows up in the 
$(\{\epsilon_1,\ldots,\epsilon_s\},\{c_k(\delta_1), \ldots, c_k(\delta_s)\})$ minor of $M_{\overline{v}}$.
This occurs if $n\pm k+1\in \{\delta_1,\ldots,\delta_s\}$.  Without loss of generality, let $\delta_1= n\pm k+1$;
we must then have $vc_k(\delta_1)=v(\delta_1-1)\in \{\epsilon_1,\ldots,\epsilon_s\}$.  Let $\epsilon_t=v(\delta_1-1)$.
Since $\delta_1\leq q$ and $p\leq \epsilon_i$, $\tau_{p,q}(M_{\overline{v}})$ contains a $1$ in row $\epsilon_t$, namely
at $(\epsilon_t, \delta_1)$.  This contradicts the definition of $s$ and we can thus conclude that $n\pm k+1\notin \{\delta_1,\ldots,\delta_s\}$.

The leading term $m'$ of this minor must be $\prod_{l=1}^s (M_{\overline{vc_k}})_{\epsilon_l\delta_l}$.
This is because any term $m''$ of the last minor can be extended to a term $m''m/m'$ of the $(A,B)$ minor of $M_{\overline{vc_k}}$
by multiplying by $m/m'=\prod_{l=s+1}^r (M_{\overline{vc_k}})_{\epsilon_l\delta_l}$,
but $m\geq m''m/m'$ by choice of~$m$
and the fact that monomial orders respect multiplication implies that $m'\geq m''$.
We conclude that $m'\mid m$ and (1) follows.

We now show (2). 
Suppose that $c_k$ is a descent of $w$ and $m\in\mathcal G_{\overline{vc_k},w,\prec'}$.
Since $(p,q)\in E(w)$, we cannot have $q=n\pm k$ and it follows that $B'\subset [q]$.
We therefore have that $m'$ is the leading term of a minor of $\tau_{p,q}(M_{\overline{v}})$ of size $r_w(p,q)+1$ and (2) follows.

To show (3), first note that, if $B'\subseteq [q]$, then, as in (2), $m'$ is the leading term of a minor of size $r_w(p,q)+1$ lying inside $\tau_{p,q}(M_{\overline{v}})$, so we can take $m''=m'$, and $m'' \mid  z_{ij} m'$.

If $B'\not\subseteq [q]$, then we must have $q=n\pm k$.
Since $c_k$ is an ascent of~$v$, we have $v_q<v_{q+1}$, so $q+1$ is not a left-to-right minimum of~$v$.
Hence $q+1$ is a right-to-left maximum of~$v$ by \Cref{lem: 123 shuffle}, and, by \Cref{rows and columns with ones},
the only nonzero entry in column~$q+1$ of is a~$1$ in position $(v(q+1),q+1)$.
This implies that $v^{-1}(q+1)=\epsilon_j$ for some $j>s$.  Without loss of generality,
suppose $v(\epsilon_{s+1})=q+1$.

Let $B''=B'\setminus \{q+1\} \cup \{q\}$.  Note that $\#B'' = r_w(p,q)+1$
since $q\not\in B'$ as $c_k(q)=q+1>q$.  Let $\delta_{s+1}=q$, so $c_k(\delta_{s+1})=q+1$.
We have $(M_{\overline{v}})_{\epsilon_s,c_k(\delta_{s+1})}=z_{ij}$.

We now construct a row set $A'$ so that the $(A',B'')$-minor of
$\tau_{p,q}(M_{\overline{v}})$ has leading term $m''$ such that $m'' \mid  z_{ij} m'$.
Without loss of generality, assume $p\leq vc_k(\delta_j) <\epsilon_{s+1}$ when
$j\leq t$,
and $p> vc_k(\delta_j)$ or $vc_k(\delta_j)\geq \epsilon_{s+1}$ when $t<j\leq s$.
Now let $A'=\{vc_k(\delta_1),\ldots,vc_k(\delta_t), \epsilon_{t+1},\ldots,\epsilon_r\}$
We show that the leading term of the $(A',B'')$-minor of $M_{\overline{v}}$ is
$m''=\prod_{\ell=t+1}^{s+1} (M_{\overline{v}})_{\epsilon_\ell c_k(\delta_\ell)}$ and that $m'' \mid  z_{ij} m'$.

First we show by contradiction that $(M_{\overline{v}})_{\epsilon_{s+1}c_k(\delta_{s+1})}=z_{ij}$ must be part of the leading term.
Since $c_k$ is the last ascent, every entry in column $c_k(\delta_{s+1})=q$ below row $\epsilon_{s+1}$ must be $0$.
Hence, if the leading term of the $(A',B'')$-minor of $M_{\overline{v}}$
does not contain $z_{ij}$, then there must exist some row $a\in A'$ with $a<\epsilon_{s+1}$
and some column $b\in B''$ with $b<q$ such that
the $(a,q)$ and $(\epsilon_{s+1},b)$ entries are in the leading term of the minor.
Now note that $(M_{\overline{v}})_{ab}=0$, as, otherwise,
by the diagonalness of the term order, this entry and $z_{ij}$ would give a larger term, as seen in Figure \ref{fig: w ascent zij}(i).
Since $(M_{\overline{v}})_{ab}=0$ but $(M_{\overline{v}})_{aq}\neq 0$ and $(M_{\overline{v}})_{\epsilon_{s+1}b}\neq 0$,
we must have that $a<v(b)<\epsilon_{s+1}$, as seen in Figure \ref{fig: w ascent zij}(ii).  By our labelling, we must have
$b=c_k(\delta_j)$ for some $j\leq t$, and $v(b)\in A'$.  Now note that
$b$ must be a left-to-right minimum, so the only nonzero entry in row $v(b)$ is the $1$ at $(v(b),b)$.
This contradicts our assumption that the $(\epsilon_{s+1},b)$ entry of $M_{\overline{v}}$ is in the leading term.

\begin{figure}[ht]
\[
\mbox{(i)}\quad
\begin{matrix}
	& b & & q \\
	a & z_{\diamondsuit\heartsuit} & \ldots & z_{\diamondsuit j}\\
	& \vdots & & \vdots\\
	\epsilon_{s+1} & z_{i\heartsuit}&\ldots& z_{ij}
\end{matrix}
\hskip0.25\textwidth 
\mbox{(ii)}\quad
\begin{matrix}
	& b & & q \\
	a & 0 & \ldots & (\ne0)\\
	& \vdots & & \vdots\\
	v(b) & 1 & \ldots & 0\\
	& \vdots & & \vdots\\
	\epsilon_{s+1} & z_{i\heartsuit}&\ldots& z_{ij}
\end{matrix}
\]
\caption{The entries of~$M_{\overline{v}}$ in relevant rows and columns.}
\label{fig: w ascent zij}
\end{figure}

Next, note that $\tilde{m}=\prod_{\ell=t+1}^{s} (M_{\overline{v}})_{\epsilon_\ell c_k(\delta_\ell)}$ must be the leading term of the
$(\{\epsilon_{t+1},\ldots,\epsilon_s\},$ $\{c_k(\delta_{t+1}),\ldots,c_k(\delta_s)\})$ minor
of $M_{\overline{v}}$, because any other term $\tilde{m}'$ can be extended to a
term $\tilde{m}'m'/\tilde{m}$ of the $(A,B')$ minor of $M_{\overline{v}}$
by multiplying by $m'/\tilde{m}=\prod_{\ell=1}^t (M_{\overline{v}})_{\epsilon_\ell c_k(\delta_\ell)}$,
but $m'\geq \tilde{m}'m'/\tilde{m}$ by (1), and the fact that monomial orders
respect multiplication implies that $\tilde{m}\geq \tilde{m}'$.

We have left-to-right minima at $c_k(\delta_1),\ldots, c_k(\delta_t)$.  Hence
by \Cref{rows and columns with ones}, every term in the $(A',B'')$
minor of $M_{\overline{v}}$ must use the $(vc_k(\delta_\ell), c_k(\delta_\ell))$-th
entries, which are all 1s.  Therefore the leading term is $z_{ij} \tilde{m}$.
By definition, $m''=z_{ij}\tilde{m}$.  By construction, we have $m'' \mid  z_{ij}m'$.
\end{proof}

\begin{ex}
Let $v = {326154}$ and $w = 465213$ in~$C_3$. 
Then the last ascent of $v$ is~$c_1$: that is, in our one-line notation for~$v$, 
position $4=3+1$ is the rightmost position of a digit that is followed by a larger digit.
Since $w$ has a descent at~$c_1$, we are in part (2) of Lemma~\ref{lem: before lunch}. We have 

\[
M_{\overline{v}} = \begin{bmatrix}
	0&0&0 &1&0&0 \\
	0&1&0 &0&0&0\\
	1&0&0 &0&0&0\\
	 z_{11} & z_{12} &0 & z_{13}&0&1 \\
	 z_{12} & z_{22} &0&z_{23}&1&0 \\
	  z_{13} & z_{23} &1 &  0&0&0
	\end{bmatrix}, \quad
	M_{\overline{vc_1}} =
	\begin{bmatrix}
	0&0&0 &0&1&0 \\
	0&0&1 &0&0&0\\
	1&0&0 &0&0&0\\
	 z_{11} & 0 &z_{12} & 0&z_{13}&1 \\
	 z_{12} & 0 &z_{22}&1&0&0 \\
	  z_{13} & 1 &0 &  0&0&0
	\end{bmatrix}, 
\]
\[
	D(w) =
	\begin{tikzpicture}[baseline=(O.base)]
	\node(O) at (1,1.5) {};
	\draw (0,0)--(0.5,0)--(0.5,1.0)--(0,1.0)--(0,0)
		(0,0.5)--(.5,0.5)
		(1.5,1.5)--(2.5,1.5)--(2.5,2)--(1.5,2)--(1.5,1.5)
		(2,2)--(2,1.5); 
	\draw (.25, 3)--(.25,1.25) node {$\bullet$}--(3,1.25)
		(.75,3)--(.75,.25) node {$\bullet$}--(3,.25)
		(1.25,3)--(1.25,.75) node {$\bullet$}--(3,.75)
		(1.75,3)--(1.75,2.25) node {$\bullet$}--(3,2.25)
		(2.25,3)--(2.25,2.75) node {$\bullet$}--(3,2.75)
		(2.75,3)--(2.75,1.75) node {$\bullet$}--(3,1.75);
	\end{tikzpicture}
	\:.
\]
So, the type~C essential set of $w$ is $E(w) = \{(5,1)\}$. Thus,  $\mathcal{G}_{\overline{v},w} = \{z_{13}, z_{12}\} = \mathcal{G}_{\overline{vc_1},w}$ in this case, and so each element of $\mathcal{G}_{\overline{vc_1},w}$ is divisible by an element of $\mathcal{G}_{\overline{v},w}$.
\end{ex}

\begin{ex}
Let $v = v_\square c_0 \in C_5$ and let $w = \rm{a}937654821$, where $\rm{a} = 10$. Then, $E(w) = \{(8,7)\}$, $r_{w}(8,7) = 2$, the last ascent of $v$ is $c_1$, and this is a descent of $w$. We have
\[
\tau_{8,7}(M_{\overline{v}}) = \begin{bmatrix}z_{13}&z_{23}&z_{33}&z_{34}&0&z_{35}&0\\
z_{14}&z_{24}&z_{34}&z_{44}&0&z_{45}&1\\
z_{15}&z_{25}&z_{35}&z_{45}&1&0&0\\
\end{bmatrix}
\] 
and
\[
\tau_{8,7}(M_{\overline{vc_1}}) = \begin{bmatrix}z_{13}&z_{23}&z_{33}&0&z_{34}&0&z_{35}\\
z_{14}&z_{24}&z_{34}&0&z_{44}&1&0\\
z_{15}&z_{25}&z_{35}&1&0&0&0\\
\end{bmatrix}.
\] 
Observe that the $3\times 3$ minor of $\tau_{8,7}(M_{\overline{vc_1}})$ coming from columns $3,5,7$ is $z_{35}z_{44}z_{35}$, so that this term is in $\mathcal{G}_{{\overline{vc_1}},w, \prec'}$ (no matter what $\prec$ is). 
This term is not in $\mathcal{G}_{\overline{v},w, \prec}$. 
In the proof of Lemma~\ref{lem: before lunch}, we take the minor from the last three columns of $\tau_{8,7}(M_{\overline{v}})$, which is $z_{35}$, which divides $z_{35}z_{44}z_{35}$.
\end{ex}

\begin{ex}
Let $v=c_1c_0v_\square = 64218753 \in C_4$ and $w=87436521$.  Then $E(w)=\{(5,4)\}$, $r_w(5,4)=2$,
the last ascent of $v$ is $c_0$, and this is an ascent of $w$.  We have:
\[
\tau_{5,4}(M_{\overline{v}}) = \begin{bmatrix} 0 & z_{22} & z_{23} & z_{24} \\
1 & 0 & 0 & 0 \\
z_{13} & z_{23} & z_{33} & z_{34} \\
z_{14} & z_{24} & z_{34} & z_{44} \\
\end{bmatrix}
\]
and
\[
\tau_{5,4}(M_{\overline{vc_0}}) = \begin{bmatrix} 0 & z_{22} & z_{23} & 0 \\
1 & 0 & 0 & 0 \\
z_{13} & z_{23} & z_{33} & 0 \\
z_{14} & z_{24} & z_{34} & 1 \\
\end{bmatrix}
\]

Observe that the $3\times 3$ minor of $\tau_{5,4}(M_{\overline{vc_k}})$
using columns 1, 3, 4 and rows 5, 7, 8 is $z_{13}z_{23}$.  The leading term of the corresponding
minor in $M_{\overline{v}}$ uses column 5, which is outside $\tau_{5,4}(M_{\overline{v}})$.  In the
proof of \Cref{lem: before lunch}(3), we first replace column 5 with column 4, then row 7 with row 6.
The leading term of this minor of $\tau_{5,4}(M_{\overline{v}})$ is $z_{23}z_{44}$, which divides $z_{13}z_{23}z_{44}$.

\end{ex}

\begin{lemma}\label{lem: w ascent essential box}
Let $w\in C_n$ and $c_k$ be an ascent of~$w$.  Let $(p,q)\in E(wc_k)$.
\begin{enumerate}\renewcommand{\theenumi}{\roman{enumi}}
\item If $q\neq n+k+1$ and $q\neq n+1-k$, then $(p,q)\in E(w)$ and $r_w(p,q)=r_{wc_k}(p,q)$.
\item If $q=n+k+1$ or $q=n+1-k$ and $(p,q-1)\in D(wc_k)$, then $(p,q)\in E(w)$ and $r_w(p,q)=r_{wc_k}(p,q)$. 
\item If $q=n+k+1$ or $q=n+1-k$ and $(p,q-1)\not\in D(wc_k)$, then $(p,q-1)\in E(w)$ and $r_w(p,q-1)=r_{wc_k}(p,q)-1$.
\end{enumerate}
\end{lemma}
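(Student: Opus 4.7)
My plan is to analyze how the diagrams $D(w)$ and $D(wc_k)$, and their essential sets, compare under right multiplication by the ascent $c_k$. I would use both the definition of $E^A(w)$ in terms of the diagram and the equivalent characterization $(p,q)\in E^A(w)$ iff $w(q) < p \le w(q+1)$ and $w^{-1}(p-1) \le q < w^{-1}(p)$, together with the additional conditions of \Cref{def: type C essential}.

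The starting point is the basic rank identity. From $r_w(p,q) = |\{i \le q : w(i) \ge p\}|$ and the fact that $wc_k$ differs from $w$ only by swapping values at positions $\{n-k, n-k+1\}$ and (if $k \ge 1$) $\{n+k, n+k+1\}$, a direct calculation shows $r_{wc_k}(p, q) = r_w(p, q)$ unless $q \in \{n-k, n+k\}$ with $w(q) < p \le w(q+1)$, in which case $r_{wc_k}(p, q) = r_w(p, q) + 1$. For case (i), the hypothesis $q \notin \{n-k+1, n+k+1\}$ combined with $(p,q) \in D(wc_k)$ forces $p$ out of the range where any rank correction occurs, so $r_w(p,q) = r_{wc_k}(p,q)$; a short check on column $q$ of $D(w)$ vs.~$D(wc_k)$ then gives $(p,q) \in E^A(w)$, and the type~C conditions transfer directly via the rank identity. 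Case (ii), with $q = n+k+1$ (say) and $(p,q-1) \in D(wc_k)$, is analogous: columns $q-1$ and $q$ of $D(w)$ and $D(wc_k)$ agree at row $p$, so $(p,q) \in D(w)$, the NE-corner property transfers from $E^A(wc_k)$, and $r_w(p,q) = r_{wc_k}(p,q)$ since $q \notin \{n-k, n+k\}$.

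Case (iii) is the most involved. With $q = n+k+1$ and $(p,q-1) = (p,n+k) \notin D(wc_k)$, unpacking the defining conditions of $D(wc_k)$ forces $w(n+k) < p < w(n+k+1)$ strictly and $w^{-1}(p) > n+k+1$, which gives $(p, n+k) \in D(w)$. The NE-corner conditions for $(p, n+k) \in E^A(w)$ follow from those at $(p, n+k+1) \in E^A(wc_k)$ together with the ascent property $w(n+k) < w(n+k+1)$. The claimed rank identity follows from $r_w(p, n+k+1) = r_w(p, n+k) + 1$ (since $p \le w(n+k+1)$) and $r_{wc_k}(p, n+k+1) = r_{wc_k}(p, n+k)$ (since $p > w(n+k) = wc_k(n+k+1)$), combined with $r_w(p, n+k+1) = r_{wc_k}(p, n+k+1)$ from the basic identity.

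The main obstacle will be verifying the second bullet of \Cref{def: type C essential} in case (iii): the box $(p, n+k)$ has a potential symmetric twin $(p, n-k)$ that must be checked, and establishing the required strict inequality on ranks involves translating the analogous condition for $(p, n+k+1)$ and its twin $(p, n-k-1)$ in $E(wc_k)$, using both the type~C symmetry $w \in C_n$ and the rank comparison at columns $n-k$ and $n-k-1$. Parallel arguments handle the subcase $q = n-k+1$ in both (ii) and (iii) by the same reasoning on the other side of the diagram.
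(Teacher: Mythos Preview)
Your proposal is correct and follows essentially the same approach as the paper's proof. The only cosmetic difference is that where you set up an explicit rank-correction identity for $r_{wc_k}$ versus $r_w$, the paper instead invokes \Cref{lem: weak cover diagram} to read off how columns $n\pm k$ and $n\pm k+1$ of the diagram change; both encode the same information, and the paper likewise observes that $q\neq n\pm k$ because $c_k$ is a descent of $wc_k$. Your identification of the type~C condition in case~(iii) as the crux---checking the potential twin $(p,n-k)$ by first showing the corresponding twin $(p,n-k-1)$ lies in $E^A(wc_k)$ and then transferring the rank inequality---is exactly the argument the paper carries out.
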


\begin{proof}
Note that we cannot have $q=n+k$ or $q=n-k$ since $c_k$ is a descent of~$wc_k$.  
The first statement then follows from \Cref{lem: weak cover diagram}.

Suppose $q=n+k+1$ or $q=n+1-k$ and $(p,q-1)\in D(wc_k)$.  Since $(p,q)\in E^A(wc_k)$, $wc_k(q)<p\leq wc_k(q+1)$.  Since $(p,q-1)\in D(wc_k)$, $wc_k(q-1)<p$.  Hence, $w(q)=wc_k(q-1)<p\leq w(q+1)=wc_k(q+1)$, and $(p,q)\in E^A(w)$.  Also, $r_w(p,q)=r_{wc_k}(p,q)$.  
Since $(p,q)\in E(wc_k)$, $p\geq n+1$.  Furthermore, if $q=n+k+1$ and $(p,2n-q)=(p,n-k-1)\in E^A(w)$, then $(p,n-k-1)\in E^A(wc_k)$.
Applying the second condition of \Cref{def: type C essential} to $(p,q)\in E(wc_k)$ we have that $r_{wc_k}(p,n-k-1)>r_{wc_k}(p,n+k+1)-k-1$.  
Since $r_w(p,n-k-1)=r_{wc_k}(p,n-k-1)$, we also have $r_w(p,2n-q)>r_w(p,q)+n-q$.  Hence, $(p,q)\in E(w)$.

Now suppose $q=n+k+1$ or $q=n+1-k$ and $(p,q-1)\not\in D(wc_k)$.  Then $w(q)=wc_k(q-1)>p$.  Hence, $w(q-1)<p<w(q)$, and $(p,q-1)\in E^A(w)$.  Also, $r_w(p,q-1)=r_{wc_k}(p,q)-1$.  
If $q=n+k+1$ and $(p,2n-(q-1))=(p,n-k)\in E^A(w)$, then $w(n+1-k)\geq p$, so $wc_k(n-k)\geq p$.  
We first wish to show that $(p,n-1-k)\in E^A(wc_k)$.
Since $(p, n+k+1)\in E(wc_k)$ we have $wc_k(n+k+2)\geq p>n$.
Using the symmetry of type~C permutations, as noted in \eqref{eq: symmetry C}, $wc_k(n-1-k)\leq n<p$.
This implies $(p,n-1-k)\in D(wc_k)$ and $(p,n-k)\notin D(wc_k)$.
To conclude that $(p,n-1-k)\in E^A(wc_k)$, we must show that $(p-1,n-1-k)\notin D(wc_k)$.
Note that $(p-1,n-1-k)\notin D(wc_k)$ if and only if $(p-1,n-1-k)\notin D(w)$. We are assuming that $(p,n-k)\in E^A(w)$. Therefore, $w(j)\neq p-1$ for $j>n-k$ and thus we can only have $(p-1,n-1-k)\in D(w)$ if $w(n-k)=p-1$.
However, since $w(q)>p>n$ and $w(n-k)=w(2n+1-q)$, \eqref{eq: symmetry C} implies that $w(n-k)< n\le p-1$.
It follows that $(p,n-1-k)\in E^A(wc_k)$.

Applying the second condition of \Cref{def: type C essential} to $(p,q)\in E(wc_k)$ we have that $r_{wc_k}(p, n-1-k)>r_{wc_k}(p,n+k+1)-k-1$.  
Since $r_w(p,n-k)=r_w(p,n-k-1)=r_{wc_k}(p,n-k-1)$ and $r_w(p,n+k)=r_{wc_k}(p,n+k+1)-1$, we see that $r_w(p,n-k)>r_w(p,n+k)-k$, and $(p,q-1)\in E(w)$.  
\end{proof}

\begin{prop}\label{prop: K in J}
Let $v\ge v_\square$ and $w\in C_n$.  Then $K_{\overline{v},w}\subseteq J_{\overline{v},w, \prec}$
for any diagonal term order $\prec$.
\end{prop}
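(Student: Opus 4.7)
The plan is to induct on $\ell(w_0 v)$. The base case $v = w_0$ is essentially trivial: $R_{\overline{v}}$ has no variables, both ideals live in the field $\mathbb{K}$, and they coincide, equaling $\langle 0 \rangle$ when $w = w_0$ and $\langle 1 \rangle$ otherwise (using that for $w \neq w_0$ some essential rank condition produces a nonzero constant minor in $M_{\overline{w_0}}=P(w_0)$, forcing $1 \in J_{\overline{v},w,\prec}$).

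For the inductive step, let $c_k$ be the last ascent of $v$, set $\overline{vc_k} = u_l v_\square (u_r c_k)$, and let $\prec'$ denote the restriction of $\prec$ to $R_{\overline{vc_k}}$, which is a diagonal term order by \Cref{prop:diagonalcompatibility}. Applying \Cref{thm: vertex decomposition3}, in Case~(1) where $c_k$ is a descent of $w$ we have $K_{\overline{v},w} = K_{\overline{vc_k},w} R_{\overline{v}}$. The inductive hypothesis gives $K_{\overline{vc_k},w} \subseteq J_{\overline{vc_k},w,\prec'}$, and \Cref{lem: before lunch}(2) shows that each generator of $J_{\overline{vc_k},w,\prec'}$ is divisible by a generator of $J_{\overline{v},w,\prec}$, which closes the case. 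In Case~(2) where $c_k$ is an ascent of $w$, we have $K_{\overline{v},w} = \langle z_{ij} m : m \in K_{\overline{vc_k},w}\rangle + K_{\overline{vc_k},wc_k} R_{\overline{v}}$ with $z_{ij}$ the largest variable of $R_{\overline{v}}$. For the first summand, induction combined with \Cref{lem: before lunch}(3) produces a generator of $J_{\overline{v},w,\prec}$ dividing $z_{ij} m$ for each generator $m$ of $J_{\overline{vc_k},w,\prec'}$.

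The remaining containment $K_{\overline{vc_k},wc_k} R_{\overline{v}} \subseteq J_{\overline{v},w,\prec}$ is the technical core of the argument, and my plan here is to combine \Cref{lem: w ascent essential box} with \Cref{lem: before lunch}(1). By induction $K_{\overline{vc_k},wc_k} \subseteq J_{\overline{vc_k},wc_k,\prec'}$, so it suffices to show that each generator of the latter, arising from some $(p,q) \in E(wc_k)$ with minor-size $r_{wc_k}(p,q)+1$, is divisible by a generator of $J_{\overline{v},w,\prec}$. Here \Cref{lem: w ascent essential box} produces a companion essential box in $E(w)$: either $(p,q)$ itself in cases (i) and (ii) with the rank preserved, or $(p,q-1)$ in case (iii) with both the rank and minor-size dropping by one. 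I would then apply \Cref{lem: before lunch}(1) to obtain a minor in $M_{\overline{v}}$ whose leading term divides the original, and verify in each subcase that (after possibly removing one row and one column in case (iii)) this minor lies inside the requisite $\tau_{p,q'}(M_{\overline{v}})$, so that it genuinely qualifies as an element of $\mathcal{G}_{\overline{v},w,\prec}$. Assembling these divisibilities across all generators of the three monomial-ideal summands completes the induction.
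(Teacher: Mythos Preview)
Your proposal follows essentially the same inductive strategy as the paper's proof, invoking \Cref{thm: vertex decomposition3} for the recursive structure and \Cref{lem: before lunch} together with \Cref{lem: w ascent essential box} for the divisibility statements. Two small remarks are worth making.

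First, you omit the case $v\not\Bruhatle w$ in the inductive step (you treat it only when $v=w_0$). When $v\neq w_0$ and $v\not\Bruhatle w$, \Cref{thm: vertex decomposition3} gives $K_{\overline{v},w}=\langle 1\rangle$, and one must exhibit $1\in J_{\overline{v},w,\prec}$ directly: the paper does this by noting that $r_v(p,q)>r_w(p,q)$ at some $(p,q)\in E(w)$, so $\tau_{p,q}(M_{\overline{v}})$ contains at least $r_w(p,q)+1$ entries equal to~$1$, and the minor on those rows and columns equals~$\pm 1$. This is short but not automatic, and your outline should include it.

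Second, for the summand $K_{\overline{vc_k},wc_k}R_{\overline{v}}$ the paper organizes the argument slightly more efficiently than you suggest: since $c_k$ is an ascent of~$w$ it is a \emph{descent} of~$wc_k$, so \Cref{lem: before lunch}(2) applies with $wc_k$ in place of~$w$ and already places the resulting minor inside $\tau_{p,q}(M_{\overline{v}})$. Only then does one invoke \Cref{lem: w ascent essential box} and, in its case~(iii), either delete the row and column through position $(a,q)$ or, when column~$q$ is absent, use Laplace expansion to drop the size by one. This avoids a separate verification that $B'\subseteq[q]$ after using part~(1).
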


\begin{proof}
We induct on $\ell(w_0v)$.
In the base case, $v=w_0$, these are both ideals of the polynomial ring $R_{w_0}=\mathbb K$ in zero variables,
namely the zero ideal if $w=w_0$ and the unit ideal otherwise.

If $v\neq w_0$ then it may occur that $v\not\leq w$ so that $K_{\overline{v},w}$ is the unit ideal. 
In this case, $r_{v}(p,q)>r_w(p,q)$ for some $(p,q)\in E(w)$.  
Hence there will be at least
$r_w(p,q)+1$ entries equal to~$1$ in $\tau_{p,q}(M_{\overline{v}})$.  
Taking a size $r_w(p,q)+1$ minor of $\tau_{p,q}(M_{\overline{v}})$ that contains a $1$ in each column,
we see that the minor evaluates to~$1$ (every entry above and to the right of a $1$ being~$0$).
Thus $1\in J_{\overline{v},w}$ as desired.

Henceforth we assume $v\leq w$.
Let $c_k$ be the last ascent of~$v$, let $\{z_{ij}\}=V_{\overline{v}}\setminus V_{\overline{vc_k}}$, and let $\prec'$ be the restriction of $\prec$ to $R_{\overline{vc_k}}$.
We consider two cases according to whether $c_k$ is an ascent or a descent of~$w$.
If a descent, \Cref{thm: vertex decomposition3} says
\[ K_{\overline{v},w} = K_{\overline{vc_k},w}R_{\overline{v}}. \]
By induction, $K_{\overline{vc_k},w}\subseteq J_{\overline{vc_k},w, \prec'}$, since $\prec'$ is a diagonal term order by \Cref{prop:diagonalcompatibility}.
So it suffices to show that $J_{\overline{vc_k},w,\prec'}R_{\overline{v}}\subseteq J_{\overline{v},w,\prec}$; but this is part (2) of \Cref{lem: before lunch}.

If $c_k$ is an ascent of~$w$, then \Cref{thm: vertex decomposition3} says
\begin{equation}\label{eq: VD ascent}
K_{\overline{v},w} = \langle z_{ij}m : m\in K_{\overline{vc_k},w} \rangle + K_{\overline{vc_k},wc_k}R_{\overline{v}}.
\end{equation}
Let $\mhat$ be a monomial generator of~$K_{\overline{v},w}$.
We split into two cases again according to whether $\mhat\in K_{\overline{vc_k},wc_k}$.

If $\mhat\in K_{\overline{vc_k},wc_k}$, then by induction $\mhat\in J_{\overline{vc_k},wc_k, \prec'}$.
Hence there exists $m\in \mathcal{G}_{\overline{vc_k},wc_k, \prec'}$ dividing $\mhat$.
Since $c_k$ is an ascent of $w$, it is a descent of~$wc_k$, and therefore by Lemma~\ref{lem: before lunch},
there exists $m'\in\mathcal G_{\overline{v},wc_k,\prec}$ dividing $m$ and hence $\mhat$.
Since $m'\in\mathcal{G}_{\overline{v},wc_k, \prec}$, it is the leading term of some minor of $\tau_{p,q}(M_{\overline{v}})$, of size $r_{wc_k}(p,q)+1$, for some $(p,q)\in E(wc_k)$. 
Call this minor $D$.
We now apply Lemma~\ref{lem: w ascent essential box}, breaking the argument into its three cases.

If $q\neq n\pm k+1$, then $(p,q)\in E(w)$ and $r_w(p,q)=r_{wc_k}(p,q)$, so $D\in I_{\overline{v},w}$, $m'\in \mathcal G_{\overline{v},w, \prec}$, and therefore $m\in J_{\overline{v},w, \prec}$.
Similarly, if $q= n\pm k+1$ and $(p,q-1)\in D(wc_k)$ then $m\in J_{\overline{v},w, \prec}$.
Lastly, if $q= n\pm k+1$ and $(p,q-1)\not\in D(wc_k)$, the lemma implies that $(p,q-1)\in E(w)$ and $r_w(p,q-1)=r_{wc_k}(p,q)-1$.
If column $q$ is not used in $D$, then $D$ is a minor of $\tau_{p,q-1}(M_{\overline{v}})$ of size $r_w(p,q-1)+2$.
By Laplace expansion, $m'$, being the leading term of~$D$, is divisible by the leading term $m''$ of some minor of size $r_w(p,q-1)+1$.
Hence $m''\in \mathcal{G}_{\overline{v},w, \prec}$ and $\mhat\in J_{\overline{v},w, \prec}$.
Otherwise, if column $q$ is used in $D$, let $(a,q)$ be the position in column~$q$ appearing in the leading term.
Then $m'':=m'/(M_{\overline{v}})_{a,q}$ is the leading term of the minor of size $r_{wc_k}(p,q)-1$ obtained by omitting row~$a$ and column~$q$.
Therefore $m''\in\mathcal G_{\overline{v},w, \prec}$ and $\mhat\in J_{\overline{v},w, \prec}$. 

If instead $\mhat\not\in K_{\overline{vc_k},wc_k}$, then
$\mhat/z_{ij}\in K_{\overline{vc_k},w}$ and by induction $\mhat/z_{ij}\in J_{\overline{vc_k},w, \prec'}$.
Let $m\in\mathcal G_{\overline{vc_k},w}$ be a generator dividing $\mhat/z_{ij}$, arising from a minor of $\tau_{p,q}(M_{\overline{vc_k}})$.
Part (1) of Lemma~\ref{lem: before lunch} (which does not literally apply because $c_k$ is not a descent of~$w$)
produces a minor $D$ of~$M_{\overline{v}}$ of size $r_w(p,q)+1$
with a leading term $m'$ that divides $m$, and part (3) produces $m''\in\mathcal{G}_{\overline{v},w, \prec}$ dividing $z_{ij}m'$.
Hence we have $m'' \mid  z_{ij} m' \mid  z_{ij} m \mid  \mhat$, and $\mhat \in J_{\overline{v},w, \prec}$.
\end{proof}

\subsection{Equality of \texorpdfstring{$K$}{K}-polynomials}
We wish to show that, with respect to the weighting of the variables on $R_{\overline{v}}$  introduced in Section \ref{sect:TActionSmall}, the $K$-polynomial of the $R_{\overline{v}}$-module $R_{\overline{v}}/I_{\overline{v},w}$ is equal to the $K$-polynomial of $R_{\overline{v}}/K_{\overline{v},w}$. We do this by showing that the $K$-polynomial of $R_{\overline{v}}/K_{\overline{v},w}$ satisfies the recursion of the next theorem, due to B.~Kostant and S.~Kumar \cite[Proposition 2.4]{KostantKumar}.  Kumar showed that the local $K$-classes of $X_w$, and hence the $K$-polynomials of $R_{\overline{v}}/I_{\overline{v},w}$, follow this recursion.  The version below is due to Knutson (see \cite[Theorem 1]{Knutson-degenerate}) and has been translated to our specific setting. See also \cite[Theorem 6.12]{WooYongGrobner} for the type~A analog of the statement below. 

Throughout this subsection we assume that $\mathbb{K}= \mathbb{C}$ as the recursion on $K$-polynomials that we reference is only proved in that setting. In the next subsection we show that, despite this assumption, Theorem \ref{thm: main} holds over arbitrary $\mathbb{K}$ of characteristic zero.

\begin{thm}\label{thm:KostKum}
Let $v,w\in C_n$. 
\begin{itemize}
\item If $v\not\leq w$, then
\[
\mathcal{K}(R_{\overline{v}}/I_{\overline{v},w};\mathbf{t}) = 0.
\]
\item If $v = w_0$, then $w = w_0$ (or we are in the previous case). Then
\[
\mathcal{K}(R_{\overline{v}}/I_{\overline{v},w};\mathbf{t}) = 1.
\]
\item Otherwise, let $k$ be the last right ascent of $v$, so $vs_k>v$. 
\begin{enumerate}
\item If $k$ is a descent of $w$, so $cs_k<w$, then
\[
\mathcal{K}(R_{\overline{v}}/I_{\overline{v},w};\mathbf{t}) = \mathcal{K}( R_{\overline{vc_k}} / I_{\overline{vc_k},w};\mathbf{t});
\]
\item If $k$ is an ascent of $w$, so $cs_k>w$, then
\[
\mathcal{K}(R_{\overline{v}}/I_{\overline{v},w};\mathbf{t}) = \mathcal{K}(R_{\overline{vc_k}}/I_{\overline{vc_k},w};\mathbf{t})+(1-t_it_j)\mathcal{K}(R_{\overline{vc_k}}/I_{\overline{vc_k},wc_k};\mathbf{t})-(1-t_it_j)\mathcal{K}(R_{\overline{vc_k}}/I_{\overline{vc_k},w}; \mathbf{t}),
\]
where $(i,j)$ is such that $z_{ij}$ is the smallest variable with respect to $\prec_{\mathrm{lex}}$ on~$R_{\overline{v}}$.
\end{enumerate}
\end{itemize}
\end{thm}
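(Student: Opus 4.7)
The plan is to derive this recursion from the Kostant-Kumar recursion on $T$-equivariant localizations of Schubert classes in $G/B^+_G$, in the form stated by Knutson~\cite[Theorem 1]{Knutson-degenerate}.

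First, by the Kazhdan-Lusztig lemma~\cite[Lemma A.4]{KazhdanLusztig} (cf.~\cite[Section 3]{WooYongSings}), an affine neighborhood of the $T$-fixed point $P(v)B^+_G/B^+_G$ in $X_w$ is isomorphic to $\mathcal{N}_{v,w}$ times a trivial affine factor, and the latter contributes only a unit to the $K$-polynomial. Combined with \Cref{prop: KL coord ring} and the torus action on $\Sigma_v$ described in \Cref{sect:TActionSmall}, this identifies $\mathcal{K}(R_{\overline{v}}/I_{\overline{v},w};\mathbf{t})$ with the localization $[\mathcal{O}_{X_w}]|_v$ of the equivariant structure-sheaf class at $P(v)B^+_G/B^+_G$, once torus weights are matched by \Cref{prop: weights}. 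The first two bullet points of the theorem are then immediate: $[\mathcal{O}_{X_w}]|_v=0$ whenever $v\not\le w$ since the fixed point does not lie in $X_w$, and $[\mathcal{O}_{X_{w_0}}]|_{w_0}=1$ at the unique fixed point of $X_{w_0}$.

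Next I would invoke Knutson's form of the Kostant-Kumar recursion, which for any right ascent $c_k$ of~$v$ expresses $[\mathcal{O}_{X_w}]|_v$ in terms of classes localized at $vc_k$. When $c_k$ is a descent of~$w$, it gives $[\mathcal{O}_{X_w}]|_v=[\mathcal{O}_{X_w}]|_{vc_k}$. When $c_k$ is an ascent of~$w$, it gives a Demazure-style formula of the shape $[\mathcal{O}_{X_w}]|_v=t^{\alpha}[\mathcal{O}_{X_w}]|_{vc_k}+(1-t^{\alpha})[\mathcal{O}_{X_{wc_k}}]|_{vc_k}$, where $\alpha$ is the $T$-weight of the coordinate direction of $\Sigma_v$ added on top of $\Sigma_{vc_k}$. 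A routine rearrangement puts the latter identity into the form stated in the theorem.

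Finally I would identify $\alpha$ with $e_i+e_j$. By \Cref{cor: deleted variable}, $V_{\overline{v}}\setminus V_{\overline{vc_k}}$ consists of a single variable $z_{ij}$ parametrizing the added tangent direction, and by \Cref{prop: weights} its weight is $u_l\cdot e_i+u_l\cdot e_j$; via the identification $t_i=\exp(e_i)$ together with the natural torus action this becomes $t_it_j$. The main obstacle will be the torus bookkeeping: one must carefully match Knutson's abstract root datum with the $u_l$-twisted coordinates on our $T$-action, and also verify that the $z_{ij}$ singled out in the theorem statement (via $\prec_{\mathrm{lex}}$ on $R_{\overline{v}}$) indeed coincides with the unique element of $V_{\overline{v}}\setminus V_{\overline{vc_k}}$ provided by \Cref{cor: deleted variable}, so that the exponent of $t^\alpha$ on the geometric side is exactly the index pair $(i,j)$ named in the algebraic recursion.
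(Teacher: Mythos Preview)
Your proposal is correct and follows essentially the same route as the paper: both derive the recursion from Knutson's form of the Kostant--Kumar recursion \cite[Theorem~1]{Knutson-degenerate} after identifying the $K$-polynomial with the equivariant localization $[\mathcal{O}_{X_w}]|_v$, and both reduce the argument to the torus bookkeeping you flag as the main obstacle. The one difference worth noting is that the paper resolves that obstacle not through \Cref{prop: weights} but by computing $v(\alpha)$ directly from the root $\alpha=e_{k+1}-e_k$ and the identities $n+i=v(n+k+1)$, $n+1-j=v(n+k)$ extracted from the proof of \Cref{prop: partial symm matrices}; this sidesteps the $u_l$-twist you would otherwise have to unwind when going via \Cref{prop: weights}.
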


\begin{proof} This is \cite[Theorem 1]{Knutson-degenerate} in the type~C setting, but we should explain two changes that arise in translating the statement to our setting.

First, the original statement is about the pullback of the class $[\mathcal{O}_{X_w}]\in K_T(G/B_G^+)$ to $K_T(vB_G^+/B_G^+)$, where $vB_G^+/B_G^+$ is the point in the stratification of~$G/B_G^+$.  In commutative algebra terms, this is 
\[\sum_{i} (-1)^i[\Tor_i (\mathcal{O}_{X_w}, \mathcal{O}_{vB_G^+/B_G^+})]\in K_T(vB_G^+/B_G^+).\]
A class in $K_T(\mathrm{pt})$ can be identified with its formal character.  On the other hand, since $vB_G^+/B_G^+$ is the point in $M_{\overline{v}}$ with all coordinates set to 0, taking $\Tor$ with $\mathcal{O}_{vB_G^+/B_G^+}$ is the same as taking the $K$-polynomial.

Second, we need to match $e^{v(\alpha)}$ with $t_it_j$.  As shown in the proof of
\Cref{prop: partial symm matrices}, we have $n+i=v(n+k+1)$ and $n+1-j=v(n+k)$, so $v(e_k)=-e_j$ and $v(e_{k+1})=e_i$.  Since $\alpha=e_{k+1}-e_k$, $v(\alpha)=e_i+e_j$, so $e^{v(\alpha)}=t_it_j$.  (To be precise,
there are two cancelling sign differences from the original statement, one from our definition of $X_w$ as $B_G^+$-orbit closures rather than $B_G^{-}$-orbit closures,
and the second from our use of $w_0v$ instead of $v$.)
\end{proof}

\begin{prop}\label{prop: K(K) = K(I)}
Given $v,w\in C_n$, 
\[\mathcal{K}(R_{\overline{v}}/I_{\overline{v},w};\mathbf{t}) = \mathcal{K}(R_{\overline{v}}/K_{\overline{v},w};\mathbf{t}).
\]
\end{prop}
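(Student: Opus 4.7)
The approach is to show that $\mathcal{K}(R_{\overline{v}}/K_{\overline{v},w};\mathbf{t})$ satisfies the same Kostant--Kumar recursion as $\mathcal{K}(R_{\overline{v}}/I_{\overline{v},w};\mathbf{t})$ given in \Cref{thm:KostKum}, and then conclude by induction on $\ell(w_0 v)$. The base and degenerate cases are immediate from \Cref{thm: vertex decomposition3}: if $v=w_0$ then $R_{\overline{v}}=\mathbb{K}$ and either $w=w_0$ with $K_{\overline{v},w}=\langle 0\rangle$ (giving $K$-polynomial $1$) or else $K_{\overline{v},w}=\langle 1\rangle$ (giving $0$); and if $v\not\Bruhatle w$ then $K_{\overline{v},w}=\langle 1\rangle$ (giving $0$). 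These match the corresponding values in \Cref{thm:KostKum}.

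For the inductive step, let $c_k$ be the last ascent of $v$ and let $\{z_{ij}\}=V_{\overline{v}}\setminus V_{\overline{vc_k}}$. If $c_k$ is a descent of $w$, then \Cref{thm: vertex decomposition3} gives $K_{\overline{v},w}=K_{\overline{vc_k},w}R_{\overline{v}}$, and because $R_{\overline{v}}=R_{\overline{vc_k}}[z_{ij}]$ is a polynomial extension, the extra factor of $(1-\mathbf{t}^{\deg z_{ij}})$ in the $K$-polynomial normalization cancels the $(1-\mathbf{t}^{\deg z_{ij}})^{-1}$ factor in the Hilbert series from the extra free variable. This yields $\mathcal{K}(R_{\overline{v}}/K_{\overline{v},w};\mathbf{t})=\mathcal{K}(R_{\overline{vc_k}}/K_{\overline{vc_k},w};\mathbf{t})$, matching the corresponding case of \Cref{thm:KostKum}.

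If instead $c_k$ is an ascent of $w$, I would apply the short exact sequence
\[0\to (R_{\overline{v}}/(K_{\overline{v},w}:z_{ij}))(-\deg z_{ij}) \xrightarrow{\cdot z_{ij}} R_{\overline{v}}/K_{\overline{v},w}\to R_{\overline{v}}/(K_{\overline{v},w}+\langle z_{ij}\rangle)\to 0,\]
which on $K$-polynomials becomes $\mathcal{K}(R_{\overline{v}}/K_{\overline{v},w};\mathbf{t})=\mathcal{K}(R_{\overline{v}}/(K_{\overline{v},w}+\langle z_{ij}\rangle);\mathbf{t})+\mathbf{t}^{\deg z_{ij}}\mathcal{K}(R_{\overline{v}}/(K_{\overline{v},w}:z_{ij});\mathbf{t})$. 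From \Cref{thm: vertex decomposition3} one simplifies $K_{\overline{v},w}+\langle z_{ij}\rangle=\langle z_{ij}\rangle+K_{\overline{vc_k},wc_k}R_{\overline{v}}$, so the first term equals $(1-\mathbf{t}^{\deg z_{ij}})\mathcal{K}(R_{\overline{vc_k}}/K_{\overline{vc_k},wc_k};\mathbf{t})$. For the colon, since $K_{\overline{vc_k},wc_k}$ involves no $z_{ij}$, this variable is a nonzerodivisor on $R_{\overline{v}}/K_{\overline{vc_k},wc_k}R_{\overline{v}}$; combined with the containment $K_{\overline{vc_k},wc_k}\subseteq K_{\overline{vc_k},w}$ (coming from the subword complex inclusion $\Delta_{\overline{vc_k},w}\subseteq\Delta_{\overline{vc_k},wc_k}$, which in turn follows from $w_0wc_k\Bruhatle w_0w$ and the Bruhat subword property), this gives $K_{\overline{v},w}:z_{ij}=K_{\overline{vc_k},w}R_{\overline{v}}$. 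Hence the second term equals $\mathbf{t}^{\deg z_{ij}}\mathcal{K}(R_{\overline{vc_k}}/K_{\overline{vc_k},w};\mathbf{t})$. Using $\mathbf{t}^{\deg z_{ij}}=t_it_j$ (by \Cref{prop: weights}, as in the proof of \Cref{thm:KostKum}) and simplifying the sum recovers the second case of \Cref{thm:KostKum}, completing the induction.

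The main obstacle will be the colon ideal calculation, specifically establishing the Bruhat-order reversal $K_{\overline{vc_k},wc_k}\subseteq K_{\overline{vc_k},w}$ via the subword complex inclusion, and correctly matching the weight $\mathbf{t}^{\deg z_{ij}}=t_it_j$ so that the resulting $K$-polynomial recursion lines up exactly with \Cref{thm:KostKum}.
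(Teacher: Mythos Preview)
Your proposal is correct and follows the same overall strategy as the paper: show that $\mathcal{K}(R_{\overline{v}}/K_{\overline{v},w};\mathbf{t})$ satisfies the Kostant--Kumar recursion of \Cref{thm:KostKum}, then conclude by induction. The difference is in how you extract the recursion on the Stanley--Reisner side. The paper invokes \cite[Theorem~1.13]{MillerSturmfels}, a general formula for the $K$-polynomial of a Stanley--Reisner ring under vertex decomposition, applied to the decomposition in \Cref{thm: vertex decomposition2}; this yields the recursion in one line. You instead work directly from the ideal description in \Cref{thm: vertex decomposition3}, using the short exact sequence for multiplication by~$z_{ij}$ and computing the colon ideal $K_{\overline{v},w}:z_{ij}$ explicitly. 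Your route is more elementary and self-contained, at the cost of needing the auxiliary containment $K_{\overline{vc_k},wc_k}\subseteq K_{\overline{vc_k},w}$ (which you correctly deduce from $w_0wc_k\Bruhatle w_0w$ and the subword property). The paper's route is shorter but relies on the external reference. One small remark: the weight $\mathbf{t}^{\deg z_{ij}}$ is $\exp(u_l\cdot e_i+u_l\cdot e_j)$ rather than literally $t_it_j$ unless $u_l=\mathrm{id}$, but as you note, this is exactly what the paper's ``$t_it_j$'' denotes in \Cref{thm:KostKum} (cf.\ the matching of $e^{v(\alpha)}$ in its proof), so the recursions do line up.
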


\begin{proof}
\cite[Theorem 1.13]{MillerSturmfels}
implies that given a simplicial complex $\Delta$ on the vertex set $V\mathbin{\dot\cup}\{z\}$ with its vertex decomposition at~$z$,
say $\Delta = \cone_z\Lambda\cup\Pi$ where $\Lambda\subseteq\Pi$ are simplicial complexes on~$V$ (respectively the link and deletion of $z$ in~$\Delta$), we have
\[\mathcal{K}(R[V\cup\{z\}]/I_\Delta;\mathbf{t}) = \mathcal{K}(R[V]/I_\Lambda;\mathbf{t}) + (1-\mathbf{t}^{\deg(z)}) \mathcal{K}(R[V]/I_\Pi;\mathbf{t}) - (1-\mathbf{t}^{\deg(z)}) \mathcal{K}(R[V]/I_\Lambda;\mathbf{t}).\]
Applied to Proposition~\ref{thm: vertex decomposition2},
this produces a recursive formula for $\mathcal{K}(R/K_{\overline{v},w};\mathbf{t})$,
which comes out identical to Theorem~\ref{thm:KostKum} with every appearance of $I$ replaced by~$K$. 
Thus, $\mathcal{K}(R/I_{\overline{v},w};\mathbf{t})$ and $\mathcal{K}(R/K_{\overline{v},w};\mathbf{t})$ satisfy the same recursion, and so are the same. 
\end{proof}

\subsection{Proof of Theorem \ref{thm: main}}

\begin{proof}
We first assume $\mathbb{K} = \mathbb{C}$, and return to an arbitrary field of characteristic zero in the last paragraph. 
Fix a diagonal term order $\prec$.  By \Cref{prop: K in J},
$K_{\overline{v},w}\subseteq J_{\overline{v}, w, \prec}$.
By definition, $J_{\overline{v}, w,\prec}\subseteq \init_\prec I_{\overline{v},w}$.
Hence we have surjections
$$R/K_{\overline{v},w} \twoheadrightarrow R/J_{\overline{v},w,\prec} \twoheadrightarrow R/\init_\prec I_{\overline{v},w}.$$

Now \Cref{prop: K(K) = K(I)} states that 
\[\mathcal{K}(R_{\overline{v}}/I_{\overline{v},w};\mathbf{t}) = \mathcal{K}(R_{\overline{v}}/K_{\overline{v},w};\mathbf{t}).
\]
Since
\[\mathcal{K}(R_{\overline{v}}/I_{\overline{v},w};\mathbf{t}) = \mathcal{K}(R_{\overline{v}}/\init_\prec I_{\overline{v},w};\mathbf{t}),
\]
the above containments are actually equalities, and
$$J_{\overline{v},w,\prec}=\init_\prec I_{\overline{v},w},$$
as desired.

To complete the proof, we note that since the essential minors in $I_{\overline{v},w}$ are polynomials with $\mathbb{Z}$ coefficients, the essential minors are a Gr\"obner basis over any field $\mathbb{K}$ of characteristic zero. Indeed, if $f$ and $g$ are essential minors, then the $S$-polynomial $S(f,g)$ reduces to $0$ under division by the essential minors when working over $\mathbb{Q}$; hence it does over any field of characteristic zero.
\end{proof}

The above proof also gives the following corollary.

\begin{cor}\label{cor: K is initial}
Under any diagonal term order, the initial ideal of $I_{\overline{v},w}$ is $K_{\overline{v},w}$.
\end{cor}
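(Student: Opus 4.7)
The plan is to extract this equality directly from the proof of \Cref{thm: main}. Recall that the proof produces the chain
\[
K_{\overline{v},w} \subseteq J_{\overline{v},w,\prec} \subseteq \init_\prec I_{\overline{v},w},
\]
where the first inclusion is \Cref{prop: K in J} and the second holds by the very definition of $J_{\overline{v},w,\prec}$, since its generators are the leading terms of a generating set for $I_{\overline{v},w}$.

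To upgrade both containments to equalities, I would compare $K$-polynomials at the two ends of this chain. By the degenerative property, $\mathcal{K}(R_{\overline{v}}/\init_\prec I_{\overline{v},w};\mathbf{t}) = \mathcal{K}(R_{\overline{v}}/I_{\overline{v},w};\mathbf{t})$, and by \Cref{prop: K(K) = K(I)} this in turn equals $\mathcal{K}(R_{\overline{v}}/K_{\overline{v},w};\mathbf{t})$. Positivity of the multigrading on $R_{\overline{v}}$, established at the end of \Cref{sect:Taction1}, ensures that if one ideal is contained in another and their quotient rings have the same $K$-polynomial, then the two ideals agree (e.g.\ by looking at the short exact sequence of the quotient modules and observing that the intermediate term has vanishing $K$-polynomial). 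Applied to our chain, this forces $K_{\overline{v},w} = J_{\overline{v},w,\prec} = \init_\prec I_{\overline{v},w}$, yielding the desired equality.

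There is no substantial new obstacle here: the corollary is really a repackaging of the last step in the proof of \Cref{thm: main}, now isolated to emphasize that the initial ideal of $I_{\overline{v},w}$ admits the explicit combinatorial description as the Stanley-Reisner ideal of the subword complex $\Delta_{\overline{v},w}$. The only thing worth remarking on is that, exactly as in the final paragraph of the proof of \Cref{thm: main}, once the identification is established over $\mathbb{C}$ it propagates to an arbitrary field of characteristic zero, since the generators and their leading terms have integer coefficients.
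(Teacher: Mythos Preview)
Your proposal is correct and mirrors exactly what the paper does: the corollary is stated immediately after the proof of \Cref{thm: main} with only the remark that ``the above proof also gives the following corollary,'' since the chain of equalities $K_{\overline{v},w}=J_{\overline{v},w,\prec}=\init_\prec I_{\overline{v},w}$ is already established there via \Cref{prop: K in J}, \Cref{prop: K(K) = K(I)}, and the degenerative property of $K$-polynomials. Your write-up just unpacks that sentence.
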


By Lemma \ref{lem:powerof2}, all coefficients of essential minors are powers of $2$. 

\begin{conj}
Theorem \ref{thm: main} holds over an arbitrary field $\mathbb{K}$ of characteristic not $2$.
\end{conj}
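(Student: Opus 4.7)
The plan is to adapt the authors' final paragraph in the proof of Theorem~\ref{thm: main} from characteristic~$0$ to characteristic~$\ne 2$ by carrying out all Buchberger reductions over $\mathbb{Z}[1/2]$ in place of $\mathbb{Z}$. The key input is Lemma~\ref{lem:powerof2}, whose proof uses only that $2\ne 0$ and thus remains valid in any characteristic~$\ne 2$. In particular, every coefficient of every minor of $M_{\overline{v}}$ is a signed power of~$2$, hence a unit in $\mathbb{Z}[1/2]$; so the essential minors generating $I_{\overline{v},w}$ may be regarded as elements of $R_{\mathbb{Z}[1/2]} := \mathbb{Z}[1/2][V_{\overline{v}}]$ whose leading coefficients are units.

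Fix a diagonal term order $\prec$ and a pair of essential minors $f,g$. The $S$-polynomial $S(f,g)$ lies in $R_{\mathbb{Z}[1/2]}$, since constructing it divides only by $\operatorname{LC}(f)$ and $\operatorname{LC}(g)$. By the already-proved Theorem~\ref{thm: main} over $\mathbb{Q}$, there exists a Buchberger reduction sequence of $S(f,g)$ modulo the essential minors terminating at $0$. Each step subtracts a polynomial $(c/\operatorname{LC}(g_i))\,(t/\operatorname{LM}(g_i))\, g_i$ for an essential minor $g_i$, a coefficient $c$ of the current remainder, and a monomial $t$ thereof divisible by $\operatorname{LM}(g_i)$. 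Since $\operatorname{LC}(g_i)$ is a unit in $\mathbb{Z}[1/2]$ and (inductively) $c$ lies in $\mathbb{Z}[1/2]$, every intermediate remainder remains in $R_{\mathbb{Z}[1/2]}$, and the sequence is already a valid reduction sequence over $\mathbb{Z}[1/2]$. For any field $\mathbb{K}$ with $\operatorname{char}\mathbb{K} \ne 2$, the ring homomorphism $\mathbb{Z}[1/2]\to\mathbb{K}$ maps this sequence into $\mathbb{K}[V_{\overline{v}}]$: it starts at $\overline{S(f,g)} = S(\bar f,\bar g)$ (the equality holding because $\operatorname{LC}(f)$ and $\operatorname{LC}(g)$ remain nonzero in~$\mathbb{K}$) and ends at $0$. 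Deleting any steps rendered trivial by the specialization leaves a bona fide Buchberger reduction of $S(\bar f,\bar g)$ to~$0$ modulo the essential minors in $\mathbb{K}[V_{\overline{v}}]$, verifying Buchberger's criterion and establishing the Gr\"obner basis property over $\mathbb{K}$.

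The step requiring verification is the claim that the $\mathbb{Q}$-reduction stays in $R_{\mathbb{Z}[1/2]}$, i.e., that no step requires dividing by a non-unit of $\mathbb{Z}[1/2]$; this follows directly from the unit-leading-coefficient property supplied by Lemma~\ref{lem:powerof2}. A more conceptual alternative route would first extend Proposition~\ref{prop: K in J} to characteristic $\ne 2$ (its proof already works, using only Lemma~\ref{lem:powerof2}) and then compare Hilbert series: the Stanley--Reisner quotient $R/K_{\overline{v},w}$ has field-independent Hilbert series, and semi-continuity of graded fiber dimensions for the $\mathbb{Z}[1/2]$-module $R_{\mathbb{Z}[1/2]}/I_{\mathbb{Z}[1/2]}$ would then force $K_{\overline{v},w} = \init_\prec I_{\overline{v},w}$ in every such characteristic.
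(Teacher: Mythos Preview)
The statement you are addressing is labeled a \emph{Conjecture} in the paper; the authors do not supply a proof, so there is no paper argument to compare against.

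That said, your argument appears to be correct and would resolve the conjecture. The key observation---that Lemma~\ref{lem:powerof2} makes every coefficient of every essential minor, and in particular every leading coefficient, a signed power of~$2$ and hence a unit in $\mathbb{Z}[1/2]$---is exactly what is needed to lift the Gr\"obner basis property from $\mathbb{Q}$ to $\mathbb{Z}[1/2]$ and then specialize. Your first route is sound: since each $\mathrm{LC}(g_i)$ is a unit in $\mathbb{Z}[1/2]$, the division algorithm applied over~$\mathbb{Q}$ never leaves $R_{\mathbb{Z}[1/2]}$, and the resulting representation $S(f,g)=\sum_k c_k m_k g_{i_k}$ with $\mathrm{LM}(c_k m_k g_{i_k})\preceq\mathrm{LM}(S(f,g))$ specializes to a representation over~$\mathbb{K}$ satisfying the same monomial bound, which is what Buchberger's criterion requires. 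One small caveat: the phrase ``bona fide Buchberger reduction'' is slightly loose, since after specialization the surviving steps may cancel a term of $\bar r_k$ that is no longer its \emph{leading} term. But each such step still cancels a genuine term of $\bar r_k$ (its coefficient $\bar c_k\cdot\overline{\mathrm{LC}(g_{i_k})}$ is nonzero precisely when $\bar c_k\ne 0$), and the resulting standard representation is all the criterion needs. Your alternative route via Hilbert series is, if anything, cleaner: the standard monomials span $R_{\mathbb{Z}[1/2]}/I_{\mathbb{Z}[1/2]}$ by the division algorithm and are independent after base change to~$\mathbb{Q}$, hence form a free $\mathbb{Z}[1/2]$-basis, and this persists after tensoring with any $\mathbb{K}$ of characteristic~$\ne 2$.

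The authors presumably left this as a conjecture because their direct proof over~$\mathbb{C}$ hinges on the Kostant--Kumar recursion (Theorem~\ref{thm:KostKum}), whose cited source is characteristic-zero; they did not record that the final Buchberger paragraph of their proof already descends to $\mathbb{Z}[1/2]$ once Lemma~\ref{lem:powerof2} is available.
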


\section{\texorpdfstring{$K$}{K}-polynomial formulas via pipe dreams}\label{sec: formulas}

\subsection{Type~C pipe dreams on small patches}

In this section, we recall the notion of pipe dreams associated to pairs of type~A permutations $v,w$. We then define a closely related notion of type~C pipe dreams, specifically for pairs of permutations $v,w\in C_n$ with $v\ge v_\square$. These are simply type~A pipe dreams with symmetry imposed about the diagonal.

We begin with pipe dream complexes for pairs of permutations in $S_{m}$.
Pipe dreams were invented by S.~Fomin and A.~Kirillov in \cite{Fomin-Kirillov} and further studied by N.~Bergeron and S.~Billey in \cite{Bergeron-Billey}.  Knutson and Miller \cite{KnutsonMillerAdvances,KnutsonMillerAnnals} endowed them with the structure of a simplicial complex, namely a subword complex.
A (type~A) {\bf pipe dream} is a tiling of the entries in the southwest triangle of an $m\times m$ matrix with the tiles \textbf{cross} \textcross, \textbf{elbow} \textelbow, and \textbf{half elbow} \textcelbow\ such that:
	\begin{enumerate}
	\item the diagonal is tiled with \textcelbow, and
	\item the weak southwest triangle only uses \textcross\ and \textelbow.
\end{enumerate}
A pipe dream $\rho$ induces an arrangement of $m$ pseudolines and $\rho$ is \textbf{reduced} if no two pseudolines cross twice.
A pipe dream $\rho$ is \textbf{contained} in another pipe dream $\rho'$ if the set of positions of the elbows of $\rho$ is contained in the set of positions of the elbows of $\rho'$: see Figure~\ref{fig: type A pipe dreams} for an example.

Label the west ends of the pseudolines $1,\ldots,m$ bottom to top in the order of their incidence with the west boundary, as in Figure~\ref{fig: type A pipe dreams}.
Also label the south ends of the pseudolines with $1,\ldots,m$
by transporting the west labels along the pseudolines, except
ignoring all crossings subsequent to the first between each pair of pseudolines,
i.e.\ moving the labels as if such crosses were elbows instead.
Then a pipe dream for $w\in S_m$ is a pipe dream whose labels along the south boundary read $w$.
See Figure~\ref{fig: type A pipe dreams} for two examples of pipe dreams for $1432$.

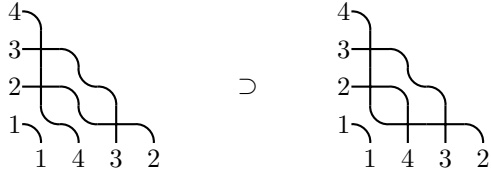
\begin{figure}[h]
\begin{tikzpicture}
	 \draw node at (-.1,.25) {$1$}
	 	node at (-.1,.75) {$2$}
	 	node at (-.1,1.25) {$3$}
	 	node at (-.1,1.75) {$4$};
	  \begin{scope}[shift={(0,0)}]{\elbow}  \end{scope}
	  \begin{scope}[shift={(.5,0)}]{\elbow}  \end{scope}
	  \begin{scope}[shift={(1,0)}]{\+}  \end{scope}
	  \begin{scope}[shift={(1.5,0)}]{\celbow}  \end{scope}
	  \begin{scope}[shift={(0,.5)}]{\+}  \end{scope}
	  \begin{scope}[shift={(.5,.5)}]{\elbow}  \end{scope}
	  \begin{scope}[shift={(1,.5)}]{\celbow}  \end{scope}
	  \begin{scope}[shift={(0,1)}]{\+}  \end{scope}
	  \begin{scope}[shift={(.5,1)}]{\celbow}  \end{scope}
	  \begin{scope}[shift={(0,1.5)}]{\celbow}  \end{scope}
	 \draw node at (.25,-.2) {$1$}
	 	node at (.75,-.2) {$4$}
	 	node at (1.25,-.2) {$3$}
	 	node at (1.75,-.2) {$2$};
	\draw node at (3,.75) {$\supset$};
\end{tikzpicture}
\qquad
\begin{tikzpicture}
	 \draw node at (-.1,.25) {$1$}
	 	node at (-.1,.75) {$2$}
	 	node at (-.1,1.25) {$3$}
	 	node at (-.1,1.75) {$4$};
	  \begin{scope}[shift={(0,0)}]{\elbow}  \end{scope}
	  \begin{scope}[shift={(.5,0)}]{\+}  \end{scope}
	  \begin{scope}[shift={(1,0)}]{\+}  \end{scope}
	  \begin{scope}[shift={(1.5,0)}]{\celbow}  \end{scope}
	  \begin{scope}[shift={(0,.5)}]{\+}  \end{scope}
	  \begin{scope}[shift={(.5,.5)}]{\elbow}  \end{scope}
	  \begin{scope}[shift={(1,.5)}]{\celbow}  \end{scope}
	  \begin{scope}[shift={(0,1)}]{\+}  \end{scope}
	  \begin{scope}[shift={(.5,1)}]{\celbow}  \end{scope}
	  \begin{scope}[shift={(0,1.5)}]{\celbow}  \end{scope}
	 \draw node at (.25,-.2) {$1$}
	 	node at (.75,-.2) {$4$}
	 	node at (1.25,-.2) {$3$}
	 	node at (1.75,-.2) {$2$};
\end{tikzpicture}
\caption{The pipe dream on the left is reduced and contains the pipe dream on the right, which is not reduced. These are both pipe dreams for $1432$.}
\label{fig: type A pipe dreams}
\end{figure}

Let $v\in S_m$ and consider the pictorial description for $D(v)$ described in Section~\ref{sec: weak order}.
Let us denote by $D_{\sf L}(v)$ the diagram obtained from $D(v)$ after left-aligning.
The (type~A) \textbf{pipe dream complex} $PD^A_{v,w}$ for $v,w\in S_{m}$ is the simplicial complex with vertices given by the boxes of $D_{\sf L}(v)$, 
and one facet for each reduced pipe dream $\rho$ for $w$ whose crosses are contained in $D_{\sf L}(v)$,
the set of vertices in the facet being the set of positions of elbows in~$\rho$.
We will abuse notation and reuse the name $\rho\in PD^A_{v,w}$ for the facet.

Associate to a pipe dream $\rho$ a word in the nilHecke algebra by assigning simple reflections $s_k$ to cross tiles as follows:
if a cross tile appears in position $(i,j)$, then assign $s_{i-j}$. 
(This has the effect of making all instances of~$s_k$ fall on the $k$\/th diagonal below the main diagonal.)
We obtain a word by reading the reflections $s_k$ in the leftmost column from top to bottom, then the next column from top to bottom, etc. 
We remark that if $\rho$ is a pipe dream for $w\in S_m$, then the resulting word is a word in the nilHecke algebra for $w_0w$,
and that the pipe dream with crosses exactly in $D_{\sf L}(v)$ is a reduced pipe dream for $v$.

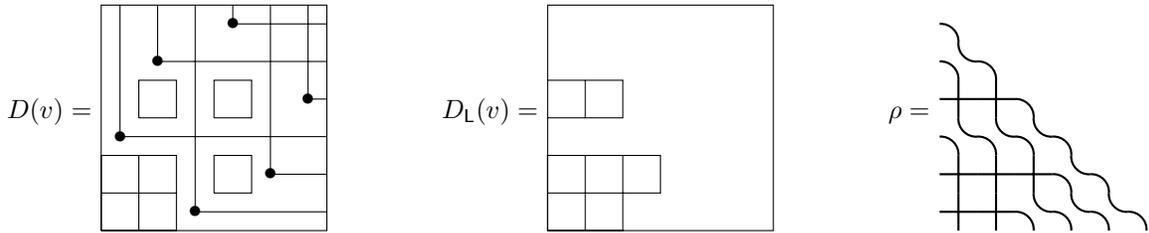
\begin{figure}[h]
	$D(v) =$
	\begin{tikzpicture}[baseline=(O.base)]
	\node(O) at (1,1.5) {};
	\draw (0,0)--(3,0)--(3,3)--(0,3)--(0,0)
	        (0,0)--(1,0)--(1,1.0)--(0,1.0)--(0,0)
		(0,0.5)--(1,0.5)
		(0.5,0)--(0.5,1)
		(1.5,1.5)--(2,1.5)--(2,2)--(1.5,2)--(1.5,1.5)
		(.5,1.5)--(1,1.5)--(1,2)--(.5,2)--(.5,1.5)
		(1.5,.5)--(2,.5)--(2,1)--(1.5,1)--(1.5,.5)
		; 
	\draw (.25, 3)--(.25,1.25) node {$\bullet$}--(3,1.25)
		(.75,3)--(.75,2.25) node {$\bullet$}--(3,2.25)
		(1.25,3)--(1.25,.25) node {$\bullet$}--(3,.25)
		(1.75,3)--(1.75,2.75) node {$\bullet$}--(3,2.75)
		(2.25,3)--(2.25,.75) node {$\bullet$}--(3,.75)
		(2.75,3)--(2.75,1.75) node {$\bullet$}--(3,1.75);
\end{tikzpicture}
	\qquad
	\qquad
		$D_{\sf L}(v) =$
	\begin{tikzpicture}[baseline=(O.base)]
	\node(O) at (1,1.5) {};
	\draw (0,0)--(3,0)--(3,3)--(0,3)--(0,0)
		(0,0)--(1,0)--(1,1.0)--(0,1.0)--(0,0)
		(0,0.5)--(1,0.5)
		(0.5,0)--(0.5,1)
		(0,1.5)--(1,1.5)--(1,2)--(0,2)
		(.5,1.5)--(.5,2)
		(1,.5)--(1.5,.5)--(1.5,1)--(1,1)
		; 
\end{tikzpicture}
	\qquad
	\qquad
	$\rho=$
	\begin{tikzpicture}[baseline=(O.base)]
	\node(O) at (1,1.5) {};
	  \begin{scope}[shift={(0,0)}]{\+}  \end{scope}
	  \begin{scope}[shift={(.5,0)}]{\+}  \end{scope}
	  \begin{scope}[shift={(1,0)}]{\elbow}  \end{scope}
	  \begin{scope}[shift={(1.5,0)}]{\elbow}  \end{scope}
	  \begin{scope}[shift={(2,0)}]{\elbow}  \end{scope}
	  \begin{scope}[shift={(2.5,0)}]{\celbow}  \end{scope}
	  \begin{scope}[shift={(0,.5)}]{\+}  \end{scope}
	  \begin{scope}[shift={(.5,.5)}]{\+}  \end{scope}
	  \begin{scope}[shift={(1,.5)}]{\+}  \end{scope}
	  \begin{scope}[shift={(1.5,0.5)}]{\elbow}  \end{scope}
	  \begin{scope}[shift={(2,0.5)}]{\celbow}  \end{scope}
	  \begin{scope}[shift={(0,1)}]{\elbow}  \end{scope}
	  \begin{scope}[shift={(.5,1)}]{\elbow}  \end{scope}
	  \begin{scope}[shift={(1,1)}]{\elbow}  \end{scope}
	  \begin{scope}[shift={(1.5,1)}]{\celbow}  \end{scope}
	  \begin{scope}[shift={(0,1.5)}]{\+}  \end{scope}
	  \begin{scope}[shift={(.5,1.5)}]{\+}  \end{scope}
	  \begin{scope}[shift={(1,1.5)}]{\celbow}  \end{scope}
	  \begin{scope}[shift={(0,2)}]{\elbow}  \end{scope}
	  \begin{scope}[shift={(.5,2)}]{\celbow}  \end{scope}
	  \begin{scope}[shift={(0,2.5)}]{\celbow}  \end{scope}
\end{tikzpicture}
\caption{The diagram $D(v)$, left-aligned diagram $D_{\sf L}(v)$, and corresponding reduced pipe dream $\rho$ for $v=426153$. The word corresponding to $\rho$ is $s_2s_4s_5s_1s_3s_4s_2$ which is a reduced expression for $w_0v$.}
\label{fig: type A reduced word}
\end{figure}

	\begin{figure}[h]
	\begin{tikzpicture}
  \filldraw[thick,fill=cyan!30]  (-2,5) node {$\bullet$}--(-4.6-1,-.1) node {$\bullet$}--(-.5,-.1) node {$\bullet$}--(3.5,-.1) node {$\bullet$}--(-2,5)--(-.5,-.1);
	  \begin{scope}[shift={(0,0)}]{\+}  \end{scope}
	  \begin{scope}[shift={(.5,0)}]{\+}  \end{scope}
	  \begin{scope}[shift={(1,0)}]{\elbow}  \end{scope}
	  \begin{scope}[shift={(1.5,0)}]{\elbow}  \end{scope}
	  \begin{scope}[shift={(2,0)}]{\elbow}  \end{scope}
	  \begin{scope}[shift={(2.5,0)}]{\celbow}  \end{scope}
	  \begin{scope}[shift={(0,.5)}]{\+}  \end{scope}
	  \begin{scope}[shift={(.5,.5)}]{\elbow}  \end{scope}
	  \begin{scope}[shift={(1,.5)}]{\elbow}  \end{scope}
	  \begin{scope}[shift={(1.5,0.5)}]{\elbow}  \end{scope}
	  \begin{scope}[shift={(2,0.5)}]{\celbow}  \end{scope}
	  \begin{scope}[shift={(0,1)}]{\elbow}  \end{scope}
	  \begin{scope}[shift={(.5,1)}]{\elbow}  \end{scope}
	  \begin{scope}[shift={(1,1)}]{\elbow}  \end{scope}
	  \begin{scope}[shift={(1.5,1)}]{\celbow}  \end{scope}
	  \begin{scope}[shift={(0,1.5)}]{\+}  \end{scope}
	  \begin{scope}[shift={(.5,1.5)}]{\elbow}  \end{scope}
	  \begin{scope}[shift={(1,1.5)}]{\celbow}  \end{scope}
	  \begin{scope}[shift={(0,2)}]{\elbow}  \end{scope}
	  \begin{scope}[shift={(.5,2)}]{\celbow}  \end{scope}
	  \begin{scope}[shift={(0,2.5)}]{\celbow}  \end{scope}
	  \begin{scope}[shift={(-3.5+0,0)}]{\+}  \end{scope}
	  \begin{scope}[shift={(-3.5+.5,0)}]{\+}  \end{scope}
	  \begin{scope}[shift={(-3.5+1,0)}]{\elbow}  \end{scope}
	  \begin{scope}[shift={(-3.5+1.5,0)}]{\elbow}  \end{scope}
	  \begin{scope}[shift={(-3.5+2,0)}]{\elbow}  \end{scope}
	  \begin{scope}[shift={(-3.5+2.5,0)}]{\celbow}  \end{scope}
	  \begin{scope}[shift={(-3.5+0,.5)}]{\+}  \end{scope}
	  \begin{scope}[shift={(-3.5+.5,.5)}]{\elbow}  \end{scope}
	  \begin{scope}[shift={(-3.5+1,.5)}]{\+}  \end{scope}
	  \begin{scope}[shift={(-3.5+1.5,0.5)}]{\elbow}  \end{scope}
	  \begin{scope}[shift={(-3.5+2,0.5)}]{\celbow}  \end{scope}
	  \begin{scope}[shift={(-3.5+0,1)}]{\elbow}  \end{scope}
	  \begin{scope}[shift={(-3.5+.5,1)}]{\elbow}  \end{scope}
	  \begin{scope}[shift={(-3.5+1,1)}]{\elbow}  \end{scope}
	  \begin{scope}[shift={(-3.5+1.5,1)}]{\celbow}  \end{scope}
	  \begin{scope}[shift={(-3.5+0,1.5)}]{\elbow}  \end{scope}
	  \begin{scope}[shift={(-3.5+.5,1.5)}]{\elbow}  \end{scope}
	  \begin{scope}[shift={(-3.5+1,1.5)}]{\celbow}  \end{scope}
	  \begin{scope}[shift={(-3.5+0,2)}]{\elbow}  \end{scope}
	  \begin{scope}[shift={(-3.5+.5,2)}]{\celbow}  \end{scope}
	  \begin{scope}[shift={(-3.5+0,2.5)}]{\celbow}  \end{scope}
\end{tikzpicture}
\caption{The pipe dream complex $PD^A_{v,w}$ for $v=426153$ and $w=456231$.}
\end{figure}
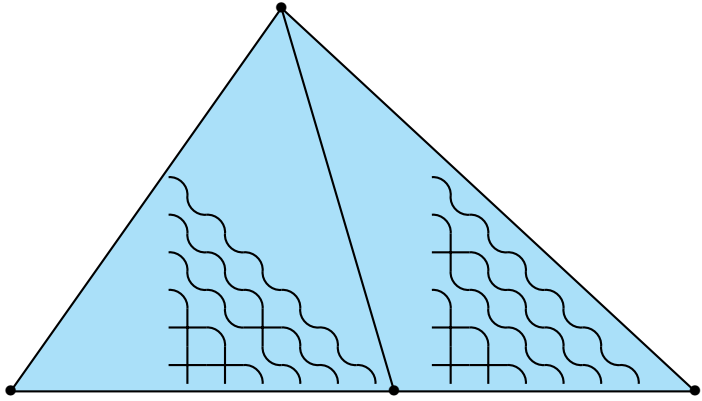

We now describe type~C pipe dream complexes for permutations $v\ge v_\square$ and $w$.
A \textbf{type~C pipe dream} is a (type~A) pipe dream whose crosses lie within $D_{\sf L}(v_\square)=D(v_\square)$ and which is symmetric about the diagonal of $D(v_\square)$.
Since all the tiles outside of $D(v_\square)$ are elbows, we only draw the tiles inside this region, which is an $n\times n$ square.
We give the positions in this region coordinates from $(1,1)$ to $(n,n)$,
rather than using the coordinates $(n+1,1)$ to $(2n,n)$ they would inherit from their inclusion in the diagrams for type~A pipe dreams.
	\begin{figure}[h]
	\begin{tikzpicture}
	 \draw node at (-.1,.25) {$1$}
	 	node at (-.1,.75) {$2$}
	 	node at (-.1,1.25) {$3$};
	 \draw node at (.25,1.7) {$4$}
	 	node at (.75,1.7) {$5$}
	 	node at (1.25,1.7) {$6$};
	  \begin{scope}[shift={(0,0)}]{\elbow}  \end{scope}
	  \begin{scope}[shift={(.5,0)}]{\elbow}  \end{scope}
	  \begin{scope}[shift={(1,0)}]{\elbow}  \end{scope}
	  \begin{scope}[shift={(0,.5)}]{\+}  \end{scope}
	  \begin{scope}[shift={(.5,.5)}]{\elbow}  \end{scope}
	  \begin{scope}[shift={(1,.5)}]{\elbow}  \end{scope}
	  \begin{scope}[shift={(0,1)}]{\+}  \end{scope}
	  \begin{scope}[shift={(.5,1)}]{\+}  \end{scope}
	  \begin{scope}[shift={(1,1)}]{\elbow}  \end{scope}
	 \draw node at (.25,-.2) {$1$}
	 	node at (.75,-.2) {$4$}
	 	node at (1.25,-.2) {$2$};
	 \draw node at (1.65,.25) {$5$}
	 	node at (1.65,.75) {$3$}
	 	node at (1.65,1.25) {$6$};
\end{tikzpicture}
	\qquad \qquad \qquad
	\begin{tikzpicture}
	 \draw node at (-.1,.25) {$1$}
	 	node at (-.1,.75) {$2$}
	 	node at (-.1,1.25) {$3$};
	 \draw node at (.25,1.7) {$4$}
	 	node at (.75,1.7) {$5$}
	 	node at (1.25,1.7) {$6$};
	  \begin{scope}[shift={(0,0)}]{\elbow}  \end{scope}
	  \begin{scope}[shift={(.5,0)}]{\+}  \end{scope}
	  \begin{scope}[shift={(1,0)}]{\elbow}  \end{scope}
	  \begin{scope}[shift={(0,.5)}]{\elbow}  \end{scope}
	  \begin{scope}[shift={(.5,.5)}]{\elbow}  \end{scope}
	  \begin{scope}[shift={(1,.5)}]{\elbow}  \end{scope}
	  \begin{scope}[shift={(0,1)}]{\+}  \end{scope}
	  \begin{scope}[shift={(.5,1)}]{\+}  \end{scope}
	  \begin{scope}[shift={(1,1)}]{\elbow}  \end{scope}
	 \draw node at (.25,-.2) {$1$}
	 	node at (.75,-.2) {$4$}
	 	node at (1.25,-.2) {$2$};
	 \draw node at (1.65,.25) {$5$}
	 	node at (1.65,.75) {$3$}
	 	node at (1.65,1.25) {$6$};
\end{tikzpicture}
\caption{The diagram on the left is a type~C pipe dream for $635241$, however the diagram on the right is not.}
\end{figure}
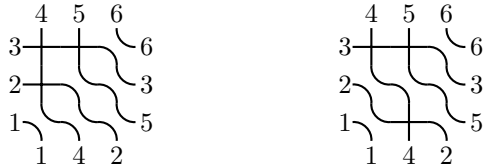

Just as in the type~A case, we associate to a type~C pipe dream $\rho$ a word in the nilHecke algebra.
We start by assigning a simple reflection $c_k$ to each tile in
$D(v_\square)$: assign the generator $c_{j-i}$ to the positions $(j,i)$ and $(i,j)$ in $D(v_\square)$,
these being the positions where the variable $z_{ij}$ appears in the southwest quarter of~$M_{v_\square}$.
Next, given a type~C pipe dream $\rho$, assign to each cross the generator $c_k$ corresponding to the position of  the cross.
Last, for each cross in the weak lower triangular part of $D(v_\square)$ read the generators
in the leftmost column from top to bottom, then the next column from top to bottom, etc. 
The result is the word associated to $\rho$.
We say that $\rho$ is a type~C pipe dream for $w\in C_n$ if the word associated to $\rho$ is a word for $w_0w$.
A type~C pipe dream is \textbf{reduced} if the word is reduced.
We will prove in \Cref{sec: combo pd C} that, just as for type~A, 
if we transport labels $1,\ldots,2n$ from the north and west sides of the picture along the pseudolines,
ignoring all crossings subsequent to the first between each pair of pseudolines,
the resulting labels on the south side read $w$.

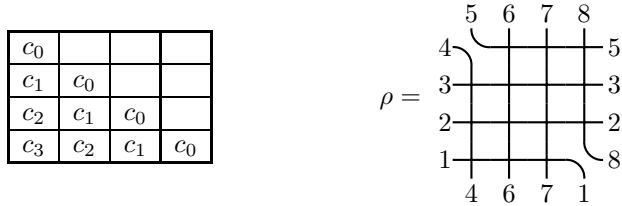
\begin{figure}[h]
	\begin{tabular}{|l|l|l|l|}
	\hline
$c_0$ &  &  &   \\
\hline
$c_1$ & $c_0$ &  &   \\
\hline
$c_2$ & $c_1$ & $c_0$ &  \\
\hline
$c_3$ & $c_2$ & $c_1$ & $c_0$ \\
 \hline 
\end{tabular}
	\qquad
	\qquad
	\qquad
	$\rho=$
		\begin{tikzpicture}[baseline=(O.base)]
	\node(O) at (1,1) {};
	 \draw node at (-.1,.25) {$1$}
	 	node at (-.1,.75) {$2$}
	 	node at (-.1,1.25) {$3$}
	 	node at (-.1,1.75) {$4$};
	 \draw node at (.25,2.2) {$5$}
	 	node at (.75,2.2) {$6$}
	 	node at (1.25,2.2) {$7$}
	 	node at (1.75,2.2) {$8$};
	  \begin{scope}[shift={(0,0)}]{\+}  \end{scope}
	  \begin{scope}[shift={(.5,0)}]{\+}  \end{scope}
	  \begin{scope}[shift={(1,0)}]{\+}  \end{scope}
	  \begin{scope}[shift={(1.5,0)}]{\elbow}  \end{scope}
	  \begin{scope}[shift={(0,.5)}]{\+}  \end{scope}
	  \begin{scope}[shift={(.5,.5)}]{\+}  \end{scope}
	  \begin{scope}[shift={(1,.5)}]{\+}  \end{scope}
	  \begin{scope}[shift={(1.5,.5)}]{\+}  \end{scope}
	  \begin{scope}[shift={(0,1)}]{\+}  \end{scope}
	  \begin{scope}[shift={(.5,1)}]{\+}  \end{scope}
	  \begin{scope}[shift={(1,1)}]{\+}  \end{scope}
	  \begin{scope}[shift={(1.5,1)}]{\+}  \end{scope}
	  \begin{scope}[shift={(0,1.5)}]{\elbow}  \end{scope}
	  \begin{scope}[shift={(.5,1.5)}]{\+}  \end{scope}
	  \begin{scope}[shift={(1,1.5)}]{\+}  \end{scope}
	  \begin{scope}[shift={(1.5,1.5)}]{\+}  \end{scope}
	 \draw node at (.25,-.2) {$4$}
	 	node at (.75,-.2) {$6$}
	 	node at (1.25,-.2) {$7$}
	 	node at (1.75,-.2) {$1$};
	 \draw node at (2.15,.25) {$8$}
	 	node at (2.15,.75) {$2$}
	 	node at (2.15,1.25) {$3$}
	 	node at (2.15,1.75) {$5$};
\end{tikzpicture}
\caption{On the left is the assignment of generators to the weak lower triangular part of $D(v_\square)$.
On the right is a reduced type~C pipe dream $\rho$ for $v=53281764$. The word corresponding to $\rho$ is $c_1c_2c_3c_0c_1c_2c_0c_1$ which is a reduced expression for $w_0v$.}
\label{fig: type C reduced word}
\end{figure}

Let $v\in C_n$ with $v\ge v_\square$.
Note that there is at least one type~C pipe dream for $v$. For example, a factorization $\overline{v}$ of $v$ gives a type~C pipe dream, which we will call $\rho(\overline{v})$, consisting of crosses in the positions of $z_{ij}$ in the southwest quarter of~$M_{v_\square}$ for each $z_{ij}\in V_{\overline{v}}$.

Given a type~C pipe dream $\rho$, we denote by $\rho_{\rm L}$ the set of positions of its crosses in the weak lower triangular part of $D(v_\square)$.
Given a reduced pipe dream $\rho$ for $v$ and some $w\in C_n$, the \textbf{type~C pipe dream complex} $PD^C_{\rho,w}$ is the simplicial complex whose vertices are the boxes in $\rho_{\rm L}$.
The vertices of $PD^C_{\overline{v},w}$ are thus a subset of the entries in the weak lower triangular part of $D(v_\square)$.
The facets of $PD^C_{\rho,w}$ correspond to the reduced type~C pipe dreams $\sigma$ for $w$ such that $\rho\subset \sigma$;
the vertices in such a facet are the positions of the elbows of~$\sigma$ that lie in $\rho_{\rm L}$.
If $\rho = \rho(\overline{v})$, we abbreviate $PD^C_{\rho(\overline{v}),w}$ to $PD^C_{\overline{v},w}$.
From the definition, we see that type~C pipe dreams which contain a type~C pipe dream for a reduced type~C word for $w$ are in bijection with faces of $\Delta_{\overline{v},w}$.

	\begin{figure}[h]
	\begin{tikzpicture}
	\draw[thick] (0-1.25,-.1) node {$\bullet$}--(2-.4,0-.1) node {$\bullet$}--(4.55,-.1) node {$\bullet$}--(7.5,0-.1) node {$\bullet$}
		;
	  \begin{scope}[shift={(-.75+0,0+0)}]{\+}  \end{scope}
	  \begin{scope}[shift={(-.75+.5,0+0)}]{\elbow}  \end{scope}
	  \begin{scope}[shift={(-.75+1,0+0)}]{\elbow}  \end{scope}
	  \begin{scope}[shift={(-.75+1.5,0+0)}]{\elbow}  \end{scope}
	  \begin{scope}[shift={(-.75+0,0+.5)}]{\+}  \end{scope}
	  \begin{scope}[shift={(-.75+.5,0+.5)}]{\+}  \end{scope}
	  \begin{scope}[shift={(-.75+1,0+.5)}]{\elbow}  \end{scope}
	  \begin{scope}[shift={(-.75+1.5,0+.5)}]{\elbow}  \end{scope}
	  \begin{scope}[shift={(-.75+0,0+1)}]{\+}  \end{scope}
	  \begin{scope}[shift={(-.75+.5,0+1)}]{\+}  \end{scope}
	  \begin{scope}[shift={(-.75+1,0+1)}]{\+}  \end{scope}
	  \begin{scope}[shift={(-.75+1.5,0+1)}]{\elbow}  \end{scope}
	  \begin{scope}[shift={(-.75+0,0+1.5)}]{\elbow}  \end{scope}
	  \begin{scope}[shift={(-.75+.5,0+1.5)}]{\+}  \end{scope}
	  \begin{scope}[shift={(-.75+1,0+1.5)}]{\+}  \end{scope}
	  \begin{scope}[shift={(-.75+1.5,0+1.5)}]{\+}  \end{scope}
	  \begin{scope}[shift={(2+0,0+0)}]{\+}  \end{scope}
	  \begin{scope}[shift={(2+.5,0+0)}]{\elbow}  \end{scope}
	  \begin{scope}[shift={(2+1,0+0)}]{\+}  \end{scope}
	  \begin{scope}[shift={(2+1.5,0+0)}]{\elbow}  \end{scope}
	  \begin{scope}[shift={(2+0,0+.5)}]{\+}  \end{scope}
	  \begin{scope}[shift={(2+.5,0+.5)}]{\elbow}  \end{scope}
	  \begin{scope}[shift={(2+1,0+.5)}]{\elbow}  \end{scope}
	  \begin{scope}[shift={(2+1.5,0+.5)}]{\+}  \end{scope}
	  \begin{scope}[shift={(2+0,0+1)}]{\+}  \end{scope}
	  \begin{scope}[shift={(2+.5,0+1)}]{\+}  \end{scope}
	  \begin{scope}[shift={(2+1,0+1)}]{\elbow}  \end{scope}
	  \begin{scope}[shift={(2+1.5,0+1)}]{\elbow}  \end{scope}
	  \begin{scope}[shift={(2+0,0+1.5)}]{\elbow}  \end{scope}
	  \begin{scope}[shift={(2+.5,0+1.5)}]{\+}  \end{scope}
	  \begin{scope}[shift={(2+1,0+1.5)}]{\+}  \end{scope}
	  \begin{scope}[shift={(2+1.5,0+1.5)}]{\+}  \end{scope}
	  \begin{scope}[shift={(5+0,0+0)}]{\+}  \end{scope}
	  \begin{scope}[shift={(5+.5,0+0)}]{\elbow}  \end{scope}
	  \begin{scope}[shift={(5+1,0+0)}]{\+}  \end{scope}
	  \begin{scope}[shift={(5+1.5,0+0)}]{\elbow}  \end{scope}
	  \begin{scope}[shift={(5+0,0+.5)}]{\+}  \end{scope}
	  \begin{scope}[shift={(5+.5,0+.5)}]{\elbow}  \end{scope}
	  \begin{scope}[shift={(5+1,0+.5)}]{\+}  \end{scope}
	  \begin{scope}[shift={(5+1.5,0+.5)}]{\+}  \end{scope}
	  \begin{scope}[shift={(5+0,0+1)}]{\+}  \end{scope}
	  \begin{scope}[shift={(5+.5,0+1)}]{\elbow}  \end{scope}
	  \begin{scope}[shift={(5+1,0+1)}]{\elbow}  \end{scope}
	  \begin{scope}[shift={(5+1.5,0+1)}]{\elbow}  \end{scope}
	  \begin{scope}[shift={(5+0,0+1.5)}]{\elbow}  \end{scope}
	  \begin{scope}[shift={(5+.5,0+1.5)}]{\+}  \end{scope}
	  \begin{scope}[shift={(5+1,0+1.5)}]{\+}  \end{scope}
	  \begin{scope}[shift={(5+1.5,0+1.5)}]{\+}  \end{scope}
\end{tikzpicture}
\caption{The type~C pipe dream complex $PD^C_{\rho,w}$ for $w=58372615$ and $\rho$ the pipe dream in \Cref{fig: type C reduced word} is obtained from the simplicial complex above by coning the vertex corresponding to the elbow in position $(4,2)$. 
}
\end{figure}
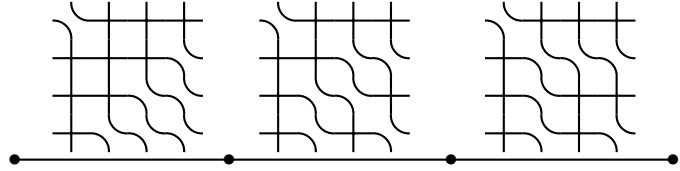

The following lemma follows from the definition of $PD^C_{\rho,w}$.

\begin{lemma}\label{lem: pd isom subword}
The simplicial complex $PD^C_{\overline{v},w}$ is the image of $\Delta_{\overline{v},w}$ 
under the isomorphism acting on vertices as $z_{ij}\mapsto(j,i)$.
In particular, the facet $F$ of $\Delta_{\overline{v},w}$ corresponds to the facet $\{(p,q)\in \rho(\overline{v})_{\rm L}: z_{qp}\in F\}$ of $PD^C_{\overline{v},w}$.
\end{lemma}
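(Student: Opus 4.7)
The plan is to recognize the lemma as a dictionary between two encodings of the same combinatorial data, verified via three compatibility statements.

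First, the map $z_{ij}\mapsto(j,i)$ is a bijection from $V_{\overline{v}}$ to $\rho(\overline{v})_{\rm L}$: by \Cref{prop: partial symm matrices} and the definition of $\rho(\overline{v})$, its crosses sit precisely at the lower-triangle positions $\{(j,i) : z_{ij}\in V_{\overline{v}},\ i\le j\}$. Since both $PD^C_{\overline{v},w}$ and $\Delta_{\overline{v},w}$ are downward closed, it suffices to show this vertex map induces a bijection on facets.

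The central step is to identify the word $Q$ of \Cref{prop: Q is a reduced word} with the word read off $\rho(\overline{v})$ by the type~C pipe dream rule. The prescribed reading order—leftmost column top-to-bottom, then the next column top-to-bottom, and so on, restricted to the weak lower triangle—visits positions $(i,i),(i+1,i),\ldots,(n,i)$ in column $i$ before moving to column $i+1$, which under $(j,i)\leftrightarrow z_{ij}$ is precisely the order of $\prec_{\rm lex}$ on variables (smaller first index first, breaking ties by smaller second index). Combined with the assignment of $c_{j-i}$ to position $(j,i)$, this shows that reading $\rho(\overline{v})$ yields exactly $Q=(j_1-i_1,\ldots,j_\ell-i_\ell)$.

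Next, any type~C pipe dream $\sigma$ with $\rho(\overline{v})\subseteq\sigma$ is determined by the subset $\sigma_{\rm L}\subseteq\rho(\overline{v})_{\rm L}$ of retained crosses; the upper-triangle crosses are forced by the diagonal symmetry and do not enter the associated word. Thus the word of $\sigma$ is exactly the subword of $Q$ indexed by $\sigma_{\rm L}$, and $\sigma$ is a reduced type~C pipe dream for $w$ if and only if this subword is a reduced expression for $w_0w$. Therefore the facets of $PD^C_{\overline{v},w}$ are the elbow sets $\rho(\overline{v})_{\rm L}\setminus\sigma_{\rm L}$ for such $\sigma$, while the facets of $\Delta_{\overline{v},w}$ are the sets $\zeta([\ell]\setminus I)$ for $I$ indexing a reduced subword of $Q$ for $w_0w$; the vertex map matches these facet descriptions term-by-term, yielding the isomorphism and the explicit facet correspondence stated in the lemma.

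The only step requiring genuine verification is the compatibility of the pipe dream reading order with $\prec_{\rm lex}$; every other piece is transcription between dual notations, so I anticipate no substantial obstacle.
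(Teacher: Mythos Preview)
Your argument is correct and is precisely the unpacking the paper has in mind: the paper states only that the lemma ``follows from the definition of $PD^C_{\rho,w}$'' and gives no further proof, so what you have written is the explicit verification the paper omits. Your key checkpoint---that the column-by-column reading order on $\rho(\overline{v})_{\rm L}$ coincides with the $\prec_{\rm lex}$ order on $V_{\overline{v}}$ under $z_{ij}\leftrightarrow(j,i)$, so that the pipe-dream word equals the $Q$ of \Cref{prop: Q is a reduced word}---is exactly the content that makes the definitions line up.
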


\subsection{Multidegrees and K-polynomials}

In analogy with the type~A setting \cite[Theorem 3.2]{WooYongGrobner}, we have that the prime components of the initial ideal of each $I_{\overline{v},w}$, with $v\ge v_\square$ in left-right weak order, are indexed by type~C pipe dreams.

\begin{cor}\label{cor:initIndex}
For any diagonal term order $\prec$, the initial ideal 
$\init_\prec I_{\overline{v},w}$ has the prime decomposition
\[\init_\prec I_{\overline{v},w} = \bigcap_\rho \,\langle z_{qp} : (p,q)\in\rho_{\rm L}\rangle\]
where $\rho$ ranges over all reduced elements of $PD^C_{\overline{v},w}$.
\end{cor}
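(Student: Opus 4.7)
The plan is to derive this as essentially an immediate consequence of Theorem~\ref{thm: main}, using only standard Stanley-Reisner theory and the dictionary between subword complex faces and pipe dreams set up in Lemma~\ref{lem: pd isom subword}. First, I would invoke Corollary~\ref{cor: K is initial} to rewrite $\init_\prec I_{\overline{v},w} = K_{\overline{v},w}$, i.e., the Stanley-Reisner ideal of $\Delta_{\overline{v},w}$. This reduces the statement to exhibiting the minimal prime decomposition of $K_{\overline{v},w}$ in pipe dream language.

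Next, I would apply the standard fact that, for any simplicial complex $\Delta$ on a vertex set $V$, the Stanley-Reisner ideal admits the prime decomposition
\[
I_\Delta = \bigcap_{F} \langle z \in V : z \notin F \rangle,
\]
the intersection running over the facets $F$ of $\Delta$. Specializing to $\Delta_{\overline{v},w}$, with vertex set $V_{\overline{v}}$, this produces
\[
K_{\overline{v},w} = \bigcap_{F} \langle z_{ij} \in V_{\overline{v}} : z_{ij} \notin F \rangle.
\]

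Finally, I would translate this decomposition into pipe dream data using Lemma~\ref{lem: pd isom subword}, which identifies $\Delta_{\overline{v},w}$ with $PD^C_{\overline{v},w}$ via $z_{ij} \mapsto (j,i)$. Under this identification, a facet $F$ of $\Delta_{\overline{v},w}$ corresponds to the facet of $PD^C_{\overline{v},w}$ consisting of the elbows of some reduced type~C pipe dream $\rho$ for $w$ with $\rho(\overline{v}) \subset \rho$; the variables missing from $F$ correspond exactly to the positions in $\rho(\overline{v})_{\rm L}$ occupied by crosses of $\rho$. Since every cross of $\rho$ lies in $\rho(\overline{v})_{\rm L}$ (as $\rho \supset \rho(\overline{v})$ forces the crosses of $\rho$ to be a subset of those of $\rho(\overline{v})$), this set of cross positions is precisely $\rho_{\rm L}$. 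Substituting this identification into the Stanley-Reisner decomposition yields the claimed formula.

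There is no real obstacle here: the content has already been done in Theorem~\ref{thm: main}, and the only bookkeeping required is to match the complement of a facet in $V_{\overline{v}}$ with the cross positions of the corresponding pipe dream, which is immediate from the definition of the vertex labelling in Section~\ref{sec: labeling} and the identification in Lemma~\ref{lem: pd isom subword}.
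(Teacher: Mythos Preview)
Your proposal is correct and follows essentially the same approach as the paper's proof: invoke Corollary~\ref{cor: K is initial} to identify the initial ideal with the Stanley-Reisner ideal $K_{\overline{v},w}$, use the standard facet-indexed prime decomposition of Stanley-Reisner ideals, and then translate via Lemma~\ref{lem: pd isom subword}. Your additional remark explaining why the complement of a facet is exactly $\rho_{\rm L}$ is a helpful elaboration but not a departure from the paper's argument.
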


\begin{proof}
By \Cref{cor: K is initial}, we have $\init_\prec I_{\overline{v},w} = K_{\overline{v},w}$, the latter of which is the Stanley-Reisner ideal of the (type~C) subword complex $\Delta_{\overline{v},w}$. Consequently,
\[
\init_\prec I_{\overline{v},w}=
\bigcap_{F\in \text{Facets}(\Delta_{\overline{v},w})}\langle \zeta([\ell]\setminus F)\rangle,
\]
where $\zeta: [\ell]\rightarrow V_{\overline{v}}$ is as defined immediately after the proof of \Cref{prop: Q is a reduced word}. The result now follows by \Cref{lem: pd isom subword}.
\end{proof}

Our next goal is to provide multidegree and $K$-polynomial formulas for our type~C Kazhdan-Lusztig varieties $X_{\overline{v},w}$, $v\ge v_{\square}$ in terms of type~C pipe dreams.
Our formulas are in the variables $t_1,\ldots,t_n$ discussed in \Cref{sect:TActionSmall}. We recall the action of $C_n$ on this variable set described there: 
given $u\in C_n$, the action is so that for $i\le n$,
$$
	u\odot t^{-1}_i=\begin{cases}
	t_{n+1-u(n+i)} &  \text{if }\ u(i+n)\le n,\\
	t^{-1}_{u(n+i)-n} &  \text{if }\ u(i+n)\ge n+1.
\end{cases}
$$
We will also want an additive version of this action:
$$
	u\oplus-t_i=\begin{cases}
	t_{n+1-u(n+i)}&  \text{if }\ u(i+n)\le n,\\
	-t_{u(n+i)-n}&  \text{if }\ u(i+n)\ge n+1.
\end{cases}
$$

Corollary \ref{cor:initIndex} immediately implies a positive multidegree formula for our type~C Kazhdan-Lusztig varieties $X_{\overline{v},w}$, $v\ge v_{\square}$ in terms of type~C pipe dreams. Note that this formula can also be recovered from the $K$-polynomial formula given below in Proposition \ref{prop: K-poly pipe}.

\begin{prop}\label{prop:multidegree formula}
The multidegree of $R_{\overline{v}}/I_{\overline{v},w}$ is
\[\mathcal{C}(R_{\overline{v}}/I_{\overline{v},w};\mathbf{t}) =
\sum_\rho \prod_{\substack{(i,j) \text{ is a} \\ \text{cross in } \rho_{\rm L}}} 
\left((u_l\oplus-t_i)+(u_l\oplus-t_j)\right)
\]
where $\rho$ ranges over all reduced elements of $PD^C_{\overline{v},w}$.
\end{prop}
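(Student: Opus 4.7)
The plan is to combine Theorem~\ref{thm: main} with the additivity of multidegrees and the prime decomposition of $K_{\overline{v},w}$. First, since multidegrees are preserved under Gr\"obner degeneration and $\init_\prec I_{\overline{v},w}=K_{\overline{v},w}$ by \Cref{cor: K is initial}, we have $\mathcal{C}(R_{\overline{v}}/I_{\overline{v},w};\mathbf{t})=\mathcal{C}(R_{\overline{v}}/K_{\overline{v},w};\mathbf{t})$. \Cref{cor:initIndex} then decomposes $K_{\overline{v},w}=\bigcap_\rho J_\rho$, where $\rho$ runs over the reduced elements of $PD^C_{\overline{v},w}$ and $J_\rho:=\langle z_{qp}:(p,q)\in\rho_{\rm L}\rangle$.

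Next I would verify that the subword complex $\Delta_{\overline{v},w}=S(Q,w_0w)$ is pure, so that all the primes $J_\rho$ have equal height and the additivity of multidegrees from \Cref{sec: CA background} applies. Purity is immediate from the definition of subword complexes: the facets of $S(Q,w_0w)$ are the complements in $[\ell]$ of reduced subwords for $w_0w$, hence each has cardinality $\ell(w_0v)-\ell(w_0w)$, or equivalently each $|\rho_{\rm L}|$ equals $\ell(w_0w)$. Additivity then yields
\[\mathcal{C}(R_{\overline{v}}/K_{\overline{v},w};\mathbf{t})=\sum_\rho\mathcal{C}(R_{\overline{v}}/J_\rho;\mathbf{t}).\]

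Since each $J_\rho$ is generated by a regular sequence of distinct variables, $R_{\overline{v}}/J_\rho$ is itself a polynomial ring (on the complementary set of variables) and the multidegree is multiplicative over the generators:
\[\mathcal{C}(R_{\overline{v}}/J_\rho;\mathbf{t})=\prod_{(p,q)\in\rho_{\rm L}}\mathcal{C}(R_{\overline{v}}/(z_{qp});\mathbf{t}).\]
A direct Taylor expansion of $1-\mathbf{t}^a$ after the substitution $t_i\mapsto1-t_i$ shows that, for any variable of torus weight $a=\sum_k a_k e_k$, the multidegree $\mathcal{C}(R_{\overline{v}}/(z_{qp});\mathbf{t})$ is the linear form representing $a$ in the $t_i$. \Cref{prop: weights} identifies the weight of the variable $z_{qp}$ sitting at a cross $(i,j)\in\rho_{\rm L}$ as $u_l\cdot e_i+u_l\cdot e_j$, and unwinding the definition of $u_l\oplus-t_i$ matches this linear form with $(u_l\oplus-t_i)+(u_l\oplus-t_j)$. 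Multiplying over crosses and summing over $\rho$ gives the claimed formula.

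The main step is the purity verification that lets us invoke additivity of multidegrees; everything else is a routine translation between the weight of each coordinate variable and the $\oplus$ notation introduced just before the proposition.
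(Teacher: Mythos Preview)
Your proof is correct and is essentially the paper's own argument spelled out in detail: the paper simply remarks that \Cref{cor:initIndex} immediately yields the formula (and notes it can alternatively be extracted from \Cref{prop: K-poly pipe}), whereas you make explicit the purity check needed for additivity and the translation of the variable weights from \Cref{prop: weights} into the $\oplus$ notation.
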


As noted in the introduction, Ikeda, Mihalcea and Naruse also give formulas for these multidegrees \cite[Theorem 1.1, Definition 6.1]{Ikeda-Mihalcea-Naruse}.
To aid the reader in aligning conventions, we observe that the multidegree of our $I_{\overline{v},w}$
is what is called in \cite{Ikeda-Mihalcea-Naruse} the localization of $\mathfrak S_{w_0w}$ at $w_0v$,
but our work and theirs agree on the meaning of~$t_i$.

\begin{rmk}
Our pipe dream formula specializes to a formula for multiplicities: since each $v\geq v_\square$ is $123$-avoiding, each ideal $I_{\overline{v},w}$ is homogeneous with respect to the standard grading. Consequently, the multiplicity $\text{mult}_{P(v)}(X_w)$ of the Schubert variety $X_w$ at the point $P(v)B_G^+/B_G^+$ is equal to the number of reduced type $C$ pipe dreams in $PD^C_{\overline{v},w}$. See \cite[Fact 5.1]{WooYongGrobner} for the analogue of this observation in the type $A$ setting.

We note that other combinatorial formulas for multiplicities in type $C$ in special cases can be found in the works \cite{Anderson-Ikeda-Jeon-Kawago}, \cite{Ghorpade-Raghavan}, \cite{Ikeda-Naruse}, and \cite{Kreiman}. For example, in the recent paper \cite{Anderson-Ikeda-Jeon-Kawago}, Anderson, Ikeda, Jeon, and Kawago provide a combinatorial formula for the multiplicity of a singularity of a co-vexillary Schubert variety (in a classical-type flag variety) in terms of excited Young diagrams.
\end{rmk}

Our next goal is to give a formula for the $K$-polynomial of $R_{\overline{v}}/I_{\overline{v},w}$ in terms of type~C pipe dreams.

\begin{prop}\label{prop: K-poly pipe}
Let $\rho$ be the reduced pipe dream for $v$ associated to the factorization $\overline{v}=u_lv_\square u_r$.
The $K$-polynomial of $R_{\overline{v}}/I_{\overline{v},w}$ is 
	\begin{equation*}
	\mathcal{K}(R_{\overline{v}}/I_{\overline{v},w};\mathbf{t})
	=
	\sum_{\sigma \in PD^C_{\overline{v},w}} (-1)^{\mathrm{cr}(\sigma_{\rm L})-\ell(w_0w)}\prod_{ \substack{(i,j) \text{ is a} \\ \text{cross in } \sigma_{\rm L}}} \left(1-(u_l\odot t_i^{-1})(u_l\odot t_j^{-1})\right),
	\end{equation*}
where $\mathrm{cr}(\sigma_{\rm L})$ is the number of crosses in $\sigma_{\rm L}$,
and the sum is over the non-boundary faces of $PD^C_{\overline{v},w}$.
\end{prop}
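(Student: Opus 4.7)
The plan is to use Theorem \ref{thm: main} to degenerate $I_{\overline{v},w}$ to a Stanley--Reisner ideal, invoke the general formula for the $K$-polynomial of a subword complex, and finally translate the result into the language of type~C pipe dreams using Lemma \ref{lem: pd isom subword} and Proposition \ref{prop: weights}.

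First, by Corollary \ref{cor: K is initial} combined with the degenerative property of $K$-polynomials (Section \ref{sec: CA background}), we have $\mathcal{K}(R_{\overline{v}}/I_{\overline{v},w};\mathbf{t}) = \mathcal{K}(R_{\overline{v}}/K_{\overline{v},w};\mathbf{t})$, where $K_{\overline{v},w}$ is the Stanley--Reisner ideal of $\Delta_{\overline{v},w}$. Under the relabelling of Proposition \ref{prop: Q is a reduced word}, $\Delta_{\overline{v},w}$ is the subword complex $S(Q, w_0 w)$ for the reduced word $Q$ for $w_0 v$ built from the variables of $V_{\overline{v}}$, so it suffices to give a combinatorial formula for this Stanley--Reisner $K$-polynomial.

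Second, I would apply the Knutson--Miller formula for the $K$-polynomial of the Stanley--Reisner ring of a subword complex $S(Q, \pi)$:
\[
\mathcal{K}\bigl(R / I_{S(Q, \pi)}; \mathbf{t}\bigr) \;=\; \sum_{P}(-1)^{|P| - \ell(\pi)} \prod_{k \in P} \bigl(1 - \mathbf{t}^{\deg z_k}\bigr),
\]
where the sum ranges over subwords $P \subseteq [\ell]$ whose Demazure product equals $\pi$. Setting $\pi = w_0 w$ and translating via Lemma \ref{lem: pd isom subword}, each such subword corresponds bijectively to a type~C pipe dream $\sigma \in PD^C_{\overline{v},w}$, with the positions in $P$ being precisely the positions of the crosses in $\sigma_{\rm L}$; in particular $|P| = \mathrm{cr}(\sigma_{\rm L})$. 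Then, using Proposition \ref{prop: weights}, the monomial $\mathbf{t}^{\deg z_{ij}}$ is identified with $(u_l \odot t_i^{-1})(u_l \odot t_j^{-1})$ under the multiplicative $C_n$-action $\odot$ introduced at the start of the section (matching the sign conventions of Proposition \ref{prop:multidegree formula}), and this substitution into each factor yields the claimed formula.

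The main obstacle is justifying the Knutson--Miller subword-complex formula used in step two. Rather than appeal to the literature, one can establish it in-house by induction on $\ell(w_0 v)$: the sum on the right-hand side splits according to whether the final letter $\alpha_\ell$ of $Q$ appears in $P$, and the two resulting pieces match the two summands of the vertex decomposition $S(Q,\pi) = \cone_\ell S(Q', \pi) \cup S(Q', \pi c_{\alpha_\ell})$ of Theorem \ref{thm: vertex decomposition}. One then concludes via the standard inclusion--exclusion exact sequence relating the $K$-polynomials of the Stanley--Reisner rings of a cone, its base, and the deletion---structurally parallel to the argument already executed in the proof of Proposition \ref{prop: K(K) = K(I)}, but now tracking the combinatorics of subwords rather than just the scalar recursion.
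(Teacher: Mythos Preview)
Your proposal is correct and follows essentially the same route as the paper: reduce to the Stanley--Reisner ring via Corollary~\ref{cor: K is initial}, invoke the Knutson--Miller $K$-polynomial formula for subword complexes (the paper cites \cite[Theorem~4.1]{KnutsonMillerAdvances} directly rather than reproving it), and translate through Lemma~\ref{lem: pd isom subword} and Proposition~\ref{prop: weights}. The only difference is that the paper cites \cite[Theorem~3.7]{KnutsonMillerAdvances} to identify the index set with the interior faces, whereas you phrase it as subwords with Demazure product equal to~$w_0w$; these are the same condition, and your optional in-house induction for the Knutson--Miller formula is a harmless elaboration.
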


Note that all factorizations of $v$ yield the same $K$-polynomial.

\begin{proof}
By \cite[Theorem 4.1]{KnutsonMillerAdvances}, the $K$-polynomial of the Stanley-Reisner ideal $I$ of the subword complex $S(Q,w_0w)$ is
	$$
	\mathcal{K}(R/I;\mathbf{t})=
	\sum_{F} (-1)^{|Q\setminus F|-\ell(w_0w)}\prod_{i\notin F}\left(1-\mathbf{t}^{\deg(i)}\right)
	$$
where the sum is over the faces $F$ of $S(Q,w)$ such that the word in the nilHecke algebra associated to $Q\setminus F$ is a word for $w_0w$.
This is the same index set as in the proposition by \cite[Theorem 3.7]{KnutsonMillerAdvances}.
By \Cref{lem: pd isom subword}, $PD^C_{\overline{v},w}$ is isomorphic to $\Delta_{\overline{v},w}$, which in turn is a relabelling of a subword complex $S(Q,w_0w)$, as in \Cref{sec: labeling}.
We obtain the expression in the proposition by translating the equation above into the language of type~C pipe dream complexes.
Explicitly, for $\sigma$ the pipe dream in $PD^C_{\overline{v},w}$ associated to the face $F$ in  $S(Q,w_0w)$:
	\begin{itemize}
	\item $\sigma$ is a pipe dream for $w$ if and only if the word in the nilHecke algebra corresponding to $Q\setminus F$ is a word for $w_0w$,
	\item $|Q\setminus F|=\mathrm{cr}(\sigma_{\rm L})$, and
	\item by \Cref{prop: weights}, if $(i,j)$ is a cross in $\sigma_{\rm L}$, then $\mathbf{t}^{\deg(i,j)}=(u_l\odot t_i^{-1})(u_l\odot t_j^{-1})$.\qedhere
\end{itemize}
\end{proof}

We can also give a formula without the signs of \Cref{prop: K-poly pipe}.
As noted in the previous proof, $PD^C_{\overline{v},w}$ is isomorphic to the subword complex $S(Q,w_0w)$,
which is a shellable simplicial complex \cite[Theorem 2.5]{KnutsonMillerAdvances}.
That is, there exists a \emph{shelling order} $F_1,\ldots,F_m$ of the facets of~$PD^C_{\overline{v},w}$,
namely, an order such that, for each $i=2,\ldots,m$, 
the intersection of $F_i$ with~$\bigcap_{j<i}F_j$ is pure of dimension $\dim(PD^C_{\overline{v},w})-1$.

\begin{prop}\label{prop: K-poly shelling}
With notation as in \Cref{prop: K-poly pipe}, let $F_1,\ldots,F_m$ be a shelling order for $PD^C_{\overline{v},w}$. 
Then the $K$-polynomial of $R_{\overline{v}}/I_{\overline{v},w}$ is
	\begin{equation*}
	\mathcal{K}(R_{\overline{v}}/I_{\overline{v},w};\mathbf{t})
	=
	\sum_{k=1}^m 
	\prod_{ \substack{(i,j) \text{ is a} \\ \text{cross in } (F_k)_{\rm L}}} \left(1-(u_l\odot t_i^{-1})(u_l\odot t_j^{-1})\right)
	\prod_{(i,j)\in\operatorname{Abs}(F_k)} \left((u_l\odot t_i^{-1})(u_l\odot t_j^{-1})\right)
	,
	\end{equation*}
where $\operatorname{Abs}(F_k)$ is the set of elbows in $(i,j)\in F_k$ such that
$F_k\setminus(i,j)\subset F_\ell$ 
for some $\ell<k$. 
\end{prop}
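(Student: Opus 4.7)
The plan is to derive this formula from the standard K-polynomial expansion of a Stanley--Reisner ring, combined with the well-known interval decomposition of a shellable complex. By \Cref{prop: K(K) = K(I)} it suffices to compute $\mathcal{K}(R_{\overline{v}}/K_{\overline{v},w};\mathbf{t})$, and via \Cref{lem: pd isom subword} we may identify $\Delta_{\overline{v},w}$ with $PD^C_{\overline{v},w}$ so that the vertex at position $(i,j)$ carries the weight $\mathbf{t}^{\deg z_{ji}}=(u_l\odot t_i^{-1})(u_l\odot t_j^{-1})$, exactly as in the proof of \Cref{prop: K-poly pipe}.

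First I would invoke the standard expansion of the K-polynomial of a Stanley--Reisner ring,
\[
\mathcal{K}(R/I_\Delta;\mathbf{t}) \;=\; \sum_{F\in\Delta}\;\prod_{v\in F}\mathbf{t}^{\deg v}\prod_{v\notin F}(1-\mathbf{t}^{\deg v}),
\]
obtained by clearing denominators from the coarse Hilbert series $\sum_{F}\prod_{v\in F}\mathbf{t}^{\deg v}/(1-\mathbf{t}^{\deg v})$. Any shelling order on the facets of $PD^C_{\overline{v},w}$ (which exists by \cite[Theorem~2.5]{KnutsonMillerAdvances}) then yields the classical disjoint partition $\Delta = \bigsqcup_{k=1}^m [R(F_k),F_k]$, where $R(F_k) = \{v\in F_k : F_k\setminus\{v\}\subseteq F_\ell \text{ for some }\ell<k\}$; by definition this restriction set $R(F_k)$ is exactly $\operatorname{Abs}(F_k)$.

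The main computation then reduces to the collapse of the partial sum over a single interval. Writing $F=R(F_k)\sqcup S$ with $S\subseteq F_k\setminus R(F_k)$ and factoring, the sum over $F\in[R(F_k),F_k]$ equals
\[
\Bigl(\prod_{v\in R(F_k)}\mathbf{t}^{\deg v}\Bigr)\Bigl(\prod_{v\notin F_k}(1-\mathbf{t}^{\deg v})\Bigr)\prod_{v\in F_k\setminus R(F_k)}\bigl(\mathbf{t}^{\deg v}+(1-\mathbf{t}^{\deg v})\bigr),
\]
and the last product telescopes to $1$. Summing over $k$ and translating back through \Cref{lem: pd isom subword}---so that the complement $V\setminus F_k$ becomes the set of positions of crosses in $(F_k)_{\rm L}$---then gives the claimed identity. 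I do not anticipate a serious obstacle here: the argument is essentially bookkeeping around the classical fact that shellable complexes admit this interval decomposition, combined with the weight identification already in hand from \Cref{prop: K-poly pipe}.
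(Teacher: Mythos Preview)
Your proposal is correct. The computation is clean and the identifications (restriction set $R(F_k)=\operatorname{Abs}(F_k)$, complement $V\setminus F_k$ equals crosses of $(F_k)_{\rm L}$, weight of the vertex at $(i,j)$ equals $(u_l\odot t_i^{-1})(u_l\odot t_j^{-1})$) are all right.

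The route differs slightly from the paper's. The paper derives this proposition \emph{from} \Cref{prop: K-poly pipe}: it starts with the signed alternating sum over non-boundary faces and then collects, for each~$k$, those summands indexed by $\sigma=\bigcap_{\ell\in L}F_\ell$ with $k=\max L$, citing \cite[Proposition~2.3]{Stanley96} for the underlying combinatorics. You instead bypass \Cref{prop: K-poly pipe} entirely and work from the positive face expansion $\mathcal{K}(R/I_\Delta;\mathbf t)=\sum_{F\in\Delta}\prod_{v\in F}\mathbf t^{\deg v}\prod_{v\notin F}(1-\mathbf t^{\deg v})$, then apply the standard shelling partition $\Delta=\bigsqcup_k[R(F_k),F_k]$ and telescope. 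Your approach is arguably more elementary and self-contained, since it does not need the signed formula or the reference to Stanley; the paper's approach has the virtue of displaying \Cref{prop: K-poly shelling} as a direct repackaging of \Cref{prop: K-poly pipe}, making the logical dependence explicit.
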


The corresponding result in \cite{KnutsonMillerAdvances} is Theorem~4.4, 
which uses the language of subword complexes and describes $\operatorname{Abs}(F_k)$ differently:
see \cite[Remark 4.5]{KnutsonMillerAdvances}.

\begin{proof}
This follows from \Cref{prop: K-poly pipe} by collecting, for each $k=1,\ldots,m$,
the summands indexed by faces $\sigma$ of the form
$\sigma = \bigcap_{\ell\in L}F_\ell$ where $k=\max L$,
as described in the ungraded case in \cite[Proposition~2.3]{Stanley96}.
\end{proof}

In the case in which $v\ge_R v_\square$, we can always take a factorization $\overline{v}$ with $u_l=\mathrm{id}$,
which simplifies the appearance of the product in the formula in \Cref{prop: K-poly pipe}.
A similar simplification can be written down for \Cref{prop: K-poly shelling}. 

\begin{cor}
For $v\ge_R v_\square$ the K-polynomial of $R_{\overline{v}}/I_{\overline{v},w}$ is
	\begin{equation*}
	\mathcal{K}(R_{\overline{v}}/I_{\overline{v},w};\mathbf{t})
	=
	\sum_{ \sigma \in PD^C_{\overline{v},w}} (-1)^{|\sigma_{\rm L}|-\ell(w_0w)}\prod_{(i,j)\in \sigma_{\rm L}} \left(1-t_i^{-1}t_j^{-1}\right).
	\end{equation*}
\end{cor}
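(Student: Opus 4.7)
The plan is to deduce this from Proposition \ref{prop: K-poly pipe} by making a judicious choice of factorization $\overline{v}$ that kills the action of $u_l$.

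First I would observe that the hypothesis $v \ge_R v_\square$ means that some reduced word for $v$ has a reduced word for $v_\square$ as a prefix; equivalently, $v = v_\square u_r$ with $\ell(v) = \ell(v_\square) + \ell(u_r)$. This gives a factorization $\overline{v} = u_l v_\square u_r$ with $u_l = \mathrm{id}$ and $\ell(v) = \ell(u_l) + \ell(v_\square) + \ell(u_r)$, so Proposition \ref{lem: shuffle} (and hence all the machinery attached to $\overline{v}$) applies. Note also, via Proposition \ref{prop: partial symm matrices}, that this is a legitimate choice for labelling the variables of $R_{\overline{v}}$.

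Next I would apply Proposition \ref{prop: K-poly pipe} to this factorization. The key simplification is that, by the definition of the action $u_l \odot t_i^{-1}$ given just before Proposition \ref{prop:multidegree formula}, taking $u_l = \mathrm{id}$ yields $u_l \odot t_i^{-1} = t_i^{-1}$ for every $i$. Hence
\[
(u_l \odot t_i^{-1})(u_l \odot t_j^{-1}) = t_i^{-1} t_j^{-1}
\]
and the factor $\left(1 - (u_l \odot t_i^{-1})(u_l \odot t_j^{-1})\right)$ in Proposition \ref{prop: K-poly pipe} becomes $\left(1 - t_i^{-1} t_j^{-1}\right)$.

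Finally I would reconcile the notation: the quantity $\mathrm{cr}(\sigma_{\rm L})$ appearing in Proposition \ref{prop: K-poly pipe} is by definition the number of crosses in $\sigma_{\rm L}$, which (since $\sigma_{\rm L}$ is exactly the set of positions of those crosses) is $|\sigma_{\rm L}|$. Substituting these two simplifications into the formula of Proposition \ref{prop: K-poly pipe} yields exactly the claimed expression, and the sum is over the same set $PD^C_{\overline{v},w}$. There is no real obstacle here; the only thing to check is the existence of a factorization with $u_l = \mathrm{id}$, which is immediate from the definition of right weak order.
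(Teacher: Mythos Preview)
Your proof is correct and follows exactly the approach the paper indicates: take a factorization $\overline{v}$ with $u_l=\mathrm{id}$ (possible precisely because $v\ge_R v_\square$) and observe that the action in Proposition~\ref{prop: K-poly pipe} then becomes trivial, together with the identification $\mathrm{cr}(\sigma_{\rm L})=|\sigma_{\rm L}|$.
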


The following example shows that this type~C $K$-polynomial is not simply obtained from the type~A $K$-polynomial
by substituting the weights as suggested by the embedding of maximal tori from $Sp_{2n}(\mathbb{K})$ to $GL_{2n}(\mathbb{K})$. 

\begin{ex}
Let $v=v_\square=2143$, $w=3412$ in $C_2$.
Notice that the only pipe dream for $w$ in $PD^C_{\overline{v},w}$ is 
	$$\begin{tikzpicture}
	  \begin{scope}[shift={(0,0+0)}]{\+}  \end{scope}
	  \begin{scope}[shift={(.5,0+0)}]{\elbow}  \end{scope}
	  \begin{scope}[shift={(0,0+.5)}]{\elbow}  \end{scope}
	  \begin{scope}[shift={(.5,0+.5)}]{\+}  \end{scope}
\end{tikzpicture}$$
and therefore 
\[\mathcal{K}(R_{\overline{v}}/I_{\overline{v},w};\mathbf{t}) = 1-t_1^{-1}t_2^{-1}.\]

We now compare this to the type~A $K$-polynomial using the embedding of tori above.
To avoid confusion, let us denote the variables of the $K$-polynomials of type~A Kazhdan-Lusztig varieties by $t^A_1,t^A_2,t^A_3,t^A_4$, where $t^A_i$ corresponds to the $i$-th entry of the torus $T$ consisting of diagonal matrices in $GL_{2n}(\mathbb{K})$.
The pipe dream pictured above is also the only pipe dream for $w$ in $PD^A_{v,w}$.
By the type~A version of \cite[Theorem 4.1]{KnutsonMillerAdvances} we have that the $K$-polynomial of the type~A Kazhdan-Lusztig variety associated to $v,w$ is $\left(1-t^A_3(t^A_1)^{-1}\right)\left(1-t^A_4(t^A_2)^{-1}\right)$.
The embedding of the torus in $Sp_{4}(\mathbb{K})$ is given by the substitution 
  $$
  t_1^A=t_2,\qquad t_2^A=t_1,\qquad t_3^A=t_1^{-1},\qquad t_4^A=t_2^{-1}.
  $$
Therefore, $	\mathcal{K}(R_{\overline{v}}/I_{\overline{v},w};\mathbf{t})$ is not the substitution of the type~A $K$-polynomial.
\end{ex}

\subsection{Combinatorics of type~C pipe dreams}\label{sec: combo pd C}

In this section we discuss the combinatorics of type~C pipe dreams. 
We begin by showing that we can recognize type~C pipe dreams for $w$ by following pseudolines, mirroring type~A pipe dreams.

\begin{prop}\label{prop: commutation}
A type~C pipe dream is a pipe dream for $w$ if and only if
when we transport labels $1,\ldots,2n$ from the north and west sides of the picture along the pseudolines,
ignoring all crossings subsequent to the first between each pair of pseudolines,
the resulting labels on the south side read $w$.
\end{prop}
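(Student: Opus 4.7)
The plan is to reduce the statement to the type~A analogue, which is classical (cf.\ \cite[\S1.5]{KnutsonMillerAnnals}). Given a type~C pipe dream $\rho$, one extends it to a type~A pipe dream $\tilde\rho$ on the southwest triangle of the $2n\times 2n$ matrix by tiling every position outside $D(v_\square)$ with an elbow, with half-elbows on the main diagonal of the ambient matrix. The diagonal symmetry of $\rho$ is inherited by $\tilde\rho$, ensuring that the type~A permutation $\tilde w\in S_{2n}$ read off the south boundary of $\tilde\rho$ lies in $C_n\subseteq S_{2n}$.

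With this setup, the key step is to match the type~C data of $\rho$ with the type~A data of $\tilde\rho$ in two complementary senses. First, consider the embedding $C_n\hookrightarrow S_{2n}$ sending $c_0\mapsto s_n$ and $c_k\mapsto s_{n-k}s_{n+k}$ for $k\ge 1$. A diagonal cross at type~C position $(p,p)$ sits at type~A position $(n+p,p)$, contributing $s_n=c_0$ to both the type~A and type~C words. A symmetric pair of crosses at type~C positions $(p,q)$ and $(q,p)$ with $p>q$ sits at type~A positions $(n+p,q)$ and $(n+q,p)$, contributing the commuting generators $s_{n+(p-q)}$ and $s_{n-(p-q)}$ to the type~A word, which together equal $c_{p-q}$. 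Using these commutations, the column-by-column, top-to-bottom reading of the type~A word of $\tilde\rho$ can be rearranged so as to group each symmetric pair together, recovering exactly the image of the type~C word of $\rho$ in the $0$-Hecke monoid of $S_{2n}$. Second, since the complement of $D(v_\square)$ within the southwest triangle consists entirely of elbows and half-elbows, a direct trace shows that the pseudolines there zigzag along the staircase: a pseudoline entering the type~A west boundary at row $n+1-k$ is routed to the north boundary of column $k$ of $D(v_\square)$, and dually, a pseudoline exiting the east of $D(v_\square)$ at row $n+k$ is routed to the south boundary at column $2n+1-k$. After the natural relabeling of the west/north labels of the square against the west labels of the type~A triangle, this identifies the pseudoline readings of $\rho$ and $\tilde\rho$.

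With these two compatibilities in place, the proposition follows from the type~A analogue. Indeed, $w_0^{C_n}$ coincides with $w_0^{S_{2n}}$ as permutations, and the embedding $C_n\hookrightarrow S_{2n}$ is a $0$-Hecke monoid homomorphism, so the type~C word of $\rho$ represents $w_0 w$ in $C_n$ iff the type~A word of $\tilde\rho$ represents $w_0 \tilde w$ in $S_{2n}$, iff, by the type~A statement, the pseudolines of $\tilde\rho$ (equivalently, of $\rho$) read $w$. The main technical obstacle is verifying that the commutations of $s_{n-k}$ with $s_{n+k}$ within each symmetric pair can be performed consistently with the column-by-column reading; this is most cleanly handled by induction on the number of crosses, peeling off the bottom-right cross of $\tilde\rho$ at each step and invoking the type~A vertex decomposition (the analogue of \Cref{thm: vertex decomposition}).
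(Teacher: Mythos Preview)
Your overall strategy coincides with the paper's: extend the type~C pipe dream to a type~A pipe dream $\tilde\rho$ by filling the rest of the triangle with elbows, and then show (a) the type~C word of~$\rho$ maps, under $c_0\mapsto s_n$ and $c_k\mapsto s_{n-k}s_{n+k}$, to the same element of the $0$-Hecke monoid as the type~A word of~$\tilde\rho$, and (b) the pseudoline labels agree after tracing through the elbow staircases outside~$D(v_\square)$. Your treatment of~(b) is correct and slightly more explicit than the paper's, which simply defers to the type~A combinatorics once~(a) is in hand.

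The substantive gap is in~(a). You correctly identify that the only issue is reordering the commuting pair $s_{n-k},s_{n+k}$ for each off-diagonal cross so that the type~A column-by-column reading matches the image of~$Q$, and you flag this as the ``main technical obstacle''. But your proposed resolution, ``induction on the number of crosses, peeling off the bottom-right cross of~$\tilde\rho$ at each step and invoking the type~A vertex decomposition'', does not work as stated: removing a single cross from~$\tilde\rho$ destroys the diagonal symmetry, so the resulting object is no longer a type~C pipe dream and the induction hypothesis no longer applies. Even if you instead peel off a symmetric pair, you must still argue that the two type~A letters $s_{n\pm k}$ coming from that pair can be brought adjacent to one another through the intervening letters of the type~A word without hitting an $s_{n\pm k\pm 1}$; this is precisely the nontrivial commutation check. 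The paper carries this out directly: for the cross at $(p,q)$ (with $p>q$), it moves $s_{n-(p-q)}$ leftward past all letters between it and its partner $s_{n+(p-q)}$, and verifies via a region argument (the ``blue and gray'' figure) that none of the intervening crosses contribute an $s_{n-(p-q)\pm 1}$ that has not already been moved out of the way. That explicit bookkeeping is the content you are missing; once you supply it, your argument and the paper's are the same.
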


\begin{proof}
Let $\rho$ be a type~C pipe dream for $w$, meaning that its associated word $Q=(\alpha_1,\ldots,\alpha_\ell)$ is a word in the nilHecke algebra of type~C for $w_0w$.
Let $Q^A$ be the word obtained by replacing replacing each $\alpha_k$ with either one or two entries as follows:
	\begin{equation}\label{eq: c to a word}
	\alpha_i\mapsto
	\begin{cases}
	n & \text{ if } \alpha_k=0,\\
	n-\alpha_k,n+\alpha_k & \text{ if } \alpha_k\neq 0.
	\end{cases}
	\end{equation}
Note that if $\alpha_k$ corresponds to a cross of $\rho$ in position $(p,q)$ of $D(v_\square)$ then $\alpha_k={p-q}$, and as a type~A pipe dream, $\rho$ has crosses at positions $(n+p,q)$ and $(n+q,p)$.
Our goal is to use commutation relations to transform $Q^A$ into the word, in the nilHecke algebra of type~A, associated to $\rho$.
Let us denote this latter word by $\mathcal{Q}$.
Note then that the ``only if'' part of the proposition will follow from the combinatorics of type~A pipedreams.

The first entry of $\mathcal{Q}$ is $n+q-p=n-\alpha_1$, which is also the first entry of $Q^A$.
Now consider $n-\alpha_k$ and suppose that for $i=1,\ldots,k-1$ we have used the commutation relation to move $n-\alpha_i$ in $Q^A$ to the correct position in $\mathcal{Q}$.
We wish to move $n-\alpha_k=n+q-p$ to the position in $\mathcal{Q}$ associated to the cross in position $(n+q,p)$.
If $\alpha_k=0$ this is already the case, so let's suppose that $\alpha_k\neq 0$.
We are allowed to use the commutation relation as long as we don't encounter $n+q-p+1$ or $n+q-p-1$. 
In Figure~\ref{fig: blue and gray}, the blue boxes represent the positions of the crosses that contribute $n+q-p\pm1$ to $\mathcal{Q}$.
For $i=1,\ldots,k-1$ an $n-\alpha_i$ corresponding to a cross in the gray shaded region has been moved to the correct position in $\mathcal{Q}$.
Note that all the blue boxes outside the gray region correspond to entries in $Q^A$ that appear after $n\pm \alpha_k$. 
Therefore, we can use commutation relations to move $n-\alpha_k$ to the left until it reaches the correct position in $\mathcal{Q}$.
Continuing with this process, we transform $Q^A$ into $\mathcal{Q}$ using only commutation relations.
\begin{figure}[hbt]
	$$
	\begin{tikzpicture}	
	 \begin{scope}[shift={(0,3)}]
	 { \filldraw[fill=cyan!40] (0,0) rectangle (.5,.5);
	 }  \end{scope}
	 \begin{scope}[shift={(.5,2.5)}]
	 { \filldraw[fill=cyan!40] (0,0) rectangle (.5,.5);
	 }  \end{scope}
	 \begin{scope}[shift={(1,2)}]
	 { \filldraw[fill=cyan!40] (0,0) rectangle (.5,.5);
	 }  \end{scope}
	 \begin{scope}[shift={(1.5,1.5)}]
	 { \filldraw[fill=cyan!40] (0,0) rectangle (.5,.5);
	 }  \end{scope}
	 \begin{scope}[shift={(2,1)}]
	 { \filldraw[fill=cyan!40] (0,0) rectangle (.5,.5);
	 }  \end{scope}
	 \begin{scope}[shift={(2.5,.5)}]
	 { \filldraw[fill=cyan!40] (0,0) rectangle (.5,.5);
	 }  \end{scope}
	 \begin{scope}[shift={(3,0)}]
	 { \filldraw[fill=cyan!40] (0,0) rectangle (.5,.5);
	 }  \end{scope}

	 \begin{scope}[shift={(0,2)}]
	 { \filldraw[fill=cyan!40] (0,0) rectangle (.5,.5);
	 }  \end{scope}
	 \begin{scope}[shift={(.5,1.5)}]
	 { \filldraw[fill=cyan!40] (0,0) rectangle (.5,.5);
	 }  \end{scope}
	 \begin{scope}[shift={(1,1)}]
	 { \filldraw[fill=cyan!40] (0,0) rectangle (.5,.5);
	 }  \end{scope}
	 \begin{scope}[shift={(1.5,.5)}]
	 { \filldraw[fill=cyan!40] (0,0) rectangle (.5,.5);
	 }  \end{scope}
	 \begin{scope}[shift={(2,0)}]
	 { \filldraw[fill=cyan!40] (0,0) rectangle (.5,.5);
	 }  \end{scope}

	 \begin{scope}[shift={(2.5,4.5)}]
	 { \filldraw[fill=cyan!40] (0,0) rectangle (.5,.5);
	 }  \end{scope}
	 \begin{scope}[shift={(3,4)}]
	 { \filldraw[fill=cyan!40] (0,0) rectangle (.5,.5);
	 }  \end{scope}
	 \begin{scope}[shift={(3.5,3.5)}]
	 { \filldraw[fill=cyan!40] (0,0) rectangle (.5,.5);
	 }  \end{scope}
	 \begin{scope}[shift={(4,3)}]
	 { \filldraw[fill=cyan!40] (0,0) rectangle (.5,.5);
	 }  \end{scope}
	 \begin{scope}[shift={(4.5,2.5)}]
	 { \filldraw[fill=cyan!40] (0,0) rectangle (.5,.5);
	 }  \end{scope}

	  \begin{scope}[shift={(1.5,4.5)}]
	 { \filldraw[fill=cyan!40] (0,0) rectangle (.5,.5);
	 }  \end{scope}
	 \begin{scope}[shift={(2,4)}]
	 { \filldraw[fill=cyan!40] (0,0) rectangle (.5,.5);
	 }  \end{scope}
	 \begin{scope}[shift={(2.5,3.5)}]
	 { \filldraw[fill=cyan!40] (0,0) rectangle (.5,.5);
	 }  \end{scope}
	 \begin{scope}[shift={(3,3)}]
	 { \filldraw[fill=cyan!40] (0,0) rectangle (.5,.5);
	 }  \end{scope}
	 \begin{scope}[shift={(3.5,2.5)}]
	 { \filldraw[fill=cyan!40] (0,0) rectangle (.5,.5);
	 }  \end{scope}
	 \begin{scope}[shift={(4,2)}]
	 { \filldraw[fill=cyan!40] (0,0) rectangle (.5,.5);
	 }  \end{scope}
	 \begin{scope}[shift={(4.5,1.5)}]
	 { \filldraw[fill=cyan!40] (0,0) rectangle (.5,.5);
	 }  \end{scope}

	\fill[fill=black!100,fill opacity=0.2] (0,1)--(0,5)--(4,5)--(4,3)--(3.5,3)--(3.5,1.5)--(2,1.5)--(2,1)
		;
	 
	\filldraw[fill=black!60] (1.5,1) rectangle (2,1.5)
		(3.5,3) rectangle (4,3.5)
		;
	\draw[dashed] (0,5)--(5,0);
	\draw (0,1)--(2,1)--(2,1.5)--(3.5,1.5)--(3.5,3)--(4,3)--(4,5)
		(0,0) rectangle (5,5)
		;
	\draw (0,1.25) node[left] {$q$}
		(0,3.25) node[left] {$p$} 
		(3.75,5) node[above] {$q$}
		(1.75,5) node[above] {$p$}
		;
	\end{tikzpicture}
	$$
\caption{Partway through applying commutation relations, in the proof of Proposition~\ref{prop: commutation}.}
\label{fig: blue and gray}
\end{figure}
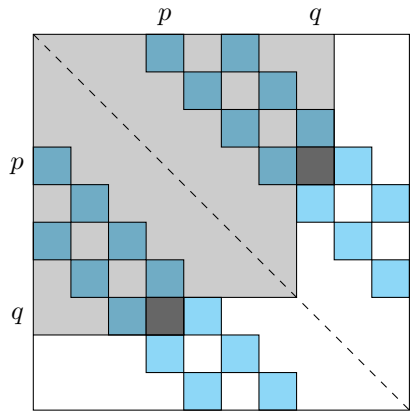

Now let $\rho$ be a type~C pipe dream such that following the pseudolines yields $w$. 
Let $\mathcal{Q}$ be the word, in the nilHecke algebra of type~A, associated to $\rho$.
By undoing the process described above, we can transform $\mathcal{Q}$ into $Q^A$ and lastly to $Q$ by undoing the substitution \eqref{eq: c to a word}.
We leave the details to the reader.
\end{proof}

\section{Beyond small patches}\label{sec:beyondSmall}
In \cite{KnutsonFrobenius} Knutson showed the defining ideal of any Kazhdan-Lusztig variety has a Gr\"obner basis whose leading terms are squarefree, and, in \cite{Knutson-degenerate}, he determined that the resulting initial ideal is the Stanley-Reisner ideal of the subword complex $S(Q,w_0w)$, where $Q$ is a reduced word for $w_0v$. 
For small patches, our coordinates agree (up to sign) with the \emph{Bott-Samelson coordinates} in \cite{KnutsonFrobenius}, and our monomial order $\prec_{\text lex}$ agrees with the monomial order in \cite{KnutsonFrobenius}. 
Thus, Theorem \ref{thm: main} shows that the type~C essential minors are a Gr\"obner basis in Knutson's set-up.
(Knutson does not provide a Gr\"obner basis in \cite{KnutsonFrobenius}.) 
In this short section, we show that things are more mysterious beyond the case of small patches as the essential minors are \emph{not} typically a Gr\"obner basis.

\begin{ex}
Let $v = 231645$ as in Example \ref{ex: type C patch}. 
Observe that $Q = (0,1,0,2,1,0,2)$ is a reduced word for $w_0v$.
Let \[
C_0^{(i)} = \begin{bmatrix}1&0&0&0&0&0\\0&1&0&0&0&0\\0&0&a_i&-1&0&0\\0&0&1&0&0&0\\0&0&0&0&1&0\\0&0&0&0&0&1\end{bmatrix},
\enskip
C_1^{(i)} = \begin{bmatrix}1&0&0&0&0&0\\0&b_i&-1&0&0&0\\0&1&0&0&0&0\\0&0&0&b_i&1&0\\0&0&0&-1&0&0\\0&0&0&0&0&1\end{bmatrix},
\enskip
C_2^{(i)} = \begin{bmatrix}c_i&-1&0&0&0&0\\1&0&0&0&0&0\\0&0&1&0&0&0\\0&0&0&1&0&0\\0&0&0&0&c_i&1\\0&0&0&0&-1&0\end{bmatrix},
\]
so that in \emph{Bott-Samelson} coordinates, the opposite cell associated to $v$ is identified with the space of matrices
\[
w_0C_0^{(1)}C_1^{(1)}C_0^{(2)}C_2^{(1)}C_1^{(2)}C_0^{(3)}C_2^{(2)} = \begin{bmatrix}
      0&0&1&0&0&0\\
      -1&0&0&0&0&0\\
      {c}_{2}&-1&0&0&0&0\\
      {c}_{2}{a}_{1}-{b}_{1}&-{a}_{1}&{c}_{1}&0&1&0\\
      {c}_{2}{b}_{1}-{a}_{2}&-{b}_{1}&{b}_{2}&0&{c}_{2}&1\\
      {c}_{2}{c}_{1}-{b}_{2}&-{c}_{1}&{a}_{3}&-1&0&0\end{bmatrix},
\]
where each $a_i, b_i, c_i$ can take arbitrary values in $\mathbb{K}$. 
Let $w = 462513$. Then, now treating $a_i, b_i, c_i$ as indeterminates, we see that the type~C essential set (see \Cref{def: type C essential}) is $\{(5,1), (5,3)\}$ and so the type~C essential minors are:
\[
\mathcal{E} = \left\{c_2c_1-b_2, \quad c_2b_1-a_2,\quad 2\times 2 \text{ minors of } \begin{bmatrix} c_2b_1-a_2 & -b_1 &b_2\\c_2c_1-b_2&-c_1&a_3 \end{bmatrix}\right\}.
\]
Using the lexicographic monomial order $c_2>a_3>b_2>c_1>a_2>b_1>a_1$, which is compatible with the vertex decomposition of the subword complex $S(Q,w_0w)$ described in Section~\ref{sec:subword complexes}, we see that 
\begin{itemize}
\item the initial ideal of the Kazhdan-Lusztig ideal $\langle \mathcal{E}\rangle$ is the Stanley-Reisner ideal of $S(Q,w_0w)$ as expected, yet
\item $\mathcal{E}$ is \emph{not} a Gr\"obner basis.
\end{itemize}
Nevertheless, the set of type~A essential minors is a Gr\"obner basis.
There also exists a Gr\"obner basis consisting of type~C essential minors which differ from the conventions introduced in \Cref{sec: type C swc}, namely the minors given by choosing the essential boxes $\{(5,1), (3,3)\}$.
\end{ex}

In the next example, we see that unlike in the previous example, the type~A essential minors needn't be a Gr\"obner basis either.

\begin{ex}
Consider $v = 213465$ so that $Q = (0,1,0,2,1,0,2,1)$ is a reduced word for $w_0v$. Let $C_0^{(i)}, C_1^{(i)}, C_2^{(i)}$ as in the previous example. Then the opposite Schubert cell associated to $v$ is identified with the space of matrices
 \begin{equation}\label{eq:nonEssentialEx}
\begin{bmatrix}
      0&1&0&0&0&0\\
      -1&0&0&0&0&0\\
      {c}_{2}&-{b}_{3}&1&0&0&0\\
      {c}_{2}{a}_{1}-{b}_{1}&-{b}_{3}{a}_{1}+{c}_{1}&{a}_{1}&-1&0&0\\
      {c}_{2}{b}_{1}-{a}_{2}&-{b}_{3}{b}_{1}+{b}_{2}&{b}_{1}&-{c}_{2}&0&1\\
      {c}_{2}{c}_{1}-{b}_{2}&-{b}_{3}{c}_{1}+{a}_{3}&{c}_{1}&-{b}_{3}&-1&0
      \end{bmatrix}
      \end{equation}
where $a_i, b_i, c_i\in \mathbb{K}$. 
Let $w = 632541$. There is a unique (type~A or type~C) essential box $\{(4,3)\}$. 
If we treat $a_i, b_i, c_i$ as indeterminates, the essential minors are then the $2\times 2$ minors of the southwest $3\times 3$ submatrix of \eqref{eq:nonEssentialEx}. 
For the lexicographic monomial order with $b_3>c_2>a_3>b_2>c_1>a_2>b_1>a_1$, we see that the ideal generated by the essential minors has initial ideal equal to the Stanley-Reisner ideal of $S(Q,w_0w)$, yet the set of essential minors is \emph{not} a Gr\"obner basis.
\end{ex}

Consequently, it is still an open problem to find combinatorially-defined Gr\"obner basis for type $C$ Kazhdan-Lusztig ideals $\mathcal{N}_{v,w}$ when $v\ngeq v_\square$ in left-right weak order. 

In type B, our methods fail because the analogue of \cite[Proposition 6.1.1.2]{Lakshmibai-Raghavan} does not hold scheme-theoretically.  In some cases, imposing the determinantal equations on the patches yields non-reduced schemes.  If we apply our methods to type B small patches, we see that we end up taking the determinants of some skew-symmetric matrices, and the obvious solution to this problem is to take the pfaffians of those skew-symmetric matrices instead.  We expect results similar to ours can be proven with this modification in that case.

Beyond small patches in type B, some unpublished preliminary work of the fourth author and Alexander Yong suggested that the appropriate equations would still form a Gr\"obner basis under a diagonal term order, but we need to impose rank conditions on some submatrices that are non-trivially similar to a skew-symmetric one.  Note that all formulas for the pfaffian require knowing the basis with respect to which a matrix is skew-symmetric, and we were not able to systematically determine the change of basis that turned these ``secretly skew-symmetric'' matrices into actually skew-symmetric matrices.  The following example illustrates some of the difficulties.

\begin{ex}
In this example we work with the type $B$ Weyl group. We embed $B_4$ into the symmetric group $S_9$ via $b_0 = s_4s_5s_4$, $b_1 = s_3s_6$, $b_2 = s_2s_7$, and $b_3 = s_1s_8$.
Consider $v=132456879\in S_9$ and observe that $Q = (0,1,2,3,0,1,2,3,0,1,2,0,1,2,3)$ is a reduced word for $w_0v$. Let
\[B_0^{(i)} = \begin{bmatrix}
1&0&0&0&0&0&0&0&0\\
0&1&0&0&0&0&0&0&0\\
0&0&1&0&0&0&0&0&0\\
0&0&0&-\frac{1}{2}a_{i}^{2}&a_{i}&1&0&0&0\\
0&0&0&a_{i}&-1&0&0&0&0\\
0&0&0&1&0&0&0&0&0\\
0&0&0&0&0&0&1&0&0\\
0&0&0&0&0&0&0&1&0\\
0&0&0&0&0&0&0&0&1
\end{bmatrix},
\quad
B_1^{(i)} = \begin{bmatrix}
1&0&0&0&0&0&0&0&0\\
0&1&0&0&0&0&0&0&0\\
0&0&b_{i}&-1&0&0&0&0&0\\
0&0&1&0&0&0&0&0&0\\
0&0&0&0&1&0&0&0&0\\
0&0&0&0&0&b_{i}&1&0&0\\
0&0&0&0&0&-1&0&0&0\\
0&0&0&0&0&0&0&1&0\\
0&0&0&0&0&0&0&0&1
\end{bmatrix},\]
\[
B_2^{(i)} = \begin{bmatrix}
1&0&0&0&0&0&0&0&0\\
0&c_{i}&-1&0&0&0&0&0&0\\
0&1&0&0&0&0&0&0&0\\
0&0&0&1&0&0&0&0&0\\
0&0&0&0&1&0&0&0&0\\
0&0&0&0&0&1&0&0&0\\
0&0&0&0&0&0&c_{i}&1&0\\
0&0&0&0&0&0&-1&0&0\\
0&0&0&0&0&0&0&0&1
\end{bmatrix}, 
\quad
B_3^{(i)} = \begin{bmatrix}
d_{i}&-1&0&0&0&0&0&0&0\\
1&0&0&0&0&0&0&0&0\\
0&0&1&0&0&0&0&0&0\\
0&0&0&1&0&0&0&0&0\\
0&0&0&0&1&0&0&0&0\\
0&0&0&0&0&1&0&0&0\\
0&0&0&0&0&0&1&0&0\\
0&0&0&0&0&0&0&d_{i}&1\\
0&0&0&0&0&0&0&-1&0
\end{bmatrix}.
\]
Then, using these Bott-Samelson coordinates, the opposite cell associated to $v$ is identified with the space of matrices
$
M_Q = w_0B_0^{(1)}B_1^{(1)}B_2^{(1)}B_3^{(1)}B_0^{(2)}B_1^{(2)}B_2^{(2)}B_3^{(2)}B_0^{(3)}B_1^{(3)}B_2^{(3)}B_0^{(4)}B_1^{(4)}B_2^{(4)}B_3^{(4)}$.

Let $w=381654927\in S_9$. The type $B$ essential set (which can be calculated in a similar way to the type $C$ essential set, see also \cite{AndersonEJC}) for $w$ consists of three boxes in locations $(7,6), (7,8),$ and $(9,6)$. Taking the associated essential minors of $M_Q$ (where $a_i, b_i, c_i, d_i$ appearing in $M_Q$ are considered as indeterminates), we obtain an ideal generated by $107$ minors. Using Macaulay2 \cite{M2}, we see that this ideal can be presented as:
\begin{align*}
I = \langle d_{1},\:c_{2},\:b_{4},\:b_{3},\:a_{4},\:c_{3}c_{4}-d_{2}d_{4},\:c_{1}c_{4}-b_{1}d_{4},\:a_{3}c_{4}-a_{2}d_{4},\:b_{1}c_{3}-c_{1}d_{2},\:a_{2}c_{3}-a_{3}d_{2}, \\
\:a_{3}b_{1}-a_{2}c_{1},\:a_{2}a_{3}d_{4}+2\,b_{2}d_{4},\:a_{2}^{2}d_{4}+2\,b_{2}c_{4},\:a_{3}^{2}d_{2}+2\,b_{2}c_{3},\:a_{2}a_{3}d_{2}+2\,b_{2}d_{2},\:a_{2}a_{3}c_{1}+2\,b_{2}c_{1},\\
\:a_{2}^{2}c_{1}+2\,b_{1}b_{2},\:a_{2}a_{3}b_{2}+2\,b_{2}^{2},\:a_{2}a_{3}^{2}+2\,a_{3}b_{2},\:a_{2}^{2}a_{3}+2\,a_{2}b_{2}\rangle.
\end{align*}
This ideal is not radical, hence does not scheme-theoretically define a Kazhdan-Lusztig variety. So, in particular the type $B$ analogue of Proposition \ref{prop: KL coord ring} is false. 
We note that radical of the ideal $I$ above is:
\[
\langle d_{1},\,c_{2},\,b_{4},\,b_{3},\,a_{4},\,c_{3}c_{4}-d_{2}d_{4},\,c_{1}c_{4}-b_{1}d_{4},\,a_{3}c_{4}-a_{2}d_{4},\,b_{1}c_{3}-c_{1}d_{2},\,a_{2}c_{3}-a_{3}d_{2},\,a_{3}b_{1}-a_{2}c_{1},\,a_{2}a_{3}+2\,b_{2}\rangle.\]

\end{ex}

\bibliography{biblio}

\begin{thebibliography}{10}

\bibitem{AJS}
H.~H. Andersen, J.~C. Jantzen, and W.~Soergel.
\newblock Representations of quantum groups at a {$p$}th root of unity and of
  semisimple groups in characteristic {$p$}: independence of {$p$}.
\newblock {\em Ast\'{e}risque}, (220):321, 1994.

\bibitem{AndersonEJC}
David Anderson.
\newblock Diagrams and essential sets for signed permutations.
\newblock {\em Electron. J. Combin.}, 25(3):Paper 3.46, 23, 2018.

\bibitem{Anderson-Ikeda-Jeon-Kawago}
David Anderson, Takeshi Ikeda, Minyoung Jeon, and Ryotaro Kawago.
\newblock The multiplicity of a singularity in a vexillary {S}chubert variety.
\newblock {\em Preprint, {\sf arXiv:2112.07375}}, 2021.

\bibitem{Bergeron-Billey}
Nantel Bergeron and Sara Billey.
\newblock R{C}-graphs and {S}chubert polynomials.
\newblock {\em Experiment. Math.}, 2(4):257--269, 1993.

\bibitem{Billey-Haiman}
Sara Billey and Mark Haiman.
\newblock Schubert polynomials for the classical groups.
\newblock {\em J. Amer. Math. Soc.}, 8(2):443--482, 1995.

\bibitem{Billey}
Sara~C. Billey.
\newblock Kostant polynomials and the cohomology ring for {$G/B$}.
\newblock {\em Duke Math. J.}, 96(1):205--224, 1999.

\bibitem{Billey-Jockusch-Stanley}
Sara~C. Billey, William Jockusch, and Richard~P. Stanley.
\newblock Some combinatorial properties of {S}chubert polynomials.
\newblock {\em J. Algebraic Combin.}, 2(4):345--374, 1993.

\bibitem{Buch-Rimanyi}
Anders~S. Buch and Rich\'{a}rd Rim\'{a}nyi.
\newblock Specializations of {G}rothendieck polynomials.
\newblock {\em C. R. Math. Acad. Sci. Paris}, 339(1):1--4, 2004.

\bibitem{Fink-Rajchgot-Sullivant}
Alex Fink, Jenna Rajchgot, and Seth Sullivant.
\newblock Matrix {S}chubert varieties and {G}aussian conditional independence
  models.
\newblock {\em J. Algebraic Combin.}, 44(4):1009--1046, 2016.

\bibitem{Fomin-Kirillov}
Sergey Fomin and Anatol~N. Kirillov.
\newblock The {Y}ang-{B}axter equation, symmetric functions, and {S}chubert
  polynomials.
\newblock In {\em Proceedings of the 5th {C}onference on {F}ormal {P}ower
  {S}eries and {A}lgebraic {C}ombinatorics ({F}lorence, 1993)}, volume 153,
  pages 123--143, 1996.

\bibitem{FultonDuke}
William Fulton.
\newblock Flags, {S}chubert polynomials, degeneracy loci, and determinantal
  formulas.
\newblock {\em Duke Math. J.}, 65(3):381--420, 1992.

\bibitem{FultonYoung}
William Fulton.
\newblock {\em Young tableaux}, volume~35 of {\em London Mathematical Society
  Student Texts}.
\newblock Cambridge University Press, Cambridge, 1997.
\newblock With applications to representation theory and geometry.

\bibitem{FultonPragacz}
William Fulton and Piotr Pragacz.
\newblock {\em Schubert varieties and degeneracy loci}, volume 1689 of {\em
  Lecture Notes in Mathematics}.
\newblock Springer-Verlag, Berlin, 1998.
\newblock Appendix J by the authors in collaboration with I. Ciocan-Fontanine.

\bibitem{Ghorpade-Raghavan}
Sudhir~R. Ghorpade and K.~N. Raghavan.
\newblock Hilbert functions of points on {S}chubert varieties in the symplectic
  {G}rassmannian.
\newblock {\em Trans. Amer. Math. Soc.}, 358(12):5401--5423, 2006.

\bibitem{Gorla-Migliore-Nagel}
E.~Gorla, J.~C. Migliore, and U.~Nagel.
\newblock Gr\"{o}bner bases via linkage.
\newblock {\em J. Algebra}, 384:110--134, 2013.

\bibitem{Gorla}
Elisa Gorla.
\newblock Symmetric ladders and {G}-biliaison.
\newblock In {\em Liaison, {S}chottky problem and invariant theory}, volume 280
  of {\em Progr. Math.}, pages 49--62. Birkh\"{a}user Verlag, Basel, 2010.

\bibitem{Graham}
William Graham.
\newblock Equivariant {$K$}-theory and {S}chubert varieties, 2002.
\newblock Unpublished.

\bibitem{M2}
Daniel~R. Grayson and Michael~E. Stillman.
\newblock Macaulay2, a software system for research in algebraic geometry.
\newblock Available at \url{http://www.math.uiuc.edu/Macaulay2/}.

\bibitem{HamakerMarbergPawlowski}
Zachary Hamaker, Eric Marberg, and Brendan Pawlowski.
\newblock Involution pipe dreams.
\newblock {\em S\'{e}m. Lothar. Combin.}, 82B:Art. 63, 12, 2020.

\bibitem{Harary}
Frank Harary.
\newblock The determinant of the adjacency matrix of a graph.
\newblock {\em SIAM Rev.}, 4:202--210, 1962.

\bibitem{Ikeda-Mihalcea-Naruse}
Takeshi Ikeda, Leonardo~C. Mihalcea, and Hiroshi Naruse.
\newblock Double {S}chubert polynomials for the classical groups.
\newblock {\em Adv. Math.}, 226(1):840--886, 2011.

\bibitem{Ikeda-Naruse}
Takeshi Ikeda and Hiroshi Naruse.
\newblock Excited {Y}oung diagrams and equivariant {S}chubert calculus.
\newblock {\em Trans. Amer. Math. Soc.}, 361(10):5193--5221, 2009.

\bibitem{Jantzen}
Jens~Carsten Jantzen.
\newblock {\em Representations of algebraic groups}, volume 107 of {\em
  Mathematical Surveys and Monographs}.
\newblock American Mathematical Society, Providence, RI, second edition, 2003.

\bibitem{KazhdanLusztig}
David Kazhdan and George Lusztig.
\newblock Representations of {C}oxeter groups and {H}ecke algebras.
\newblock {\em Invent. Math.}, 53(2):165--184, 1979.

\bibitem{kirillov2015double}
Anatol~N. Kirillov.
\newblock On double {S}chubert and {G}rothendieck polynomials for classical
  groups.
\newblock {\em Preprint, {\sf arXiv:1504.01469}}, 2015.

\bibitem{KirillovNaruse}
Anatol~N. Kirillov and Hiroshi Naruse.
\newblock Construction of double {G}rothendieck polynomials of classical types
  using id{C}oxeter algebras.
\newblock {\em Tokyo J. Math.}, 39(3):695--728, 2017.

\bibitem{Knutson-degenerate}
Allen Knutson.
\newblock Schubert patches degenerate to subword complexes.
\newblock {\em Transform. Groups}, 13(3-4):715--726, 2008.

\bibitem{KnutsonFrobenius}
Allen Knutson.
\newblock Frobenius splitting, point-counting, and degeneration.
\newblock {\em Preprint, {\sf arXiv:0911.4941}}, 2009.

\bibitem{KnutsonMillerAdvances}
Allen Knutson and Ezra Miller.
\newblock Subword complexes in {C}oxeter groups.
\newblock {\em Adv. Math.}, 184(1):161--176, 2004.

\bibitem{KnutsonMillerAnnals}
Allen Knutson and Ezra Miller.
\newblock Gr\"{o}bner geometry of {S}chubert polynomials.
\newblock {\em Ann. of Math. (2)}, 161(3):1245--1318, 2005.

\bibitem{KostantKumar}
Bertram Kostant and Shrawan Kumar.
\newblock {$T$}-equivariant {$K$}-theory of generalized flag varieties.
\newblock {\em J. Differential Geom.}, 32(2):549--603, 1990.

\bibitem{Kreiman}
V.~Kreiman.
\newblock Schubert classes in the equivariant {K}-theory and equivariant
  cohomology of the {L}agrangian {G}rassmannian.
\newblock {\em Preprint, {\sf arXiv:0602245}}, 2006.

\bibitem{Lakshmibai-Raghavan}
Venkatramani Lakshmibai and Komaranapuram~N. Raghavan.
\newblock {\em Standard monomial theory}, volume 137 of {\em Encyclopaedia of
  Mathematical Sciences}.
\newblock Springer-Verlag, Berlin, 2008.
\newblock Invariant theoretic approach, Invariant Theory and Algebraic
  Transformation Groups, 8.

\bibitem{MarbergPawlowski}
Eric Marberg and Brendan Pawlowski.
\newblock {$K$}-theory formulas for orthogonal and symplectic orbit closures.
\newblock {\em Adv. Math.}, 372:107299, 43, 2020.

\bibitem{MillerSturmfels}
Ezra Miller and Bernd Sturmfels.
\newblock {\em Combinatorial commutative algebra}, volume 227 of {\em Graduate
  Texts in Mathematics}.
\newblock Springer-Verlag, New York, 2005.

\bibitem{Sachs}
Horst Sachs.
\newblock \"{U}ber {T}eiler, {F}aktoren und charakteristische {P}olynome von
  {G}raphen. {I}.
\newblock {\em Wiss. Z. Tech. Hochsch. Ilmenau}, 12:7--12, 1966.

\bibitem{SmirnovTutubalina}
Evgeny Smirnov and Anna Tutubalina.
\newblock Pipe dreams for {S}chubert polynomials of the classical groups.
\newblock {\em Preprint, {\sf{arXiv:2009.14120}}}, 2020.

\bibitem{Stanley96}
Richard~P. Stanley.
\newblock {\em Combinatorics and commutative algebra}, volume~41 of {\em
  Progress in Mathematics}.
\newblock Birkh\"{a}user Boston, Inc., Boston, MA, second edition, 1996.

\bibitem{TymBilley}
Julianna Tymoczko.
\newblock Billey's formula in combinatorics, geometry, and topology.
\newblock In {\em Schubert calculus---{O}saka 2012}, volume~71 of {\em Adv.
  Stud. Pure Math.}, pages 499--518. Math. Soc. Japan, [Tokyo], 2016.

\bibitem{Willems}
Matthieu Willems.
\newblock Cohomologie et {$K$}-th\'{e}orie \'{e}quivariantes des
  vari\'{e}t\'{e}s de {B}ott-{S}amelson et des vari\'{e}t\'{e}s de drapeaux.
\newblock {\em Bull. Soc. Math. France}, 132(4):569--589, 2004.

\bibitem{WooEJC}
Alexander Woo.
\newblock Hultman elements for the hyperoctahedral groups.
\newblock {\em Electron. J. Combin.}, 25(2):Paper 2.41, 25, 2018.

\bibitem{WooYongSings}
Alexander Woo and Alexander Yong.
\newblock Governing singularities of {S}chubert varieties.
\newblock {\em J. Algebra}, 320(2):495--520, 2008.

\bibitem{WooYongGrobner}
Alexander Woo and Alexander Yong.
\newblock A {G}r\"{o}bner basis for {K}azhdan-{L}usztig ideals.
\newblock {\em Amer. J. Math.}, 134(4):1089--1137, 2012.

\end{thebibliography}
\bibliographystyle{plain}

\end{document}